\title[Local cohomology for topological modular forms]
    {The local cohomology spectral sequence \\ for topological modular forms}
\author{Robert Bruner}
\address{Department of Mathematics, Wayne State University, USA}
\email{robert.bruner@wayne.edu}
\author{John Greenlees}
\address{Mathematics Institute, University of Warwick, UK}
\email{John.Greenlees@warwick.ac.uk}
\author{John Rognes}
\address{Department of Mathematics, University of Oslo, Norway }
\email{rognes@math.uio.no}
\thanks{The first and second authors are grateful to the organisers of
the 2017 Ecuador Topology Conference, where initial discussions on this
project began in front of the Gal{\'a}pagos finches.}
\newtheorem{theorem}{Theorem}[section]
\newtheorem{proposition}[theorem]{Proposition}
\newtheorem{lemma}[theorem]{Lemma}
\newtheorem{corollary}[theorem]{Corollary}
\theoremstyle{definition}
\newtheorem{definition}[theorem]{Definition}
\newtheorem{notation}[theorem]{Notation}
\theoremstyle{remark}
\newtheorem{remark}[theorem]{Remark}
\newtheorem{example}[theorem]{Example}
\numberwithin{equation}{section}
\numberwithin{figure}{section}
\numberwithin{table}{section}
\DeclareMathOperator{\Cell}{Cell}
\DeclareMathOperator{\Ext}{Ext}
\DeclareMathOperator{\Hom}{Hom}
\DeclareMathOperator*{\hocolim}{hocolim}
\newcommand{\cE}{\mathcal{E}}
\newcommand{\bF}{\mathbb{F}}
\newcommand{\bQ}{\mathbb{Q}}
\newcommand{\bZ}{\mathbb{Z}}
\newcommand{\fn}{\mathfrak{n}}
\newcommand{\tmf}{tm\!f}
\newcommand{\Tmf}{Tm\!f}
\newcommand{\longto}{\longrightarrow}
\renewcommand{\:}{\colon}
\newcommand{\<}{\langle}
\renewcommand{\>}{\rangle}
\begin{document}

\begin{abstract}
We discuss proofs of local cohomology theorems for topological modular
forms, based on Mahowald--Rezk duality and on Gorenstein duality, and then
make the associated local cohomology spectral sequences explicit,
including their differential patterns and hidden extensions.
\end{abstract}

\subjclass{ Primary 55M05, 55N34, 55T99;
	Secondary 13D45, 13H10, 55P42, 55P43 }

\maketitle

\section{Introduction}
Several interesting ring spectra satisfy duality theorems relating local
cohomology to Anderson or Brown--Comenetz duals.  The algebraic precursor
of these results is due to Grothendieck~\cite{Har67}, and is a local
analogue of Serre's projective duality theorem.  In each case there is a
covariant local cohomology spectral sequence converging to the homotopy
of the local cohomology spectrum, and a contravariant $\Ext$ spectral
sequence computing the homotopy of a functionally dual spectrum.  As a
consequence of self-dualities intrinsic to the ring spectra in question,
the results of the two calculations agree up to a shift in grading, in
spite of their opposite variances.  It is the purpose of this paper to
make these self-dualities explicit for the connective topological modular
forms spectrum.  Figures~\ref{fig:A-duality} and~\ref{fig:Ap3-duality}
depict the $2$- and $3$-complete dualities, respectively.  A reader
wondering if this paper is of interest might glance at these figures;
they do require explanation (given below), but the structural patterns
are immediately and strikingly apparent in the pictures.  A reader new
to $\tmf$ might prefer to start with the simpler charts for $p=3$, as
preparation for the case of~$p=2$.  We also treat the much simpler case
of the connective real $K$-theory spectrum.

We work at one prime~$p$ at a time, write $ko = ko^\wedge_p$ and $ku
= ku^\wedge_p$ for the $p$-completed real and complex connective
topological $K$-theory spectra, and write $\tmf = \tmf^\wedge_p$
for the $p$-completed connective topological modular forms spectrum.
We also consider a $2$-complete spectrum $\tmf_1(3) = \tmf_1(3)^\wedge_2$
and a $3$-complete spectrum $\tmf_0(2) = \tmf_0(2)^\wedge_3$ related to
elliptic curves with $\Gamma_1(3)$ and $\Gamma_0(2)$ level structures,
respectively.  These are all commutative $S_p$-algebras, where $S_p =
S^\wedge_p$ denotes the $p$-completed sphere spectrum.  See~\cite{DFHH14}
and~\cite{BR21} for theoretical and computational background regarding
topological modular forms.
For $p=2$ there are Bott and Mahowald classes $B \in \pi_8(\tmf)$ and $M
\in \pi_{192}(\tmf)$ detected by the modular forms~$c_4$ and $\Delta^8$,
respectively.  The homotopy groups~$\pi_*(\tmf)$ for $0 \le * \le 192$
are shown in Figure~\ref{fig:pitmf}.  The red dots indicate $B$-power
torsion classes, and the entire picture repeats $M$-periodically.

For any commutative $S_p$-algebra $R$ and a choice of finitely generated
ideal $J = (x_1, \dots, x_d) \subset \pi_*(R)$, the local cohomology
spectrum $\Gamma_J R$ encapsulates the $J$-power torsion of $\pi_*(R)$,
together with its right derived functors.  There is a local cohomology
spectral sequence
$$
E_2^{s,t} = H^s_J(\pi_*(R))_t
	\Longrightarrow_s \pi_{t-s}(\Gamma_J R)
$$
(in Adams grading), which can be used to compute its homotopy.  For $R =
\tmf$ at $p=2$ and $J = (B, M)$ the spectral sequence collapses to a short
exact sequence
$$
0 \to H^2_{(B, M)}(\pi_*(\tmf))_{n+2}
	\longto \pi_n(\Gamma_{(B, M)} \tmf)
	\longto H^1_{(B, M)}(\pi_*(\tmf))_{n+1} \to 0
$$
in each topological degree~$n$, cf.~Figures~\ref{fig:GammaBN-ab}
and~\ref{fig:GammaBN-cd}, while for $J = (2, B, M)$ its $E_2$-term
is concentrated in filtration degrees $1 \le s \le 3$ and contains
nonzero $d_2$-differentials, cf.~Figures~\ref{fig:Gamma2BN-ab}
through~\ref{fig:Gamma2BN-gh}.

The $S_p$-module Anderson and Brown--Comenetz duals $I_{\bZ_p} R$ and $IR$
are defined as function spectra $F_{S_p}(R, I_{\bZ_p})$ and $F_{S_p}(R,
I)$, where $I_{\bZ_p}$ and $I$ are so designed that the associated
homotopy spectral sequences collapse to a short exact sequence
$$
0 \to \Ext_{\bZ_p}(\pi_{m-1}(R), \bZ_p)
	\longto \pi_{-m}(I_{\bZ_p} R)
	\longto \Hom_{\bZ_p}(\pi_m(R), \bZ_p) \to 0
$$
and an isomorphism $\pi_{-m}(IR) \cong \Hom_{\bZ_p}(\pi_m(R),
\bQ_p/\bZ_p)$, respectively.  The local cohomology duality theorems for
$\tmf$ at $p=2$ establish equivalences
\begin{gather}
\Gamma_{(B, M)} \tmf \simeq \Sigma^{-22} I_{\bZ_2}(\tmf)
	\label{eq:A-duality-tmf} \\
\Gamma_{(2, B, M)} \tmf \simeq \Sigma^{-23} I(\tmf) \,,
\end{gather}
which in particular imply that the covariantly defined $\pi_n(\Gamma_{(B,
M)} \tmf)$ and the contravariantly defined $\pi_{-m}(I_{\bZ_2} \tmf)$
are isomorphic for $n+m = -22$, and similarly that $\pi_n(\Gamma_{(2, B,
M)} \tmf)$ and $\Hom_{\bZ_2}(\pi_m(\tmf), \bQ_2/\bZ_2)$ are isomorphic
for $n+m = -23$.

Figure~\ref{fig:A-duality} illustrates $\pi_*(\tmf)$, $\pi_*(\Gamma_{(B,
M)} \tmf)$, $\pi_*(I_{\bZ_2}(\tmf))$ and the duality isomorphism,
up to a degree shift, between the latter two graded abelian groups.
More precisely, $\pi_*(\tmf)$ is isomorphic to the `basic block'
$\pi_*(N)$ shown in the first part of the figure, tensored with $\bZ[M]$.
The local cohomology $\pi_*(\Gamma_{(B, M)} \tmf)$ and the Anderson
dual $\pi_*(I_{\bZ_2} \tmf)$ are isomorphic to $\pi_*(\Gamma_B N)$ and
$\pi_*(I_{\bZ_2} N)$ tensored with $\bZ[M]/M^\infty$, respectively, up
to appropriate degree shifts.  The second part of the figure shows the
covariantly defined $\pi_*(\Gamma_B N)$, while the third part shows the
contravariantly defined $\pi_*(\Sigma^{171} I_{\bZ_2} N)$.  The nearly
mirror symmetric isomorphism between the latter two graded abelian groups
thus exhibits the duality isomorphism~\eqref{eq:A-duality-tmf}, in its
`basic block' form $\Gamma_B N \simeq \Sigma^{171} I_{\bZ_2} N$.  The same
structure is presented in greater detail in Figures~\ref{fig:pitmf},
\ref{fig:GammaBN-ab} and~\ref{fig:GammaBN-cd}, where the additive
generators are named and the module action by $\eta$, $\nu$ (and $B$) is
shown by lines increasing the topological degree by~$1$, $3$ (and~$8$),
respectively, but the distinctive symmetry implied by the duality theorem
is most easily seen in the first figure.

Several different approaches lead to proofs of such local cohomology
duality theorems.  For fp-spectra~$X$, i.e., bounded below and
$p$-complete spectra whose mod~$p$ cohomology is finitely presented
as a module over the Steenrod algebra, Mahowald and Rezk~\cite{MR99}
determined the cohomology of the Brown--Comenetz dual of the finite
$E(n)$-acyclisation $C^f_n X$.  In many cases $C^f_n R$ is a local
cohomology spectrum, and we show in Theorem~\ref{thm:MRduality} how this
leads to duality theorems for $R = ko$ at all primes, and for $R = \tmf$
at $p=2$ and~$p=3$.  This strategy ties nicely in with chromatic homotopy
theory.

Next, Dwyer, Greenlees and Iyengar~\cite{DGI06} showed that for augmented
ring spectra $R \to k$ such that $\pi_*(R) \to k$ is algebraically
Gorenstein, the $k$-cellularisation $\Cell_k R$ is often equivalent to
a suspension of $IR$ or $I_{\bZ_p} R$, for $k = \bF_p$ or $k = \bZ_p$,
respectively.  We use descent methods to extend this to ring spectra
with a good map to an augmented ring spectrum $T \to k$ satisfying the
algebraic Gorenstein property, e.g., with $\pi_*(T) = k[x_1, \dots,
x_d]$ polynomial over~$k$.  Moreover, $\Cell_k R$ is in many cases a
local cohomology spectrum, and we show in Theorem~\ref{thm:Gorduality}
how this leads to duality theorems for $R = ko$ and $R = \tmf$, at all
primes~$p$.  This strategy emphasises commutative algebra inspired by
algebraic geometry.

There is a growing list~\cite{Gre93}, \cite{Gre95}, \cite{BG97a},
\cite{BG97b}, \cite{BG03}, \cite{DGI06}, \cite{BG08}, \cite{BG10},
\cite{Gre16}, \cite{GM17}, \cite{GS18} of examples known to enjoy
Gorenstein duality. Many of them are of equivariant origin, or have $R
= C^*(X)$ for a manifold~$X$, or arise from Serre duality in derived
algebraic geometry.  For instance, Stojanoska~\cite{Sto12}, \cite{Sto14}
used Galois descent and homotopy fixed point spectral sequences to deduce
Anderson self-duality for $\Tmf$ from its covers $\Tmf(2)$ (at $p=3$)
and~$\Tmf(3)$ (at $p=2$).  More recently, Bruner and Rognes~\cite{BR21}
used a variant of the descent arguments above to directly deduce
local cohomology duality theorems for~$\tmf$ at $p=2$ and~$p=3$ from
similar theorems for $\tmf_1(3)$ and~$\tmf_0(2)$, respectively.
We summarise these results in Theorems~\ref{thm:TheradualityA}
and~\ref{thm:TheradualityBC}.

The main goal of this paper is to draw on the Hopkins--Mahowald
calculation of $\pi_*(\tmf)$, as presented in~\cite{BR21}, to make
the local cohomology spectral sequences for $R = \tmf$ at $p=2$
and at $p=3$, with $J = (B, M)$ and $J = (p, B, M)$, completely
explicit.  In order to determine the differential patterns and some
of the hidden (filtration-shifting) multiplicative extensions in
these spectral sequences, we rely on the local cohomology duality
theorems to identify the abutments with shifts of the Anderson
and Brown--Comenetz duals of $\tmf$.  This is carried out in
Subsections~\ref{subsec:BMloccohtmf} and~\ref{subsec:2BMloccohtmf}
for $p=2$, and in Subsections~\ref{subsec:BHloccohtmf}
and~\ref{subsec:3BHloccohtmf} for $p=3$.  See also the explanations in
Subsection~\ref{subsec:charts} of the graphical conventions used in
the charts.  As a warm-up we first go through the corresponding, but
far simpler, calculations for $R = ko$ at $p=2$ in Section~\ref{sec:ko}.

\section{Colocalisations}

\subsection{Small and proxy-small}
The stable homotopy category of spectra and, more generally, the homotopy
category of $R$-modules for any fixed $S$-algebra~$R$, are prototypical
triangulated categories.  We keep the terminology from~\cite{DGI06}*{3.15,
4.6}: A full subcategory of a triangulated category is \emph{thick}
if it is closed under equivalences, integer suspensions, cofibres and
retracts, and it is \emph{localising} if it is furthermore closed under
coproducts.  An object~$A$ \emph{finitely builds} an object~$X$ if $X$
lies in the thick subcategory generated by~$A$, and more generally $A$
\emph{builds}~$X$ if $X$ lies in the localising subcategory generated
by~$A$.  An $R$-module $A$ is \emph{small} if it is finitely built
from~$R$, and more generally it is \emph{proxy-small} if there is a
small $R$-module~$K$ that both builds~$A$ and is finitely built by~$A$.

\subsection{Acyclisation}
We recall three related colocalisations.
First,
for any spectrum~$X$ and integer $n\ge0$ let $C^f_n X \to X$ denote
its \emph{finite $E(n)$-acycl(ic)isation}, as defined by Miller
\cite{Mil92}*{\S2}.  Here $E(n)$ denotes the $n$-th $p$-local
Johnson--Wilson spectrum, with coefficient ring $\pi_* E(n) =
\bZ_{(p)}[v_1, \dots, v_{n-1}, v_n^{\pm1}]$.  The map $F(A, C^f_n X)
\to F(A, X)$ is an equivalence for each finite $E(n)$-acyclic~$A$,
and $C^f_n X$ is built from finite $E(n)$-acyclic spectra.  There is
a natural equivalence $C^f_n X \simeq X \wedge C^f_n S$, so for any
$R$-module $M$ the spectrum $C^f_n M$ admits a natural $R$-module
structure.  A $p$-local finite spectrum has \emph{type~$n+1$} if it
is $E(n)$-acyclic but not $E(n+1)$-acyclic.  If $X$ is $p$-local,
then by Hopkins--Smith~\cite{HS98}*{Thm.~7} any one choice of a finite
spectrum~$A$ of type~$n+1$ suffices to build $C^f_n X$.  Inductively for
each $n\ge0$, Hovey and Strickland~\cite{HS99}*{Prop.~4.22} build a
cofinal tower of \emph{generalised Moore spectra} $S/I$ of type~$n+1$,
for suitable ideals $I = (p^{a_0}, v_1^{a_1}, \dots, v_{n-1}^{a_{n-1}},
v_n^{a_n})$, such that there are homotopy cofibre sequences
$$
\Sigma^{2(p^n-1)a_n} S/I' \overset{v_n^{a_n}}\longto S/I' \longto S/I
$$
with $I' = (p^{a_0}, v_1^{a_1}, \dots, v_{n-1}^{a_{n-1}})$.  Here $S/()
= S$ and $v_0 = p$.  By~\cite{HS99}*{Prop.~7.10(a)} there is a natural
equivalence $\hocolim_I F(S/I, X) \simeq C^f_n X$, where $S/I$ ranges
over this tower.

\subsection{Cellularisation} \label{subsec:cellularisation}
Second,
let $k$ and $M$ be $R$-modules.  The \emph{$k$-cellularisation} of~$M$
is the $R$-module map $\Cell_k M \to M$ such that $F_R(k, \Cell_k M) \to
F_R(k, M)$ is an equivalence, and such that $\Cell_k M$ is built from~$k$
in $R$-modules.  It can be realised as the cofibrant replacement in a
right Bousfield localisation of the stable model structure on $R$-modules
in symmetric spectra, cf.~\cite{Hir03}*{\S5.1, \S4.1}, hence always
exists.

\begin{lemma} \label{lem:mutuallybuild}
If two $R$-modules $k$ and~$\ell$ mutually build one another, then
$\Cell_k M \simeq \Cell_{\ell} M$ for all $R$-modules~$M$.
Conversely, if $\Cell_k M \simeq \Cell_{\ell} M$ for all~$M$, then $k$
and~$\ell$ mutually build one another.
\end{lemma}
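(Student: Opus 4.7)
The plan is to leverage the universal characterisation of cellularisation noted in Subsection~\ref{subsec:cellularisation}: the map $\Cell_k M \to M$ is, up to canonical equivalence, the unique $R$-module map $A \to M$ such that $A$ is built from~$k$ and such that $F_R(k, A) \to F_R(k, M)$ is an equivalence. A routine bootstrap upgrades the second condition to the statement that $F_R(B, A) \to F_R(B, M)$ is an equivalence for every~$B$ in the localising subcategory generated by~$k$: indeed, for fixed $A$ and $M$ the fibre of $F_R(-, A) \to F_R(-, M)$ is representable in the first variable, so the class of $B$ on which this map is an equivalence is closed under suspensions, cofibres, coproducts and retracts, hence is localising, and by hypothesis contains~$k$.

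For the forward implication, suppose $k$ and~$\ell$ mutually build one another, so that the localising subcategories they generate agree. First, $\Cell_k M$ is built from~$k$ and hence from~$\ell$. Second, since $\ell$ lies in the localising subcategory generated by~$k$, the bootstrap shows that $F_R(\ell, \Cell_k M) \to F_R(\ell, M)$ is an equivalence. Thus $\Cell_k M \to M$ satisfies the two defining properties of $\ell$-cellularisation, yielding an equivalence $\Cell_k M \simeq \Cell_\ell M$.

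For the converse, specialise $M$ to~$k$. The identity $k \to k$ manifestly satisfies the universal property for $k$-cellularisation, so $\Cell_k k \simeq k$, and hence by hypothesis $\Cell_\ell k \simeq k$ as well. But $\Cell_\ell k$ is by construction built from~$\ell$, so $k$ is built from~$\ell$. Swapping the roles of $k$ and~$\ell$ shows that $\ell$ is built from~$k$, as required. The only potentially delicate point in the whole argument is the bootstrap of the first paragraph; once that is in hand, both implications fall out directly from the uniqueness of cellularisation.
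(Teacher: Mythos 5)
Your proof is correct and follows essentially the same approach as the paper: the forward direction verifies that $\Cell_k M \to M$ satisfies the two defining properties of $\ell$-cellularisation, and the converse specialises $M$ to the generators themselves. The only difference is that you spell out the bootstrap argument (that $F_R(-, \Cell_k M) \to F_R(-, M)$ is an equivalence on the whole localising subcategory generated by $k$) which the paper's proof leaves implicit.
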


\begin{proof}
If $k$ builds~$\ell$, then $F_R(\ell, \Cell_k M) \to F_R(\ell, M)$
is an equivalence.  If $\ell$ builds~$k$, then $\Cell_k M$ is built
from~$\ell$.  If both conditions hold, then $M \to \Cell_k M$ is the
$\ell$-cellularisation of~$M$.

If $\Cell_k \ell \simeq \Cell_{\ell} \ell = \ell$, then $k$ builds~$\ell$,
and if $k = \Cell_k k \simeq \Cell_\ell k$ then $\ell$ builds~$k$, so if
both hold then $k$ and~$\ell$ build one another.
\end{proof}

\begin{lemma} \label{lem:CfnCell}
If $A$ is a $p$-local finite spectrum of type~$n+1$, and $M$ is a
$p$-local $R$-module, then $C^f_n M \simeq \Cell_{R \wedge A} M$
as $R$-modules.
\end{lemma}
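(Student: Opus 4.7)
The plan is to verify that the natural map $C^f_n M \to M$ satisfies the two defining properties of the $(R \wedge A)$-cellularisation recalled in Subsection~\ref{subsec:cellularisation}, namely (i) $F_R(R \wedge A, C^f_n M) \to F_R(R \wedge A, M)$ is an equivalence, and (ii) $C^f_n M$ is built from $R \wedge A$ in $R$-modules.  Any $R$-module over~$M$ satisfying both conditions is equivalent over~$M$ to $\Cell_{R \wedge A} M$, so verifying (i) and (ii) will yield the lemma.

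Property~(i) is essentially formal.  The free--forgetful adjunction between spectra and $R$-modules gives a natural equivalence $F_R(R \wedge A, N) \simeq F(A, N)$ for every $R$-module~$N$.  Since $A$ is a $p$-local finite spectrum of type $n+1$, it is finite and $E(n)$-acyclic, so the defining property of $C^f_n$ recalled above ensures that $F(A, C^f_n M) \to F(A, M)$ is an equivalence.

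For property~(ii), I would proceed in two stages using the natural equivalence $C^f_n M \simeq M \wedge C^f_n S$.  By Hopkins--Smith, the $p$-local spectrum $C^f_n S$ lies in the localising subcategory of spectra generated by~$A$.  Since smashing with $M$ is an exact, coproduct-preserving functor from spectra to $R$-modules, it follows that $C^f_n M \simeq M \wedge C^f_n S$ lies in the localising subcategory of $R$-modules generated by $M \wedge A$.  Separately, $M$ is built from~$R$ in $R$-modules (trivially), and smashing with $A$ is an exact, coproduct-preserving endofunctor of $R$-modules, so $M \wedge A$ is built from $R \wedge A$ in $R$-modules.  Transitivity of building in the triangulated category of $R$-modules then produces~(ii).

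The only delicate point is this last step: one must take care that the word ``built'' is interpreted consistently as an $R$-module statement (not a spectrum-level one) by the end of the argument.  The two-stage device --- first transport the sphere-level building $A \rightsquigarrow C^f_n S$ along $M \wedge -$, then the module-level building $R \rightsquigarrow M$ along $- \wedge A$ --- makes the functoriality manifest and avoids any attempt at a single-step comparison.
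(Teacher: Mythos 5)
Your proposal is correct and takes essentially the same approach as the paper: both verify the two defining properties of $(R \wedge A)$-cellularisation, using the adjunction $F_R(R \wedge A, N) \simeq F(A, N)$ for property (i), and for property (ii) combining that $R$ builds $M$ in $R$-modules with the Hopkins--Smith fact that $A$ builds $C^f_n S_{(p)}$ in spectra, transported along the exact coproduct-preserving smash functors. Your two-stage presentation of (ii) is just a slightly more spelled-out version of the paper's one-line building argument.
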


\begin{proof}
We know that $R$ builds~$M$ in $R$-modules, and $A$ builds $C^f_n S_{(p)}$
in $S$-modules, so $R \wedge A$ builds $M \wedge C^f_n S_{(p)} \simeq
C^f_n M$ in $R$-modules.  Moreover, $F_R(R \wedge A, C^f_n M) \to F_R(R
\wedge A, M)$ is an equivalence, since this the same map as $F(A, C^f_n M)
\to F(A, M)$.  Hence $C^f_n M$ is the $R \wedge A$-cellularisation of~$M$.
\end{proof}

Let $\cE = F_R(k, k)$ be the endomorphism $S$-algebra of the
$R$-module~$k$.  An $R$-module~$M$ is \emph{effectively constructible}
from~$k$ if the natural map
$$
F_R(k, M) \wedge_{\cE} k \longto M
$$
is an equivalence.  It is proved in~\cite{DGI06}*{Thm.~4.10} that, if $k$
is proxy-small, then this map always realises the $k$-cellularisation
of~$M$.  Hence $\Cell_k M$ is determined by the right $\cE$-module
structure on $F_R(k, M)$, for proxy-small~$k$.

\subsection{Local cohomology} \label{subsec:localcohom}
Third,
suppose that $R$ is a commutative $S$-algebra, and let $J = (x_1, \dots,
x_d)$ be a finitely generated ideal in the graded ring $\pi_*(R)$.  For
each $x \in \pi_*(R)$ define the \emph{$x$-power torsion spectrum}
$\Gamma_x R$ by the homotopy (co-)fibre sequence
$$
\Sigma^{-1} R[\frac{1}{x}] \overset{\alpha}\longto \Gamma_x R
	\overset{\beta}\longto R
	\overset{\gamma}\longto R[\frac{1}{x}] \,.
$$
For any $R$-module $M$ let
$$
\Gamma_J M = \Gamma_{x_1} R \wedge_R \dots \wedge_R \Gamma_{x_d} R \wedge_R M
$$
be the \emph{local cohomology spectrum}.  By~\cite{GM95}*{\S1, \S3},
this $R$-module only depends on the radical~$\sqrt{J}$ of the ideal~$J$.
The convolution product of the short filtrations $\alpha \: \Sigma^{-1}
R[1/x_i] \to \Gamma_{x_i} R$ for $1 \le i \le d$ leads to a length~$d$
decreasing filtration of $\Gamma_J M$, with subquotients
$$
F^s/F^{s+1} \simeq \bigvee_{1 \le i_1 < \dots < i_s \le d}
	\Sigma^{-1} R[\frac{1}{x_{i_1}}] \wedge_R \dots
		\wedge_R \Sigma^{-1} R[\frac{1}{x_{i_s}}] \wedge_R M \,.
$$
In Adams indexing, the associated spectral sequence has $E_1$-term
$$
E_1^{s,t} = \pi_{t-s}(F^s/F^{s+1})
	\cong \bigoplus_{1 \le i_1 < \dots < i_s \le d}
	\pi_t(M[\frac{1}{x_{i_1}}, \dots, \frac{1}{x_{i_s}}])
$$
for $0 \le s \le d$, with differentials $d_r \: E_r^{s,t} \to
E^{s+r,t+r-1}$.  The $d_1$-differentials are induced by the various
localisation maps $\gamma \: R \to R[1/x_i]$, and the cohomology of $(E_1,
d_1)$ defines the \emph{local cohomology groups} of the $\pi_*(R)$-module
$\pi_*(M)$, in the sense of Grothendieck~\cite{Har67}.  This construction
defines the \emph{local cohomology spectral sequence}
\begin{equation} \label{eq:lcss}
E_2^{s,t} = H^s_J(\pi_*(M))_t
	\Longrightarrow_s \pi_{t-s}(\Gamma_J M) \,,
\end{equation}
which is a strongly convergent $\pi_*(R)$-module spectral sequence,
cf.~\cite{GM95}*{(3.2)}.  As in the topological case, the local
cohomology groups $H^*_J(\pi_*(M))$ only depend on~$J$ through its
radical in $\pi_*(R)$, not on the explicit generators~$x_1, \dots, x_d$.
We emphasise that the local cohomology spectrum $\Gamma_J M$, and the
associated spectral sequence, are covariantly functorial in~$M$.

\begin{definition}
Given a finite sequence $J = (x_1, \dots, x_d)$ of elements in $\pi_*(R)$,
we let
$$
R/J = R/x_1 \wedge_R \dots \wedge_R R/x_d \,,
$$
where each $R/x$ is defined by a homotopy cofibre sequence
$$
\Sigma^{|x|} R \overset{x}\longto R \longto R/x \longto \Sigma^{|x|+1} R \,.
$$
We shall also write $R/J$ for this $R$-module in contexts where $J$ is
interpreted as the ideal in~$\pi_*(R)$ generated by the given sequence
of elements.  This is, however, an abuse of notation, since $R/J$ depends
upon the chosen generators for the ideal, not just on the ideal~$J$ itself.
We may refer to $R/J$ and $\Gamma_J R$ as the \emph{Koszul complex}
and the \emph{stable Koszul complex}, respectively.
\end{definition}

\begin{lemma} \label{lem:CellGammaJ}
If $J = (x_1, \dots, x_d)$ is a finitely generated ideal in $\pi_*(R)$,
and $M$ is any $R$-module, then $\Cell_{R/J} M \simeq \Gamma_J M$
as $R$-modules.
\end{lemma}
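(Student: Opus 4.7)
The plan is to verify that the canonical map $\Gamma_J M \to M$ satisfies the two defining properties of the $R/J$-cellularisation: namely, that $\Gamma_J M$ is built from $R/J$ in the category of $R$-modules, and that $F_R(R/J, \Gamma_J M) \to F_R(R/J, M)$ is an equivalence.

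For the building property, I would first show that $\Gamma_{x_i} R$ is built from $R/x_i$. By definition $\Gamma_{x_i} R$ is (up to a shift) the fibre of $R \to R[\frac{1}{x_i}]$, and $R[\frac{1}{x_i}]$ is realised as the telescope of iterated multiplications by $x_i$, so $\Gamma_{x_i} R$ is equivalent up to suspension to a homotopy colimit of spectra $R/x_i^n$. Each $R/x_i^n$ is finitely built from $R/x_i$ by iterated extensions via the octahedrally produced cofibre sequences $R/x_i^{n-1} \to R/x_i^n \to R/x_i$. Smashing over $R$ then shows that $\Gamma_J R$ is built from $R/J$, and since $M$ is built from $R$ it follows that $\Gamma_J M = \Gamma_J R \wedge_R M$ is built from $R/J \wedge_R R = R/J$.

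For the function-spectrum property, the central observation is that $F_R(R/J, N) \simeq 0$ whenever some generator $x_i$ acts invertibly on $N$. From the cofibre sequence $\Sigma^{|x_i|} R \overset{x_i}\longto R \to R/x_i$ one reads off that $F_R(R/x_i, N')$ is the fibre of multiplication by $x_i$ on $N'$, which is zero once $x_i$ is invertible on $N'$. Writing $R/J \simeq R/x_i \wedge_R R/J'$ with $J' = (x_j)_{j \ne i}$, adjunction gives $F_R(R/J, N) \simeq F_R(R/x_i, F_R(R/J', N))$; since $x_i$ remains invertible on the inner function spectrum, the outer term vanishes.

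To conclude, I would apply the length-$d$ filtration $F^0 = \Gamma_J M \supset F^1 \supset \dots \supset F^{d+1} = 0$ from Subsection~\ref{subsec:localcohom}. The map $\Gamma_J M \to M$ identifies with the quotient projection $F^0 \to F^0/F^1 = M$, so its fibre is $F^1$, which is filtered by the subquotients for $1 \le s \le d$. Each of these is a wedge of shifted smash products containing at least one factor $R[\frac{1}{x_{i_j}}]$, on which $F_R(R/J, -)$ vanishes by the previous paragraph. Hence $F_R(R/J, F^1) \simeq 0$, so $F_R(R/J, \Gamma_J M) \to F_R(R/J, M)$ is an equivalence, and combined with the building step this identifies $\Gamma_J M$ with $\Cell_{R/J} M$. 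The main subtlety I expect is the telescope comparison between $\Gamma_{x_i} R$ and the colimit of Koszul complexes $R/x_i^n$, but this is standard in the local cohomology literature and is essentially implicit in the stable Koszul construction already used in the paper.
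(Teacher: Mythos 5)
Your proof is correct and takes essentially the same approach as the paper: both verify the two defining properties of $R/J$-cellularisation, namely that $\Gamma_J M$ is built from $R/J$ (via $R/x_i$ finitely building $R/x_i^n$ and passing to colimits) and that $F_R(R/J,\Gamma_J M)\to F_R(R/J,M)$ is an equivalence because $F_R(R/J,N[1/x_i])\simeq *$. Your write-up is somewhat more detailed — you factor the building step one variable at a time and make the filtration-and-vanishing argument for the function-spectrum step explicit — but these are presentational refinements of the paper's argument rather than a different route.
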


\begin{proof}
We show that $\Gamma_J M$ is the $R/J$-cellularisation of~$M$.
An inductive argument, as in the proof of~\cite{DGI06}*{Prop.~9.3}, shows
that $R/J$ finitely builds $R/x_1^m \wedge_R \dots \wedge_R R/x_d^m$
for each $m\ge1$.  Passing to the colimit over~$m$, it follows that
$R/J$ builds $\Gamma_J R$.  Since $R$ builds~$M$, it also follows that
$R/J$ builds $\Gamma_J M$.  Finally, $F_R(R/J, \Gamma_J M) \to F_R(R/J,
M)$ is an equivalence, because $F_R(R/J, N[1/x_i]) \simeq *$ for each
$R$-module~$N$ and any $1 \le i \le d$.
\end{proof}

With notation as above, if $R \wedge A$ and $R/J$ mutually build one
another, then $C^f_n M \simeq \Cell_{R \wedge A} M \simeq \Cell_{R/J} M
\simeq \Gamma_J M$ by Lemmas~\ref{lem:CfnCell} and~\ref{lem:CellGammaJ}.
Under slightly different hypotheses we can close the cycle and obtain
this conclusion directly.

\begin{lemma} \label{lem:CfnGammaJ}
Let $I = (p^{a_0}, \dots, v_n^{a_n})$ and $J = (x_1, \dots, x_d)$.  If
\begin{enumerate}
\item
each $x_i$ acts nilpotently on each $\pi_* F(S/I, R)$, and
\item
$v_s^{a_s}$ acts nilpotently on $\pi_* F(S/(p^{a_0}, \dots,
v_{s-1}^{a_{s-1}}), R/J)$ for each $0 \le s \le n$,
\end{enumerate}
then $C^f_n M \simeq \Gamma_J M$ as $R$-modules.
\end{lemma}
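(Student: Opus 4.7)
The plan is to deduce the equivalence $C^f_n M \simeq \Gamma_J M$ from a mutual building statement. By Lemma~\ref{lem:CfnCell} applied with the $p$-local type~$n+1$ finite spectrum $A = S/I$, one has $C^f_n M \simeq \Cell_{R \wedge S/I} M$, and by Lemma~\ref{lem:CellGammaJ} one has $\Gamma_J M \simeq \Cell_{R/J} M$. Hence by Lemma~\ref{lem:mutuallybuild} it suffices to show that the $R$-modules $R \wedge S/I$ and $R/J$ mutually build one another.

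For the direction that $R/J$ builds $R \wedge S/I$, Lemma~\ref{lem:CellGammaJ} reduces the assertion to $\Gamma_J(R \wedge S/I) \simeq R \wedge S/I$, which is equivalent to $(R \wedge S/I)[1/x_i] \simeq *$ for each~$i$, i.e., to the local nilpotence of each $x_i$ on $\pi_*(R \wedge S/I)$. Since $S/I$ is a $p$-local finite spectrum, Spanier--Whitehead duality yields $F(S/I, R) \simeq \Sigma^{-c}(R \wedge S/I)$ as $R$-modules for a suitable shift~$c$, and condition~(1) immediately supplies the required nilpotence. Dually, the direction that $R \wedge S/I$ builds $R/J$ reduces, via Lemma~\ref{lem:CellGammaJ} applied to the ideal $I$ in $\pi_*(R)$, to showing $(R/J)[1/v_s^{a_s}] \simeq *$ for each $0 \le s \le n$; the base case $s = 0$ is immediate from condition~(2), since $F(S, R/J) = R/J$.

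For the inductive step I would use the sub-lemma: if $N$ is an $R$-module and $y, z \in \pi_*(R)$ satisfy $N[1/y] \simeq *$ and $(N \wedge_R R/y)[1/z] \simeq *$, then $N[1/z] \simeq *$. This holds because $(N \wedge_R R/y)[1/z] \simeq N[1/z] \wedge_R R/y$, so the second hypothesis makes $N[1/z]$ an $R/y$-acyclic $R$-module, forcing $y$ to act as an equivalence on $N[1/z]$; meanwhile the first hypothesis forces $y$ to also act locally nilpotently on $\pi_*(N[1/z])$, and an element acting simultaneously as an equivalence and locally nilpotently can do so only on the zero module. Applying this iteratively, starting from $(R/J \wedge S/I_s)[1/v_s^{a_s}] \simeq *$ (supplied by condition~(2), again via Spanier--Whitehead duality) and peeling off the tensor factors of $S/I_s = S/p^{a_0} \wedge \dots \wedge S/v_{s-1}^{a_{s-1}}$ one at a time, one arrives at $(R/J)[1/v_s^{a_s}] \simeq *$, provided the required nilpotence of each $v_j^{a_j}$ ($j < s$) is available on the intermediate modules $R/J \wedge S/I_j$ --- which it is, by a secondary induction propagating the outer inductive hypothesis along the long exact sequences of the defining cofibre sequences. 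The main obstacle, and the reason this is not a one-line argument, is precisely this descent: condition~(2) supplies $v_s^{a_s}$-nilpotence only on the ``smashed'' module $R/J \wedge S/I_s$, and it must be transferred along iterated Koszul-type cofibre sequences while simultaneously tracking both the outer element $v_s^{a_s}$ and the inner elements $v_j^{a_j}$ for $j < s$.
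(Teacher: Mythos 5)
Your plan is to deduce the equivalence from a mutual-building statement via Lemma~\ref{lem:mutuallybuild}, checking separately that $R/J$ builds $R\wedge S/I$ and that $R\wedge S/I$ builds $R/J$. This is a different decomposition from the paper's, which works directly with the acyclisation functor: item~(1) gives $\Gamma_J F(S/I,R)\simeq F(S/I,R)$ for each $I$ in the tower, hence $C^f_n\Gamma_J R\simeq C^f_n R$; item~(2) gives $C^f_n(R/J)\simeq R/J$, and since $R/J$ builds $\Gamma_J R$ this yields $C^f_n\Gamma_J R\simeq\Gamma_J R$; combining, $C^f_n R\simeq\Gamma_J R$, and smashing with $M$ over $R$ finishes. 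Your first direction is essentially sound, though the duality claim $F(S/I,R)\simeq\Sigma^{-c}(R\wedge S/I)$ is not literally true (the Spanier--Whitehead dual of a generalised Moore spectrum need not be a shift of the same spectrum); what you actually need is that $D(S/I)$ and $S/I$, being $p$-local finite of the same type, build each other, so $R\wedge D(S/I)=F(S/I,R)$ and $R\wedge S/I$ build each other in $R$-modules, and the condition ``$x_i$ acts locally nilpotently on $\pi_*(-)$'' defines a localising subcategory.

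The second direction, however, has a genuine gap, and it is the crux of the whole lemma. You propose to reduce ``$R\wedge S/I$ builds $R/J$'' to ``$(R/J)[1/v_s^{a_s}]\simeq *$ for each $0\le s\le n$'' by ``applying Lemma~\ref{lem:CellGammaJ} to the ideal $I$ in $\pi_*(R)$.'' But $I=(p^{a_0},\dots,v_n^{a_n})$ is \emph{not} an ideal in $\pi_*(R)$: for $s\ge 1$ the classes $v_s$ are not elements of $\pi_*(R)$ (nor of $\pi_*(S)$); they exist only as self-maps of the generalised Moore spectra $S/(p^{a_0},\dots,v_{s-1}^{a_{s-1}})$. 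Consequently ``$(R/J)[1/v_s^{a_s}]$'' is not defined, Lemma~\ref{lem:CellGammaJ} does not apply, and your sub-lemma --- which is correct but concerns genuine elements $y,z\in\pi_*(R)$ --- cannot be invoked. You are right that the real work is in ``propagating the inductive hypothesis along Koszul-type cofibre sequences while tracking both $v_s^{a_s}$ and the inner $v_j^{a_j}$,'' but this is precisely where the argument as written breaks down rather than merely becomes long: there is no ambient ring in which all these elements live. The correct reduction is $R\wedge S/I$ builds $R/J$ if and only if $C^f_n(R/J)\simeq R/J$ (by Lemma~\ref{lem:CfnCell}), and one then proves this directly via the colimit formula $C^f_n X\simeq\hocolim_I F(S/I,X)$: for each fixed $I'=(p^{a_0},\dots,v_{s-1}^{a_{s-1}})$ the self-map $v_s^{a_s}$ is a genuine map $F(S/I',R/J)\to F(S/I',R/J)$ (up to suspension), and item~(2) says it is locally nilpotent on homotopy, so the partial colimit over $a_s$ is an equivalence; iterating over $s$ collapses the colimit to $R/J$. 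This keeps the chromatic self-maps where they live, namely on function spectra out of the Moore tower, which is exactly what the paper's proof does.
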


\begin{proof}
Item~(1) ensures that $F(S/I, \Gamma_J R) \simeq \Gamma_J
F(S/I, R)$ is equivalent to $F(S/I, R)$ for each~$I$ in the cofinal
system, and passage to homotopy colimits implies that
$$
C^f_n \Gamma_J R \overset{\simeq}\longto C^f_n R
$$
is an equivalence.
Item~(2) ensures that $C^f_n R/J \simeq
\hocolim_I F(S/I, R/J)$ is equivalent to $R/J$, which implies that
$$
C^f_n \Gamma_J R \overset{\simeq}\longto \Gamma_J R
$$
is an equivalence, since $R/J$ builds~$\Gamma_J R$.  Hence $C^f_n R
\simeq \Gamma_J R$, and more generally $C^f_n M = C^f_n R \wedge_R M
\simeq \Gamma_J R \wedge_R M = \Gamma_J M$.
\end{proof}

\subsection{A composite functor spectral sequence}
\label{subsec:composite}

Let $I, J \subset R_*$ be finitely generated ideals in a graded
commutative ring, and let $M_*$ be an $R_*$-module.  If $I = (x)$ we write
$\Gamma_x M_* = H^0_I(M_*)$ and $M_*/x^\infty = H^1_I(M_*)$
for the kernel and the cokernel of the localisation homomorphism~$\gamma$ below.
$$
0 \to \Gamma_x M_* \longto M_* \overset{\gamma}\longto M_*[1/x]
	\longto M_*/x^\infty \to 0 \,.
$$
More generally, let $\Gamma_I M_* = H^0_I(M_*)$ denote the $I$-power
torsion submodule of~$M_*$.  The identity $\Gamma_I(\Gamma_J M_*) =
\Gamma_{I+J} M_*$ leads to a composite functor spectral sequence
$$
E_2^{i,j} = H^i_I(H^j_J(M_*))
	\Longrightarrow_i H^{i+j}_{I+J}(M_*) \,.
$$
This is a case of the double complex spectral sequence of~\cite{CE56}*{\S
XV.6}.  When $I = (x)$ and $J = (y)$, it arises by horizontally filtering
the condensation of the central commutative square below, leading to
an $E_1$-term given by the inner modules in the upper and lower rows,
and an $E_2$-term given by the modules at the four corners.
$$
\xymatrix{
\Gamma_x(M_*/y^\infty) \ar@{ >->}[r]
	& M_*/y^\infty \ar[r] & M_*[1/x]/y^\infty \ar@{->>}[r]
	& (M_*/y^\infty)/x^\infty \\
	& M_*[1/y] \ar[r]^-{\gamma} \ar@{->>}[u] & M_*[1/x, 1/y] \ar@{->>}[u] \\
	& M_* \ar[r]^-{\gamma} \ar[u]^-{\gamma} & M_*[1/x] \ar[u]^-{\gamma} \\
\Gamma_x(\Gamma_y M_*) \ar@{ >->}[r]
	& \Gamma_y M_* \ar[r] \ar@{ >->}[u] & \Gamma_y M_*[1/x] \ar@{ >->}[u]
		\ar@{->>}[r]
	& (\Gamma_y M_*)/x^\infty
}
$$
For bidegree reasons, the spectral sequence collapses at this stage,
so that
$$
E_2^{i,j} = E_\infty^{i,j} = \begin{cases}
\Gamma_x(\Gamma_y M_*) & \text{for $(i,j) = (0,0)$,} \\
(\Gamma_y M_*)/x^\infty & \text{for $(i,j) = (1,0)$,} \\
\Gamma_x (M_*/y^\infty) & \text{for $(i,j) = (0,1)$,} \\
(M_*/y^\infty)/x^\infty & \text{for $(i,j) = (1,1)$.}
\end{cases}
$$
It follows that we have identities
\begin{align*}
\Gamma_x(\Gamma_y M_*) &= \Gamma_{(x,y)} M_* = H^0_{(x,y)}(M_*) \\
(M_*/y^\infty)/x^\infty &= M_*/(x^\infty, y^\infty) = H^2_{(x,y)}(M_*) \,,
\end{align*}
and a natural short exact sequence
$$
0 \to (\Gamma_y M_*)/x^\infty \longto H^1_{(x,y)}(M_*)
	\longto \Gamma_x(M_*/y^\infty) \to 0 \,.
$$
If $\Gamma_y M_* \subset \Gamma_x M_*$, so that the $y$-power torsion
is entirely $x$-power torsion, then $(\Gamma_y M_*)/x^\infty = 0$
and $H^1_{(x,y)}(M_*) \cong \Gamma_x(M_*/y^\infty)$.

\section{Dualities}

\subsection{Artinian and Noetherian $S_p$-modules}

Let $S_p$ denote the $p$-completed sphere spectrum.  The category of
$S_p$-modules contains a subcategory of \emph{$p$-power torsion modules}
satisfying $\Gamma_p M \simeq M$, and a subcategory of \emph{$p$-complete
modules} for which $M \simeq M^\wedge_p$.  The (covariant) functors
$\Gamma_p$ and $(-)^\wedge_p$ give mutually inverse equivalences
between these full subcategories, cf.~\cite{HPS97}*{Thm.~3.3.5}.

We say that a $p$-power torsion module~$M$ is \emph{Artinian} if each
homotopy group $\pi_t(M)$ is an Artinian $\bZ_p$-module, i.e., a finite
direct sum of modules of the form $\bQ_p/\bZ_p$ or $\bZ/p^a$ for $a\ge1$.
Dually, we say that a $p$-complete module~$M$ is \emph{Noetherian} if
each homotopy group $\pi_t(M)$ is a Noetherian $\bZ_p$-module, i.e., a
finite direct sum of modules of the form $\bZ_p$ or $\bZ/p^a$ for $a\ge1$.
The latter are the same as the finitely generated $\bZ_p$-modules.
The simultaneously Artinian and Noetherian $S_p$-modules~$M$ are those
for which each $\pi_t(M)$ is finite.

\subsection{Brown--Comenetz duality}
We recall two related dualities.  First, working in $S_p$-modules, the
\emph{Brown--Comenetz duality} spectrum $I$ represents the cohomology
theory
$$
I^t(M) = \Hom_{\bZ_p}(\pi_t(M), \bQ_p/\bZ_p) \,,
$$
cf.~\cite{BC76}.  This makes sense because $\bQ_p/\bZ_p$ is an injective
$\bZ_p$-module.  Letting $IM = F_{S_p}(M, I)$, we obtain a contravariant
endofunctor~$I$ of $S_p$-modules, with $\pi_{-t}(IM) = I^t(M)$.  It maps
$p$-power torsion modules to $p$-complete modules, and restricts to
a functor from Artinian $S_p$-modules to Noetherian $S_p$-modules, since
$$
\Hom_{\bZ_p}(\bQ_p/\bZ_p, \bQ_p/\bZ_p) \cong \bZ_p
\quad\text{and}\quad
\Hom_{\bZ_p}(\bZ/p^a, \bQ_p/\bZ_p) \cong \bZ/p^a \,.
$$
In general, it does not map $p$-complete modules to $p$-power torsion
modules, but it does restrict to a functor from Noetherian $S_p$-modules
to Artinian $S_p$-modules.  Moreover, the natural map
$$
\rho \: M \longto I(IM)
$$
is an equivalence for $M$ that are Artinian or Noetherian.  Hence the
two restrictions of $I$ are mutually inverse contravariant equivalences
between Artinian $S_p$-modules and Noetherian $S_p$-modules.  If the
$S_p$-module action on~$M$ extends to a (left or right) $R$-module
structure, then $IM$ is naturally a (right or left) $R$-module.

\subsection{Anderson duality}
Second, the Eilenberg--MacLane spectrum $I_{\bQ_p} = H\bQ_p$
represents the ordinary rational cohomology theory
$$
I_{\bQ_p}^t(M) = \Hom_{\bZ_p}(\pi_t(M), \bQ_p)
$$
in $S_p$-modules.  The canonical surjection $\bQ_p \to \bQ_p/\bZ_p$
induces a map of cohomology theories, and a map $I_{\bQ_p} \to I$ of
representing spectra, whose homotopy fibre defines the \emph{Anderson
duality} spectrum~$I_{\bZ_p}$, cf.~\cite{And69} and~\cite{Kai71}.
Letting $I_{\bZ_p}M = F_{S_p}(M, I_{\bZ_p})$ and $I_{\bQ_p}M = F_{S_p}(M,
I_{\bQ_p})$, we obtain a natural homotopy fibre sequence
$$
\Sigma^{-1} IM \longto I_{\bZ_p}M \longto I_{\bQ_p}M \longto IM \,,
$$
which lifts the injective resolution $0 \to \bZ_p \to \bQ_p \to
\bQ_p/\bZ_p \to 0$.  The associated long exact sequence in homotopy
splits into short exact sequences
\begin{equation} \label{eq:ExtIZpHom}
0 \to \Ext_{\bZ_p}(\pi_{t-1}(M), \bZ_p)
	\longto \pi_{-t}(I_{\bZ_p}M)
	\longto \Hom_{\bZ_p}(\pi_t(M), \bZ_p) \to 0 \,.
\end{equation}
If the $S_p$-module action on~$M$ extends to a (left or right) $R$-module
structure, then $I_{\bZ_p} M$ is naturally a (right or left) $R$-module,
and the short exact sequence above is one of $\pi_*(R)$-modules.

The contravariant endofunctor $I_{\bZ_p}$ on $S_p$-modules is equivalent
to $\Sigma^{-1} I$ on the subcategory of $p$-power torsion modules, since
$I_{\bQ_p}$ is trivial on these objects.  More relevant to us is the fact
that it maps Noetherian $S_p$-modules to Noetherian $S_p$-modules, since
\begin{alignat*}{2}
&\Ext_{\bZ_p}(\bZ_p, \bZ_p) = 0
&
&\Hom_{\bZ_p}(\bZ_p, \bZ_p) \cong \bZ_p \\
&\Ext_{\bZ_p}(\bZ/p^a, \bZ_p) \cong \bZ/p^a
&\qquad
&\Hom_{\bZ_p}(\bZ/p^a, \bZ_p) = 0 \,.
\end{alignat*}
Moreover, the natural map
\begin{equation} \label{rho:Anderson}
\rho \: M \longto I_{\bZ_p}(I_{\bZ_p} M)
\end{equation}
is an equivalence for Noetherian~$M$, cf.~\cite{Yos75}*{Thm.~2}
and~\cite{Kna99}*{Cor.~2.8}.  Hence $I_{\bZ_p}$ restricts to a
contravariant self-equivalence of Noetherian $S_p$-modules, being its
own inverse equivalence.

We emphasise that the Brown--Comenetz and Anderson dual spectra, $IM$
and $I_{\bZ_p}M$, and the algebraic expressions for their homotopy groups,
are contravariantly functorial in the $S_p$- or $R$-module $M$.

\begin{lemma} \label{lem:BCdualofGammap}
There are natural equivalences
$$
I(\Gamma_p M) \simeq (IM)^\wedge_p
	\simeq \Sigma (I_{\bZ_p}M)^\wedge_p
$$
\end{lemma}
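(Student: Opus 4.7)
The plan is to deduce both equivalences by $p$-completing two standard fibre sequences and observing that a third term is killed by the completion. For the easier equivalence $(IM)^\wedge_p \simeq \Sigma(I_{\bZ_p}M)^\wedge_p$, I would $p$-complete the Anderson fibre sequence $\Sigma^{-1} IM \longto I_{\bZ_p}M \longto I_{\bQ_p}M$. Since $I_{\bQ_p}M = F_{S_p}(M, H\bQ_p)$ has $\bQ_p$-vector space homotopy groups, multiplication by~$p$ is an equivalence on $I_{\bQ_p}M$, so $I_{\bQ_p}M/p \simeq *$ and $(I_{\bQ_p}M)^\wedge_p \simeq *$. The resulting fibre sequence then gives the desired equivalence.

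For $I(\Gamma_p M) \simeq (IM)^\wedge_p$, I would apply the contravariant functor $I = F_{S_p}(-,I)$ to the cofibre sequence $\Gamma_p M \longto M \longto M[1/p]$ from Subsection~\ref{subsec:localcohom} to obtain a fibre sequence $I(M[1/p]) \longto IM \longto I(\Gamma_p M)$, and then $p$-complete. Two facts are needed: (a)~$(I(M[1/p]))^\wedge_p \simeq *$, and (b)~$I(\Gamma_p M)$ is already $p$-complete. Claim~(a) is immediate: $p$ is an equivalence on $M[1/p]$, hence by $\pi_0 S$-linearity also on $I(M[1/p])$, so $I(M[1/p])/p \simeq *$ and its $p$-completion is trivial.

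For (b)---the main obstacle---I would identify $\Gamma_p M \simeq \Sigma^{-1} \hocolim_a M/p^a$ (since $\hocolim_a M/p^a$ is the cofibre of $M \to M[1/p]$), whence $I(\Gamma_p M)$ is a suspension of the homotopy limit of the $I(M/p^a)$. From the defining cofibre sequence $M \overset{p^a}\longto M \longto M/p^a$, each $\pi_*(M/p^a)$ is an extension of two $p^a$-torsion groups and so is annihilated by~$p^{2a}$; therefore $\pi_*(I(M/p^a)) = \Hom_{\bZ_p}(\pi_{-*}(M/p^a), \bQ_p/\bZ_p)$ has uniform $p$-exponent~$p^{2a}$. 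A spectrum with uniformly bounded $p$-exponent on its homotopy groups is $p$-complete (a Milnor $\lim$/$\lim^1$ argument shows $F(S[1/p], -)$ vanishes on such a spectrum, since the tower of homotopy groups has uniformly zero image after $2a$~steps), and homotopy limits preserve $p$-completeness, so $I(\Gamma_p M)$ is $p$-complete. The delicate point is establishing this bounded-exponent criterion cleanly and confirming that passage to the homotopy limit over~$a$ does not obstruct $p$-completeness; once in hand, everything else follows by direct application of $(-)^\wedge_p$ to the two fibre sequences.
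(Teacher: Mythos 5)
Your argument is correct, but it takes a genuinely different and longer route than the paper's. The paper proves the first equivalence in one line by an adjunction: since $\Gamma_p M \simeq \Gamma_p S \wedge_{S_p} M$ and $F_{S_p}(\Gamma_p S, N) \simeq N^\wedge_p$, one has
$$
I(\Gamma_p M) = F_{S_p}(\Gamma_p S \wedge_{S_p} M, I)
  \simeq F_{S_p}(\Gamma_p S, F_{S_p}(M, I)) = (IM)^\wedge_p ,
$$
which sidesteps entirely the need to verify that $I(\Gamma_p M)$ is $p$-complete. You instead dualise the cofibre sequence $\Gamma_p M \to M \to M[1/p]$ and then $p$-complete, which forces you to establish item (b) --- that $I(\Gamma_p M)$ is already $p$-complete. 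Your argument for (b) is sound: $I(\Gamma_p M) \simeq \Sigma \operatorname{holim}_a I(M/p^a)$, each $I(M/p^a)$ has homotopy groups of bounded $p$-exponent $p^{2a}$ and is therefore $p$-complete by the $\lim/\lim^1$ vanishing in the Milnor sequence for $F(S[1/p], -)$, and homotopy limits of $p$-complete spectra are $p$-complete. So everything works, but you pay for the more elementary decomposition with the bounded-exponent analysis; the paper's formal move of factoring $\Gamma_p$ through $\Gamma_p S \wedge_{S_p} -$ and recognising its right adjoint as $p$-completion buys the result immediately. Your proof of the second equivalence (by $p$-completing the Anderson fibre sequence and using $(I_{\bQ_p}M)^\wedge_p \simeq *$) coincides with the paper's.
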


\begin{proof}
For $S_p$-modules $M$ and~$N$ we have $\Gamma_p S \wedge_{S_p} M
= \Gamma_p M$ and $F_{S_p}(\Gamma_p S, N) = N^\wedge_p$.
The first equivalence then follows from the adjunction $F_{S_p}(\Gamma_p S
\wedge_{S_p} M, I) \simeq F_{S_p}(\Gamma_p S, F_{S_p}(M, I))$.  The second
equivalence follows from the homotopy fibre sequence defining the Anderson
dual, since $(I_{\bQ_p} M)^\wedge_p$ is trivial.
If the $S_p$-module structure on~$M$ extends to an $R$-module structure,
then this is respected by all of these equivalences.
\end{proof}

\section{Mahowald--Rezk duality}

\subsection{Spectra with finitely presented cohomology}
Let $A$ denote the mod~$p$ Steenrod algebra, where $p$ is a prime.
We write $H^*(X)$ for the mod~$p$ cohomology of a spectrum~$X$, with
its natural left $A$-module structure.  For $n\ge0$ let $A(n)$ be the
finite sub (Hopf) algebra of~$A$ that is generated by $Sq^1, Sq^2, \dots,
Sq^{2^n}$ for $p=2$, and by $\beta, P^1, \dots, P^{p^{n-1}}$ for $p$ odd.
Also let $E(n)$ be the exterior sub (Hopf) algebra of $A(n)$ generated
by $Q_0, Q_1, \dots, Q_n$, where $Q_0 = Sq^1$ and $Q_i = [Sq^{2^i},
Q_{i-1}]$ for $i\ge1$ and $p=2$, and $Q_0 = \beta$ and $Q_{i+1} =
[P^{p^i}, Q_i]$ for $i\ge0$ and $p$ odd.

Let $X$ be a spectrum that is $p$-complete and bounded below.  Following
Mahowald and~Rezk~\cite{MR99}*{\S3} we say that $X$ is an \emph{fp-spectrum}
if $H^*(X)$ is finitely presented as an $A$-module.  This is equivalent
to asking that $H^*(X) \cong A \otimes_{A(n)} M$ is induced up from a
finite $A(n)$-module~$M$, for some~$n$.  We say that a graded abelian
group $\pi_*$ is \emph{finite} if the direct sum $\bigoplus_t
\pi_t$ is finite.  The class of $p$-local finite spectra~$V$ such that
$\pi_*(V \wedge X)$ is finite generates a thick subcategory of the
stable homotopy category, and is therefore equal to the class of $p$-local
finite spectra of type~$\ge m+1$ for some well-defined integer $m\ge0$.
We then say that $X$ has \emph{fp-type} equal to~$m$.  In each case $n
\ge m$, sometimes with strict inequality, cf.~\cite{BR:imj}*{Prop.~3.9}.

\begin{theorem}[\cite{MR99}*{Prop.~4.10, Thm.~8.2}]
Let $X$ be $p$-complete and bounded below, with $H^*(X) \cong A
\otimes_{A(n)} M$ for some finite $A(n)$-module~$M$.  Then $I C^f_n
X$ is $p$-complete and bounded below, with $H^*(I C^f_n X) \cong A
\otimes_{A(n)} \Hom_{\bF_p}(M, \Sigma^{a(n)} \bF_p)$, where $a(n)$
is the top degree of a nonzero class in~$A(n)$.
\end{theorem}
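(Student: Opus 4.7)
The plan is to present $IC^f_n X$ as a homotopy limit built from $S/I \wedge IX$ and then to read off its mod~$p$ cohomology using the Poincar\'e duality structure of $A(n)$.

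First I would invoke the Hovey--Strickland cofinal tower of generalised Moore spectra $S/I$ with $I = (p^{a_0}, v_1^{a_1}, \dots, v_n^{a_n})$ recalled in Section~2 to write $C^f_n X \simeq \hocolim_I F(S/I, X)$, and then dualise:
$$
I C^f_n X \;\simeq\; \operatorname{holim}_I \, IF(S/I, X) \;\simeq\; \operatorname{holim}_I \bigl( S/I \wedge IX \bigr) \,.
$$
The last equivalence uses that each $S/I$ is a finite spectrum, so that Spanier--Whitehead duality gives $F(S/I, X) \simeq D(S/I) \wedge X$, and therefore $I F(S/I, X) \simeq F(D(S/I), IX) \simeq S/I \wedge IX$. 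This step also makes manifest that $IC^f_n X$ is $p$-complete, since each $S/I \wedge IX$ is.

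Second, I would analyse the cohomology of each term by K\"unneth. The cohomology $H^*(S/I)$ is a finite $A$-module built by iterating the cofibre sequences $\Sigma^{|v_s^{a_s}|} S/I' \to S/I' \to S/I$; it has an $E(n)$-free filtration whose top generator lies in a degree that grows without bound as $I$ runs through the tower. The hypothesis $H^*(X) \cong A \otimes_{A(n)} M$ with $M$ a finite $A(n)$-module is precisely the condition that $X$ is an fp-spectrum, and in this setting a change-of-rings/Shapiro argument along $A(n) \hookrightarrow A$ identifies the $A$-module cohomology of $IX$ (in the relevant range cut out by the tower) with an induced module built from the $\bF_p$-linear dual $\Hom_{\bF_p}(M, \bF_p)$.

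Third, I would take the homotopy limit and invoke Poincar\'e duality for $A(n)$. The growing connectivity supplied by the cofibre sequences in the tower forces the relevant $\lim^1$-terms to vanish and assembles the graded pieces into a single $A$-module induced from a finite $A(n)$-module; in particular $IC^f_n X$ is bounded below. The Poincar\'e duality isomorphism
$$
\Hom_{\bF_p}(A(n), \bF_p) \;\cong\; \Sigma^{a(n)} A(n)
$$
of $A(n)$-bimodules converts coinduction along $A(n) \subset A$ into induction shifted by $a(n)$, and combined with the linear dualisation of $M$ produced in the previous step, this yields the asserted formula $H^*(IC^f_n X) \cong A \otimes_{A(n)} \Hom_{\bF_p}(M, \Sigma^{a(n)} \bF_p)$.

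The main obstacle is the simultaneous bookkeeping of three degree shifts: the suspensions carried by the Spanier--Whitehead duals $D(S/I)$, the top-degree classes of $H^*(S/I)$, and the Poincar\'e duality shift $a(n)$ of $A(n)$. One must check that these combine coherently along the tower to produce exactly the single shift $\Sigma^{a(n)}$ in the final answer, and verify the vanishing of the relevant $\lim^1$-terms so that the homotopy limit genuinely has the expected $A$-induced cohomology rather than only a filtered approximation to it.
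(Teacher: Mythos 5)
The paper does not prove this theorem: it simply cites \cite{MR99}*{Prop.~4.10, Thm.~8.2}, so there is no proof in the paper against which to compare your proposal. Assessed on its own terms, your outline begins soundly (the identification $C^f_n X \simeq \hocolim_I F(S/I, X)$ and the Spanier--Whitehead manipulation $IF(S/I,X) \simeq S/I \wedge IX$ are both fine), but the third step contains a genuine gap.

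You propose to read off $H^*(\operatorname{holim}_I (S/I \wedge IX))$ from the cohomologies of the terms by checking that ``$\lim^1$-terms vanish,'' but the Milnor $\lim$--$\lim^1$ sequence governs \emph{homotopy groups} of a homotopy limit, not mod~$p$ cohomology. Since $H^*$ is contravariant, the tower of spectra produces a \emph{directed} system of cohomology groups, and the natural comparison is a map
$$
\colim_I H^*(S/I \wedge IX) \longto H^*\bigl(\operatorname{holim}_I (S/I \wedge IX)\bigr) \,,
$$
which is far from being an isomorphism in general; there is no Milnor-type sequence available here without substantial further hypotheses and argument. Relatedly, the K\"unneth step requires knowing $H^*(IX)$ as an $A$-module, and for a non-finite bounded-below $X$ this is not the $\bF_p$-linear dual of $H^*(X)$ in any naive sense; the ``change-of-rings/Shapiro argument'' you invoke at this point is exactly the hard part and is not supplied. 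These are not bookkeeping issues that ``combine coherently'' --- they are the actual mathematical content of the Mahowald--Rezk theorem: both the assertion that $IC^f_n X$ is bounded below and the identification of its cohomology as induced from a finite $A(n)$-module depend on making essential use of the fp-hypothesis $H^*(X) \cong A \otimes_{A(n)} M$ in a way that taking cohomology of a dualised tower does not by itself accomplish. Mahowald and Rezk's actual argument proceeds by a rather different route, working with the Adams spectral sequence and careful control of finite-type conditions and $A(n)$-module structure, precisely to circumvent the difficulties that your sketch passes over.
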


Recall that we write $ko = ko^\wedge_p$ and $\tmf = \tmf^\wedge_p$ for
the $p$-completed connective real $K$-theory and topological modular
forms spectra, respectively, and that these are commutative $S$-algebras.

\begin{proposition}[\cite{MR99}*{Cor.~9.3}]
\label{prop:MRCor93}
There is an equivalence of $ko$-modules
$$
\Sigma^6 ko \simeq I C^f_1 ko
$$
(at all primes~$p$), and an equivalence of $\tmf$-modules
$$
\Sigma^{23} \tmf \simeq I C^f_2 \tmf
$$
(at $p=2$ and at $p=3$).  The underlying $S_p$-modules are Noetherian
and bounded below.
\end{proposition}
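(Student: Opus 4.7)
The plan is to identify $H^*(R)$ in the form $A \otimes_{A(n)} M$ required by the preceding theorem, extract a cohomological isomorphism $H^*(IC^f_n R) \cong \Sigma^{a(n)} H^*(R)$, and then lift it to an $R$-module equivalence $\Sigma^{a(n)} R \simeq IC^f_n R$.

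First I would recall that $H^*(ko) \cong A \otimes_{A(1)} \bF_p$ at every prime (after the familiar splitting of $ko$ into suspended copies of $l = BP\<1\>$ at odd~$p$, which feeds the theorem in the same way), and that $H^*(\tmf) \cong A \otimes_{A(2)} \bF_p$ at $p=2$ and~$p=3$, in each case with $M = \bF_p$ the trivial module over the relevant subalgebra. Since $\Hom_{\bF_p}(\bF_p, \Sigma^{a(n)} \bF_p) \cong \Sigma^{a(n)} \bF_p$, the preceding theorem then yields
$$
H^*(IC^f_n R) \cong A \otimes_{A(n)} \Sigma^{a(n)} \bF_p \cong \Sigma^{a(n)} H^*(R) \,.
$$
The top degrees $a(1) = 6$ and $a(2) = 23$ (at the relevant primes) produce precisely the suspensions $\Sigma^6 ko$ and $\Sigma^{23} \tmf$ asserted in the statement.

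Next I would realise this cohomological identification by an $R$-module map. Because $R$ is a commutative $S$-algebra and $C^f_n$ is smashing, $IC^f_n R = F_{S_p}(C^f_n R, I)$ inherits a natural $R$-module structure. Its Adams spectral sequence has
$$
E_2^{s,t} = \Ext^{s,t}_{A(n)}(\Sigma^{a(n)} \bF_p, \bF_p) \,,
$$
and the generator in bidegree $(0, a(n))$ is a permanent cycle, since $H^*(IC^f_n R)$ vanishes below degree $a(n)$ and therefore nothing in positive filtration can map into it. Lifting this generator to a homotopy class $\iota \in \pi_{a(n)}(IC^f_n R)$ and acting through the $R$-module structure produces an $R$-module map $\Sigma^{a(n)} R \longto IC^f_n R$ inducing the desired cohomology isomorphism; since both sides are $p$-complete and bounded below, this map is an equivalence.

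Finally, the $S_p$-module $\Sigma^{a(n)} R$ is bounded below, and Noetherian because each homotopy group of $ko$ and of $\tmf$ is a finitely generated $\bZ_p$-module, so the same properties transfer to $IC^f_n R$. I expect the principal obstacle to be the lifting step, where one has to verify simultaneously that the bottom cohomology class survives in the Adams spectral sequence and that multiplication by the resulting class realises the specific cohomology isomorphism delivered by the theorem; both points can be settled by tracking the interaction of the $R$-module structure on $IC^f_n R$ with the unit $S_p \to R$.
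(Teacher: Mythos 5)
Your strategy — reading off $H^*(IC^f_n R)$ from the Mahowald--Rezk theorem, choosing a generator of the bottom homotopy group, and promoting it to an $R$-module equivalence by the $R$-module structure on $IC^f_n R$ — is exactly the paper's argument in the $2$-primary cases. At $p = 2$ you have $H^*(ko) \cong A/\!/A(1)$, $H^*(\tmf) \cong A/\!/A(2)$, $a(1) = 6$, $a(2) = 23$, $M = \bF_2$, and a $\phi^*$ that is iso on the cyclic generator and therefore iso everywhere; all of that is right and matches the paper.

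The gap is at $p = 3$ for $\tmf$. You assert "$H^*(\tmf) \cong A \otimes_{A(2)} \bF_p$ at $p=2$ and $p=3$, in each case with $M = \bF_p$" and "$a(2) = 23$ (at the relevant primes)." Both are false at $p = 3$. There, $a(2) = 87$ (the top degree of $A(2)$ at $p=3$), and $H^*(\tmf) \cong A \otimes_{A(2)} M$ for an $18$-dimensional $A(2)$-module $M = A(2)/\!/E(2)\{g_0, g_8\}/(P^1 g_0,\, P^3 g_0 - P^1 g_8)$, not for $M = \bF_3$; this is the content of Proposition~\ref{prop:Htmfp3}. The theorem then yields $H^*(IC^f_2 \tmf) \cong A \otimes_{A(2)} \Hom_{\bF_3}(M, \Sigma^{87} \bF_3)$, and without the nontrivial self-duality $\Hom_{\bF_3}(M, \bF_3) \cong \Sigma^{-64} M$ of Proposition~\ref{prop:Htmfp3} you cannot identify this with $\Sigma^{23} H^*(\tmf)$ — the shift $23 = 87 - 64$ is exactly a consequence of that duality, and your reasoning would instead land on $\Sigma^{87}$. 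Moreover, once the target is $A \otimes_{A(2)} \Sigma^{23} M$ rather than a cyclic module, checking that $\phi^*$ is an isomorphism in the bottom degree~$23$ no longer suffices: $M$ has two generators over $A(2)/\!/E(2)$ (in degrees $0$ and $8$), and the paper must additionally verify the isomorphism in degree~$31$, using the relation $P^3 g_0 = P^1 g_8$. Your proof contains no substitute for these two steps, so it does not establish the $3$-primary case. (For odd-primary $ko$ you correctly gesture at the $\ell$-splitting, but the leading claim that $H^*(ko) \cong A \otimes_{A(1)} \bF_p$ at every prime is also not literally true for $p$ odd and should be replaced by working through $\ell$, as in the paper.)
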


\begin{proof}
For $X = ko$ completed at $p=2$, we have $H^*(ko) \cong A/\!/A(1) = A
\otimes_{A(1)} \bF_2$ by Stong~\cite{Sto63}, so
$$
H^*(IC^f_1 ko) \cong A \otimes_{A(1)} \Sigma^6 \bF_2
	= \Sigma^6 A/\!/A(1) \,,
$$
since $a(1) = 6$.  Choosing a map $S^6 \to IC^f_1 ko$ generating the
lowest homotopy (and homology) group, and using the natural $ko$-module
structure on the target, we obtain a $ko$-module map $\phi \: \Sigma^6
ko \to IC^f_1 ko$.  The induced $A$-module homomorphism $\phi^* \:
H^*(IC^f_1 ko) \to H^*(\Sigma^6 ko)$ has the form $\Sigma^6 A/\!/A(1)
\to \Sigma^6 A/\!/A(1)$, and is an isomorphism in degree~$6$, hence is
an isomorphism in all degrees.  It follows that $\phi$ is an equivalence
of $2$-complete $ko$-modules.

For $X = \ell = BP\<1\>$ completed at any prime~$p$ we have $H^*(\ell)
\cong A/\!/E(1)$, essentially by~\cite{Sin68}, so $H^*(IC^f_1 \ell) \cong
\Sigma^{2p} A/\!/E(1)$ and $IC^f_1 \ell \simeq \Sigma^{2p} \ell$.  For $p$
odd this implies the claim for $ko \simeq \bigvee_{i=0}^{(p-3)/2} \ell$.


For $X = \tmf$ completed at $p=2$, we have $H^*(\tmf) \cong
A/\!/A(2) = A \otimes_{A(2)} \bF_2$ by Hopkins--Mahowald and
Mathew~\cite{Mat16}*{Thm.~1.1}.  Hence
$$
H^*(IC^f_2 \tmf) \cong A \otimes_{A(2)} \Sigma^{23} \bF_2
	= \Sigma^{23} A/\!/A(2) \,,
$$
since $a(2) = 23$.  Choosing a map $S^{23} \to IC^f_2 \tmf$ generating the
lowest homotopy group, and using the natural $\tmf$-module structure on
the target, we obtain a $\tmf$-module map $\phi \: \Sigma^{23} \tmf \to
IC^f_2 \tmf$.  The induced $A$-module homomorphism $\phi^* \: H^*(IC^f_2
\tmf) \to H^*(\Sigma^{23} \tmf)$ has the form $\Sigma^{23} A/\!/A(2)
\to \Sigma^{23} A/\!/A(2)$, and is an isomorphism in degree~$23$,
hence is an isomorphism in all degrees.  It follows that $\phi$ is an
equivalence of $2$-complete $\tmf$-modules.


For $X = \tmf$ completed at $p=3$, we have $H^*(\tmf) \cong A
\otimes_{A(2)} M$ for a finite $A(2)$-module $M$ with $\Hom_{\bF_3}(M,
\bF_3) \cong \Sigma^{-64} M$, by Proposition~\ref{prop:Htmfp3} below.
It follows that
$$
H^*(I C^f_2 \tmf) \cong A \otimes_{A(2)} \Sigma^{23} M
	\cong \Sigma^{23} H^*(\tmf)
$$
as $A$-modules, since $a(2) = 87$ and $87-64=23$.  Choosing a map
$S^{23} \to IC^f_2 \tmf$ generating the lowest homotopy group, and
using the natural $\tmf$-module structure on the target, we obtain
a $\tmf$-module map $\phi \: \Sigma^{23} \tmf \to IC^f_2 \tmf$.
The induced $A$-module homomorphism $\phi^* \: H^*(IC^f_2 \tmf) \to
H^*(\Sigma^{23} \tmf)$ has the form $\Sigma^{23} A \otimes_{A(2)} M \to
\Sigma^{23} A \otimes_{A(2)} M$, and is an isomorphism in degree~$23$.
It follows from the relation $P^3 g_0 = P^1 g_8$ that $\phi^*$ is also
an isomorphism in degree~$23+8=31$, hence in all degrees, and that $\phi$
is an equivalence of $3$-complete $\tmf$-modules.
\end{proof}

\begin{proposition} \label{prop:Htmfp3}
At $p=3$ there is an isomorphism $H^*(\tmf) \cong A \otimes_{A(2)} M$
of $A$-modules, where
$$
M = \frac{A(2)/\!/E(2)\{g_0, g_8\}}{(P^1 g_0, P^3 g_0 = P^1 g_8)}
$$
is a finite $A(2)$-module of dimension~$18$ satisfying
$\Hom_{\bF_3}(M, \bF_3) \cong \Sigma^{-64} M$.
\end{proposition}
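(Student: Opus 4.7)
The plan is threefold: (i) identify the module $M$ from the known presentation of $H^*(\tmf; \bF_3)$, (ii) verify its dimension, and (iii) exhibit the self-duality.

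First, by Mathew's $p=3$ version of the Hopkins--Mahowald theorem (\cite{Mat16}*{Thm.~1.1}), $H^*(\tmf)$ is induced from a finite $A(2)$-module, so we may write $H^*(\tmf) \cong A \otimes_{A(2)} M$ for some $M$. Identifying $M$ with the explicitly presented module of the statement is a matter of reading off generators and relations from the Adams spectral sequence chart for $\tmf$ at $p=3$, as computed in~\cite{BR21}. One finds two $A(2)$-generators: $g_0$ in degree~$0$ (the fundamental class) and $g_8$ in degree~$8$ (corresponding to the reduction of $c_4$). The relation $P^1 g_0 = 0$ expresses the absence of any degree~$4$ class in $H^*(\tmf)$; the relation $P^3 g_0 = P^1 g_8$ records the unique way of reaching the degree~$12$ class in $H^*(\tmf)$ from the two generators.

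Second, at $p=3$ one has $\dim_{\bF_3} A(2) = 216$ with top class in degree $a(2) = 87 = 1 + 5 + 17 + 8\cdot 4 + 2\cdot 16$, and $\dim_{\bF_3} E(2) = 8$, so $\dim_{\bF_3} A(2)/\!/E(2) = 27$. The free module on $g_0, g_8$ thus has dimension~$54$; a direct monomial enumeration modulo the two stated relations yields exactly~$18$ linearly independent classes, distributed across degrees $0 \le t \le 64$. For the self-duality statement, note that the top class of $M$ lies in degree $64 = 87 - 23$, where $23 = 1 + 5 + 17$ is the top degree of~$E(2)$. Define an $A(2)$-linear map $\phi \: \Sigma^{-64} M \to \Hom_{\bF_3}(M, \bF_3)$ by sending the shifted image of the top generator $g_{64}$ to the functional dual to $g_0$, then extending $A(2)$-linearly. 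That $\phi$ is an isomorphism follows by comparing Poincar\'e series degree-by-degree and by checking that the relations defining $M$ are compatible under duality with the relations defining $\Hom_{\bF_3}(M, \bF_3)$.

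The main obstacle will be the verification in step~(iii). Palindromic symmetry of the dimension function of~$M$ is a short calculation, but the construction of an $A(2)$-linear isomorphism requires explicitly tracing the action of $\beta$, $P^1$, and $P^3$ on both sides. In particular, the symmetric role of the two generators $g_0$ and $g_8$ under the duality must be confirmed: the dual functional to $g_0$ should correspond, via $\phi$, to the top-degree class of $M$, while the dual to $g_8$ should correspond to a class in degree $64 - 8 = 56$, and the relation $P^3 g_0 = P^1 g_8$ must match its $A(2)$-linear dual inside $\Hom_{\bF_3}(M, \bF_3)$. Once the presentation of $M$ is pinned down, these checks reduce to routine (if tedious) manipulations in the Milnor basis of~$A(2)$ at $p=3$.
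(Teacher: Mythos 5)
Your proposal is a genuinely different route from the paper's, and while the broad skeleton (present $M$, count dimensions, check self-duality) is reasonable, several of the steps as written either rest on a miscitation or would require much heavier machinery than you acknowledge.

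The most serious gap is at the start. You invoke ``Mathew's $p=3$ version of the Hopkins--Mahowald theorem (\cite{Mat16}*{Thm.~1.1})'' to conclude that $H^*(\tmf)$ is induced from a finite $A(2)$-module. But Mathew's Theorem~1.1 is the $p=2$ statement $H^*(\tmf) \cong A/\!/A(2)$; it does not cover $p=3$, where $H^*(\tmf)$ is emphatically \emph{not} cyclic over~$A$. The $p=3$ structure must be \emph{derived}, and this is precisely the content of the proposition. The paper derives it from the Wood-type equivalence $\tmf \wedge \Psi \simeq \tmf_0(2)$ of \cite{Mat16}*{Thm.~4.16} together with the known $H^*(\tmf_0(2)) \cong A/\!/E(2)\{g_0,g_8\}$: the two cofibre sequences built from $\Psi = S \cup_\nu e^4 \cup_\nu e^8$ splice to a four-term exact complex of $A$-modules, which is visibly induced up from $A(2)$, and this simultaneously proves the ``induced from a finite $A(2)$-module'' assertion, gives the presentation of~$M$ (with \cite{Cul21}*{Cor.~6.7} pinning down the boundary map), and yields the dimension count $2\cdot 27 - 2 \cdot \dim M = 0$ by exactness. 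Your alternative, ``reading off generators and relations from the Adams spectral sequence chart,'' is circular as stated: the chart is computed \emph{from} the $A(2)$-module $M$, and $\Ext_{A(2)}(\bF_3, M)$ does not let you recover the module structure of~$M$ without additional input.

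On the self-duality, your plan to define $\phi$ on the top class and extend $A(2)$-linearly, then verify relation-by-relation in the Milnor basis, would in principle succeed but is far more work than needed. The paper instead applies $D = \Hom_{\bF_3}(-,\bF_3)$ to the whole exact complex of $A(2)$-modules; since $D(A(2)/\!/E(2)) \cong \Sigma^{-64} A(2)/\!/E(2)$ and the dualised boundary $D\partial$ has the same form as~$\partial$, the dual complex presents $\Sigma^{-12} DM \cong \Sigma^{-76}M$, which rearranges to $DM \cong \Sigma^{-64}M$ with no Milnor-basis computations. If you fill the gap in step one by adopting the $\Psi$-cofibre-sequence argument, the self-duality then falls out of the same exact complex for free, making your explicit construction of $\phi$ unnecessary.
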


\begin{proof}
Let all spectra be implicitly completed at~$p=3$.
Let $\Psi = S \cup_\nu e^4 \cup_\nu e^8$, where $\nu = \alpha_1$
is detected by~$P^1$.  According to~\cite{Mat16}*{Thm.~4.16},
there is an equivalence $\tmf \wedge \Psi \simeq \tmf_0(2)$ of
$\tmf$-module spectra, where $\pi_*(\tmf_0(2)) = \bZ_p[a_2, a_4]$
with $|a_2| = 4$ and $|a_4| = 8$.  We take as known that
$H^*(\tmf_0(2)) \cong A/\!/E(2)\{g_0, g_8\}$, where $|g_0| = 0$
and $|g_8| = 8$.  The homotopy cofibre sequences
\begin{align*}
S &\longto \Psi \longto \Sigma^4 C\nu \\
\Sigma^4 C\nu &\longto \Sigma^4 \Psi \longto S^{12}
\end{align*}
induce homotopy cofibre sequences
\begin{align*}
\tmf &\longto \tmf_0(2) \longto \Sigma^4 \tmf \wedge C\nu \\
\Sigma^4 \tmf \wedge C\nu &\longto \Sigma^4 \tmf_0(2) \longto \Sigma^{12} \tmf
\end{align*}
of $\tmf$-modules.  Passing to cohomology, we get two short exact
sequences, which we splice together to an exact complex
$$
0 \to \Sigma^{12} H^*(\tmf)
	\to \Sigma^4 A/\!/E(2)\{g_0, g_8\}
	\overset{\partial}\longto A/\!/E(2)\{g_0, g_8\}
	\to H^*(\tmf) \to 0
$$
of $A$-modules.  Here $\partial(\Sigma^4 g_0) \in \bF_3\{P^1 g_0\}$
and $\partial(\Sigma^4 g_8) \in \bF_3\{P^3 g_0, P^1 g_8\}$.  Hence this
complex is induced up from an exact complex
$$
0 \to \Sigma^{12} M
	\to \Sigma^4 A(2)/\!/E(2)\{g_0, g_8\}
	\overset{\partial}\longto A(2)/\!/E(2)\{g_0, g_8\}
	\to M \to 0
$$
of $A(2)$-modules, where the rank of~$\partial$ is twice the dimension
of~$M$.  The dimension of $A(2)/\!/E(2)$ is~$27$, so the dimension of~$M$
is~$18$.  By exactness, $\partial(\Sigma^4 g_0)$ and $\partial(\Sigma^4
g_8)$ are nonzero.  From~\cite{Cul21}*{Cor.~6.7} it follows that we can
choose the signs of the generators so that $\partial(\Sigma^4 g_0) =
P^1 g_0$ and $\partial(\Sigma^4 g_8) = P^3 g_0 - P^1 g_8$.  This gives
the stated presentation of~$M$.

Applying $D = \Hom_{\bF_3}(-, \bF_3)$ we find that $D(A(2)/\!/E(2))
\cong \Sigma^{-64} A(2)/\!/E(2)$, and can calculate that $D\partial$
has the same form as~$\partial$, so that the dual of the exact
$A(2)$-module complex above presents $D(\Sigma^{12} M) = \Sigma^{-12}
DM$ as $\Sigma^{-76} M$, which implies that $M$ is concentrated in
degrees $0 \le * \le 64$, and is self-dual.
\end{proof}

\subsection{Local cohomology theorems by Mahowald--Rezk duality}

\begin{notation}
The graded ring structure of $\pi_*(ko)$ is well known~\cite{Bot59}.
We use the notation
$$
\pi_*(ko) = \bZ_p[\eta, A, B]/(2 \eta, \eta^3, \eta A, A^2 = 4 B)
$$
where $|\eta| = 1$, $|A| = 4$ and $|B| = 8$, cf.~\cite{BR21}*{Ex.~2.30}.
If $p$ is odd this simplifies to $\pi_*(ko) = \bZ_p[A]$.  We call~$B$
the \emph{Bott element}.
\end{notation}

\begin{table}
\caption{Algebra generators~$x_k$ for $\pi_*(\tmf)$ at $p=2$
	\label{tab:alggenpitmfp2}}
\begin{tabular}{>{$}r<{$}|>{$}l<{$}|>{$}c<{$}}
x & k & |x| \\
\hline
\eta & 0, 1, 4 & 1 \\
\nu & 0, 1, 2, 4, 5, 6 & 3 \\
\epsilon & 0, 1, 4, 5 & 8 \\
\kappa & 0, 4 & 14 \\
\bar\kappa & 0 & 20 \\
B & 0, 1, 2, 3, 4, 5, 6, 7 & 8 \\
C & 0, 1, 2, 3, 4, 5, 6, 7 & 12 \\
D & 1, 2, 3, 4, 5, 6, 7 & 0 \\
M & 0 & 192 \\
\end{tabular}
\end{table}

\begin{table}
\caption{Algebra generators~$x_k$ for $\pi_*(\tmf)$ at $p=3$
	\label{tab:alggenpitmfp3}}
\begin{tabular}{>{$}r<{$}|>{$}l<{$}|>{$}c<{$}}
x & k & |x| \\
\hline
\nu & 0, 1 & 3 \\
\beta & 0 & 10 \\
B & 0, 1, 2 & 8 \\
C & 0, 1, 2 & 12 \\
D & 1, 2 & 0 \\
H & 0 & 72 \\
\end{tabular}
\end{table}

\begin{notation}
The graded ring structure of $\pi_*(\tmf)$ is also
known~\cite{DFHH14}*{Ch.~13}, \cite{BR21}*{Ch.~9, Ch.~13}, up to a couple
of finer points.

For $p=2$, the graded commutative $\bZ_2$-algebra $\pi_*(\tmf)$ is
generated by forty classes $x_k$, where $x \in \{\eta, \nu, \epsilon,
\kappa, \bar\kappa, B, C, D, M\}$ and $0 \le k \le 7$.  The indices~$k$
that occur are shown in Table~\ref{tab:alggenpitmfp2}.  We abbreviate
$x_0$ to $x$, and note that $|x_k| = |x| + 24 k$ is positive in each case.
We call $B \in \pi_8(\tmf)$ and $M \in \pi_{192}(\tmf)$ the \emph{Bott
element} and the \emph{Mahowald element}, respectively.
See Figure~\ref{fig:pitmf} for the mod~$2$ Adams $E_\infty$-term for
$\tmf$ in the range $0 \le t-s \le 192$.

For $p=3$, the graded commutative $\bZ_3$-algebra $\pi_*(\tmf)$ is
generated by twelve classes $x_k$, where $x \in \{\nu, \beta, B, C, D,
H\}$ and $0 \le k \le 2$.  The values of~$k$ that occur are shown in
Table~\ref{tab:alggenpitmfp3}.  We abbreviate $x_0$ to $x$, and again
note that $|x_k| = |x| + 24 k$ is positive in each case.  We call $B
\in \pi_8(\tmf)$ and $H \in \pi_{72}(\tmf)$ the \emph{Bott element}
and the \emph{Hopkins--Miller element}, respectively.
See Figure~\ref{fig:pitmfp3} for the mod~$3$ ($\tmf$-module) Adams
$E_\infty$-term for $\tmf$ in the range $0 \le t-s \le 72$.

To avoid repetitive case distinctions we will sometimes write
$\bZ_p[B, M]$, $(p, B, M)$ or~$(B, M)$, both for $p=2$ and for $p=3$,
in spite of the fact that the correct notations for $p=3$ would be
$\bZ_3[B, H]$, $(3, B, H)$ or~$(B, H)$.  In effect, the element `$M$'
should be read as `$H$' for $p=3$.
\end{notation}

\begin{definition}
For any $p$-complete connective $S$-algebra~$R$ with $\pi_0(R) =
\bZ_p$ let
$$
\fn_0 = \ker(\pi_*(R) \to \bZ_p)
$$
denote the ideal in~$\pi_*(R)$ given by the classes in positive
degrees, and let
$$
\fn_p = \ker(\pi_*(R) \to \bF_p)
$$
denote the maximal ideal generated by $\fn_0$ and~$p$.
\end{definition}

We shall review the precise structure of $\pi_*(\tmf)$ as a $\bZ_p[B,
M]$-module in Section~\ref{sec:tmf}, but for now we will only need
the following, more qualitative, properties.  Their analogues for $ko$
are straightforward.

\begin{lemma} \label{lem:tmffinite}
The following hold for $p=2$ and for $p=3$.
\begin{enumerate}
\item \label{item1}
The graded group $\pi_*(\tmf)$ is finitely generated as a $\bZ_p[B,
M]$-module.
\item \label{item2}
The radical of $(B, M) \subset \pi_*(\tmf)$ is $\fn_0$, and
the radical of $(p, B, M)$ is $\fn_p$.
\item \label{item3}
The graded group $\pi_*(\tmf/(p, B, M))$ is finite.
\item \label{item4}
Each $\pi_t(\Gamma_{(B, M)} \tmf)$ is a finitely generated $\bZ_p$-module.
\end{enumerate}
\end{lemma}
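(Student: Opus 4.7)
The plan is to reduce all four claims to item~\ref{item1}, which is the substantive structural input, and then derive items~\ref{item2}--\ref{item4} by elementary graded commutative algebra together with standard properties of the local cohomology spectral sequence. For \ref{item1}, I would extract the answer from the explicit additive structure of $\pi_*(\tmf)$ at $p=2$ and $p=3$ recorded in \cite{BR21} and \cite{DFHH14}*{Ch.~13}. The algebra generators in Tables~\ref{tab:alggenpitmfp2} and~\ref{tab:alggenpitmfp3} include $B$ and $M$ (respectively $B$ and $H$), together with finitely many other classes, all of positive degree bounded above by an explicit constant, and satisfying enough relations that only finitely many of their monomials survive modulo $(B,M)$. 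A graded Nakayama argument applied to the connective module $\pi_*(\tmf)$ and the ideal $(B,M)$, generated by positive-degree elements, then promotes finitely many lifts of a $\bZ_p$-basis of $\pi_*(\tmf)/(B,M)$ to a set of $\bZ_p[B,M]$-module generators of $\pi_*(\tmf)$.

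Given \ref{item1}, item~\ref{item2} is formal. The inclusions $(B,M) \subseteq \fn_0$ and $(p,B,M) \subseteq \fn_p$ are obvious from degree considerations, while $\fn_0$ and $\fn_p$ are radical because they are kernels of the augmentations to the integral domains $\bZ_p$ and $\bF_p$. Conversely, \ref{item1} exhibits $\pi_*(\tmf)/(B,M)$ as a finitely generated $\bZ_p$-module, necessarily concentrated in finitely many degrees, so any positive-degree homogeneous element has a power in $(B,M)$; reducing further modulo $p$ yields $\fn_p \subseteq \sqrt{(p,B,M)}$ by the same argument.

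For \ref{item3}, I would view $\tmf/(p,B,M) = \tmf/p \wedge_\tmf \tmf/B \wedge_\tmf \tmf/M$ as a topological Koszul complex; the associated Tor spectral sequence produces a finite filtration of $\pi_*(\tmf/(p,B,M))$ whose subquotients are the Tor groups $\mathrm{Tor}^{\bZ_p[B,M]}_i(\pi_*(\tmf), \bF_p)$ for $0 \le i \le 3$. Since $\bZ_p[B,M]$ is Noetherian and $\pi_*(\tmf)$ is finitely generated over it by \ref{item1}, a finite free resolution by finitely generated free modules computes each such Tor group as a subquotient of a finite-dimensional $\bF_p$-vector space concentrated in bounded degrees, hence finite; summing across the filtration yields \ref{item3}.

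Finally, \ref{item4} follows from the local cohomology spectral sequence \eqref{eq:lcss} for $J=(B,M)$, which has nonzero columns only in the range $0 \le s \le 2$. Since $\bZ_p[B,M]$ is Noetherian and $\pi_*(\tmf)$ is finitely generated by \ref{item1}, one can compute $H^*_{(B,M)}(\pi_*(\tmf))$ from a finite free resolution and the \v{C}ech complex. The only nonzero local cohomology of $\bZ_p[B,M]$ itself is $H^2_{(B,M)}(\bZ_p[B,M])$, which is spanned by the monomials $B^{-a}M^{-b}$ with $a,b \ge 1$; any fixed total degree admits only finitely many such monomials, so each graded piece is finitely generated over $\bZ_p$. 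The same conclusion then holds for each $H^s_{(B,M)}(\pi_*(\tmf))$ graded piece by graded piece, and strong convergence of the spectral sequence with finite filtration length yields \ref{item4}. The principal work really lies in \ref{item1}; the remaining items are essentially bookkeeping.
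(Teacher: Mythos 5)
Your argument is sound in outline but takes a noticeably more abstract, commutative-algebra route than the paper, which reduces everything to explicit structure theorems. The paper establishes~\ref{item1} by factoring through the isomorphism $\pi_*(\tmf)\cong N_*\otimes\bZ[M]$ (Theorems~\ref{thm:pitmfextension} and~\ref{thm:pitmfextensionp3}) and reading off finite generation of $N_*$ over $\bZ_p[B]$ from the explicit decomposition $N_* = \Gamma_B N_*\oplus\bigoplus_k ko[k]$ in Theorems~\ref{thm:ZBmoduleN} and~\ref{thm:ZBmoduleNp3}. Your graded Nakayama argument is correct in principle, but it rests on first knowing that $\pi_*(\tmf)/(B,M)$ is concentrated in finitely many degrees with finitely generated graded pieces; you gesture at this by invoking ``enough relations,'' whereas the paper just reads it off from the explicit $\bZ_p[B]$-module structure of $N_*$. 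Both arguments ultimately depend on the Hopkins--Mahowald computation, so there is no saving here, only a reshuffling of where the explicit input is consumed.

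For~\ref{item3} and~\ref{item4} you again diverge: you run a Koszul/Tor spectral sequence for~\ref{item3} and a finite-free-resolution dévissage for~\ref{item4}, whereas the paper reads both off from the explicit summands $ko[k]$, $ko[k]/B$, $ko[k]/B^\infty$ and the finiteness of $\Gamma_B N_*$. Your route is valid but leaves more to be checked. In~\ref{item3} you should be explicit that the filtration of $\tmf/(p,B,M)$ by the Koszul cofiltration has $E_1$-page the algebraic Koszul complex of $(p,B,M)$ on $\pi_*(\tmf)$, and that since $(p,B,M)$ is a regular sequence on $\bZ_p[B,M]$ this computes $\mathrm{Tor}^{\bZ_p[B,M]}_*(\pi_*(\tmf),\bF_p)$, each term finite because $\pi_*(\tmf)$ admits a finite free resolution by finitely generated $\bZ_p[B,M]$-modules. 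In~\ref{item4} your phrase ``graded piece by graded piece'' hides the real work: local cohomology is not exact, so you need a dévissage on projective dimension (bounded by~$3$ over $\bZ_p[B,M]$), using long exact sequences in $H^*_{(B,M)}$ together with the base case $H^2_{(B,M)}(\bZ_p[B,M])_t$ finitely generated over $\bZ_p$. Once those details are supplied, your argument goes through and has the virtue of isolating item~\ref{item1} as the unique piece of topological input; the paper's version has the virtue of being short and entirely explicit. Either would be acceptable, but be aware that the paper does not run a Nakayama or Tor argument and instead cites the explicit module decompositions directly.
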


\begin{proof}
We use the notation and results of Subsections~\ref{subsec:BMloccohtmf}
and~\ref{subsec:BHloccohtmf}.  In particular, $N = \tmf/M$, with $N_*
\cong \pi_*(N)$.

(1) Since $\pi_*(\tmf) \cong N_* \otimes \bZ[M]$, it suffices to check
that $N_*$ is finitely generated as a $\bZ_p[B]$-module, which is clear
from the explicit expressions given in Theorems~\ref{thm:ZBmoduleN}
and~\ref{thm:ZBmoduleNp3}.

(2) Because $\pi_*(\tmf)/(B, M)$ is finitely generated over~$\bZ_p$,
each positive degree class is nilpotent, and therefore each class in
$\fn_0$ lies in the radical of~$(B, M)$.  The other conclusions follow.

(3) Since $\Gamma_B N_*$ is finite, each $ko[k]$ is $B$-torsion free,
and each $ko[k]/B$ is a finitely generated $\bZ_p$-module, it follows that
$\pi_*(N/B)$ is a finitely generated $\bZ_p$-module.  Hence $\pi_*(N/(p,
B)) \cong \pi_*(\tmf/(p, B, M))$ is finite.

(4) Likewise, since $\Gamma_B N_*$ is finite and each $ko[k]/B^\infty$
is bounded above and finitely generated over~$\bZ_p$ in each degree,
it follows that $\pi_*(\Gamma_B N)$ is bounded above and finitely
generated over~$\bZ_p$ in each degree.  Hence $\pi_*(\Gamma_{(B, M)}
\tmf) \cong \pi_*(\Gamma_B N) \otimes \bZ[M]/M^\infty$ is also bounded
above and finitely generated over~$\bZ_p$ in each degree.
\end{proof}

\begin{theorem} \label{thm:MRduality}
There are equivalences of $ko$-modules
$$
\Gamma_{(p,B)} ko = \Gamma_{\fn_p} ko
	\simeq C^f_1 ko \simeq \Sigma^{-6} I(ko)
$$
(at all primes~$p$), and equivalences of $\tmf$-modules
$$
\Gamma_{(p,B,M)} \tmf = \Gamma_{\fn_p} \tmf
	\simeq C^f_2 \tmf \simeq \Sigma^{-23} I(\tmf)
$$
(at $p=2$ and at $p=3$).
The underlying $S_p$-modules are Artinian and bounded above.
\end{theorem}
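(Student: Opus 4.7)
The plan is to establish each of the three equivalences in the chain separately, for both $R = ko$ and $R = \tmf$. The first identification $\Gamma_{(p,B)} ko = \Gamma_{\fn_p} ko$ (and similarly for $\tmf$) is immediate from the radical invariance of $\Gamma_J$ recorded in Subsection~\ref{subsec:localcohom}, combined with Lemma~\ref{lem:tmffinite}(\ref{item2}) (and its evident $ko$ analogue), which identifies $\sqrt{(p,B,M)}$ with $\fn_p$.

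For the middle equivalence $\Gamma_{(p,B,M)} \tmf \simeq C^f_2 \tmf$ (and analogously for $ko$), I would invoke Lemma~\ref{lem:CfnGammaJ} with $n = 2$, $R = \tmf$ and $J = (p,B,M)$. To verify hypothesis~(1): since $\tmf$ has fp-type~$2$, any $p$-local finite spectrum~$V$ of type~$\geq 3$ has $\pi_*(\tmf \wedge V)$ finite, and by Spanier--Whitehead duality the same holds for $\pi_* F(S/I, \tmf)$ for each $I$ in the cofinal tower of type~$3$ generalised Moore spectra. Hence the positive-degree classes $B$ and $M$ act nilpotently, as does $p$ on a finite $p$-group. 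Hypothesis~(2) will follow from Lemma~\ref{lem:tmffinite}(\ref{item3}): since $\pi_*(\tmf/(p,B,M))$ is already finite, so is $\pi_* F(S/(p^{a_0}, \dots, v_{s-1}^{a_{s-1}}), \tmf/(p,B,M))$, and $v_s^{a_s}$ then acts nilpotently through its image in $\pi_*(\tmf)$. The argument for $ko$ is identical, using fp-type~$1$ and the $ko$-analogue of Lemma~\ref{lem:tmffinite}(\ref{item3}).

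For the third equivalence I would apply the Brown--Comenetz functor~$I$ to the equivalences $\Sigma^6 ko \simeq I C^f_1 ko$ and $\Sigma^{23} \tmf \simeq I C^f_2 \tmf$ of Proposition~\ref{prop:MRCor93}, obtaining $\Sigma^{-6} I(ko) \simeq I(I C^f_1 ko)$ and $\Sigma^{-23} I(\tmf) \simeq I(I C^f_2 \tmf)$. To collapse the right-hand sides via the reflexivity map $\rho \: M \to I(IM)$, I need to know that each $C^f_n R$ is Artinian. This is easiest to extract directly from Proposition~\ref{prop:MRCor93}: since $I C^f_n R \simeq \Sigma^{a(n)} R$ is Noetherian and bounded below, the short exact sequence for $\pi_*(I C^f_n R)$ expresses $\pi_s(C^f_n R)$ as an extension of $\Hom_{\bZ_p}(\pi_{-s-a(n)}(R), \bQ_p/\bZ_p)$ by $\Ext$-terms that are finite cyclic $p$-groups. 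Connectivity of $R$ then yields both the Artinian property and boundedness above (vanishing for $s > -a(n)$).

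The main obstacle I anticipate is the clean articulation of hypothesis~(2) of Lemma~\ref{lem:CfnGammaJ}: one must pin down that the chromatic classes $v_s$ act on $\tmf/(p,B,M)$ through classes of $\pi_*(\tmf)$ (for instance via the $BP$-module structure or a map $MU \to \tmf$), so that their nilpotency reduces to the finiteness established in Lemma~\ref{lem:tmffinite}(\ref{item3}). This is not a deep point, but it is the unique step where input beyond the abstract fp-spectrum framework must be referenced.
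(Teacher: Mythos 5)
Your decomposition into the three constituent equivalences is correct, and your treatment of the first two — radical invariance of $\Gamma_J$, and verification of the hypotheses of Lemma~\ref{lem:CfnGammaJ} via fp-type and Lemma~\ref{lem:tmffinite}\eqref{item3} — matches the paper. The worry you raise at the end is a red herring: the $v_s^{a_s}$ appearing in hypothesis~(2) acts on $\pi_* F(S/(p^{a_0}, \dots, v_{s-1}^{a_{s-1}}), R/J)$ by precomposition with the $v_s^{a_s}$-self-map of the generalised Moore spectrum, which is a map of strictly positive degree; its nilpotence on a graded group that is finite (hence concentrated in finitely many degrees) is automatic for degree reasons, and no factorisation through $\pi_*(\tmf)$ or a $BP$-orientation is needed.

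There is, however, a genuine gap in your argument for the Artinian property and the Brown--Comenetz bidual equivalence. You write that ``the short exact sequence for $\pi_*(I C^f_n R)$ expresses $\pi_s(C^f_n R)$ as an extension of $\Hom_{\bZ_p}(\pi_{-s-a(n)}(R), \bQ_p/\bZ_p)$ by $\Ext$-terms that are finite cyclic $p$-groups.'' No such sequence exists: the functor $I$ satisfies $\pi_{-t}(IM) \cong \Hom_{\bZ_p}(\pi_t(M), \bQ_p/\bZ_p)$ on the nose, since $\bQ_p/\bZ_p$ is an injective $\bZ_p$-module; there is no derived term. (You appear to be conflating this with the Anderson duality sequence~\eqref{eq:ExtIZpHom}.) Moreover, knowing that $\Hom_{\bZ_p}(\pi_t(C^f_n R), \bQ_p/\bZ_p) \cong \pi_{-t-a(n)}(R)$ is finitely generated does not, on its own, let you reconstruct $\pi_t(C^f_n R)$ or conclude that it is Artinian; the Pontryagin dual is not injective on isomorphism classes without a prior hypothesis. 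The step you are missing — and the crux of the paper's proof — is that the double-dual homomorphism $\rho \: \pi_t(M) \to \Hom_{\bZ_p}(\Hom_{\bZ_p}(\pi_t(M), \bQ_p/\bZ_p), \bQ_p/\bZ_p)$ is \emph{injective} for every abelian group, because $\bQ_p/\bZ_p$ is a cogenerator. The target is Artinian because the inner $\Hom$ is Noetherian, so $\pi_t(M)$ embeds in an Artinian module and is hence Artinian; and for Artinian modules $\rho$ is an isomorphism, so $\rho \: M \to I(IM)$ is an equivalence. You need to supply this injectivity step before the rest of your argument for the third equivalence goes through.
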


\begin{proof}
For any $S_p$-module $M$ the natural homomorphism
$$
\rho \: \pi_t(M) \longto
	\Hom_{\bZ_p}(\Hom_{\bZ_p}(\pi_t(M), \bQ_p/\bZ_p), \bQ_p/\bZ_p)
$$
is injective.  Hence, if $\pi_{-t}(IM) = \Hom_{\bZ_p}(\pi_t(M),
\bQ_p/\bZ_p)$ is a Noetherian (= finitely generated) $\bZ_p$-module, then
$\rho$ exhibits $\pi_t(M)$ as a submodule of an Artinian $\bZ_p$-module,
which must itself be Artinian.  This applies with $M = C^f_1 ko$, since
we know from Proposition~\ref{prop:MRCor93} that $IM \simeq \Sigma^6 ko$,
and $ko$ is a Noetherian $S_p$-module.  It also applies with $M = C^f_2
\tmf$, since $IM \simeq \Sigma^{23} \tmf$ and $\tmf$ is a Noetherian
$S_p$-module by Lemma~\ref{lem:tmffinite}\eqref{item1}.  Hence $\rho \:
M \to I(IM)$ is in fact an equivalence in these cases, so that
\begin{align*}
C^f_1 ko &\simeq I(I C^f_1 ko) \simeq I(\Sigma^6 ko)
	\simeq \Sigma^{-6} I(ko) \\
C^f_2 \tmf &\simeq I(I C^f_2 \tmf) \simeq I(\Sigma^{23} \tmf)
	\simeq \Sigma^{-23} I(\tmf) \,.
\end{align*}

We use Lemma~\ref{lem:CfnGammaJ} with $n=1$, $R = M = ko$ and $J =
(p, B)$ to see that
\begin{equation} \label{eq:Cf1koisGammapBko}
C^f_1 ko \simeq \Gamma_{(p, B)} ko \,.
\end{equation}
The finite spectra $S/I$ with $I = (p^{a_0}, v_1^{a_1})$, and their
Spanier--Whitehead duals, have type~$2$, so $\pi_* F(S/I, ko)$ is finite,
since $ko$ has fp-type~$1$.  Hence both $p$ and~$B$ act nilpotently on
this graded~$\bZ_p$-module.  This confirms the first condition.  For the
second condition, note that $R/J = ko/(p, B)$ and $F(S/p^{a_0}, R/J)$
have finite graded homotopy groups, hence $p^{a_0}$ acts nilpotently on
the former and $v_1^{a_1}$ acts nilpotently on the latter.  The radical
in~$\pi_*(ko)$ of $J = (p, B)$ equals~$\sqrt{J} = \fn_p$, and as reviewed
in Subsection~\ref{subsec:localcohom} this implies the equivalence
$$
\Gamma_{(p, B)} ko \simeq \Gamma_{\fn_p} ko \,.
$$

Similarly, we use Lemma~\ref{lem:CfnGammaJ} with $n=2$, $R = M = \tmf$
and $J = (p, B, M)$ to see that
\begin{equation} \label{eq:Cf2tmfisGammapBMtmf}
C^f_2 \tmf \simeq \Gamma_{(p, B, M)} \tmf \,.
\end{equation}
The finite spectra $S/I$ with $I = (p^{a_0}, v_1^{a_1}, v_2^{a_2})$, and
their Spanier--Whitehead duals, have type~$3$, so $\pi_* F(S/I, \tmf)$
is finite, since $\tmf$ has fp-type~$2$.  Hence $p$, $B$ and~$M$
act nilpotently on this graded~$\bZ_p$-module.  Next, note that $R/J =
\tmf/(p, B, M)$, $F(S/p^{a_0}, R/J)$ and $F(S/(p^{a_0}, v_1^{a_1}), R/J)$
have finite homotopy, by Lemma~\ref{lem:tmffinite}\eqref{item3}, so that
$p^{a_0}$ acts nilpotently on the first, $v_1^{a_1}$ acts nilpotently
on the second, and $v_2^{a_2}$ acts nilpotently on the third.
The radical in~$\pi_*(\tmf)$ of $J = (p, B, M)$ equals~$\sqrt{J} =
\fn_p$, by Lemma~\ref{lem:tmffinite}\eqref{item2}, which implies the
equivalence
\begin{equation*}
\Gamma_{(p, B, M)} \tmf \simeq \Gamma_{\fn_p} \tmf \,.  \qedhere
\end{equation*}
\end{proof}

\begin{remark}
As an alternative to Lemma~\ref{lem:CfnGammaJ}, we could
use Lemmas~\ref{lem:CfnCell} and~\ref{lem:CellGammaJ} to
establish~\eqref{eq:Cf1koisGammapBko}.  For $p=2$ the Adams complex $A =
S/(2, v_1^4)$ from~\cite{Ada66}*{\S12} has type~$2$ and satisfies $ko
\wedge S/(2, v_1^4) \simeq ko/(2, B)$.  Hence $C^f_1 ko \simeq \Cell_{ko
\wedge A} ko \simeq \Cell_{ko/(2, B)} ko \simeq \Gamma_{(2, B)} ko$.
For $p$ odd the complex $S/(p, v_1)$ has type~$2$ and $ko \wedge S/(p,
v_1) \simeq ko/(p, A^m)$ with $m = (p-1)/2$.  Hence $C^f_1 ko \simeq
\Gamma_{(p, A^m)} ko \simeq \Gamma_{(p, B)} ko$, since $(p, A^m)$ and $(p,
B)$ have the same radical.

Likewise, we could use the existence~\cite{BHHM08} of a $2$-local
finite complex $A = S/(2, v_1^4, v_2^{32})$ of type~$3$ satisfying
$\tmf \wedge A \simeq \tmf/(2, B, M)$ to deduce that $C^f_2 \tmf
\simeq \Cell_{\tmf \wedge A} \simeq \Cell_{\tmf/(2, B, M)} \tmf \simeq
\Gamma_{(2, B, M)} \tmf$.  Similar remarks apply at~$p=3$, using the
type~$3$ complex $A = S/(3, v_1, v_2^9)$ constructed in~\cite{BP04}.
In these cases the argument for~\eqref{eq:Cf2tmfisGammapBMtmf} using
Lemma~\ref{lem:CfnGammaJ} is dramatically simpler, as is to be expected,
since asking for $R \wedge A$ and $R/J$ to be equivalent is a much more
restrictive condition than asking that they build one another.
\end{remark}

\begin{theorem}
There are equivalences of $ko$-modules
$$
\Gamma_B ko = \Gamma_{\fn_0} ko
	\simeq \Sigma^{-5} I_{\bZ_p}(ko)
$$
(at all primes~$p$), and equivalences of $\tmf$-modules
$$
\Gamma_{(B,M)} \tmf = \Gamma_{\fn_0} \tmf
	\simeq \Sigma^{-22} I_{\bZ_p}(\tmf)
$$
(at $p=2$ and at $p=3$).
The underlying $S_p$-modules are Noetherian and bounded above.
\end{theorem}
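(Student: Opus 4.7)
My plan is to deduce the Anderson-duality equivalences from the Brown--Comenetz duality equivalences of Theorem~\ref{thm:MRduality} by ``peeling off'' the $p$-torsion layer. The central ingredient is the composite-functor identity
$$
\Gamma_p \circ \Gamma_{(B,M)} \simeq \Gamma_{(p,B,M)}
$$
at the spectrum level, which follows immediately from the definition $\Gamma_J M = \Gamma_{x_1}R \wedge_R \cdots \wedge_R \Gamma_{x_d}R \wedge_R M$ together with the commutativity of the smash product. Combined with Theorem~\ref{thm:MRduality} this yields
$$
\Gamma_p\bigl(\Gamma_{(B,M)}\tmf\bigr) \simeq \Sigma^{-23} I(\tmf).
$$

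The strategy is now to $p$-complete both sides. On the right, Lemma~\ref{lem:BCdualofGammap} gives $(I\tmf)^\wedge_p \simeq \Sigma (I_{\bZ_p}\tmf)^\wedge_p$, and since $\tmf$ is Noetherian by Lemma~\ref{lem:tmffinite}\eqref{item1}, its Anderson dual $I_{\bZ_p}\tmf$ is Noetherian too; each homotopy group is therefore a finitely generated $\bZ_p$-module, forcing $(I_{\bZ_p}\tmf)^\wedge_p \simeq I_{\bZ_p}(\tmf)$. Hence $\bigl(\Sigma^{-23} I\tmf\bigr)^\wedge_p \simeq \Sigma^{-22} I_{\bZ_p}(\tmf)$.

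On the left, I would use the inverse equivalence $L \simeq (\Gamma_p L)^\wedge_p$ valid for $p$-complete $L$, coming from the equivalence between $p$-power torsion and $p$-complete $S_p$-modules recalled from \cite{HPS97}*{Thm.~3.3.5} at the start of Section~3. The required $p$-completeness of $L = \Gamma_{(B,M)}\tmf$ is supplied by Lemma~\ref{lem:tmffinite}\eqref{item4} (each $\pi_t(L)$ is finitely generated, hence $p$-complete, over $\bZ_p$) together with bounded-aboveness of $L$, which can be read off the local cohomology spectral sequence~\eqref{eq:lcss}: since $\pi_*(\tmf)$ is concentrated in non-negative degrees, each $H^s_{(B,M)}(\pi_*\tmf)_t$ vanishes for $t$ sufficiently large relative to $s$. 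Chaining the identifications gives
$$
\Gamma_{(B,M)}\tmf \simeq \bigl(\Gamma_p L\bigr)^\wedge_p \simeq \bigl(\Sigma^{-23} I\tmf\bigr)^\wedge_p \simeq \Sigma^{-22} I_{\bZ_p}(\tmf).
$$

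The $ko$ case follows the same recipe, starting from $\Gamma_{(p,B)} ko \simeq \Sigma^{-6} I(ko)$ and concluding with $\Gamma_B ko \simeq \Sigma^{-5} I_{\bZ_p}(ko)$. The identities $\Gamma_{(B,M)}\tmf = \Gamma_{\fn_0}\tmf$ and $\Gamma_B ko = \Gamma_{\fn_0} ko$ are then immediate from the radical identifications in Lemma~\ref{lem:tmffinite}\eqref{item2} (and its $ko$-analogue) combined with the fact that local cohomology depends only on the radical of the defining ideal (Subsection~\ref{subsec:localcohom}); the Noetherian and bounded-above claims for the underlying $S_p$-modules are inherited directly from the corresponding properties of $I_{\bZ_p}\tmf$ and $I_{\bZ_p}ko$. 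The main obstacle is the $p$-completeness step: while Lemma~\ref{lem:tmffinite}\eqref{item4} delivers the homotopical finiteness at once, upgrading this to $p$-completeness of the underlying spectrum requires separately establishing bounded-aboveness, which must itself be wrung out of the local cohomology spectral sequence before the desired equivalence is in hand.
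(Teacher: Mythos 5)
Your argument is correct, and it takes a genuinely different (though closely related) route from the paper. The paper starts from $\Sigma^{23}\tmf \simeq I(\Gamma_{(p,B,M)}\tmf)$, rewrites the right-hand side via Lemma~\ref{lem:BCdualofGammap} as $\Sigma (I_{\bZ_p}\Gamma_{(B,M)}\tmf)^\wedge_p$, uses the Noetherianness of $\Gamma_{(B,M)}\tmf$ (Lemma~\ref{lem:tmffinite}\eqref{item4}) to drop the completion and obtain $\Sigma^{22}\tmf \simeq I_{\bZ_p}\Gamma_{(B,M)}\tmf$, and then applies $I_{\bZ_p}$ once more, invoking Anderson biduality~\eqref{rho:Anderson} for Noetherian $S_p$-modules, to conclude. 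You instead take the intrinsic form $\Gamma_p\Gamma_{(B,M)}\tmf \simeq \Sigma^{-23}I(\tmf)$ and apply $p$-completion $(-)^\wedge_p$ to both sides, using the mutually inverse pair $\Gamma_p \leftrightarrow (-)^\wedge_p$ on one side and Lemma~\ref{lem:BCdualofGammap} on the other. Both routes hinge on exactly the same two inputs — the composite-functor identity $\Gamma_{(p,B,M)} = \Gamma_p\circ\Gamma_{(B,M)}$ and the Noetherianness (hence $p$-completeness) of $\Gamma_{(B,M)}\tmf$ and of $I_{\bZ_p}\tmf$ — but your version replaces the double application of $I_{\bZ_p}$ with a single $p$-completion, which is arguably a shade more economical. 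The only place where you over-complicate matters is the final paragraph: bounded-aboveness of $\Gamma_{(B,M)}\tmf$ is not actually required to establish $p$-completeness. A spectrum whose homotopy groups are all finitely generated $\bZ_p$-modules is automatically $p$-complete, since such groups are derived $p$-complete; so Lemma~\ref{lem:tmffinite}\eqref{item4} alone already closes that step, and there is no further obstacle to ``wring out'' of the spectral sequence. (Bounded-aboveness is of course still needed for the final sentence of the theorem, but it is already observed in the proof of Lemma~\ref{lem:tmffinite}\eqref{item4}.)
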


\begin{proof}
Proposition~\ref{prop:MRCor93}, equivalence~\eqref{eq:Cf1koisGammapBko},
and Lemma~\ref{lem:BCdualofGammap} applied to the $ko$-module $M =
\Gamma_B ko$, give equivalences
$$
\Sigma^6 ko \simeq IC^f_1 ko \simeq I(\Gamma_{(p,B)} ko)
	\simeq \Sigma (I_{\bZ_p} \Gamma_B ko)^\wedge_p \,.
$$
Here $\Gamma_B ko$ is a Noetherian $S_p$-module, which implies
that $I_{\bZ_p} \Gamma_B ko$ is also Noetherian and (in particular)
$p$-complete.  Hence $\Sigma^5 ko \simeq I_{\bZ_p} \Gamma_B ko$, which
implies that
$$
\Gamma_B ko \simeq I_{\bZ_p}(I_{\bZ_p} \Gamma_B ko)
	\simeq I_{\bZ_p}(\Sigma^5 ko)
	\simeq \Sigma^{-5} I_{\bZ_p}(ko) \,.
$$
Furthermore, $\Gamma_B ko = \Gamma_{\fn_0} ko$, since the radical of~$(B)$
in $\pi_*(ko)$ equals $\fn_0$.

Similarly, Proposition~\ref{prop:MRCor93},
equivalence~\eqref{eq:Cf2tmfisGammapBMtmf}, and
Lemma~\ref{lem:BCdualofGammap} applied to the $\tmf$-module $M =
\Gamma_{(B, M)} \tmf$, give equivalences
$$
\Sigma^{23} \tmf \simeq IC^f_2 \tmf \simeq I(\Gamma_{(p,B,M)} \tmf)
        \simeq \Sigma (I_{\bZ_p} \Gamma_{(B, M)} \tmf)^\wedge_p \,.
$$
Part~\eqref{item4} of Lemma~\ref{lem:tmffinite} asserts that $\Gamma_{(B,
M)} \tmf$ is a Noetherian $S_p$-module, which implies that $I_{\bZ_p}
\Gamma_{(B, M)} \tmf$ is also Noetherian and $p$-complete.  Hence
$\Sigma^{22} \tmf \simeq I_{\bZ_p} \Gamma_{(B, M)} \tmf$, which implies
that
$$
\Gamma_{(B, M)} \tmf \simeq I_{\bZ_p}(I_{\bZ_p} \Gamma_{(B, M)} \tmf)
        \simeq I_{\bZ_p}(\Sigma^{22} \tmf)
        \simeq \Sigma^{-22} I_{\bZ_p}(\tmf) \,.
$$
Finally, $\Gamma_{(B, M)} \tmf = \Gamma_{\fn_0} \tmf$, since
the radical of~$(B, M)$ in $\pi_*(\tmf)$ equals~$\fn_0$ by
Lemma~\ref{lem:tmffinite}\eqref{item2}.
\end{proof}

\section{Gorenstein duality}

\subsection{Gorenstein maps of $S$-algebras}

The original version~\cite{DGI06}*{Def.~8.1} of the following definition
was slightly more restrictive, but by~\cite{DGI06}*{Prop.~8.4} there is
no difference when $k$ is proxy-small as an $R$-module.

\begin{definition}
Let $R \to k$ be a map of $S$-algebras.  We say that $R \to k$ is
\emph{Gorenstein of shift~$a$} if there is an equivalence of left
$k$-modules
$$
F_R(k, R) \simeq \Sigma^a k \,.
$$
\end{definition}

Our next aim is to prove Proposition~\ref{prop:GorshiftGordual}.
We write $\bF_p$ and $\bZ_p$ for the mod~$p$ and $p$-adic integral
Eilenberg--MacLane spectra, respectively, with their unique (commutative)
$S_p$-algebra structures.

Suppose that $R$ is an $S_p$-algebra and that $k = \bF_p$.  There is
then an equivalence
$$
k \simeq Ik = F_{S_p}(k, I) \cong F_R(k, IR)
$$
of left $k$-modules, where $Ik$ and $IR = F_{S_p}(R, I)$
are the Brown--Comenetz duals of~$k$ and~$R$, and where $k$ acts from the
right on the domains of the two mapping spectra.  Hence, if $R \to k$
is Gorenstein of shift~$k$, then there is a $k$-module equivalence
$$
F_R(k, R) \simeq \Sigma^a k \simeq F_R(k, \Sigma^a IR) \,.
$$
Recall the notation $\cE = F_R(k, k)$ from
Subsection~\ref{subsec:cellularisation}.  Restriction along $R
\to k$ defines an $S$-algebra map $k^{op} \to \cE$, and the left
$\cE$-action on~$k$ induces right $\cE$-actions on $F_R(k, R)$ and
$F_R(k, \Sigma^a IR)$.  If $R$ is connective with $\pi_0(R) = \bZ_p$,
then $\cE$ is coconnective with $\pi_0(\cE) \cong k^{op}$ a field.
According to~\cite{DGI06}*{Prop.~3.9} the $k$-module equivalence above
then extends to an $\cE$-module equivalence, so that
$$
F_R(k, R) \wedge_{\cE} k \simeq F_R(k, \Sigma^a IR) \wedge_{\cE} k \,.
$$
Moreover, if $k$ is proxy-small, so that $k$-cellularisation is
effectively constructible by~\cite{DGI06}*{Thm.~4.10}, then we can
rewrite this as an equivalence
$$
\Cell_k R \simeq \Cell_k(\Sigma^a IR) \,.
$$
Finally, if $\pi_*(IR)$ is $p$-power torsion, then $k$ builds~$IR$ as
the homotopy colimit of a refined Whitehead tower in $R$-modules, so
that $\Cell_k(\Sigma^a IR) \simeq \Sigma^a IR$.  Hence these hypotheses
ensure that $R \to k$ has Gorenstein duality of shift~$a$, in the
following sense.

\begin{definition}
A map $R \to k = \bF_p$ of $S_p$-algebras has \emph{Gorenstein duality
of shift~$a$} if there is an equivalence
$$
\Cell_k R \simeq \Sigma^a IR \,.
$$
Similarly, a map $R \to k = \bZ_p$ of $S_p$-algebras has \emph{Gorenstein
duality of shift~$a$} if there is an equivalence
$$
\Cell_k R \simeq \Sigma^a I_{\bZ_p} R \,.
$$
\end{definition}

\begin{proposition} \label{prop:GorshiftGordual}
Let $k = \bF_p$ or $\bZ_p$, let $R \to k$ be a map of connective
$S_p$-algebras with $\pi_0(R) = \bZ_p$, and let $\cE = F_R(k, k)$.
Suppose that
\begin{enumerate}
\item
$R \to k$ is Gorenstein of shift~$a$, and
\item
$k$ is proxy-small as an $R$-module.
\end{enumerate}
For $k = \bF_p$ also assume that
\begin{enumerate}
\setcounter{enumi}{2}
\item
$\Hom_{\bZ_p}(\pi_*(R), \bQ_p/\bZ_p)$ is $p$-power torsion.
\end{enumerate}
Then $R \to k$ has Gorenstein duality of shift~$a$.
\end{proposition}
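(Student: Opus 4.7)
The proof is essentially the argument carried out in the discussion immediately preceding the statement, which I would follow verbatim for $k=\bF_p$ and adapt to $k=\bZ_p$ by replacing the Brown--Comenetz dual with the Anderson dual. Writing $D$ for either $I$ or $I_{\bZ_p}$ according to whether $k=\bF_p$ or $k=\bZ_p$, the four-step scheme is: (1) produce a $k$-module equivalence $F_R(k, R)\simeq F_R(k,\Sigma^a DR)$ using $Dk\simeq k$ and the Gorenstein hypothesis; (2) upgrade it to a right $\cE$-module equivalence; (3) smash with $k$ over $\cE$ to obtain $\Cell_k R\simeq \Cell_k(\Sigma^a DR)$; (4) verify that $\Cell_k(\Sigma^a DR)\simeq \Sigma^a DR$.

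For step~(1), the computation $F_R(k,DR)=F_R(k,F_{S_p}(R,D))\simeq F_{S_p}(k,D)=Dk$ is a routine adjunction, and one checks directly from the defining formulas that $I\bF_p\simeq \bF_p$ (since $\Hom_{\bZ_p}(\bF_p,\bQ_p/\bZ_p)\cong \bF_p$) and $I_{\bZ_p}\bZ_p\simeq \bZ_p$ (since $\Hom_{\bZ_p}(\bZ_p,\bZ_p)\cong \bZ_p$ and $\Ext_{\bZ_p}(\bZ_p,\bZ_p)=0$). Composing with the Gorenstein equivalence $F_R(k,R)\simeq \Sigma^a k$ gives the desired $k$-module equivalence. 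Step~(2) invokes \cite{DGI06}*{Prop.~3.9}: the connectivity of $R$ with $\pi_0(R)=\bZ_p$ forces $\cE$ to be coconnective with $\pi_0(\cE)\cong k^{op}$, which is either a field or a complete discrete valuation ring, and both lie in the setting of the cited rigidity result. Step~(3) is then effective constructibility \cite{DGI06}*{Thm.~4.10}, available precisely because of proxy-smallness (hypothesis~(2)).

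The substantive new content is step~(4), where hypothesis~(3) enters. For $k=\bZ_p$, the short exact sequence \eqref{eq:ExtIZpHom} shows that each $\pi_*(I_{\bZ_p} R)$ is a $\bZ_p$-module, and since $I_{\bZ_p} R$ is bounded above (the $S_p$-module dual of a connective module), it is built from $\bZ_p$ in $R$-modules by a Postnikov construction, so its $\bZ_p$-cellularisation is the identity. For $k=\bF_p$, the formula $\pi_{-t}(IR)\cong \Hom_{\bZ_p}(\pi_t(R),\bQ_p/\bZ_p)$ shows that hypothesis~(3) is precisely the condition that $\pi_*(IR)$ is entirely $p$-power torsion; each such abelian group is an iterated extension of copies of $\bF_p$, so $\bF_p$ builds $IR$ in $R$-modules and $\Cell_{\bF_p}(\Sigma^a IR)\simeq \Sigma^a IR$.

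The point I expect to be most delicate is step~(2), namely matching the right $\cE$-module structures on $F_R(k,R)$ and on $F_R(k,\Sigma^a DR)$ before applying the DGI rigidity statement: the Gorenstein equivalence must be chosen (or shown to exist) so as to intertwine the two $\cE$-actions, which is exactly where \cite{DGI06}*{Prop.~3.9} does genuine work. The other steps are essentially bookkeeping with adjunctions and the definitions of $I$ and $I_{\bZ_p}$.
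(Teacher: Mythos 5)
Your steps~(1), (3) and~(4) track the paper's argument faithfully, including the two different reasons that $k$ builds $DR$: boundedness above over $\bZ_p$ when $k=\bZ_p$, and hypothesis~(3) ($p$-power torsion homotopy) when $k=\bF_p$. The gap is in step~(2) for $k=\bZ_p$. The cited result \cite{DGI06}*{Prop.~3.9} gives uniqueness of the right $\cE$-module structure only when $\pi_0(\cE)$ is a field; it is not stated for, and does not automatically extend to, $\pi_0(\cE)\cong\bZ_p$. Your claim that a complete discrete valuation ring ``lies in the setting of the cited rigidity result'' is therefore not justified. The paper handles the $\bZ_p$ case by proving a new uniqueness statement, Proposition~\ref{prop:uniqueEaction}, by a cell-attachment argument that refines the DGI proof and, crucially, uses the additional hypothesis $\pi_{-1}(\cE)=0$ (which holds here because $R$ is connective with $\pi_0(R)=\bZ_p$, forcing $\cE$ to be coconnective with $\pi_{-1}(\cE)=0$). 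That extra vanishing hypothesis cannot be dropped in general, as the paper notes via \cite{DGI06}*{Rem.~3.11}, so the passage from $\bF_p$ to $\bZ_p$ is genuine new work rather than a cosmetic enlargement of allowed coefficient rings. To repair the proposal, replace the appeal to \cite{DGI06}*{Prop.~3.9} in the $\bZ_p$ case with a proof (or citation) of uniqueness of $\cE$-actions on $\bZ_p$ under the hypotheses $\pi_0(\cE)\cong\bZ_p$ coconnective and $\pi_{-1}(\cE)=0$.
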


\begin{proof}
The case $k = \bF_p$ was discussed above.
When $k = \bZ_p$, we have an equivalence
$$
k \simeq F_{S_p}(k, I_{\bZ_p}) \cong F_R(k, I_{\bZ_p} R)
$$
of $k$-modules, where $I_{\bZ_p} R = F_{S_p}(R, I_{\bZ_p})$ is the
Anderson dual of~$R$.  Hence, if $R \to k$ is Gorenstein of shift~$a$
then there is a $k$-module equivalence
$$
F_R(k, R) \simeq \Sigma^a k \simeq F_R(k, \Sigma^a I_{\bZ_p} R) \,.
$$
Since $R$ is connective with $\pi_0(R) = \bZ_p$, it follows that $\cE$
is coconnective with $\pi_0(\cE) \cong k^{op}$ and $\pi_{-1}(\cE)
= 0$.  We prove in Proposition~\ref{prop:uniqueEaction} below that this
implies that the $k$-module equivalence above extends to an $\cE$-module
equivalence, so that
$$
F_R(k, R) \wedge_{\cE} k \simeq F_R(k, \Sigma^a I_{\bZ_p} R)
	\wedge_{\cE} k \,.
$$
If $k$ is proxy-small, then we can rewrite this as
$$
\Cell_k R \simeq \Cell_k(\Sigma^a I_{\bZ_p} R) \,.
$$
Since $\pi_*(I_{\bZ_p} R)$ is a bounded above graded $\bZ_p$-module,
it follows that $I_{\bZ_p} R$ is built from~$k$, so that $\Cell_k R
\simeq \Sigma^a I_{\bZ_p} R$.
\end{proof}

\subsection{Uniqueness of $\cE$-module structures}

\begin{proposition} \label{prop:uniqueEaction}
Let $k = \bF_p$ or~$\bZ_p$, and let $k^{op} \to \cE$ be a map of
coconnective $S$-algebras inducing an isomorphism on~$\pi_0$.
For $k = \bZ_p$ also assume that $\pi_{-1}(\cE) = 0$.
Then any two right $\cE$-module structures on~$k$ are equivalent.
\end{proposition}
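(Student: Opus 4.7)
The plan is to follow the obstruction-theoretic argument used in \cite{DGI06}*{Prop.~3.9} for the case $k = \bF_p$, and to extend it to $k = \bZ_p$ by using the extra hypothesis $\pi_{-1}(\cE) = 0$ to kill the one additional obstruction that arises. Throughout, I think of a right $\cE$-module structure on a fixed underlying spectrum~$k$ as an $S$-algebra map $\cE^{op} \to F_S(k, k)$, and two such structures as equivalent when the corresponding right $\cE$-modules are equivalent via a self-equivalence of the underlying spectrum.

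Let $\mu_1, \mu_2$ be two right $\cE$-module structures on~$k$. Express $\cE$ as the homotopy limit of its Postnikov tower in coconnective associative $S$-algebras under~$k^{op}$, say $\cE \simeq \lim_n \cE\<n\>$, where $\cE\<0\> = k^{op}$ and each stage $\cE\<n\> \to \cE\<n-1\>$ is a square-zero extension whose fibre is a shifted $k^{op}$-bimodule of the form $\Sigma^{-n} \pi_{-n}(\cE)$. Restriction of scalars converts $\mu_i$ into a compatible system of right $\cE\<n\>$-module structures $\mu_i|_{\cE\<n\>}$ on~$k$. The plan is to inductively construct right $\cE\<n\>$-module equivalences $\phi_n \: (k, \mu_1|_{\cE\<n\>}) \to (k, \mu_2|_{\cE\<n\>})$; in the limit these assemble into a right $\cE$-module equivalence $(k, \mu_1) \simeq (k, \mu_2)$.

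At the base stage $n = 0$, both restricted structures agree up to equivalence with the canonical right $k^{op}$-action on~$k$, since $k^{op}$ is concentrated in degree~$0$ and any unital map $k^{op} \to F_S(k, k)$ realises the standard iso on $\pi_0$; thus we may take $\phi_0 = \mathrm{id}_k$. Inductively, the obstruction to extending $\phi_{n-1}$ across the square-zero extension $\cE\<n\> \to \cE\<n-1\>$ is controlled by derivations into the twisted coefficients $\Sigma^{-n}\pi_{-n}(\cE)$ and identifies with an Ext-group of the form $\Ext^s_{k^{op}}(k, \pi_{-n}(\cE) \otimes^{\mathbb L}_{k^{op}} k)$ for a cohomological degree~$s$ determined by~$n$. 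For $k = \bF_p$ all Ext in positive cohomological degree vanishes because $\bF_p$ is a field, so every obstruction is zero. For $k = \bZ_p$ the global dimension is~$1$, so only $\Ext^0$ and $\Ext^1$ can be nonzero; tracking degrees, the only stage at which a nontrivial $\Ext^1$ obstruction can occur is $n = 1$, with coefficients precisely in $\pi_{-1}(\cE)$, and this is killed by the standing hypothesis. All other obstructions vanish automatically.

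The main obstacle will be setting up the Postnikov tower of~$\cE$ and the associated obstruction calculus precisely enough in the associative (not commutative) $S$-algebra setting so that the obstructions to lifting right module equivalences one stage up are rigorously identified with the Ext-groups displayed above. Once that technical framework is in place, the vanishing arguments reduce to elementary homological algebra over~$\bF_p$ and~$\bZ_p$, and the proposition follows.
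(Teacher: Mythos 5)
Your strategy is genuinely different from the paper's. The paper's proof (refining \cite{DGI06}*{Prop.~3.9}) does not filter $\cE$ at all; instead it builds a single cellular $\cE$-module $N$ receiving a map from $M = \cE$ by attaching $\cE$-cells in dimensions $\le 0$, arranged so that the canonical map $M \to k_1$ extends to an equivalence $g_1 \colon N \to k_1$. The hypothesis $\pi_{-1}(\cE) = 0$ is used to guarantee that the first cells attached are $\Sigma^t\cE$ with $t \le -2$, so that each cofibre $C\phi_t$ has $\cE$-cells only in dimensions $\le -1$; connectivity of $k_1$ then makes the composite $C\phi_t \to M \to k_1$ null. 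The same degree count shows that $f_2 \colon M \to k_2$ also extends over $N$, giving a $\pi_0$-iso $g_2 \colon N \to k_2$ that is therefore an equivalence. So $k_1 \simeq N \simeq k_2$ as $\cE$-modules. This is quite elementary: no Postnikov tower of $S$-algebras, no square-zero extensions, no identification of obstruction groups with $\Ext$.

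Your Postnikov-tower approach is reasonable in spirit, but as written there is a real gap in the identification of the obstruction groups, beyond the ``set it up rigorously'' caveat you flag. The obstruction to lifting $\phi_{n-1}$ across $\cE\<n\> \to \cE\<n-1\>$ naturally lives in a derived mapping group over $\cE\<n-1\>$, something like $\pi_{-1}\mathrm{Map}_{\cE\<n-1\>}(k,\; k \otimes_{\cE\<n-1\>} J)$ with $J = \Sigma^{-n}H\pi_{-n}(\cE)$, not in $\Ext^s_{k^{op}}(k, \pi_{-n}(\cE)\otimes_{k^{op}}k)$. For $n \ge 2$ the derived tensor $k \otimes_{\cE\<n-1\>} k^{op}$ and the free $\cE\<n-1\>$-resolution of $k$ each contribute in a spread of degrees, so the claim that ``the only stage at which a nontrivial $\Ext^1$ obstruction can occur is $n=1$'' does not reduce to the global dimension of $\bZ_p$. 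In fact the assertion $\Ext^1_{\bZ_p}(\bZ_p, -) = 0$ is vacuously true for every coefficient module (since $\bZ_p$ is projective over itself), which suggests your obstruction group can't literally be $\Ext^1_{k^{op}}$ --- otherwise the hypothesis $\pi_{-1}(\cE) = 0$ would be unnecessary. What actually saves the day at each stage $n \ge 2$ is a coconnectivity/degree estimate on $k \otimes_{\cE\<n-1\>} J$ together with the connectivity of the resolution cells of $k$ over $\cE\<n-1\>$; this is a genuine argument that your sketch elides. By contrast, the paper's construction exploits the same degree bookkeeping in one place, without having to carry a filtration of the ring along. If you want to pursue the Postnikov route, I would recommend re-deriving the obstruction group over $\cE\<n-1\>$ carefully and then doing the coconnectivity estimate explicitly; but be aware the paper's direct cellular argument is shorter and avoids the square-zero extension machinery entirely.
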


\begin{proof}
When $k = \bF_p$, this is a special case of~\cite{DGI06}*{Prop.~3.9}.
When $k = \bZ_p$, we refine the proof of that proposition.  Let $k_1$ and
$k_2$ be right $\cE$-modules whose restricted $k$-module structures are
given by the usual left action on~$k$.  Let $M = \cE$ as an $\cE$-module,
with the usual right action, and choose $\cE$-module maps $f_1 \:
M \to k_1$ and $f_2 \: M \to k_2$ inducing isomorphisms on~$\pi_0$.
We shall extend~$M$ to a cellular $\cE$-module~$N$ such that $f_1$
extends to an $\cE$-module equivalence $g_1 \: N \to k_1$, and such
that $f_2$ extends to an $\cE$-module map $g_2 \: N \to k_2$.  It will
then follow that $g_2$ is also an equivalence, and $k_1$ and $k_2$
are equivalent as $\cE$-modules.
$$
\xymatrix{
M \ar[r] \ar@(u,u)[rrr]^-{f_1} \ar@(d,l)[drr]_-{f_2}
	& M' \ar[r] & N \ar[r]^-{g_1} \ar[d]_-{g_2} & k_1 \\
& & k_2
}
$$
Note that $\pi_{-1}(M) = 0$.  As a first approximation to~$N$ we build
a cellular $\cE$-module~$M'$ by attaching $\cE$-cells of dimension $\le
0$ to~$M$, so that $\pi_{-1}(M') = 0$ and $\pi_t(M) \to \pi_t(M')$ is
trivial for each $t\le-2$.  More precisely, for each $t\le-2$ choose an
$\cE$-module map
$$
\bigvee_\alpha \Sigma^t \cE
	\overset{\phi_t}\longto \bigvee_\beta \Sigma^t \cE
$$
such that
$$
0 \to \bigoplus_\alpha \bZ_p
	\overset{\pi_t(\phi_t)}\longto \bigoplus_\beta \bZ_p
	\longto \pi_t(M) \to 0
$$
is a free $\bZ_p$-resolution of $\pi_t(M)$.  There is then a map $C\phi_t
\to M$ from the homotopy cofibre of $\phi_t$, inducing an isomorphism
on $\pi_t$.  The composite $C\phi_t \to M \to k_1$ is null-homotopic, since
$C\phi_t$ has cells in dimensions~$t$ and~$t+1 \le -1$ only, while $k_1$ is
connective.  Let $M'$ be the mapping cone of the sum over~$t$ of the maps
$C\phi_t \to M$, and let $f'_1 \: M' \to k_1$ extend~$f_1$.  Then $M'$
has the stated properties.

Iterating the process infinitely often, and letting $N$ be the
(homotopy) colimit of the sequence $M \subset M' \subset \dots$, we
calculate that $\pi_t(N) = 0$ for $t \ne 0$, while $g_1 \: N \to k_1$
is a $\pi_0$-isomorphism, and therefore an equivalence.

We obtained~$N$ from~$M$ by attaching cells of dimensions $\le 0$, so
the obstructions to extending $f_2 \: M \to k_2$ lie in the negative
homotopy groups of~$k_2$, which are trivial.  Hence an extension $g_2 \:
N \to k_2$ exists.  It must be a $\pi_0$-isomorphism, since $f_1$, $f_2$
and~$g_1$ have this property, and is therefore an equivalence, as claimed.
\end{proof}

\begin{remark}
The hypothesis on $\pi_{-1}(\cE)$ can in general not be omitted;
see~\cite{DGI06}*{Rem.~3.11}.
\end{remark}

\subsection{Gorenstein descent}

Suppose that we are given maps $R \to T \to k$ of $S$-algebras, and
that~$T$ is somehow easier to work with than~$R$.  A \emph{descent
theorem} for a property~$P$ gives hypotheses under which $P$ for~$T \to
k$ implies $P$ for~$R \to k$.  We first apply this idea in the case of
the Gorenstein property.

\begin{lemma}
Let $T \to k$ be a map of $S$-algebras, and suppose that the homomorphism
of coefficient rings $\pi_*(T) \to \pi_*(k)$ is (algebraically)
Gorenstein in the sense that $\Ext_{\pi_*(T)}^{*,*}(\pi_*(k),
\pi_*(T))$ is a free $\pi_*(k)$-module of rank~$1$, on a generator in
bidegree~$(s,t)$.  Then $T \to k$ is Gorenstein of shift $a = t-s$.
\end{lemma}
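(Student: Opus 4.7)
The plan is to translate the algebraic Gorenstein hypothesis into a topological statement via the universal coefficient spectral sequence
\begin{equation*}
E_2^{s,t} = \Ext^{s,t}_{\pi_*(T)}(\pi_*(k), \pi_*(T)) \Longrightarrow \pi_{t-s} F_T(k, T),
\end{equation*}
with differentials $d_r \: E_r^{s,t} \to E_r^{s+r, t+r-1}$. This spectral sequence is obtained by realising a free resolution of $\pi_*(k)$ over $\pi_*(T)$ as a cellular $T$-module model of~$k$, filtering by skeleta and applying $\pi_* F_T(-, T)$; it is a spectral sequence of $\pi_*(k)$-modules, since the right $k$-action on~$k$ makes $F_T(k, T)$ a left $k$-module.

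Next I would observe that under the algebraic Gorenstein hypothesis, the $E_2$-page, viewed as a bigraded $\pi_*(k)$-module, is concentrated in the single cohomological filtration~$s$ (with the $\pi_*(k)$-action varying only the internal degree from~$t$). Every differential $d_r$ for $r\ge 2$ strictly raises the filtration index, so both source and target of each $d_r$ cannot simultaneously be nonzero; hence the spectral sequence collapses at $E_2$, and $\pi_* F_T(k, T)$ is a free $\pi_*(k)$-module of rank one, concentrated in topological degree~$a = t-s$.

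To upgrade this isomorphism of coefficient modules to a $k$-module equivalence, I would pick a homotopy class $\iota \in \pi_a F_T(k, T)$ generating the rank-one free $\pi_*(k)$-module, realise it as a map $S^a \to F_T(k, T)$, and use the left $k$-module structure on the target to extend it to a $k$-module map
\begin{equation*}
\phi \: \Sigma^a k \longto F_T(k, T).
\end{equation*}
On homotopy groups $\phi$ sends the generator of $\Sigma^a \pi_*(k)$ to a generator of the rank-one free $\pi_*(k)$-module $\pi_* F_T(k, T)$, hence is an isomorphism. Therefore $\phi$ is an equivalence, witnessing that $T \to k$ is Gorenstein of shift~$a$.

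The main point to handle with care is the construction and convergence of the universal coefficient spectral sequence: in general this requires choosing a cellular $T$-module model of~$k$ whose cellular chains form a free $\pi_*(T)$-resolution of~$\pi_*(k)$, and verifying strong convergence. In the motivating applications $\pi_*(T)$ is polynomial and $\pi_*(k)$ is a quotient by a regular sequence, so a finite Koszul resolution suffices and both construction and convergence are immediate; this also makes the Gorenstein hypothesis transparent, since $\Ext^{*,*}_{\pi_*(T)}(\pi_*(k), \pi_*(T))$ is then literally one-dimensional over $\pi_*(k)$ in the expected top Koszul bidegree.
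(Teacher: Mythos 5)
Your proof takes essentially the same route as the paper: both invoke the universal coefficient / $\Ext$ spectral sequence $\Ext^{s,t}_{\pi_*(T)}(\pi_*(k),\pi_*(T)) \Rightarrow \pi_{t-s}F_T(k,T)$ (the paper cites \cite{EKMM97}*{Thm.~IV.4.1}, you reconstruct it from a cellular $T$-module model of $k$), observe collapse at $E_2$ because the page is concentrated in a single filtration, and then realise a generating homotopy class as a $k$-module map $\Sigma^a k \to F_T(k,T)$ that is a $\pi_*$-isomorphism. One small streamlining available to you on the convergence point you flag: the spectral sequence is conditionally convergent in general, and collapse at $E_2$ (so $RE_\infty = 0$) automatically upgrades this to strong convergence by Boardman's theorem, so there is no need to restrict to the polynomial/Koszul case.
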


Grothendieck's definition given in~\cite{Har67}*{p.~63} is more
restrictive, but this condition suffices for our purposes.

\begin{proof}
The conditionally convergent $\Ext$ spectral sequence
$$
E_2^{s,t} = \Ext_{\pi_*(T)}^{s,t}(\pi_*(k), \pi_*(T))
	\Longrightarrow_s \pi_{t-s} F_T(k, T)
$$
of~\cite{EKMM97}*{Thm.~IV.4.1} is a $\pi_*(k)$-module spectral sequence
that collapses at the $E_2$-term, hence is strongly convergent.
It follows that $\pi_* F_T(k, T) \cong \Sigma^a \pi_*(k)$ as
$\pi_*(k)$-modules, so that $F_T(k, T) \simeq \Sigma^a k$ as $k$-modules.
\end{proof}

\begin{example} \label{ex:ZpFpGor}
$\bZ_p \to \bF_p$ is Gorenstein of algebraic shift $(s,t) = (1,0)$
and of topological shift~$a = -1$.
\end{example}

\begin{lemma}
Let $R \to T \to k$ be maps of $S$-algebras, and suppose that $R \to T$
is Gorenstein of shift~$b$.  Then $T \to k$ is Gorenstein of shift~$a$
if and only if $R \to k$ is Gorenstein of shift~$a+b$.
\end{lemma}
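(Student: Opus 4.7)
The plan is to reduce the statement to a standard change-of-rings adjunction, and then to argue that the two Gorenstein hypotheses combine multiplicatively (more accurately, additively on shifts) through the resulting composition equivalence. The reversibility in the statement is then essentially automatic, because each of the three Gorenstein conditions can be read off the other two using the same equivalence.

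First I would set up the adjunction. For the $S$-algebra map $R \to T$ and any $R$-module $M$, the mapping spectrum $F_R(T, M)$ is a left $T$-module via the right $T$-action on the source copy of~$T$. For any left $T$-module~$N$, there is a natural adjunction equivalence
$$
F_R(N, M) \simeq F_T(N, F_R(T, M)),
$$
which, when $N$ is also a left $k$-module (via $T \to k$), is an equivalence of left $k$-modules. Applying this with $N = k$ and $M = R$ gives the key equivalence
$$
F_R(k, R) \simeq F_T(k, F_R(T, R))
$$
of left $k$-modules.

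Next I would feed in the Gorenstein hypothesis on $R \to T$. By assumption we have a $T$-module equivalence $F_R(T, R) \simeq \Sigma^b T$, so
$$
F_R(k, R) \simeq F_T(k, \Sigma^b T) \simeq \Sigma^b F_T(k, T)
$$
as left $k$-modules. From here both directions of the biconditional are immediate: if $T \to k$ is Gorenstein of shift $a$, so that $F_T(k, T) \simeq \Sigma^a k$, then $F_R(k, R) \simeq \Sigma^{a+b} k$, and conversely any equivalence $F_R(k, R) \simeq \Sigma^{a+b} k$ of left $k$-modules yields $F_T(k, T) \simeq \Sigma^{-b} F_R(k, R) \simeq \Sigma^a k$.

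The only step requiring any real care is the bookkeeping of module structures in the adjunction: one needs to verify that starting with an equivalence of left $T$-modules $F_R(T, R) \simeq \Sigma^b T$ and applying $F_T(k, -)$ produces a genuine equivalence of left $k$-modules (rather than merely of underlying spectra), and symmetrically that the suspension $\Sigma^b$ can be pulled outside $F_T(k, -)$ as a $k$-module equivalence. Once those compatibilities are in place, the two implications are formal.
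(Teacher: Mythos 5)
Your proposal is correct and follows essentially the same route as the paper: the change-of-rings equivalence $F_R(k, R) \simeq F_T(k, F_R(T, R))$ of left $k$-modules, substitution of the hypothesis $F_R(T, R) \simeq \Sigma^b T$, and the observation that both implications of the biconditional are then immediate. The paper states this more tersely, leaving the module-structure bookkeeping implicit, but the argument is the same.
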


\begin{proof}
By hypothesis, $F_R(T, R) \simeq \Sigma^b T$ as left $T$-modules.
It follows that
$$
F_R(k, R) \cong F_T(k, F_R(T, R)) \simeq F_T(k, \Sigma^b T)
	\cong \Sigma^b F_T(k, T)
$$
as left~$k$-modules.  Hence $F_T(k, T) \simeq \Sigma^a k$ if
and only if $F_R(k, R) \simeq \Sigma^{a+b} k$.
\end{proof}

\begin{example} \label{ex:polyGor}
If $\pi_*(T) \cong \bZ_p[y_1, \dots, y_d]$ is polynomial on finitely
many generators, and $k = \bZ_p$, then the ring homomorphism $\pi_*(T)
\to \bZ_p$ is Gorenstein of shift $(s, t) = (d, - \sum_{i=1}^d |y_i|)$.
Hence the $S$-algebra map $T \to \bZ_p$ is Gorenstein of shift
$$
a = - d - \sum_{i=1}^d |y_i| = - \sum_{i=1}^d (|y_i|+1) \,.
$$
Moreover, $\pi_*(T) \to \bF_p$ is Gorenstein of shift $(d + 1,  -
\sum_{i=1}^d |y_i|)$ and $T \to \bF_p$ is Gorenstein of shift $- d -
1 - \sum_{i=1}^d |y_i|$.
\end{example}

\begin{proposition} \label{prop:kotmfGorShift}
The $S$-algebra maps
$$
ko \longto ku \longto \bZ_p \longto \bF_p
$$
are Gorenstein of shift $-2$, $-3$ and~$-1$, respectively.  Hence $ko
\to \bZ_p$ is Gorenstein of shift~$-5$ and $ko \to \bF_p$ is Gorenstein
of shift~$-6$.

At $p=2$ the $S$-algebra maps
$$
\tmf \longto \tmf_1(3) \longto \bZ_2 \longto \bF_2
$$
are Gorenstein of shift $-12$, $-10$ and~$-1$, respectively.
At $p=3$ the $S$-algebra maps
$$
\tmf \longto \tmf_0(2) \longto \bZ_3 \longto \bF_3
$$
are Gorenstein of shift $-8$, $-14$ and~$-1$, respectively.
At $p\ge5$ the $S$-algebra maps
$$
\tmf \longto \bZ_p \longto \bF_p
$$
are Gorenstein of shift $-22$ and~$-1$, respectively.  Hence $\tmf \to
\bZ_p$ is Gorenstein of shift~$-22$, and $\tmf \to \bF_p$ is Gorenstein
of shift~$-23$, uniformly at all primes.
\end{proposition}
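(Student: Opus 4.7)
The plan is to verify each Gorenstein shift by combining Examples~\ref{ex:ZpFpGor} and~\ref{ex:polyGor} with the composition lemma for Gorenstein maps, handling the three nontrivial steps $ko \to ku$, $\tmf \to \tmf_1(3)$ at $p=2$, and $\tmf \to \tmf_0(2)$ at $p=3$ via $S$-algebra realizations of the form $T \simeq R \wedge Y$ for a suitable self-dual finite spectrum $Y$.

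First I would dispense with the maps targeting Eilenberg--MacLane spectra. Example~\ref{ex:ZpFpGor} gives shift $-1$ for $\bZ_p \to \bF_p$. Applying Example~\ref{ex:polyGor} to the polynomial coefficient rings $\pi_*(ku) = \bZ_p[u]$ with $|u|=2$, $\pi_*(\tmf_1(3)) = \bZ_2[a_1, a_3]$ with $|a_1|=2$ and $|a_3|=6$, $\pi_*(\tmf_0(2)) = \bZ_3[a_2, a_4]$ with $|a_2|=4$ and $|a_4|=8$, and, at $p \ge 5$, the Hopkins--Miller identification $\pi_*(\tmf) = \bZ_p[c_4, c_6]$ with $|c_4|=8$ and $|c_6|=12$ yields shifts $-3$, $-(2+1)-(6+1)=-10$, $-(4+1)-(8+1)=-14$, and $-(8+1)-(12+1)=-22$, respectively.

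Next I would treat the three intermediate maps via the common pattern that, whenever $T \simeq R \wedge Y$ as $R$-modules for a finite spectrum $Y$ with Spanier--Whitehead self-duality $DY \simeq \Sigma^{-n} Y$, we have
\[
F_R(T, R) \simeq F(Y, R) \simeq DY \wedge R \simeq \Sigma^{-n}(R \wedge Y) \simeq \Sigma^{-n} T
\]
as $T$-modules, exhibiting $R \to T$ as Gorenstein of shift $-n$. For $ko \to ku$, Wood's equivalence $ku \simeq ko \wedge C\eta$ (valid at all primes) together with $DC\eta \simeq \Sigma^{-2} C\eta$ for the two-cell complex $C\eta = S \cup_\eta e^2$ gives shift $-2$. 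For $\tmf \to \tmf_0(2)$ at $p=3$, the equivalence $\tmf_0(2) \simeq \tmf \wedge \Psi$ from Proposition~\ref{prop:Htmfp3} with $\Psi = S \cup_\nu e^4 \cup_\nu e^8$ and the self-duality $D\Psi \simeq \Sigma^{-8} \Psi$, which follows from the symmetry of the two attachments by $\nu = \alpha_1$, gives shift $-8$. For $\tmf \to \tmf_1(3)$ at $p=2$, the analogous Mathew-style equivalence $\tmf_1(3) \simeq \tmf \wedge Y$ for a suitable eight-cell $2$-local finite complex $Y$ with $DY \simeq \Sigma^{-12} Y$ gives shift $-12$.

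Repeated application of the composition lemma then assembles the remaining total shifts: $ko \to \bZ_p$ acquires shift $-2 - 3 = -5$, and further composition with $\bZ_p \to \bF_p$ gives $ko \to \bF_p$ shift $-6$; at $p=2$, $\tmf \to \bZ_2$ acquires shift $-12 - 10 = -22$, and at $p=3$, $\tmf \to \bZ_3$ acquires shift $-8 - 14 = -22$, so at all primes the composite $\tmf \to \bZ_p$ has shift $-22$ and $\tmf \to \bF_p$ has shift $-23$. The main obstacle is verifying the Spanier--Whitehead self-dualities of $\Psi$ at $p=3$ and of the eight-cell complex realizing $\tmf_1(3)$ at $p=2$; both are expected to follow from explicit cellular duality arguments exploiting the self-duality of the attaching classes $\eta$, $\nu$, and $\alpha_1$, or equivalently to be extracted from the finite-complex decompositions in \cite{Mat16}.
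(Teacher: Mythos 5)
Your proposal is correct and follows essentially the same route as the paper: Examples~\ref{ex:ZpFpGor} and~\ref{ex:polyGor} for the Eilenberg--MacLane and polynomial-ring steps, Wood-type equivalences $T \simeq R \wedge Y$ with Spanier--Whitehead self-dual $Y$ ($Y = C\eta$, $\Phi$, $\Psi$) for the three intermediate maps, and the composition lemma to assemble the totals. The paper supplies the precise references for the self-dualities you flag as the main open point ($D\Phi \simeq \Sigma^{-12}\Phi$ from \cite{BR21}*{Lem.~1.42}, $D\Psi \simeq \Sigma^{-8}\Psi$ from \cite{BR21}*{Def.~13.3}) and attributes $\tmf \wedge \Psi \simeq \tmf_0(2)$ to \cite{Mat16}*{Thm.~4.16} rather than to Proposition~\ref{prop:Htmfp3}, which merely records it.
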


\begin{proof}
The homotopy rings
\begin{align*}
\pi_*(ku) &\cong \bZ_p[u] \\
\pi_*(\tmf_1(3)) &\cong \bZ_2[a_1, a_3] \\
\pi_*(\tmf_0(2)) &\cong \bZ_3[a_2, a_4] \\
\pi_*(\tmf) &\cong \bZ_p[c_4, c_6] \,,
\end{align*}
where $p\ge5$ in the last case, are all polynomial, with $|u| = 2$, $|a_i|
= 2i$ and $|c_i| = 2i$.  See \cite{Bot59}, \cite{MR09}*{Prop.~3.2},
\cite{Beh06}*{\S1.3}, \cite{Del75}*{Prop.~6.1} or~\cite{BR21}*{\S9.3,
Thm.~13.4}.  This accounts for the Gorenstein shifts by~$-3$, $-10$,
$-14$ and~$-22$, as in Example~\ref{ex:polyGor}.

The shifts by~$-1$ are covered by Example~\ref{ex:ZpFpGor}.

By Wood's theorem~\cite{BG10}*{Lem.~4.1.2}, and its
parallels~\cite{Mat16}*{Thm.~4.12, Thm.~4.15} for topological modular
forms, there are equivalences
\begin{equation} \label{eq:wood}
\begin{aligned}
ko \wedge C\eta &\simeq ku \\
\tmf \wedge \Phi &\simeq \tmf_1(3) \\
\tmf \wedge \Psi &\simeq \tmf_0(2)
\end{aligned}
\end{equation}
of $ko$- or $\tmf$-modules, according to the case.  Here $C\eta = S
\cup_\eta e^2$ is a $2$-cell, $2$-dimensional Spanier--Whitehead self-dual
spectrum, $\Phi$ is an $8$-cell, $12$-dimensional Spanier--Whitehead
self-dual $2$-local spectrum~\cite{BR21}*{Lem.~1.42} with mod~$2$
cohomology $H^*(\Phi) \cong A(2)/\!/E(2) \cong \Phi A(1)$ realising
the double of~$A(1)$, and $\Psi = S \cup_\nu e^4 \cup_\nu e^8$ is
a $3$-cell $8$-dimensional Spanier--Whitehead self-dual $3$-local
spectrum~\cite{BR21}*{Def.~13.3} with mod~$3$ cohomology $H^*(\Psi)
\cong P(0) = \<P^1\>$.  The duality equivalences $D(C\eta) \simeq
\Sigma^{-2} C\eta$, $D\Phi \simeq \Sigma^{-12} \Phi$ and $D\Psi \simeq
\Sigma^{-8} \Psi$ account for the Gorenstein shifts by~$-2$, $-12$
and~$-8$, respectively.  For example, in the case of $\tmf$ at~$p=2$
we have equivalences
\begin{align*}
F_{\tmf}(\tmf_1(3), \tmf)
	&\simeq F_{\tmf}(\tmf \wedge \Phi, \tmf)
	\simeq F(\Phi, \tmf) \\
	&\simeq \tmf \wedge D\Phi
	\simeq \tmf \wedge \Sigma^{-12} \Phi
	\simeq \Sigma^{-12} \tmf_1(3)
\end{align*}
of $\tmf$-modules.
\end{proof}

\subsection{Small descent}

We can use descent to verify that $k$ is a (proxy-)small $R$-module
in the cases relevant for Sections~\ref{sec:ko} and~\ref{sec:tmf}.

\begin{lemma} \label{lem:smalldescent}
Let $R \to T \to k$ be maps of $S$-algebras, such that $T$ is small as
an $R$-module and $k$ is small as a $T$-module.  Then $k$ is small as
an $R$-module.
\end{lemma}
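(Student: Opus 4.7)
The plan is to exploit the fact that finitely-built-from is transitive, once one interprets ``build'' through the appropriate restriction of scalars. First I would recall the definition: a module $A$ is small over $R$ precisely when it lies in the thick subcategory of $R$-modules generated by~$R$. So the hypotheses say that $k$ lies in the thick subcategory $\langle T\rangle_T$ of $T$-modules generated by~$T$, and $T$ lies in the thick subcategory $\langle R\rangle_R$ of $R$-modules generated by~$R$; the goal is to show $k \in \langle R\rangle_R$.

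The main tool is restriction of scalars $U\colon T\text{-mod} \to R\text{-mod}$ along $R \to T$. This functor is triangulated (it preserves suspensions, cofibre sequences, and retracts, simply because the underlying spectra and maps are unchanged), and it carries $T$ to $U(T) = T$, which by assumption lies in $\langle R\rangle_R$. A triangulated functor sends the thick subcategory generated by an object~$X$ into any thick subcategory containing the image of~$X$, so $U$ sends $\langle T\rangle_T$ into $\langle R\rangle_R$. In particular, $k = U(k)$ lies in $\langle R\rangle_R$, which is exactly the statement that $k$ is small as an $R$-module.

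There is essentially no obstacle here beyond checking that the argument does what one expects on the model level. One can make the induction explicit by writing $k$ as obtained from $T$ in $T$-modules by finitely many operations (cofibres, suspensions, retracts), and observing that each of those operations is performed by maps of $T$-modules, which in particular are maps of $R$-modules; then the same finite sequence of operations exhibits~$k$ as built from~$T$ in $R$-modules, after which one concatenates with the finite construction of $T$ from $R$ in $R$-modules. The only mild subtlety is the closure under retracts, but this is immediate since a splitting of $T$-modules is a fortiori a splitting of $R$-modules. Thus the composite finite construction witnesses $k \in \langle R\rangle_R$, completing the proof.
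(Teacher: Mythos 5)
Your proof is correct and follows essentially the same route as the paper's: restriction of scalars along $R \to T$ is exact and preserves retracts, so ``$T$ finitely builds $k$'' persists after forgetting down to $R$-modules, and then transitivity with ``$R$ finitely builds $T$'' finishes. The paper states this in two sentences; you have merely spelled out the standard argument about triangulated functors and thick subcategories in more detail.
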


\begin{proof}
Since $T$ finitely builds $k$ as a $T$-module, this remains true
as $R$-modules.  Hence $R$ finitely builds~$k$.
\end{proof}

\begin{lemma} \label{lem:polysmall}
Let $T$ be a commutative $S$-algebra with $\pi_*(T) \cong \bZ_p[y_1,
\dots, y_d]$.  Then $\bZ_p$ and $\bF_p$ are small as $T$-modules.
\end{lemma}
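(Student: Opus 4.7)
The plan is to realise $\bZ_p$ as a Koszul-style quotient of~$T$, finitely built from copies of~$T$. For each generator $y_i$ the homotopy cofibre sequence
$$
\Sigma^{|y_i|} T \overset{y_i}\longto T \longto T/y_i
$$
from Subsection~\ref{subsec:localcohom} exhibits $T/y_i$ as a two-cell $T$-module, so the Koszul complex
$$
T/(y_1, \dots, y_d) = T/y_1 \wedge_T \dots \wedge_T T/y_d
$$
is finitely built from~$T$. Since $y_1, \dots, y_d$ is a regular sequence in $\pi_*(T) = \bZ_p[y_1, \dots, y_d]$, iterating the long exact sequence in homotopy yields $\pi_*(T/(y_1, \dots, y_d)) \cong \bZ_p$, concentrated in degree~$0$.

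Next I would identify $T/(y_1, \dots, y_d)$ with $\bZ_p$ as a $T$-module. The idea is that any connective $T$-module~$M$ with $\pi_* M$ concentrated in degree~$0$ and equal to~$\bZ_p$ must be equivalent, as a $T$-module, to $\bZ_p$ itself (equipped with its $T$-module structure via the augmentation $T \to \bZ_p$): the Postnikov tower collapses to a single layer, and a $T$-module map $T/(y_1, \dots, y_d) \to \bZ_p$ realising the identity on~$\pi_0$ can be built by inductively extending the augmentation through the cofibre sequences, the obstructions being controlled by $\pi_{|y_i|}$-groups that vanish because $y_i$ acts as zero on the target and $|y_i| > 0$. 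The resulting map is a $\pi_*$-isomorphism, hence an equivalence of $T$-modules. Therefore $\bZ_p$ is finitely built from~$T$, and is small.

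For $\bF_p$, the cofibre sequence $\bZ_p \overset{p}\longto \bZ_p \longto \bF_p$ exhibits $\bF_p$ as a two-cell $\bZ_p$-module, so $\bF_p$ is small as a $\bZ_p$-module. Applying the small descent Lemma~\ref{lem:smalldescent} to the composite $T \to \bZ_p \to \bF_p$ then gives that $\bF_p$ is small as a $T$-module.

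The main obstacle will be the clean identification of the Koszul complex $T/(y_1, \dots, y_d)$ with $\bZ_p$ as a $T$-module: the underlying homotopy calculation from regularity is routine, but tracking the $T$-module structures through the iterated extension of the augmentation, or equivalently invoking the uniqueness of a connective $T$-module with prescribed homotopy concentrated in degree~$0$, is where the technical care is needed.
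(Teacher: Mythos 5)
Your proof is correct and follows essentially the same route as the paper: realise $\bZ_p$ as the Koszul complex $T/y_1 \wedge_T \cdots \wedge_T T/y_d$, which is visibly finitely built from $T$, and then obtain $\bF_p$ as $\bZ_p/p$. The paper's one-line proof simply asserts $T/y_1 \wedge_T \cdots \wedge_T T/y_d \simeq \bZ_p$; the identification step you spell out (that a connective $T$-module with homotopy concentrated in degree~$0$ and equal to $\bZ_p$ must be $\bZ_p$ with its augmentation $T$-module structure, e.g.\ by Postnikov truncation or by the obstruction-theoretic extension you describe) is exactly what the paper leaves implicit, and your care there is well placed.
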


\begin{proof}
$T/y_1 \wedge_T \dots \wedge_T T/y_d \simeq \bZ_p$ and
$\bZ_p/p \simeq \bF_p$ are finitely built from~$T$.
\end{proof}

\begin{remark}
More generally, if $\pi_*(k)$ is a perfect $\pi_*(T)$-module, meaning
that it admits a finite length resolution by finitely generated projective
$\pi_*(T)$-modules, then $k$ is small as a $T$-module.
\end{remark}

\begin{corollary} \label{cor:ZpFpsmallkotmf}
$\bZ_p$ and $\bF_p$ are small as $ko$-modules and as $\tmf$-modules,
at all primes~$p$.
\end{corollary}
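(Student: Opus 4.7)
The plan is to assemble the two preceding lemmas by routing through the intermediate $S$-algebras identified in Proposition~\ref{prop:kotmfGorShift}, whose homotopy rings are polynomial over~$\bZ_p$.

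First, for $ko$ at any prime~$p$, I would factor the augmentation as $ko \to ku \to \bZ_p \to \bF_p$. The Wood equivalence $ko \wedge C\eta \simeq ku$ from~\eqref{eq:wood} exhibits $ku$ as finitely built from~$ko$, since $C\eta$ is a finite spectrum, so $ku$ is small as a $ko$-module. Since $\pi_*(ku) \cong \bZ_p[u]$ is polynomial, Lemma~\ref{lem:polysmall} gives that $\bZ_p$ and $\bF_p$ are small as $ku$-modules. Two applications of Lemma~\ref{lem:smalldescent} then yield smallness of $\bZ_p$ and $\bF_p$ as $ko$-modules.

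Next, for $\tmf$ at $p=2$ I would factor through $\tmf \to \tmf_1(3) \to \bZ_2 \to \bF_2$. The Wood-type equivalence $\tmf \wedge \Phi \simeq \tmf_1(3)$ from~\eqref{eq:wood}, with $\Phi$ a finite $2$-local spectrum, shows that $\tmf_1(3)$ is finitely built from~$\tmf$, hence small as a $\tmf$-module. Since $\pi_*(\tmf_1(3)) \cong \bZ_2[a_1, a_3]$ is polynomial, Lemma~\ref{lem:polysmall} applies to give smallness of $\bZ_2$ and $\bF_2$ as $\tmf_1(3)$-modules, and Lemma~\ref{lem:smalldescent} propagates this down to~$\tmf$. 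The case $p=3$ is identical, using instead $\tmf \wedge \Psi \simeq \tmf_0(2)$ with $\Psi$ a finite $3$-local spectrum and $\pi_*(\tmf_0(2)) \cong \bZ_3[a_2, a_4]$. Finally, for $p \ge 5$ we have $\pi_*(\tmf) \cong \bZ_p[c_4, c_6]$, so Lemma~\ref{lem:polysmall} applies directly to $\tmf$ itself, with no descent needed.

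There is no real obstacle here; the corollary is essentially a corollary of the assembly, with the only substantive inputs being (i) the Wood-type equivalences recorded in~\eqref{eq:wood}, which guarantee the first descent step is along a small map, and (ii) the polynomial structure of the coefficient rings of the intermediate algebras, which feeds Lemma~\ref{lem:polysmall}. Both ingredients were already established in Proposition~\ref{prop:kotmfGorShift}, so the argument is a short chain of applications of Lemmas~\ref{lem:smalldescent} and~\ref{lem:polysmall}.
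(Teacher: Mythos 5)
Your proof is correct and takes essentially the same route as the paper: apply Lemma~\ref{lem:polysmall} to the intermediate algebras with polynomial homotopy and then Lemma~\ref{lem:smalldescent} using the Wood-type equivalences~\eqref{eq:wood}. The only cosmetic difference is that for odd primes the paper notes $\pi_*(ko) = \bZ_p[A]$ is already polynomial and invokes Lemma~\ref{lem:polysmall} on $ko$ directly, whereas you route uniformly through $ku$; both work.
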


\begin{proof}
Lemma~\ref{lem:polysmall} applies to the commutative $S_p$-algebras
$ko$ for $p\ge3$, $ku$ for all~$p$,
$\tmf_1(3)$ for $p=2$, $\tmf_0(2)$ for $p=3$, and~$\tmf$ for $p\ge5$.
Lemma~\ref{lem:smalldescent} then covers the cases of~$ko$ at~$p=2$,
and $\tmf$ at $p \in \{2,3\}$, in view of~\eqref{eq:wood}.
\end{proof}

\subsection{Descent of algebraic cellularisation}

\begin{definition}
Let $R$ be a commutative $S$-algebra and $k$ an $R$-module.  We say that
$R$ has \emph{algebraic $k$-cellularisation by~$J$} if $J = (x_1, \dots,
x_d) \subset \pi_*(R)$ is a finitely generated ideal with
$$
\Cell_k M \simeq \Gamma_J M
$$
for all $R$-modules~$M$.
\end{definition}

This condition only depends on the radical $\sqrt{J}$ of~$J$, and by
Lemmas~\ref{lem:mutuallybuild} and~\ref{lem:CellGammaJ} it is equivalent
to asking that the $R$-modules $k$ and~$R/J$ mutually build one another.

\begin{lemma} \label{lem:algcellpoly}
Let $T$ be a commutative $S$-algebra with $\pi_*(T) \cong \bZ_p[y_1,
\dots, y_d]$.  Then $T$ has algebraic $\bZ_p$-cellularisation by $(y_1,
\dots, y_d)$, and algebraic $\bF_p$-cellularisation by $(p, y_1, \dots,
y_d)$.
\end{lemma}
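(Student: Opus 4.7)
The plan is to use the equivalent characterisation given in the remark preceding the lemma: by Lemmas~\ref{lem:mutuallybuild} and~\ref{lem:CellGammaJ}, it suffices to exhibit, as $T$-modules, mutual builds between $\bZ_p$ and $T/J$ for $J = (y_1, \dots, y_d)$, and between $\bF_p$ and $T/J'$ for $J' = (p, y_1, \dots, y_d)$. In fact I will establish the stronger statement that these pairs of $T$-modules are equivalent, making mutual building immediate.

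For the first assertion, since $\pi_*(T) \cong \bZ_p[y_1, \dots, y_d]$ with each $y_i$ of positive degree, the sequence $y_1, \dots, y_d$ is regular in $\pi_*(T)$. Each cofibre sequence $\Sigma^{|y_i|} N \xrightarrow{y_i} N \to N/y_i$ therefore induces a short exact sequence on homotopy, so smashing over $T$ one factor at a time yields $\pi_*(T/J) \cong \bZ_p$ concentrated in degree zero. Since $T$ is connective with $\pi_0(T) = \bZ_p$, the Postnikov truncation $T \to \tau_{\le 0} T = \bZ_p$ endows the Eilenberg--MacLane spectrum $\bZ_p$ with a $T$-module structure on which each $y_i$ acts trivially, so the canonical map $T \to \bZ_p$ factors through $T/J$; the resulting $T$-module map $T/J \to \bZ_p$ is a $\pi_*$-isomorphism, hence an equivalence. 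The $\bF_p$-case is entirely analogous, since $p, y_1, \dots, y_d$ remains regular in $\pi_*(T)$, giving $T/J' \simeq \bF_p$ as $T$-modules.

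There is no serious obstacle here: the proof reduces to the standard Koszul-complex computation for a regular sequence, together with the basic observation that a connective $T$-module whose homotopy is $\bZ_p$ (resp.\ $\bF_p$) concentrated in degree zero is automatically equivalent to $\bZ_p$ (resp.\ $\bF_p$) as a $T$-module. The only point worth noting is that once the equivalences $T/J \simeq \bZ_p$ and $T/J' \simeq \bF_p$ are in hand, the mutual building condition needed to invoke Lemmas~\ref{lem:mutuallybuild} and~\ref{lem:CellGammaJ} is trivially satisfied.
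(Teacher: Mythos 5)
Your argument is correct and takes essentially the same route as the paper. The paper's proof is just the two-line observation that $T/J' \simeq k$ for $J' = (y_1, \dots, y_d)$ or $(p, y_1, \dots, y_d)$, whence $\Cell_k M \simeq \Cell_{T/J'} M \simeq \Gamma_{J'} M$ by Lemma~\ref{lem:CellGammaJ}; you have simply spelled out the regular-sequence Koszul computation and the Postnikov-truncation argument that the paper leaves implicit in asserting $T/J' \simeq k$.
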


\begin{proof}
Letting $J' = (y_1, \dots, y_d)$ or $J' = (p, y_1, \dots, y_d)$ we have
$T/J' \simeq k = \bZ_p$ or $T/J' \simeq k = \bF_p$, according to the case.
Hence $\Cell_k M \simeq \Cell_{T/J'} M \simeq \Gamma_{J'} M$.
\end{proof}

\begin{lemma} \label{lem:algcelldesc}
Let $\phi \: R \to T$ be a map of commutative $S$-algebras, where $R$ is
connective with $\pi_0(R) = \bZ_p$.  Let $J = (x_1, \dots, x_d) \subset
\pi_*(R)$ be such that $\pi_*(R/J)$ is bounded above, and suppose that $T$
has algebraic $\bZ_p$-cellularisation by
$$
J' = (\phi(x_1), \dots, \phi(x_d)) \subset \pi_*(T) \,.
$$
Then $R$ has algebraic $\bZ_p$-cellularisation by~$J$.

Similarly, if $\pi_*(R/J)$ is $p$-power torsion and bounded above,
and $T$ has algebraic $\bF_p$-cellularisation by $J'$, then $R$ has
algebraic $\bF_p$-cellularisation by~$J$.
\end{lemma}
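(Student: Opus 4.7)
The approach is to apply Lemmas~\ref{lem:mutuallybuild} and~\ref{lem:CellGammaJ}, reducing both claims to the statement that $R/J$ and $k$ mutually build one another as $R$-modules, where $k$ denotes $\bZ_p$ or $\bF_p$ in the respective cases. The same reduction applied to $T$ turns the hypothesis into the statement that $T/J'$ and $k$ mutually build one another as $T$-modules.

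For the direction ``$R/J$ builds $k$'' as $R$-modules, I would argue by descent along $\phi$. Building in $T$-modules implies building in $R$-modules under restriction along $\phi$, so $T/J'$ builds $k$ as $R$-modules. Iterating the cofibre sequences defining $R/J$ and $T/J'$ supplies a natural $R$-module equivalence $R/J \wedge_R T \simeq T/J'$. Since $T$ lies in the localising subcategory of $R$-modules generated by $R$ (trivially so, as this subcategory is all of $R$-modules), and the functor $- \wedge_R R/J$ preserves localising subcategories, $R/J \simeq R \wedge_R R/J$ builds $R/J \wedge_R T \simeq T/J'$ as $R$-modules. Composing, $R/J$ builds $k$.

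For the direction ``$k$ builds $R/J$'' I would use the boundedness hypothesis together with the Postnikov tower of $R/J$ as an $R$-module. Since $R$ is connective and $\pi_*(R/J)$ is bounded above, the tower has only finitely many nonzero layers, each of the form $\Sigma^t H\pi_t(R/J)$ and each factoring through the augmentation $R \to \bZ_p$ to become an $H\bZ_p$-module spectrum. In the $\bZ_p$ case, any $\bZ_p$-module admits a free resolution of length at most one, realising each layer as a cofibre of a map of wedges of suspensions of $\bZ_p$, so iteration across the finite tower builds $R/J$ from $\bZ_p$. In the $\bF_p$ case, the additional hypothesis that $\pi_*(R/J)$ is $p$-power torsion yields a filtration of each $\pi_t(R/J)$ by its $p^n$-torsion subgroups with $\bF_p$-vector space subquotients, so the same Postnikov argument builds $R/J$ from $\bF_p$. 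The main obstacle I expect is the bookkeeping in this second direction: one must check that the Postnikov layers of $R/J$, viewed as $R$-modules, genuinely carry the $H\bZ_p$-module structure on which the free-resolution argument relies, so that the whole construction takes place inside the category of $R$-modules.
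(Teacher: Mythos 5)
Your proposal is correct and follows essentially the same route as the paper: reduce via Lemmas~\ref{lem:mutuallybuild} and~\ref{lem:CellGammaJ} to mutual building, use restriction of scalars along $\phi$ together with $R/J \wedge_R T \simeq T/J'$ for the direction ``$R/J$ builds $k$'', and appeal to the (finitely many, bounded) Postnikov layers of the $R$-module $R/J$ for the other direction, which the paper simply asserts. The obstacle you flag at the end is not a genuine one: because $R$ is connective and each Postnikov layer of $R/J$ has homotopy concentrated in a single degree, the $R$-action on such a layer automatically factors through the truncation $R \to H\pi_0(R) = \bZ_p$, so the free-resolution argument does take place inside $R$-modules exactly as you describe.
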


\begin{proof}
In the case $k = \bZ_p$, the $R$-module $\bZ_p$ builds $R/J$ since
$\pi_*(R/J)$ is bounded above.  Conversely, $R$ builds~$T$ so $R/J$
builds $T \wedge_R R/J = T/J'$.  By hypothesis, $T/J'$ builds $\bZ_p$
in $T$-modules, hence also in $R$-modules.  Thus $R/J$ builds $\bZ_p$
in $R$-modules.

Similarly, for $k = \bF_p$ the $R$-module $\bF_p$ builds $R/J$ since
$\pi_*(R/J)$ is $p$-power torsion and bounded above.  Conversely,
$R/J$ builds $T/J'$ as before.  By hypothesis, $T/J'$ builds $\bF_p$
in $T$-modules, hence also in $R$-modules.  Thus $R/J$ builds $\bF_p$
in $R$-modules.
\end{proof}

Recall that $B \in \pi_8(\tmf)$ (together with $B + \epsilon$) is detected
by the modular form~$c_4$, while we write $M$ for $M \in \pi_{192}(\tmf)$
detected by $\Delta^8$ when $p=2$, and for $H \in \pi_{72}(\tmf)$ detected
by $\Delta^3$ when $p=3$.  For uniformity of notation, let us also write
$M$ for the class in $\pi_{24}(\tmf)$ detected by~$\Delta$ when $p\ge5$.

\begin{proposition} \label{prop:algcellkotmf}
The commutative $S_p$-algebra $ko$ has algebraic $\bZ_p$-cellularisation
by $(B)$, and algebraic $\bF_p$-cellularisation by $(p, B)$, for all
primes~$p$.
\begin{align*}
\Cell_{\bZ_p} ko &\simeq \Gamma_{(B)} ko \\
\Cell_{\bF_p} ko &\simeq \Gamma_{(p, B)} ko
\end{align*}
Likewise, $\tmf$ has algebraic $\bZ_p$-cellularisation by $(B, M)$,
and algebraic $\bF_p$-cellu\-larisation by $(p, B, M)$, for all primes~$p$.
\begin{align*}
\Cell_{\bZ_p} \tmf &\simeq \Gamma_{(B, M)} \tmf \\
\Cell_{\bF_p} \tmf &\simeq \Gamma_{(p, B, M)} \tmf
\end{align*}
\end{proposition}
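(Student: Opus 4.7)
The plan is to combine the polynomial base case (Lemma~\ref{lem:algcellpoly}) with the descent Lemma~\ref{lem:algcelldesc}, making use of the observation (following the definition of algebraic cellularisation) that the condition depends only on the radical $\sqrt{J}$ of~$J$.

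First I would dispose of the cases in which $\pi_*(R)$ is itself a polynomial ring. For $ko$ at odd~$p$, $\pi_*(ko) = \bZ_p[A]$, so Lemma~\ref{lem:algcellpoly} applies directly with $y_1 = A$; since $B = A^2/4$ at odd primes, the ideals $(B)$ and $(A)$ have the same radical, and likewise after adjoining~$p$. For $\tmf$ at $p \ge 5$, $\pi_*(\tmf) = \bZ_p[c_4, c_6]$ and the Weierstrass relation $1728\, M = c_4^3 - c_6^2$ (with $1728$ a unit) gives $\sqrt{(B, M)} = (c_4, c_6)$, so Lemma~\ref{lem:algcellpoly} again concludes.

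For the remaining cases I would descend along the Wood-type maps already used in Proposition~\ref{prop:kotmfGorShift} via the equivalences~\eqref{eq:wood}: $ko \to ku$ at $p=2$, $\tmf \to \tmf_1(3)$ at $p=2$, and $\tmf \to \tmf_0(2)$ at $p=3$. In each case the target has polynomial homotopy over~$\bZ_p$ (respectively $\bZ_2[u]$, $\bZ_2[a_1, a_3]$, $\bZ_3[a_2, a_4]$), so Lemma~\ref{lem:algcellpoly} produces algebraic cellularisation of the target by the polynomial generators. Lemma~\ref{lem:algcelldesc} then requires verifying (a) that $\pi_*(R/J)$ is bounded above (and $p$-power torsion in the $\bF_p$ case), and (b) that the image of the generators of~$J$ in $\pi_*(T)$ generates an ideal with the same radical as the polynomial generators. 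Claim~(a) is routine: for $ko$, $\pi_*(ko/B)$ sits in a bounded range of degrees by Bott periodicity; for $\tmf$, Lemma~\ref{lem:tmffinite}(\ref{item3}) gives finiteness of $\pi_*(\tmf/(p, B, M))$, and the proof of that lemma also yields that $\pi_*(\tmf/(B, M))$ is finitely generated over~$\bZ_p$ and hence bounded above.

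The main technical step is claim~(b), the radical identities. For $ko \to ku$ this is trivial, since $\phi(B)$ is a unit multiple of~$u^4$, so $\sqrt{(\phi(B))} = (u)$. For $\tmf \to \tmf_1(3)$ at $p=2$, I would substitute the Weierstrass formulas $c_4 = a_1^4 - 24 a_1 a_3$ and $\Delta = a_3^3(a_1^3 - 27 a_3)$ for the universal curve $y^2 + a_1 xy + a_3 y = x^3$, reduce modulo~$2$, and check nilpotency of $a_1, a_3$ in $\bF_2[a_1, a_3]/(a_1^4,\, a_3^3 a_1^3 + a_3^4)$. For $\tmf \to \tmf_0(2)$ at $p=3$, a parallel argument uses $c_4 = 16(a_2^2 - 3 a_4)$ and $\Delta = 16 a_4^2(a_2^2 - 4 a_4)$, reduced modulo~$3$. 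These are brief commutative-algebra computations, but they are where the geometric content of the covers $\tmf \to \tmf_1(3)$ and $\tmf \to \tmf_0(2)$ (their ramification only at the cusp) enters the argument, and hence the only real obstacle in an otherwise formal chain of reductions.
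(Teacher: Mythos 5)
Your proposal follows the paper's proof almost exactly: both establish the polynomial base case via Lemma~\ref{lem:algcellpoly} and then descend along the Wood-type maps $ko \to ku$, $\tmf \to \tmf_1(3)$, $\tmf \to \tmf_0(2)$ using Lemma~\ref{lem:algcelldesc}, reducing the matter to the bounded-above/finiteness checks and the radical identities. The only (trivial) divergence is that you handle $ko$ at odd~$p$ directly from $\pi_*(ko) = \bZ_p[A]$, whereas the paper runs all of~$ko$ through the complexification $ko \to ku$.

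One small caution on your step~(b): for the $\bZ_p$-cellularisation you need $\sqrt{(c_4,\Delta)} = (a_1,a_3)$ in $\bZ_2[a_1,a_3]$ itself (resp.\ $(a_2,a_4)$ in $\bZ_3[a_2,a_4]$), and your suggested reduction modulo~$p$ only verifies the radical identity for the ideal $(p,c_4,\Delta)$, which is what the $\bF_p$-cellularisation case requires. The integral identity is still straightforward --- from $c_4 = a_1(a_1^3 - 24a_3)$ and $\Delta = a_3^3(a_1^3 - 27a_3)$ one deduces $3 a_1 a_3^4 \in (c_4,\Delta)$, and since $3$ is a unit in~$\bZ_2$ this gives $a_1 a_3^4 = 0$, whence $a_1^{16} = a_3^8 = 0$ modulo $(c_4,\Delta)$ --- but that inversion of~$3$ (resp.\ of~$2$ at $p=3$) is precisely the integral input that the mod-$p$ reduction discards, so your computation needs to be run over $\bZ_p$ and not just over~$\bF_p$.
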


\begin{proof}
For $ko$, we apply Lemma~\ref{lem:algcelldesc} to the complexification
map $\phi \: ko \to ku$ with $J = (B)$, where $\phi(B) = u^4$.
Then $\pi_*(ko/B) \cong \bZ_p\{1, \eta, \eta^2, A\}/(2 \eta, 2 \eta^2)$
is finitely generated over~$\bZ_p$.  Moreover, $J' = (u^4)$ has radical
$(u) \subset \pi_*(ku)$.  According to Lemma~\ref{lem:algcellpoly},
$ku$ has algebraic $\bZ_p$-cellularisation by~$(u)$, hence it also has
algebraic $\bZ_p$-cellularisation by~$J'$.

Similarly, with $J = (p, B)$ we see that $\pi_*(ko/(p, B))$ is finite
and $J' = (p, u^4)$ has radical $(p, u) \subset \pi_*(ku)$, so $ku$
has algebraic $\bF_p$-cellularisation by~$(p, u)$ and by~$J'$.

For $\tmf$ at $p=2$ we apply Lemma~\ref{lem:algcelldesc} to the map $\phi
\: \tmf \to \tmf_1(3)$ with $J = (B, M)$, where
\begin{alignat*}{2}
\phi(B) &= c_4
&\qquad
c_4 &= a_1 (a_1^3 - 24 a_3) \\
\phi(M) &= \Delta^8
&\qquad
\Delta &= a_3^3 (a_1^3 - 27 a_3) \,,
\end{alignat*}
according to the formulas for $\Gamma_1(3)$-modular forms.
See~\cite{BR21}*{\S9.3} and the more detailed references therein.
It is clear from Theorem~\ref{thm:ZBmoduleN} that $\pi_*(\tmf/(B, M))
\cong \pi_*(N/B)$ is finitely generated over~$\bZ_2$.  Moreover, $J' =
(c_4, \Delta^8)$ has radical $(a_1, a_3) \subset \pi_*(\tmf_1(3))$,
so $\tmf_1(3)$ has algebraic $\bZ_2$-cellularisation by $(a_1, a_3)$
according to Lemma~\ref{lem:algcellpoly}, hence also by~$J'$.

Similarly, with $J = (2, B, M)$ we see that $\pi_*(\tmf/(2, B, M)) \cong
\pi_*(N/(2, B))$ is finite and $J' = (2, c_4, \Delta^8)$ has radical~$(2,
a_1, a_3)$, so $\tmf$ has algebraic $\bF_2$-cellularisation by $(2, a_1,
a_3)$ and by~$J'$.

For $\tmf$ at $p=3$ we apply Lemma~\ref{lem:algcelldesc} to the map
$\phi \: \tmf \to \tmf_0(2)$ with $J = (B, H)$, where
\begin{alignat*}{2}
\phi(B) &= c_4
&\qquad
c_4 &= 16 (a_2^2 - 3 a_4) \\
\phi(H) &= \Delta^3
&\qquad
\Delta &= 16 a_4^2 (a_2^2 - 4 a_4)
\end{alignat*}
according to the formulas for $\Gamma_0(2)$-modular forms.
See~\cite{BR21}*{\S13.1} and the more detailed references therein.
It is clear from Theorem~\ref{thm:ZBmoduleNp3} that $\pi_*(\tmf/(B, H))
\cong \pi_*(N/B)$ is finitely generated over~$\bZ_3$.  Moreover, $J' =
(c_4, \Delta^3)$ has radical $(a_2, a_4) \subset \pi_*(\tmf_0(2))$,
so $\tmf_0(2)$ has algebraic $\bZ_3$-cellularisation by $(a_2, a_4)$
according to Lemma~\ref{lem:algcellpoly}, hence also by~$J'$.

Similarly, with $J = (3, B, H)$ we see that $\pi_*(\tmf/(3, B, H)) \cong
\pi_*(N/(3, B))$ is finite and $J' = (3, c_4, \Delta^3)$ has radical~$(3,
a_2, a_4)$, so $\tmf$ has algebraic $\bF_3$-cellularisation by $(3, a_2,
a_4)$ and by~$J'$.

For $\tmf$ at $p\ge5$, the ideal $J' = (B, M) = (c_4, \Delta)$, with
$\Delta = (c_4^3 - c_6^2)/1728$, has radical~$(c_4, c_6)$.  Hence $\tmf$
has algebraic $\bZ_p$-cellularisation by $(c_4, c_6)$ and by~$J'$.

Similarly, the ideal $J' = (p, B, M) = (p, c_4, \Delta)$ has radical~$(p,
c_4, c_6)$, so $\tmf$ has algebraic $\bF_p$-cellularisation by~$(p, c_4,
c_6)$ and by~$J'$.
\end{proof}

\subsection{Local cohomology theorems by Gorenstein duality}

\begin{theorem} \label{thm:Gorduality}
There are equivalences of $ko$-modules
$$
\Gamma_B ko = \Gamma_{\fn_0} ko
	\simeq \Sigma^{-5} I_{\bZ_p}(ko)
$$
and equivalences of $\tmf$-modules
$$
\Gamma_{(B,M)} \tmf = \Gamma_{\fn_0} \tmf
	\simeq \Sigma^{-22} I_{\bZ_p}(\tmf)
$$
at all primes~$p$.
\end{theorem}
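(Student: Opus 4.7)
The plan is to assemble the pieces already developed in this section. By Proposition~\ref{prop:kotmfGorShift} the $S_p$-algebra maps $ko \to \bZ_p$ and $\tmf \to \bZ_p$ are Gorenstein of shifts $-5$ and $-22$, respectively. By Corollary~\ref{cor:ZpFpsmallkotmf}, $\bZ_p$ is small, and in particular proxy-small, as a $ko$-module and as a $\tmf$-module. Both $ko$ and $\tmf$ are connective $S_p$-algebras with $\pi_0 = \bZ_p$, so the hypotheses of Proposition~\ref{prop:GorshiftGordual} (for the case $k = \bZ_p$, which does not require the additional torsion assumption imposed when $k = \bF_p$) are verified in both cases. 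That proposition therefore yields Gorenstein duality of the stated shifts, i.e., $\cE$-module equivalences
$$
\Cell_{\bZ_p} ko \simeq \Sigma^{-5} I_{\bZ_p}(ko)
\qquad\text{and}\qquad
\Cell_{\bZ_p} \tmf \simeq \Sigma^{-22} I_{\bZ_p}(\tmf) \,.
$$

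Next I would translate the left-hand sides into local cohomology using Proposition~\ref{prop:algcellkotmf}, which identifies
$$
\Cell_{\bZ_p} ko \simeq \Gamma_B ko
\qquad\text{and}\qquad
\Cell_{\bZ_p} \tmf \simeq \Gamma_{(B,M)} \tmf \,.
$$
Combining these two sets of equivalences gives precisely the claimed $ko$-module equivalence $\Gamma_B ko \simeq \Sigma^{-5} I_{\bZ_p}(ko)$ and the $\tmf$-module equivalence $\Gamma_{(B,M)} \tmf \simeq \Sigma^{-22} I_{\bZ_p}(\tmf)$.

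Finally, the identifications $\Gamma_B ko = \Gamma_{\fn_0} ko$ and $\Gamma_{(B, M)} \tmf = \Gamma_{\fn_0} \tmf$ follow from the fact that $\Gamma_J$ depends on $J$ only through its radical (Subsection~\ref{subsec:localcohom}), together with the observation that the radical of $(B)$ in $\pi_*(ko)$ is $\fn_0$ (immediate from $\pi_*(ko)/(B)$ being finitely generated over $\bZ_p$, with all positive degree classes nilpotent), and the parallel statement for $(B, M) \subset \pi_*(\tmf)$ established in Lemma~\ref{lem:tmffinite}\eqref{item2}.

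There is really no single hard step in this assembly, since all the technical work — the Gorenstein shift computations via Wood-type splittings, the smallness of $\bZ_p$ over $ko$ and $\tmf$ via descent through $ku$, $\tmf_1(3)$, $\tmf_0(2)$, and the algebraic cellularisation comparison via descent — has been carried out beforehand. The only point requiring care is to check that we are in the $k = \bZ_p$ clause of Proposition~\ref{prop:GorshiftGordual}, so that we need neither the $\pi_{-1}(\cE)=0$ condition (handled generally in Proposition~\ref{prop:uniqueEaction}) nor the $p$-power torsion hypothesis on $\Hom_{\bZ_p}(\pi_*(R), \bQ_p/\bZ_p)$ that would be needed for the parallel Brown--Comenetz statement. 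If I were to prove the Brown--Comenetz version simultaneously, that torsion hypothesis would be the real obstacle, but for the Anderson form stated here it is not an issue.
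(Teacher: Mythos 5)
Your proposal is correct and follows essentially the same route as the paper's proof: apply Proposition~\ref{prop:GorshiftGordual} with $k = \bZ_p$, feeding in the Gorenstein shifts from Proposition~\ref{prop:kotmfGorShift}, smallness from Corollary~\ref{cor:ZpFpsmallkotmf}, the local cohomology identification from Proposition~\ref{prop:algcellkotmf}, and the radical argument from Lemma~\ref{lem:tmffinite}. One tiny slip: the equivalence $\Cell_{\bZ_p} R \simeq \Sigma^a I_{\bZ_p} R$ produced by Proposition~\ref{prop:GorshiftGordual} is an equivalence of $R$-modules (as the theorem statement requires), not an $\cE$-module equivalence as you label it, but this does not affect the argument.
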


\begin{proof}
We apply Proposition~\ref{prop:GorshiftGordual} to $R \to k$ with $R =
ko$ or $R = \tmf$ and $k = \bZ_p$.  Then $R \to k$ is Gorenstein of
shift $a = -5$ or $a = -22$ by Proposition~\ref{prop:kotmfGorShift},
and $k$ is small, hence proxy-small, as an $R$-module by
Corollary~\ref{cor:ZpFpsmallkotmf}.  Hence
$\Cell_k R \simeq \Sigma^a I_{\bZ_p} R$
in each case.  Moreover,
$\Cell_k R \simeq \Gamma_J R$
for $J = (B) \subset \pi_*(ko)$ or $J = (B, M) \subset \pi_*(\tmf)$,
by Proposition~\ref{prop:algcellkotmf}.  Finally, $\Gamma_J R
\simeq \Gamma_{\fn_0} R$ since $J$ has radical~$\fn_0$ in each case,
cf.~Lemma~\ref{lem:tmffinite}\eqref{item2}.
\end{proof}

\begin{theorem}
There are equivalences of $ko$-modules
$$
\Gamma_{(p,B)} ko = \Gamma_{\fn_p} ko
	\simeq \Sigma^{-6} I(ko)
$$
and equivalences of $\tmf$-modules
$$
\Gamma_{(p,B,M)} \tmf = \Gamma_{\fn_p} \tmf
	\simeq \Sigma^{-23} I(\tmf)
$$
at all primes~$p$.
\end{theorem}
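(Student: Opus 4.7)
The plan is to apply Proposition~\ref{prop:GorshiftGordual} with $k = \bF_p$, in direct parallel to the proof of Theorem~\ref{thm:Gorduality}, which handled the case $k = \bZ_p$. With $R = ko$ or $R = \tmf$, the shifts come out right: by Proposition~\ref{prop:kotmfGorShift}, the map $R \to \bF_p$ is Gorenstein of shift $a = -6$ for $R = ko$ and of shift $a = -23$ for $R = \tmf$ (uniformly at all primes~$p$). Proxy-smallness of $\bF_p$ as an $R$-module is provided by Corollary~\ref{cor:ZpFpsmallkotmf}.

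The new input needed (relative to the $\bZ_p$ case) is the third hypothesis of Proposition~\ref{prop:GorshiftGordual}: that $\Hom_{\bZ_p}(\pi_*(R), \bQ_p/\bZ_p)$ is $p$-power torsion. For $R = ko$ this is straightforward from the explicit description of $\pi_*(ko)$: each $\pi_t(ko)$ is a finitely generated $\bZ_p$-module, so $\Hom_{\bZ_p}(\pi_t(ko), \bQ_p/\bZ_p)$ is a finite direct sum of copies of $\bQ_p/\bZ_p$ and $\bZ/p^a$, each of which is $p$-power torsion. For $R = \tmf$ the same argument works, since Lemma~\ref{lem:tmffinite}(1) together with the finite generation of $\bZ_p[B,M]$ over $\bZ_p$ in each degree guarantees that every $\pi_t(\tmf)$ is a finitely generated $\bZ_p$-module. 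So in both cases hypothesis (3) holds, and Proposition~\ref{prop:GorshiftGordual} gives an equivalence $\Cell_{\bF_p} R \simeq \Sigma^a IR$.

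It remains to identify the cellularisation with local cohomology. By Proposition~\ref{prop:algcellkotmf}, $ko$ has algebraic $\bF_p$-cellularisation by $(p, B)$ and $\tmf$ has algebraic $\bF_p$-cellularisation by $(p, B, M)$, yielding $\Cell_{\bF_p} ko \simeq \Gamma_{(p,B)} ko$ and $\Cell_{\bF_p} \tmf \simeq \Gamma_{(p,B,M)} \tmf$. Finally, $(p, B)$ has radical $\fn_p$ in $\pi_*(ko)$, and $(p, B, M)$ has radical $\fn_p$ in $\pi_*(\tmf)$ by Lemma~\ref{lem:tmffinite}(2), so $\Gamma_{(p,B)} ko = \Gamma_{\fn_p} ko$ and $\Gamma_{(p,B,M)} \tmf = \Gamma_{\fn_p} \tmf$. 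Combining gives the claimed equivalences.

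The only point that genuinely requires thought is hypothesis (3), and even that is really just a bookkeeping observation given the structure results already available; once that is noted, the proof is a mechanical assembly of the prior ingredients. An alternative route, worth mentioning but not pursuing in detail, would skip Proposition~\ref{prop:GorshiftGordual} entirely and instead deduce this theorem from Theorem~\ref{thm:Gorduality} by applying $\Gamma_p$ to the equivalence $\Gamma_{\fn_0} R \simeq \Sigma^a I_{\bZ_p} R$: the composition identity $\Gamma_{\fn_p} R \simeq \Gamma_p \Gamma_{\fn_0} R$, combined with the fiber sequence defining~$I_{\bZ_p}$ (so that $\Gamma_p I_{\bQ_p} R \simeq *$ while $\Gamma_p IR \simeq IR$ under the $p$-power torsion condition on $IR$), produces the extra shift of $-1$ and transforms $I_{\bZ_p}$ into~$I$, recovering the same conclusion.
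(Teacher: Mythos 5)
Your proof is correct and matches the paper's argument essentially step for step: apply Proposition~\ref{prop:GorshiftGordual} with $k = \bF_p$, verify hypothesis~(3) from the degreewise finite generation of $\pi_*(R)$ over~$\bZ_p$, then convert $\Cell_{\bF_p} R$ to $\Gamma_J R$ via Proposition~\ref{prop:algcellkotmf} and pass to the radical. The alternative you sketch (applying $\Gamma_p$ to the conclusion of Theorem~\ref{thm:Gorduality} and using Lemma~\ref{lem:BCdualofGammap}-style identifications of $\Gamma_p I_{\bZ_p}R$ with $\Sigma^{-1} IR$) is also sound, but the paper does not take that route here.
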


\begin{proof}
We apply Proposition~\ref{prop:GorshiftGordual} to $R \to k$ with $R =
ko$ or $R = \tmf$ and $k = \bF_p$.  Then $R \to k$ is Gorenstein of
shift $a = -6$ or $a = -23$ by Proposition~\ref{prop:kotmfGorShift},
and $k$ is small, hence proxy-small, as an $R$-module by
Corollary~\ref{cor:ZpFpsmallkotmf}.  Furthermore, $\pi_t(R)$ is a
finitely generated $\bZ_p$-module for each~$t$, as is clear from
Theorems~\ref{thm:pitmfextension} and~\ref{thm:ZBmoduleN} below, so
$\Hom_{\bZ_p}(\pi_*(R), \bQ_p/\bZ_p)$ is $p$-power torsion in each degree.
Hence
$\Cell_k R \simeq \Sigma^a I R$
in each case.  Moreover,
$\Cell_k R \simeq \Gamma_J R$
for $J = (p, B) \subset \pi_*(ko)$ or $J = (p, B, M) \subset \pi_*(\tmf)$,
by Proposition~\ref{prop:algcellkotmf}.  Finally, $\Gamma_J R
\simeq \Gamma_{\fn_p} R$ since $J$ has radical~$\fn_p$ in each case,
cf.~Lemma~\ref{lem:tmffinite}\eqref{item2}.
\end{proof}

\subsection{$ko$- and $\tmf$-module Steenrod algebras}

For completeness, we record the structure of $\pi_*(\cE)$ in our main
cases of interest, where $\cE = F_R(k, k)$, $R = ko$ or $\tmf$,
and $k = \bF_p$.

\begin{proposition}[\cite{Hil07}, \cite{DFHH14}, \cite{BR21}]
For $p=2$ there are algebra isomorphisms
\begin{gather*}
\pi_* F_{ko}(\bF_2, \bF_2) \cong A(1) \\
\pi_* F_{\tmf}(\bF_2, \bF_2) \cong A(2) \,.
\end{gather*}
For $p=3$ there is a square-zero quadratic extension
$$
\pi_* F_{\tmf}(\bF_3, \bF_3) = A_{\tmf} \longto A(1) \,,
$$
where $A_{\tmf}$ is generated by classes $\beta$ and $P^1$ in
cohomological degrees~$1$ and~$4$, subject to $\beta^2 = 0$, $\beta
(P^1)^2 \beta = (\beta P^1)^2 + (P^1 \beta)^2$ and $(P^1)^3 = 0$.
In each case, classes in homotopical degree~$-m$ correspond to classes
in cohomological degree~$m$.
\end{proposition}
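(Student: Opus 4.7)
The plan is to compute each endomorphism ring $\pi_* F_R(\bF_p, \bF_p)$ by constructing a minimal cellular model of $\bF_p$ as an $R$-module and then reading off the resulting composition product from the attaching maps. Equivalently, one runs the $R$-module Adams spectral sequence
$$
E_2^{s,t} = \Ext_A^{s,t}\bigl(\bF_p,\, H^*(\bF_p \wedge_R \bF_p)\bigr)
	\Longrightarrow \pi_{t-s} F_R(\bF_p, \bF_p)
$$
and analyses its collapse and extensions, using what we already know about $H^*(R)$ as an $A$-module.

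For the two $p = 2$ cases the input is immediate. Stong's theorem gives $H^*(ko) \cong A/\!/A(1)$, so a minimal resolution of $\bF_2$ by free $A(1)$-modules lifts, cell by cell, to a minimal cellular $ko$-module structure on $\bF_2$ with precisely one $ko$-cell for each $A(1)$-generator; the composition product on the attaching maps reproduces the multiplication in $A(1)$, giving $\pi_* F_{ko}(\bF_2, \bF_2) \cong A(1)$ as computed by Hilbert~\cite{Hil07}. The same argument applied to $H^*(\tmf) \cong A/\!/A(2)$ from Hopkins--Mahowald--Mathew yields $\pi_* F_{\tmf}(\bF_2, \bF_2) \cong A(2)$, as recorded in~\cite{DFHH14} and~\cite{BR21}.

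For $\tmf$ at $p = 3$ the situation is strictly more delicate because $H^*(\tmf) \cong A \otimes_{A(2)} M$ with $M$ the $18$-dimensional $A(2)$-module of Proposition~\ref{prop:Htmfp3}, which is \emph{not} of the form $A(2)/\!/B$ for any sub-Hopf algebra $B$. A minimal resolution of $M$ by free $A(2)$-modules therefore gives a $\tmf$-module cell structure on $\bF_3$ whose resulting $\Ext$-algebra is not a subalgebra of~$A$. The generators $\beta \in \pi_{-1}$ and $P^1 \in \pi_{-4}$ still arise from the obvious Bockstein- and $P^1$-towers, and the relations $\beta^2 = 0$ and $(P^1)^3 = 0$ are forced by the corresponding facts in $A(1) \subset A(2)$ together with the fact that $\pi_0(\tmf) = \bZ_3$ rather than $\bF_3$.

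The main obstacle is the quadratic relation
$$
\beta(P^1)^2 \beta \;=\; (\beta P^1)^2 + (P^1 \beta)^2,
$$
whose nontriviality is a direct incarnation of the relation $P^3 g_0 = P^1 g_8$ in~$M$: the corresponding secondary composition in $F_{\tmf}(\bF_3, \bF_3)$ cannot vanish but instead equals the sum of the two obvious iterated products. Identifying this Massey-product-like extension is the substantive content of the $p=3$ computation, and requires a careful analysis of the attaching map determined by the nonzero class $\partial(\Sigma^4 g_8) = P^3 g_0 - P^1 g_8$ produced in the proof of Proposition~\ref{prop:Htmfp3}. This is precisely the calculation carried out in~\cite{BR21}, which we quote to complete the proof.
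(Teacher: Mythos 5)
Your approach is genuinely different from the paper's for the $p=2$ cases, and worth comparing. The paper does not construct a minimal cell structure at all. Instead it uses restriction along $S \to R$ to produce an algebra map $\pi_*\cE \to A$, then base-changes along $S \to \bF_2$ to rewrite $\cE \cong F_{\bF_2 \wedge R}(\bF_2 \wedge \bF_2, \bF_2)$, observes that $A_*$ is free as an $H_*(R)$-module (this is exactly dual to $H^*(R) \cong A/\!/A(n)$), concludes that the resulting $\Ext$ spectral sequence collapses, and identifies $\pi_*\cE \cong \Hom_{H_*(R)}(A_*,\bF_2) \hookrightarrow A$ as the subalgebra of $A/\!/A(n)$-comodule primitives, which is $A(n)$. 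That argument delivers the algebra structure for free, as the image of a monomorphism of algebras, whereas your route makes you reconstruct the ring structure from composition of attaching maps in a minimal $ko$- (resp.\ $\tmf$-) cell structure on $\bF_2$. That reconstruction is plausible but is not self-evident: one needs to argue that the tower built from a minimal $A(1)$-resolution actually realises $\bF_2$ as a $ko$-module, and that the induced pairing on the $E_1$-page (Toda-bracket-type compositions of attaching maps) does reproduce the multiplication in $A(1)$ rather than some deformation of it. You assert this but do not justify it, and this is precisely the content the paper's comodule-primitives argument is designed to sidestep.

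Two smaller issues. First, the spectral sequence you write,
$$
E_2^{s,t} = \Ext_A^{s,t}\bigl(\bF_p,\, H^*(\bF_p \wedge_R \bF_p)\bigr)
	\Longrightarrow \pi_{t-s} F_R(\bF_p, \bF_p),
$$
is not correctly formulated: the natural input for a function spectrum $F_R(\bF_p,\bF_p)$ after base change is an $\Ext$ over $H_*(R)$ (or $\pi_*(\bF_p\wedge R)$), not over $A$, and the variance is wrong. The paper's collapsing $\Ext$ spectral sequence lives over $H_*(R)$, not over $A$. Second, your attribution ``as computed by Hilbert~\cite{Hil07}'' misreads the reference: \cite{Hil07} is Michael Hill's paper, and it concerns the $p=3$ calculation; the $p=2$ identifications $A(1)$ and $A(2)$ are not its content.

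For $p=3$ you and the paper both simply cite: the paper defers the whole identification to Henriques--Hill and \cite{BR21}, while you give a heuristic explanation of where the quadratic relation comes from (the relation $P^3 g_0 = P^1 g_8$ in $M$) and then cite \cite{BR21}. Your heuristic is reasonable and matches how \cite{BR21} organises the computation, but you should flag it as motivation rather than proof, since neither you nor the paper actually carries out the secondary-operation/Massey-product computation here.
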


\begin{proof}
Restriction along $S \to \tmf$ induces an $S$-algebra map
$$
\cE = F_{\tmf}(\bF_2, \bF_2) \longto F_S(\bF_2, \bF_2)
$$
and an algebra homomorphism $\pi_*(\cE) \to A$ to the mod~$2$ Steenrod
algebra.  Base change along $S \to \bF_2$
lets us rewrite the $S$-algebra map as
$$
\cE \cong F_{\bF_2 \wedge \tmf}(\bF_2 \wedge \bF_2, \bF_2)
	\longto F_{\bF_2}(\bF_2 \wedge \bF_2, \bF_2) \,.
$$
Since the dual Steenrod algebra $A_* = \pi_*(\bF_2 \wedge \bF_2)$ is
free as an $H_*(\tmf) = \pi_*(\bF_2 \wedge \tmf)$-module, the $\Ext$
spectral sequences for these two function spectra collapse, and let us
rewrite the algebra homomorphism as the monomorphism
$$
\pi_*(\cE) \cong \Hom_{H_*(\tmf)}(A_*, \bF_2)
	\longto \Hom_{\bF_2}(A_*, \bF_2) \cong A \,.
$$
By duality, this identifies $\pi_*(\cE)$ with the $H^*(\tmf) =
A/\!/A(2)$-comodule primitives in~$A$, which is precisely the
subalgebra~$A(2)$.

The proof for~$ko$ is the same, replacing $A(2)$ with~$A(1)$.

The result for $p=3$ is due to Henriques and Hill \cite{Hil07}*{Thm.~2.2},
\cite{DFHH14}*{\S13.3}, except for the comment that the extension is
square-zero, which appears in~\cite{BR21}*{\S13.1}.
\end{proof}

\section{Thera duality}

A third line of proof is discussed in~\cite{BR21}*{\S10.3, \S10.4, \S13.5},
yielding the following theorems.

\begin{theorem}[\cite{BR21}*{Thm.~10.6, Prop.~10.12}]
	\label{thm:TheradualityA}
There are equivalences of $2$-complete $\tmf$-modules
\begin{align*}
\Sigma^{23} \tmf &\simeq I(\Gamma_{(2, B, M)} \tmf) \\
\Sigma^{22} \tmf &\simeq I_{\bZ_2}(\Gamma_{(B, M)} \tmf) \,.
\end{align*}
\end{theorem}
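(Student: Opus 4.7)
The plan is to derive the two Thera duality equivalences as formal consequences of the local cohomology duality theorems already obtained via Mahowald--Rezk and Gorenstein methods. Specifically, Theorem~\ref{thm:MRduality} and Theorem~\ref{thm:Gorduality} yield equivalences of $\tmf$-modules
\begin{align*}
\Gamma_{(2, B, M)} \tmf &\simeq \Sigma^{-23} I(\tmf), \\
\Gamma_{(B, M)} \tmf &\simeq \Sigma^{-22} I_{\bZ_2}(\tmf).
\end{align*}
Applying the appropriate duality functor to each and invoking the fact that $\tmf$ is its own double dual should recover the stated equivalences.

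For the first claim, I would apply the Brown--Comenetz functor $I$ to the Mahowald--Rezk equivalence, obtaining
$$
I(\Gamma_{(2,B,M)} \tmf) \simeq I\bigl(\Sigma^{-23} I(\tmf)\bigr) \simeq \Sigma^{23}\, I(I(\tmf)).
$$
By Lemma~\ref{lem:tmffinite}\eqref{item1}, the graded group $\pi_*(\tmf)$ is finitely generated as a $\bZ_2[B,M]$-module, so each $\pi_t(\tmf)$ is a finitely generated $\bZ_2$-module. Hence $\tmf$ is a Noetherian $S_2$-module, and the natural double-duality map $\rho\colon \tmf \to I(I(\tmf))$ is an equivalence (as recorded in the discussion of Brown--Comenetz duality preceding Lemma~\ref{lem:BCdualofGammap}). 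Desuspending by $23$ and inverting the equivalence yields $\Sigma^{23} \tmf \simeq I(\Gamma_{(2,B,M)} \tmf)$.

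The second claim is entirely parallel: apply $I_{\bZ_2}$ to the Gorenstein equivalence $\Gamma_{(B,M)} \tmf \simeq \Sigma^{-22} I_{\bZ_2}(\tmf)$ to obtain $I_{\bZ_2}(\Gamma_{(B,M)} \tmf) \simeq \Sigma^{22} I_{\bZ_2}(I_{\bZ_2}(\tmf))$, and invoke the Anderson double-duality equivalence $\rho\colon \tmf \to I_{\bZ_2}(I_{\bZ_2}(\tmf))$ of~\eqref{rho:Anderson}, which holds because $\tmf$ is Noetherian. Module structure is preserved throughout since $\Gamma_J(-)$, $I(-)$, and $I_{\bZ_2}(-)$ applied to a $\tmf$-module are all naturally $\tmf$-modules.

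The main subtlety, rather than obstacle, is bookkeeping: one must check that each step respects the $\tmf$-module structure, which follows from the naturality of $\rho$ together with the $\tmf$-linearity of the equivalences in Theorems~\ref{thm:MRduality} and~\ref{thm:Gorduality}. An alternative, genuinely ``Thera''-flavoured route, independent of the Mahowald--Rezk and Gorenstein approaches, is the descent argument of~\cite{BR21} using the Spanier--Whitehead self-dual $\tmf$-module equivalence $\tmf \wedge \Phi \simeq \tmf_1(3)$ of~\eqref{eq:wood} together with the relatively transparent local cohomology of the polynomial ring spectrum $\tmf_1(3)$; this would provide an independent verification and is the strategy adopted in~\cite{BR21}.
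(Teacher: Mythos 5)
Your derivation is correct and, modulo an observation about what the paper actually does, is a genuinely different route from the paper's cited source. The paper does not prove Theorem~\ref{thm:TheradualityA} itself: the theorem label carries the citation \cite{BR21}*{Thm.~10.6, Prop.~10.12}, and the paragraph following it only sketches that reference's argument, namely that the local cohomology of $\pi_*(\tmf_1(3)) = \bZ_2[a_1, a_3]$ at $\fn_2 = (2, a_1, a_3)$ sits in a single cohomological degree with free Pontryagin dual, giving $\Sigma^{11}\tmf_1(3) \simeq I(\Gamma_{(2,a_1,a_3)}\tmf_1(3))$, after which faithful descent along $\tmf \to \tmf_1(3) \simeq \tmf \wedge \Phi$ and Spanier--Whitehead self-duality of $\Phi$ carries the conclusion down to $\tmf$. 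You correctly identify this at the end of your write-up.

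What you actually do is different and entirely legitimate within the scope of the present paper: you take the already-proved equivalences $\Gamma_{(2,B,M)}\tmf \simeq \Sigma^{-23}I(\tmf)$ and $\Gamma_{(B,M)}\tmf \simeq \Sigma^{-22}I_{\bZ_2}(\tmf)$ from Theorems~\ref{thm:MRduality} and~\ref{thm:Gorduality}, apply $I$ or $I_{\bZ_2}$, commute the functor past the suspension, and invoke the biduality equivalences $\rho \colon \tmf \to I(I\tmf)$ and $\rho \colon \tmf \to I_{\bZ_2}(I_{\bZ_2}\tmf)$, both of which hold because $\tmf$ is a Noetherian $S_2$-module (each $\pi_t(\tmf)$ is finitely generated over $\bZ_2$, as you note via Lemma~\ref{lem:tmffinite}\eqref{item1}). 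The $\tmf$-module structure is preserved throughout since $I(-) = F_{S_p}(-,I)$ and $I_{\bZ_2}(-) = F_{S_p}(-,I_{\bZ_2})$ applied to a $\tmf$-module are naturally $\tmf$-modules and $\rho$ is a map of $\tmf$-modules. What your approach buys is a short formal derivation that unifies the three versions (Mahowald--Rezk, Gorenstein, Thera) as mutual consequences via double duality. What the BR21 descent argument buys is an independent third proof, not circular through the other two, and one that showcases a strengthened Cohen--Macaulay/Gorenstein phenomenon at the level of $\tmf_1(3)$; this independence is presumably why the authors present it separately rather than deriving it as you do.
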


\begin{theorem}[\cite{BR21}*{Thm.~13.20, Prop.~13.21}]
	\label{thm:TheradualityBC}
There are equivalences of $3$-complete $\tmf$-modules
\begin{align*}
\Sigma^{23} \tmf &\simeq I(\Gamma_{(3, B, H)} \tmf) \\
\Sigma^{22} \tmf &\simeq I_{\bZ_3}(\Gamma_{(B, H)} \tmf) \,.
\end{align*}
\end{theorem}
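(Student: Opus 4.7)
The plan is to deduce both equivalences by applying $I$ or $I_{\bZ_3}$ to the local cohomology duality theorems already established for~$\tmf$ at~$p=3$, and then invoking Noetherian double duality.

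First, I would record that $\tmf$ at~$p=3$ is a Noetherian $S_3$-module: by Lemma~\ref{lem:tmffinite}\eqref{item1} the homotopy $\pi_*(\tmf)$ is finitely generated as a $\bZ_3[B, H]$-module, and since $\bZ_3[B, H]$ is connective with each graded piece finitely generated over~$\bZ_3$, the same holds for $\pi_t(\tmf)$ in each degree~$t$. Consequently $\tmf$ lies in the class for which both the natural map $\rho \: \tmf \to I_{\bZ_3}(I_{\bZ_3} \tmf)$ from~\eqref{rho:Anderson} and the analogous natural map $\rho \: \tmf \to I(I \tmf)$ are equivalences.

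Next, starting from the Gorenstein duality equivalence $\Gamma_{(B,H)} \tmf \simeq \Sigma^{-22} I_{\bZ_3}(\tmf)$ supplied by Theorem~\ref{thm:Gorduality} at~$p=3$, I would apply $I_{\bZ_3}$ to both sides and chain:
$$
I_{\bZ_3}(\Gamma_{(B,H)} \tmf)
\simeq I_{\bZ_3}(\Sigma^{-22} I_{\bZ_3}(\tmf))
\simeq \Sigma^{22} I_{\bZ_3}(I_{\bZ_3} \tmf)
\simeq \Sigma^{22} \tmf \,.
$$
The analogous argument applied to the Mahowald--Rezk duality equivalence $\Gamma_{(3,B,H)} \tmf \simeq \Sigma^{-23} I(\tmf)$ from Theorem~\ref{thm:MRduality}, but with $I$ in place of $I_{\bZ_3}$, yields $I(\Gamma_{(3,B,H)} \tmf) \simeq \Sigma^{23} \tmf$.

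The main obstacle is verifying that these are equivalences of $\tmf$-modules, not merely of underlying $S_3$-modules. Since $I_{\bZ_3}$ and $I$ convert left $\tmf$-module structures into right $\tmf$-module structures (and conversely), and every step in the chain arises from a function-spectrum adjunction that is natural in the $\tmf$-module input, the $\tmf$-equivariance should come out automatically; nonetheless, identifying the $\tmf$-action on $I_{\bZ_3}(I_{\bZ_3} \tmf)$ produced by iterating the mapping-spectrum construction with the original $\tmf$-module structure on~$\tmf$ via~$\rho$ is the one bookkeeping step that requires care. An alternative, more intrinsic route would mimic the descent strategy of Proposition~\ref{prop:kotmfGorShift}: establish Thera duality first for $\tmf_0(2)$ at $p=3$ (where $\pi_*(\tmf_0(2)) = \bZ_3[a_2, a_4]$ makes the Gorenstein shifts $-14$ and $-15$ transparent), then descend along $\tmf \wedge \Psi \simeq \tmf_0(2)$ using $D\Psi \simeq \Sigma^{-8} \Psi$, picking up the extra shift of $-8$ to land at the expected suspensions $\Sigma^{22}$ and $\Sigma^{23}$.
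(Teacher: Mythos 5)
Your argument is correct, and it is a genuinely different route from the one the paper attributes to this theorem. The paper cites \cite{BR21}*{Thm.~13.20, Prop.~13.21} and sketches the ``Thera duality'' method: one proves directly that $\Sigma^{15}\tmf_0(2) \simeq I(\Gamma_{(3,a_2,a_4)}\tmf_0(2))$ by observing that the local cohomology of the polynomial ring $\pi_*(\tmf_0(2)) = \bZ_3[a_2,a_4]$ at $\fn_3 = (3,a_2,a_4)$ is concentrated in a single cohomological degree with free Pontryagin dual, and then descends along $\tmf \to \tmf_0(2) \simeq \tmf \wedge \Psi$ using $D\Psi \simeq \Sigma^{-8}\Psi$. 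You instead apply $I$ and $I_{\bZ_3}$ to the already-proved local cohomology duality equivalences (Theorem~\ref{thm:MRduality} and the Anderson-dual version, or equivalently Theorem~\ref{thm:Gorduality}) and invoke the Noetherian double-duality equivalences $\rho\colon M \to I(IM)$ and $\rho\colon M \to I_{\bZ_3}(I_{\bZ_3}M)$ from~\eqref{rho:Anderson}, together with Lemma~\ref{lem:tmffinite}\eqref{item1} to certify that $\tmf$ is a Noetherian $S_3$-module. The chain $I_{\bZ_3}(\Gamma_{(B,H)}\tmf) \simeq \Sigma^{22} I_{\bZ_3}(I_{\bZ_3}\tmf) \simeq \Sigma^{22}\tmf$ (and its Brown--Comenetz analogue) is sound, and $\tmf$-linearity is automatic since all functors involved are $\tmf$-module functors on the commutative $S_3$-algebra $\tmf$ and $\rho$ is natural. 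What the two approaches buy is different: your derivation shows that the ``Thera'' formulation and the Gorenstein/Mahowald--Rezk formulations are in fact equivalent statements, linked by double duality over the Noetherian/Artinian subcategories, so it is a short reduction rather than an independent proof; the paper's cited route (descent from the polynomial case) is genuinely independent of the constructions in Sections~4 and~5, which is why the paper frames it as a third line of proof rather than a corollary. Your closing remark about the alternative descent route correctly identifies the paper's actual method, with the right shifts ($-14$ and $-15$ for $\tmf_0(2) \to \bZ_3 \to \bF_3$, plus $-8$ from $D\Psi$, landing at $-22$ and $-23$).
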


This approach combines descent with a strengthening of the Cohen--Macaulay
property, equivalent to the Gorenstein property.  One first observes that
$$
\Sigma^{11} \tmf_1(3) \simeq I(\Gamma_{(2, a_1, a_3)} \tmf_1(3)) \,,
$$
because the local cohomology of $\pi_*(\tmf_1(3)) = \bZ_2[a_1, a_3]$
at the maximal ideal $\fn_2 = (2, a_1, a_3)$ is concentrated in a single
cohomological degree, and, moreover, its $\bZ_2$-module Pontryagin dual
is a free $\pi_*(\tmf_1(3))$-module on one generator.  The conclusion
for $\tmf$ follows by faithful descent along $\tmf \to \tmf_1(3) \simeq
\tmf \wedge \Phi$, since $\Phi$ is Spanier--Whitehead self-dual.

\section{Topological $K$-theory} \label{sec:ko}

As a warm-up to Section~\ref{sec:tmf}, we spell out the structure
of the local cohomology spectral sequences
$$
E_2^{s,t} = H^s_{(B)}(\pi_*(ko))_t
        \Longrightarrow_s \pi_{t-s}(\Gamma_{B} ko)
                \cong \pi_{t-s}(\Sigma^{-5} I_{\bZ_2}(ko))
$$
and
$$
E_2^{s,t} = H^s_{(2, B)}(\pi_*(ko))_t
        \Longrightarrow_s \pi_{t-s}(\Gamma_{(2, B)} ko)
                \cong \pi_{t-s}(\Sigma^{-6} I(ko)) \,.
$$
Multiplication by~$B$ acts injectively on the depth~$1$ graded
commutative ring
$$
\pi_*(ko) = \bZ_2[\eta, A, B]/(2 \eta, \eta^3, \eta A, A^2 - 4 B)
$$
and we let $N_*$ denote a basic block for this action.

\begin{definition}
In this section only, let $N_* \subset \pi_*(ko)$ be the $\bZ_2$-submodule
of classes in degrees $0 \le * < 8$, and let $N = ko/B$.
\end{definition}

\begin{lemma}
The composite
$N_* \otimes \bZ[B] \to \pi_*(ko) \otimes \bZ[B]
	\overset{\cdot}\to \pi_*(ko)$
is an isomorphism.  As a $\bZ_2$-module, $N_* = \bZ_2\{1, \eta, \eta^2,
A\}/(2 \eta, 2 \eta^2)$ is a split extension by the $2$-torsion submodule
$\Gamma_2 N_* = \bZ/2\{\eta, \eta^2\}$ of the $2$-torsion free quotient
$N_* / \Gamma_2 N_* = \bZ_2\{1, A\}$.
\end{lemma}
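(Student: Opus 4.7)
The plan is to read the claim directly off the given presentation of $\pi_*(ko)$ by putting each monomial $\eta^a A^b B^c$ into a unique normal form. The relation $\eta A = 0$ forces $a = 0$ or $b = 0$ in any nonzero monomial, the relation $\eta^3 = 0$ bounds $a \le 2$, and the relation $A^2 = 4B$ lets us trade $A^{b+2} B^c$ for $4 A^b B^{c+1}$, so we may take $b \le 1$. The surviving normal-form monomials are
\[
B^c, \quad \eta B^c, \quad \eta^2 B^c, \quad A B^c \qquad (c \ge 0),
\]
subject to the additive relations $2\eta B^c = 2\eta^2 B^c = 0$ inherited from $2\eta = 0$.

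Second, I would invoke Bott's computation of $\pi_*(ko)$ to conclude that no further relations hold, so these monomials form a $\bZ_2$-basis with the indicated torsion. The claimed identification $N_* \otimes \bZ[B] \cong \pi_*(ko)$ then follows by observing that $N_*$, as the $\bZ_2$-submodule in degrees $0 \le * < 8$, picks out exactly the generators $1, \eta, \eta^2, A$ (degrees $3, 5, 6, 7$ contain no normal-form monomial), and that multiplication by $B^c$ bijects this basis onto the $B^c$-block. In particular, $B$ acts injectively on $\pi_*(ko)$.

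For the split extension statement, I would simply note that the splitting is forced by internal degrees: $1$ and $A$ lie in degrees $0$ and $4$, generating free $\bZ_2$-summands, while $\eta$ and $\eta^2$ lie in degrees $1$ and $2$, generating copies of $\bZ/2$. Since these four degrees are all distinct, the graded decomposition
\[
N_* = \bZ_2\{1, A\} \oplus \bZ/2\{\eta, \eta^2\}
\]
is canonical, exhibiting $\Gamma_2 N_* = \bZ/2\{\eta, \eta^2\}$ as a direct summand with $2$-torsion-free quotient $\bZ_2\{1, A\}$.

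The argument is essentially bookkeeping and presents no real obstacle; the only point to verify carefully is that the normal-form monomials are $\bZ_2$-independent, which is immediate from the known ranks of $\pi_n(ko)$ in each degree $n$, or equivalently from the fact that the ideal $(2\eta, \eta^3, \eta A, A^2 - 4B)$ meets each graded piece of $\bZ_2[\eta, A, B]$ in precisely the expected $\bZ_2$-span complementary to the listed normal forms.
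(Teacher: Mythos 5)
The paper states this lemma without proof (treating it as routine), so there is no canonical argument to compare against. Your proof is correct and is essentially the unique argument one would write: it unpacks the presentation $\pi_*(ko) = \bZ_2[\eta, A, B]/(2\eta, \eta^3, \eta A, A^2-4B)$ into the normal-form basis $\{B^c, \eta B^c, \eta^2 B^c, A B^c\}_{c\ge 0}$ in degrees $8c, 8c+1, 8c+2, 8c+4$, observes that $N_*$ (degrees $0 \le * < 8$) is the $c=0$ block, and that $B$-multiplication carries each block bijectively to the next, whence $N_* \otimes \bZ[B] \cong \pi_*(ko)$. The degree-based argument for the splitting is exactly right: a graded module automatically splits as torsion plus torsion-free summand when they live in disjoint degrees, and here $\{1, A\}$ sit in degrees $\{0,4\}$ while $\{\eta, \eta^2\}$ sit in degrees $\{1,2\}$. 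Your caveat about verifying $\bZ_2$-independence of the normal forms is appropriate, and appealing to Bott's computation of $\pi_n(ko)$ degree by degree is a clean way to dispose of it, although one could also check directly that the listed relations generate the full kernel by a short Gr\"obner-type computation.
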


\begin{lemma}
$H^0_{(B)}(\pi_*(ko)) = 0$ and $H^1_{(B)}(\pi_*(ko)) \cong N_* \otimes
\bZ[B]/B^\infty$.
\end{lemma}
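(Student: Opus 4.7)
The plan is to read off both local cohomology groups directly from the four-term exact sequence
$$
0 \to H^0_{(B)}(M_*) \longto M_* \overset{\gamma}\longto M_*[1/B]
        \longto H^1_{(B)}(M_*) \to 0
$$
applied with $M_* = \pi_*(ko)$, using the structural isomorphism $\pi_*(ko) \cong N_* \otimes \bZ[B]$ supplied by the preceding lemma. The whole calculation is a formal consequence of that isomorphism together with the flatness of the right-hand tensor factor.

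First I would note that multiplication by~$B$ on $\pi_*(ko)$ corresponds under the isomorphism to $1 \otimes B$ on $N_* \otimes \bZ[B]$, and hence is injective because $\bZ[B]$ is $\bZ$-free on the basis $\{1, B, B^2, \dots\}$. Therefore $\Gamma_B \pi_*(ko) = H^0_{(B)}(\pi_*(ko)) = 0$, which is the first assertion.

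For the second assertion, inverting~$B$ commutes with the tensor product, so
$$
\pi_*(ko)[1/B] \cong N_* \otimes \bZ[B, B^{-1}] \,.
$$
Since $\bZ[B] \hookrightarrow \bZ[B, B^{-1}]$ is a split inclusion of free $\bZ$-modules, the sequence
$$
0 \to \bZ[B] \longto \bZ[B, B^{-1}] \longto \bZ[B]/B^\infty \to 0
$$
remains exact after tensoring with~$N_*$ over~$\bZ$. Reading off the cokernel of the localisation map~$\gamma$ then gives
$$
H^1_{(B)}(\pi_*(ko)) \cong N_* \otimes \bZ[B]/B^\infty \,,
$$
as claimed.

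There is no serious obstacle: the substantive content has been packaged into the preceding lemma, and what remains is just the definition of local cohomology at a single generator together with the exactness of $N_* \otimes_{\bZ} (-)$ on the relevant short exact sequence. The only point deserving a word is the identification of the abelian group denoted $\bZ[B]/B^\infty$ with the cokernel $\bZ[B, B^{-1}]/\bZ[B]$, which one would state explicitly to fix notation.
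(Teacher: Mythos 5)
Your proposal is correct and follows essentially the same route as the paper: identify $H^0_{(B)}$ and $H^1_{(B)}$ with the kernel and cokernel of the localisation homomorphism $\gamma\colon \pi_*(ko) \to \pi_*(ko)[1/B]$, and rewrite this map as $N_*\otimes\bZ[B] \to N_*\otimes\bZ[B^{\pm1}]$ using the preceding lemma. The paper states this rewriting and stops; you additionally spell out why $N_*\otimes_{\bZ}(-)$ preserves the relevant exactness, which is a worthwhile small addition but not a different method.
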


\begin{proof}
These are the cohomology groups of the complex
$$
0 \to \pi_*(ko) \overset{\gamma}\longto \pi_*(ko)[1/B] \to 0 \,,
$$
which we may rewrite as
$0 \to N_* \otimes \bZ[B]
	\overset{\gamma}\longto N_* \otimes \bZ[B^{\pm1}] \to 0$.
\end{proof}

\begin{figure}
\includegraphics[scale=0.52]{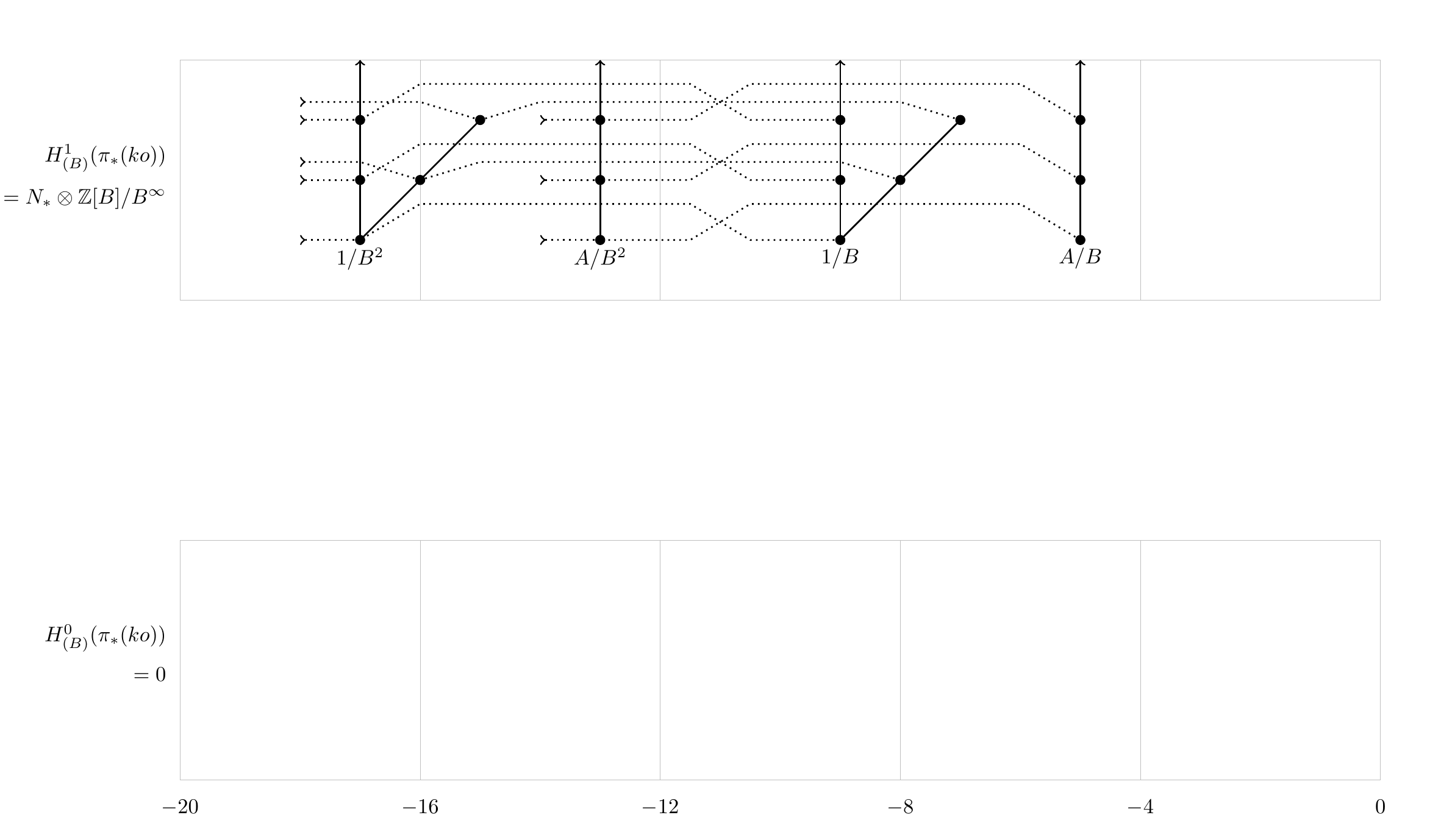}
\caption{$E_2^{s,t} = H^s_{(B)}(\pi_*(ko))_t
        \Longrightarrow_s \pi_{t-s}(\Gamma_B ko)$
	\label{fig:GammaBko}}
\end{figure}

\begin{proposition}
The local cohomology spectral sequence
$$
E_2^{s,t} = H^s_{(B)}(\pi_*(ko))_t
	\Longrightarrow_s \pi_{t-s}(\Gamma_B ko)
	\cong \pi_{t-s}(\Sigma^{-5} I_{\bZ_2} ko)
$$
has $E_2$-term concentrated on the $s=1$ line, with
$E_2^{1,*} = N_* \otimes \bZ[B]/B^\infty$.  There is
no room for differentials or hidden extensions, so
$E_2 = E_\infty$.  Hence there are isomorphisms
$$
\Sigma^{-1} \bZ_2\{1, A\} \otimes \bZ[B]/B^\infty \cong \Sigma^{-5}
	\Hom_{\bZ_2}(\bZ_2\{1, A\} \otimes \bZ[B], \bZ_2)
$$
and
$$
\Sigma^{-1} \bZ/2\{\eta, \eta^2\} \otimes \bZ[B]/B^\infty \cong \Sigma^{-6}
	\Ext_{\bZ_2}(\bZ/2\{\eta, \eta^2\} \otimes \bZ[B], \bZ_2) \,.
$$
\end{proposition}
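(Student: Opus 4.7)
The plan is to prove the three claims in sequence, each following from material already at hand. First, the assertion that the $E_2$-term is concentrated on the line $s = 1$ with value $N_* \otimes \bZ[B]/B^\infty$ is immediate from the preceding lemma, together with the vanishing of $H^s_{(B)}$ for $s \ge 2$ since the ideal $J = (B)$ is principal.

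Next I would argue collapse and the absence of hidden extensions on dimensional grounds. Because $J = (B)$ has $d = 1$ generator, the stable Koszul filtration of $\Gamma_B ko$ has length one, so the associated spectral sequence is supported in $0 \le s \le 1$. Any $d_r$-differential with $r \ge 2$ out of the $s = 1$ line would land in $s$-coordinate $1 + r \ge 3$, outside the range, and hence must vanish. In each topological degree $n$ the filtration of $\pi_n(\Gamma_B ko)$ thus has a single nonzero subquotient $E_\infty^{1, n+1}$, leaving no room for hidden filtration-shifting extensions either. Consequently $\pi_n(\Gamma_B ko) \cong (N_* \otimes \bZ[B]/B^\infty)_{n+1}$.

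For the two stated isomorphisms I would compare this computation to the duality equivalence $\Gamma_B ko \simeq \Sigma^{-5} I_{\bZ_2}(ko)$ proved earlier by either Mahowald--Rezk or Gorenstein duality. Splitting $\pi_*(ko) = N_* \otimes \bZ[B]$ into its $\bZ_2$-torsion-free summand $\bZ_2\{1, A\} \otimes \bZ[B]$ and its $2$-torsion summand $\bZ/2\{\eta, \eta^2\} \otimes \bZ[B]$, I would feed each summand separately into the Anderson duality short exact sequence~\eqref{eq:ExtIZpHom}. The torsion-free summand contributes only through $\Hom_{\bZ_2}(-, \bZ_2)$, yielding the first isomorphism after accounting for the suspension $\Sigma^{-5}$. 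The $2$-torsion summand contributes only through $\Ext_{\bZ_2}(-, \bZ_2)$, which appears with an internal shift by $-1$ coming from the $\pi_{t-1}$ slot of the sequence, yielding the second isomorphism with total suspension $\Sigma^{-6}$.

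No step poses a genuine difficulty; the only bookkeeping that deserves care is the degree matching, which is a routine index chase confirming that $\Sigma^{-1}(\bZ_2\{1, A\} \otimes \bZ[B]/B^\infty)$ sits in exactly the same internal degrees as $\Sigma^{-5} \Hom_{\bZ_2}(\bZ_2\{1, A\} \otimes \bZ[B], \bZ_2)$, and similarly for the $\Ext$-summand.
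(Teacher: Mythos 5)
Your argument is correct and fills in exactly what the paper compresses into its one-line proof ("See Figure~\ref{fig:GammaBko} and recall the short exact sequence~\eqref{eq:ExtIZpHom}"): the $E_2$-term and length-one filtration come from the preceding lemma and the principality of $(B)$, forcing collapse with no hidden extensions, and the two displayed isomorphisms then follow by splitting $\pi_*(ko)$ into its $\bZ_2$-free and $2$-torsion summands and reading off~\eqref{eq:ExtIZpHom} with the $\Sigma^{-5}$ shift, the $\Ext$ piece picking up the extra $\Sigma^{-1}$ from the $\pi_{t-1}$ slot. This is the same approach as the paper, just spelled out.
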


\begin{proof}
See Figure~\ref{fig:GammaBko} and recall the short exact
sequence~\eqref{eq:ExtIZpHom}.
\end{proof}

\begin{lemma}
$H^s_{(2, B)}(\pi_*(ko)) \cong H^{s-1}_{(2)}(N_*) \otimes \bZ[B]/B^\infty$
where
\begin{align*}
H^0_{(2)}(N_*) &= \bZ/2\{\eta, \eta^2\} \\
H^1_{(2)}(N_*) &= \bQ_2/\bZ_2\{1, A\} \,.
\end{align*}
\end{lemma}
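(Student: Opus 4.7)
The plan is to invoke the composite functor spectral sequence of Subsection~\ref{subsec:composite} with $x = 2$ and $y = B$ applied to $M_* = \pi_*(ko)$. By the preceding lemma, $\Gamma_B \pi_*(ko) = H^0_{(B)}(\pi_*(ko)) = 0$, so in particular $\Gamma_B \pi_*(ko) \subset \Gamma_2 \pi_*(ko)$, and moreover the $(\Gamma_B \pi_*(ko))/2^\infty$ entry in the collapsed $E_\infty$-term also vanishes. Reading off the four surviving corners immediately gives $H^0_{(2,B)}(\pi_*(ko)) = 0$ together with
$$
H^1_{(2,B)}(\pi_*(ko)) \cong \Gamma_2\bigl(\pi_*(ko)/B^\infty\bigr)
\quad\text{and}\quad
H^2_{(2,B)}(\pi_*(ko)) \cong \bigl(\pi_*(ko)/B^\infty\bigr)/2^\infty \,.
$$

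Next I would combine this with the preceding lemma's identification $\pi_*(ko)/B^\infty = H^1_{(B)}(\pi_*(ko)) \cong N_* \otimes \bZ[B]/B^\infty$. Since $\bZ[B]/B^\infty$ is a free abelian group (on the basis $\{B^{-n} : n \ge 1\}$), both of the functors $\Gamma_2(-)$ and $(-)/2^\infty$ distribute over the tensor factor, being exact on direct sums of copies of~$N_*$. This produces the stated formula
$$
H^s_{(2,B)}(\pi_*(ko)) \cong H^{s-1}_{(2)}(N_*) \otimes \bZ[B]/B^\infty
$$
for $s \in \{1,2\}$, with both sides vanishing otherwise.

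The residual calculation of $H^0_{(2)}(N_*)$ and $H^1_{(2)}(N_*)$ is a one-line unwinding from the split decomposition $N_* = \bZ/2\{\eta, \eta^2\} \oplus \bZ_2\{1, A\}$ recorded in an earlier lemma of this section: the $2$-power torsion submodule is the first summand, and the cokernel of the localisation map $N_* \to N_*[1/2] = \bQ_2\{1, A\}$ is $\bQ_2/\bZ_2\{1, A\}$. There is no genuine obstacle here; the only structural input is the composite functor spectral sequence already established in Subsection~\ref{subsec:composite}, and the verification that tensoring with the flat graded abelian group $\bZ[B]/B^\infty$ commutes with the relevant local cohomology functors in each internal degree.
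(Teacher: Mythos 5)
Your proposal is correct and matches the paper's own (very terse) proof in all essentials: the paper simply defers the first isomorphism to the analogous Lemma for $\tmf$, which uses exactly the same composite functor spectral sequence with $x=2$ (outer) and $y=B$ (inner) that you invoke, and then reads off $H^*_{(2)}(N_*)$ from the two-term complex $N_* \to N_*[1/2]$. Your more explicit discussion of the collapse (via the vanishing of $\Gamma_B\pi_*(ko)$), and your flatness argument for pulling $\bZ[B]/B^\infty$ out of $\Gamma_2(-)$ and $(-)/2^\infty$, are accurate — both functors preserve direct sums, so tensoring with the free graded abelian group $\bZ[B]/B^\infty \cong \bigoplus_{n\ge 1}\Sigma^{-8n}\bZ$ commutes with them — though the phrase "exact on direct sums of copies of $N_*$" would be cleaner if restated as "preserves direct sums."
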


\begin{proof}
See Lemma~\ref{lem:HBMtmf} for the proof of the first isomorphism.
The $H^*_{(2)}(N_*)$ are the cohomology groups of the complex
\begin{equation*}
0 \to N_* \overset{\gamma}\longto N_*[1/2] \to 0 \,. \qedhere
\end{equation*}
\end{proof}

\begin{figure}
\includegraphics[scale=0.52]{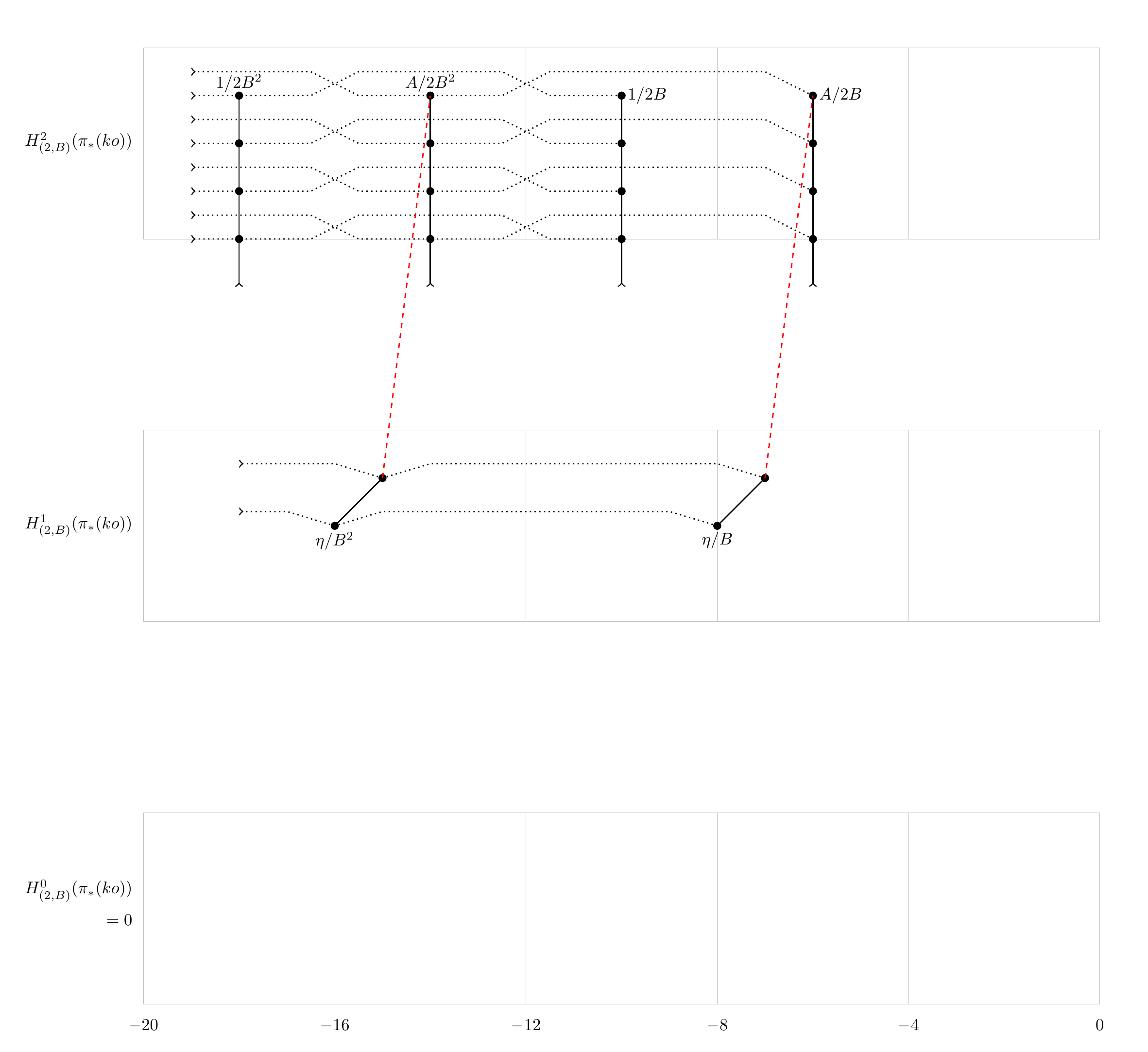}
\caption{$E_2^{s,t} = H^s_{(2, B)}(\pi_*(ko))_t
        \Longrightarrow_s \pi_{t-s}(\Gamma_{(2, B)} ko)$
	\label{fig:Gamma2Bko}}
\end{figure}

\begin{proposition}
The local cohomology spectral sequence
$$
E_2^{s,t} = H^s_{(2, B)}(\pi_*(ko))_t
	\Longrightarrow_s \pi_{t-s}(\Gamma_{(2, B)} ko)
	\cong \pi_{t-s}(\Sigma^{-6} I(ko))
$$
has $E_2$-term
$$
E_2^{s,t} \cong H^{s-1}_{(2)}(N_*) \otimes \bZ[B]/B^\infty \,.
$$
There is no room for differentials, so $E_2 = E_\infty$.
Hence there are isomorphisms
$$
\Sigma^{-2} \bQ_2/\bZ_2\{1, A\} \otimes \bZ[B]/B^\infty \cong \Sigma^{-6}
	\Hom_{\bZ_2}(\bZ_2\{1, A\} \otimes \bZ[B], \bQ_2/\bZ_2)
$$
and
$$
\Sigma^{-1} \bZ/2\{\eta, \eta^2\} \otimes \bZ[B]/B^\infty \cong \Sigma^{-6}
	\Hom_{\bZ_2}(\bZ/2\{\eta, \eta^2\} \otimes \bZ[B], \bQ_2/\bZ_2) \,.
$$
Moreover, there are hidden $\eta$-extensions as shown by sloping dashed
red lines.
\end{proposition}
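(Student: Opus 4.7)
The plan is as follows. First, the $E_2$-term identification is immediate from the preceding lemma: $E_2^{s,*}$ is nonzero only for $s \in \{1, 2\}$, with $E_2^{1,*} \cong \bZ/2\{\eta, \eta^2\} \otimes \bZ[B]/B^\infty$ and $E_2^{2,*} \cong \bQ_2/\bZ_2\{1, A\} \otimes \bZ[B]/B^\infty$. Any differential $d_r$ with $r \geq 2$ shifts filtration by $r$ and lands outside the support of $E_2$, so $d_r = 0$ for all $r \geq 2$ and $E_2 = E_\infty$.

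Second, the two displayed isomorphisms follow by combining the equivalence $\Gamma_{(2, B)} ko \simeq \Sigma^{-6} I(ko)$ established in the Mahowald--Rezk duality theorem above with the Brown--Comenetz formula $\pi_{-m}(IR) \cong \Hom_{\bZ_2}(\pi_m(R), \bQ_2/\bZ_2)$. Splitting $\pi_*(ko) \cong \bZ_2\{1, A\} \otimes \bZ[B] \oplus \bZ/2\{\eta, \eta^2\} \otimes \bZ[B]$ into its $2$-torsion-free and $2$-torsion summands and applying $\Hom_{\bZ_2}(-, \bQ_2/\bZ_2)$ to each matches the $s = 2$ line of $E_\infty$ with the dual of the first summand, and the $s = 1$ line with the dual of the second, with the overall $\Sigma^{-6}$ balanced against the internal shifts $\Sigma^{-2}$ and $\Sigma^{-1}$ that record the filtration indices $s$.

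The main content, and the step I expect to require the most care, is the hidden $\eta$-extensions from $s = 1$ to $s = 2$. These are forced by the $\pi_*(ko)$-module structure on $I(ko)$: contravariantly, $\eta \in \pi_1(ko)$ acts on $\phi \in \Hom_{\bZ_2}(\pi_m(ko), \bQ_2/\bZ_2) = \pi_{-m}(I(ko))$ by $(\eta \cdot \phi)(x) = \phi(\eta x)$. The key relation to dualise is $\eta \cdot 1 = \eta$ in $\pi_*(ko)$: taking $\phi$ to be the generator of $\Hom_{\bZ_2}(\bZ/2\{\eta B^k\}, \bQ_2/\bZ_2) \cong \bZ/2$, one computes $(\eta \cdot \phi)(B^k) = \phi(\eta B^k) = 1/2$, so that $\eta \cdot \phi$ is the unique element of order $2$ in $\Hom_{\bZ_2}(\bZ_2\{B^k\}, \bQ_2/\bZ_2) \cong \bQ_2/\bZ_2$. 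Under the identification from the previous paragraph, $\phi$ is detected in filtration $s = 1$ while $\eta \cdot \phi$ is detected in filtration $s = 2$, yielding the claimed hidden extension. All other potentially nontrivial cross-filtration $\eta$-actions vanish by the same formula combined with the vanishing $\pi_{8k-1}(ko) = \pi_{8k+3}(ko) = 0$ and the relation $\eta A = 0$ in $\pi_*(ko)$.
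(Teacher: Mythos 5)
Your proof is correct, and it supplies the reasoning that the paper's own proof compresses into the single line ``See Figure~\ref{fig:Gamma2Bko}.'' In particular, your mechanism for the hidden $\eta$-extensions -- computing $(\eta\cdot\phi)(x)=\phi(\eta x)$ on $\pi_*(I(ko))$ so that the dual of $\eta B^k$ (filtration $s=1$) maps to the order-two element in the dual of $B^k$ (filtration $s=2$) -- is exactly the argument the paper makes explicit for the analogous $\tmf$ spectral sequences, where the hidden $\eta$- and $\nu$-extensions are obtained by observing that they are ``present in $\pi_*(IN)$, hence also in $\pi_*(\Gamma_{(2,B)}N)$.'' Your other steps (vanishing of $d_r$ for $r\ge 2$ because $E_2$ is concentrated in $s\in\{1,2\}$, and the degreewise matching of each $E_\infty$ line with the Brown--Comenetz dual of one of the two $\bZ_2$-module summands of $\pi_*(ko)$) are also right; it is worth noting, as you implicitly use, that the $s=1$ and $s=2$ contributions occur in disjoint topological degrees, so there are no hidden additive extensions and the two displayed isomorphisms hold on the nose rather than only up to associated graded.
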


\begin{proof}
See Figure~\ref{fig:Gamma2Bko}.
\end{proof}

\section{Topological modular forms} \label{sec:tmf}

We can now work out the structure of the local cohomology spectral sequences
\begin{align*}
E_2^{s,t} &= H^s_{(B, M)}(\pi_*(\tmf))_t \\
	&\Longrightarrow_s \pi_{t-s}(\Gamma_{(B, M)} \tmf)
		\cong \pi_{t-s}(\Sigma^{-22} I_{\bZ_p}(\tmf))
\end{align*}
and
\begin{align*}
E_2^{s,t} &= H^s_{(p, B, M)}(\pi_*(\tmf))_t \\
	&\Longrightarrow_s \pi_{t-s}(\Gamma_{(p, B, M)} \tmf)
		\cong \pi_{t-s}(\Sigma^{-23} I(\tmf))
\end{align*}
for $p=2$ and for $p=3$.  Recall the algebra generators for
$\pi_*(\tmf)$ listed in Table~\ref{tab:alggenpitmfp2} for $p=2$ and in
Table~\ref{tab:alggenpitmfp3} for $p=3$.  In each case multiplication
by~$M$ acts injectively on the depth~$1$ graded commutative ring
$\pi_*(\tmf)$, and we let $N_*$ denote a basic block for this action.
(The notation $BB$ is used for a similar object in~\cite{GM17}.)
To begin, we review the $\bZ_p[B, M]$-module structure
on $\pi_*(\tmf)$ and the $\bZ_p[B]$-module structure on~$N_*$, in the
notation from~\cite{BR21}*{Ch.~9}.

\subsection{$(B, M)$-local cohomology of $\tmf$}
\label{subsec:BMloccohtmf}

Let $p=2$ in this subsection and the next.
See Figure~\ref{fig:pitmf} for the mod~$2$ Adams $E_\infty$-term for
$\tmf$ in the range $0 \le t-s \le 192$, with all hidden $2$-, $\eta$-
and $\nu$-extensions shown.  There are no hidden $B$- or $M$-extensions
in this spectral sequence.

\begin{definition} \label{def:Nstar}
Let $N_* \subset \pi_*(\tmf)$ be the $\bZ_2[B]$-submodule generated by
all classes in degrees $0 \le * < 192$, and let $N = \tmf/M$.
\end{definition}

\begin{theorem}[\cite{BR21}*{Thm.~9.27}]
\label{thm:pitmfextension}
The composite homomorphisms
\begin{gather*}
N_* \otimes \bZ[M] \longto \pi_*(\tmf) \otimes \pi_*(\tmf)
        \overset{\cdot}\longto \pi_*(\tmf) \\
N_* \subset \pi_*(\tmf) \longto \pi_*(N)
\end{gather*}
are isomorphisms.  Hence $\pi_*(\tmf)$ is a (split) extension of $\bZ_2[B,
M]$-modules
$$
0 \to \Gamma_B N_* \otimes \bZ[M] \longto \pi_*(\tmf)
	\longto \frac{N_*}{\Gamma_B N_*} \otimes \bZ[M] \to 0 \,.
$$
\end{theorem}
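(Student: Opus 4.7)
The plan is to leverage the mod-$2$ Adams spectral sequence computation of $\pi_*(\tmf)$ as packaged in~\cite{BR21}*{Ch.~9}, to show that $M$ acts as a free generator of~$\pi_*(\tmf)$ over the polynomial subring $\bZ_2[M]$, with basis the basic block~$N_*$. The second isomorphism $N_* \overset{\cong}\longto \pi_*(N)$ then drops out of the cofibre sequence $\Sigma^{192} \tmf \overset{M}\longto \tmf \longto N$.

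First I would show that multiplication by $M$ is injective on $\pi_*(\tmf)$. Since $M$ is detected in the Adams spectral sequence by a permanent cycle represented by $\Delta^8$, and since multiplication by $\Delta^8$ visibly acts freely on the mod-$2$ Adams $E_\infty$-page (Figure~\ref{fig:pitmf}) as a translation by~$192$ in the $t-s$ direction, injectivity on the associated graded propagates to injectivity on~$\pi_*(\tmf)$: if $Mx = 0$ for some $x$ of finite Adams filtration~$s$, then $[\Delta^8] \cdot [x] = 0$ in the $E_\infty$-page, contradicting freeness unless $[x] = 0$.

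With $M$-injectivity in hand, the cofibre sequence yields a short exact sequence $0 \to \pi_{*-192}(\tmf) \overset{M}\longto \pi_*(\tmf) \longto \pi_*(N) \to 0$, so $\pi_*(N) \cong \pi_*(\tmf)/M$. The composite $N_* \hookrightarrow \pi_*(\tmf) \twoheadrightarrow \pi_*(N)$ is surjective because every class in the Adams $E_\infty$-term of $\tmf$ is an $\bF_2[\Delta^8]$-multiple of a basic-block class, and injective because $N_*$ contains no nonzero $M$-multiples (being concentrated in degrees $* < 192$). For the first composite $N_* \otimes \bZ[M] \to \pi_*(\tmf)$, surjectivity follows by downward induction on Adams filtration using the quotient identification just established, and injectivity follows from the $M$-freeness.

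The splitting of the resulting extension as $\bZ_2[B,M]$-modules reduces, via the $\bZ[M]$-freeness proved above, to splitting the $\bZ_2[B]$-module $N_*$ itself as $\Gamma_B N_* \oplus (N_*/\Gamma_B N_*)$, which is visible from the explicit description of $N_*$ as a $\bZ_2[B]$-module worked out in~\cite{BR21}*{Ch.~9}. The main obstacle is ruling out hidden $M$-extensions in the Adams spectral sequence for~$\tmf$; this is precisely the substantive content of the Hopkins--Mahowald calculation, which I would invoke from~\cite{BR21}*{Thm.~9.27} rather than reprove.
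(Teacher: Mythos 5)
The paper itself does not supply a proof of this statement; it is quoted verbatim as~\cite{BR21}*{Thm.~9.27}. Your proposal is therefore offering an argument where the paper gives none, and it is largely a reasonable sketch of why the claim follows from the Adams spectral sequence calculation---but it both contains a genuine factual error and, in the end, defers the key step back to the very citation it is meant to replace.

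The factual error: you assert that $N_*$ is ``concentrated in degrees $* < 192$,'' and use this to deduce that $N_*$ contains no nonzero $M$-multiples. But by Definition~\ref{def:Nstar}, $N_*$ is the $\bZ_2[B]$-\emph{submodule} generated by the classes in degrees $0 \le * < 192$; since $|B| = 8$, it contains classes (e.g., $B^{24}, B^{25}, \dots$) in every degree divisible by~$8$ and, in fact, is unbounded above. The statement ``$N_*$ contains no nonzero $M$-multiples'' is equivalent to the injectivity of $N_* \otimes \bZ[M] \to \pi_*(\tmf)$, i.e.\ it is essentially the theorem, and is not visible from the definition alone. The correct route is the one you gesture at for the first composite: one must work filtration by filtration in the Adams spectral sequence and rule out hidden $M$-extensions; injectivity of the second composite is then a formal consequence of the first composite being an isomorphism (apply $- \otimes_{\bZ[M]} \bZ$ to both sides and use $M$-injectivity).

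The deferral: your closing paragraph says the absence of hidden $M$-extensions ``is precisely the substantive content of the Hopkins--Mahowald calculation, which I would invoke from~\cite{BR21}*{Thm.~9.27}.'' Since Theorem~\ref{thm:pitmfextension} \emph{is} that theorem, this is circular as a proof of it. That is consistent with how the paper handles it (as a black-box citation), but you should be explicit that you are not giving an independent proof: the surjectivity argument, the freeness over $\bZ[M]$, and the splitting over $\bZ_2[B,M]$ all rest on the hidden-extension analysis carried out in~\cite{BR21}, which your outline (correctly) does not attempt to redo.

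One smaller point: your $M$-injectivity argument via freeness of $[\Delta^8]$ on the $E_\infty$-page is fine as stated (if $[x] \ne 0$ detects $x$ and $[\Delta^8][x] \ne 0$, then $Mx$ has Adams filtration exactly $s(x)+s(M)$ and is nonzero), but it should be phrased as the contrapositive you need---nonzero $x$ implies nonzero $Mx$---rather than as ``if $Mx=0$ then $[\Delta^8][x]=0$,'' which reads like the wrong direction of the implication.
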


\begin{definition} \label{def:nukdk}
Let $\nu_3 = \eta_1^3$ and $\nu_7 = 0$, and set $d_k = 8/\gcd(k,8)$,
so that $d_0 = 1$, $d_4 = 2$, $d_2 = d_6 = 4$, $d_1 = d_3 = d_5 = d_7 =
8$ and~$d_{7-k} \nu_k = 0$ for $0 \le k \le 7$.
\end{definition}

\begin{theorem}[\cite{BR21}*{Thm.~9.26}] \label{thm:ZBmoduleN}
As a $\bZ_2[B]$-module, $N_*$ is a split extension
$$
0 \to \Gamma_B N_* \longto N_* \longto \frac{N_*}{\Gamma_B N_*} \to 0 \,.
$$
The $B$-power torsion submodule $\Gamma_B N_*$ is given in
Table~\ref{BtorsionN}.  It is concentrated in degrees $3 \le * \le 164$,
and is finite in each degree.  The action of~$B$ is as indicated in the
table, together with $B \cdot \epsilon_1 = 2 \bar\kappa^2$, $B \cdot \eta
\nu_2 = 2 \bar\kappa^3$ and $B \cdot \epsilon_5 \kappa = 4 \nu \nu_6$.

The $B$-torsion free quotient of $N_*$ is the direct sum
$$
\frac{N_*}{\Gamma_B N_*} = \bigoplus_{k=0}^7 ko[k]
$$
of the following eight $\bZ_2[B]$-modules,
with $ko[k]$ concentrated in degrees~$* \ge 24k$:
\begin{align*}
ko[0] &= \bZ_2[B]\{1, C\}
        \oplus \bZ/2[B]\{\eta, \eta^2\} \\
ko[1] &= \bZ_2\{D_1\} \oplus \bZ_2[B]\{B_1, C_1\}
        \oplus \bZ/2[B]\{\eta_1, \eta \eta_1\} \\
ko[2] &= \bZ_2\{D_2\} \oplus \bZ_2[B]\{B_2, C_2\}
        \oplus \bZ/2[B]\{\eta B_2, \eta_1^2\} \\
ko[3] &= \bZ_2\{D_3\} \oplus \bZ_2[B]\{B_3, C_3\}
        \oplus \bZ/2[B]\{\eta B_3, \eta^2 B_3\} \\
ko[4] &= \bZ_2\{D_4\} \oplus \bZ_2[B]\{B_4, C_4\}
        \oplus \bZ/2[B]\{\eta_4, \eta \eta_4\} \\
ko[5] &= \bZ_2\{D_5\} \oplus \bZ_2[B]\{B_5, C_5\}
        \oplus \bZ/2[B]\{\eta B_5, \eta_1 \eta_4\} \\
ko[6] &= \bZ_2\{D_6\} \oplus \bZ_2[B]\{B_6, C_6\}
        \oplus \bZ/2[B]\{\eta B_6, \eta^2 B_6\} \\
ko[7] &= \bZ_2\{D_7\} \oplus \bZ_2[B]\{B_7, C_7\}
        \oplus \bZ/2[B]\{\eta B_7, \eta^2 B_7\} \,.
\end{align*}
The $\bZ_2[B]$-module structures are specified by $B \cdot D_k = d_k B_k$
for each $1 \le k \le 7$.
\end{theorem}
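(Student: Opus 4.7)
The plan is to read off the $\bZ_2[B]$-module structure of~$N_*$ directly from the Hopkins--Mahowald computation of $\pi_*(\tmf)$ at $p=2$, as displayed in the mod~$2$ Adams $E_\infty$-chart of Figure~\ref{fig:pitmf}. By the preceding Theorem~\ref{thm:pitmfextension}, $N_* \cong \pi_*(N)$ with $N = \tmf/M$, and the chart in the range $0 \le t-s < 192$ together with the algebra generators of Table~\ref{tab:alggenpitmfp2} already contains all of the required additive and multiplicative information; the work is to organise it into the stated decomposition.

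First I would construct the splitting
$$
0 \to \Gamma_B N_* \longto N_* \longto N_*/\Gamma_B N_* \to 0
$$
of $\bZ_2[B]$-modules by choosing, for each free generator of the quotient, an explicit Adams representative as a lift. The quotient is generated by the classes $1, C, B_k, C_k, D_k$ and the stated $\eta$- and $\eta^2$-multiples, and among these the only relations are the eight identities $B \cdot D_k = d_k B_k$, together with the $2\eta = 2\eta^2 = 0$ relations that mark the $\bZ/2[B]$-summands. Since each such lift is uniquely determined in its bidegree up to $B$-torsion, and since $B$-torsion lies by definition in~$\Gamma_B N_*$, the lifts assemble into a well-defined $\bZ_2[B]$-linear section.

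Next, for the $B$-torsion free quotient, I would partition the generators by the index $k \in \{0, 1, \dots, 7\}$, producing the eight summands $ko[k]$, each concentrated in degrees~$\ge 24k$. The module $ko[k]$ has the additive pattern of $\pi_*(ko)$ with an extra $\bZ_2\{D_k\}$ in its bottom degree; the appearance of $\pi_*(ko)$ reflects the factorisation of $B$ through the unit $ko \to \tmf$, and the relation $B \cdot D_k = d_k B_k$ can be verified by noting that $D_k$ sits in Adams filtration~$0$ while $B_k$ sits in filtration at least~$v_2(d_k)$, so that the unique nonzero multiple of $B_k$ in the correct bidegree is $d_k B_k$. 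The $\bZ/2[B]$-factors encode the $\eta$- and $\eta^2$-multiples that survive $B$-periodically in each phase.

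For the $B$-power torsion submodule $\Gamma_B N_*$, the strategy is a case-by-case enumeration from Figure~\ref{fig:pitmf}: list all classes in $N_*$ that do not lie on an infinite $B$-tower, verify that they occupy degrees $3 \le * \le 164$, and read off the $B$-action from the chart. The main obstacle is verifying the three hidden $B$-extensions $B \cdot \epsilon_1 = 2\bar\kappa^2$, $B \cdot \eta\nu_2 = 2\bar\kappa^3$, and $B \cdot \epsilon_5 \kappa = 4\nu\nu_6$, which cross Adams filtration and are therefore invisible on the $E_\infty$-page. These must be settled either by explicit Toda bracket identifications, by exploiting the $\tmf$-module map $\tmf \to \tmf_1(3)$ (where $c_4 \equiv a_1^4$ modulo $a_3$-terms allows one to track the $B$-action through the polynomial target), or by descent from~$\Tmf$; once these three extensions are pinned down, the remaining content of Table~\ref{BtorsionN} is a systematic chart inspection.
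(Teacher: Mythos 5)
The theorem you are proving is not actually proved in this paper: it is cited verbatim from Theorem~9.26 of~\cite{BR21}, the authors' monograph on the Adams spectral sequence for $\tmf$, so there is no in-paper proof to compare against. Your overall strategy --- identify $N_* \cong \pi_*(N)$ with the $\bZ_2[B]$-submodule of $\pi_*(\tmf)$ generated in degrees $0 \le * < 192$, read off the additive structure and the $\bZ_2[B]$-module decomposition from the Adams $E_\infty$-chart of Figure~\ref{fig:pitmf}, and split off the $B$-power torsion --- is in spirit how the result is established in~\cite{BR21}, after the far more substantial work of computing $\Ext_{A(2)}(\bF_2,\bF_2)$ and resolving all differentials and extension problems; your proposal silently assumes that chart as given.

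There is, however, a concrete error that would misdirect the bulk of your effort: you assert that the three products $B\cdot\epsilon_1 = 2\bar\kappa^2$, $B\cdot\eta\nu_2 = 2\bar\kappa^3$, and $B\cdot\epsilon_5\kappa = 4\nu\nu_6$ are hidden extensions ``which cross Adams filtration and are therefore invisible on the $E_\infty$-page,'' to be settled by Toda brackets or descent to $\tmf_1(3)$. The paper states explicitly, in the text immediately preceding Definition~\ref{def:Nstar}, that there are no hidden $B$- or $M$-extensions in this spectral sequence. These three relations are therefore visible at $E_\infty$; they are listed separately in the theorem statement simply because they do not fit the tabular format (the right-hand sides involve a factor of $2$ or $4$ rather than a named cyclic generator), not because they require any argument beyond chart inspection. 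A smaller slip: the relations $B\cdot D_k = d_k B_k$ are seven in number, for $1 \le k \le 7$, not eight, since $D_0$ does not occur among the algebra generators of Table~\ref{tab:alggenpitmfp2}.
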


\begin{remark}
The submodule $N_* \subset \pi_*(\tmf)$ is preserved by the action of
$\eta$, $\nu$, $\epsilon$, $\kappa$ and~$\bar\kappa$.  To check this,
note that the $B^2$-torsion classes $\kappa C_7$, $\bar\kappa B_7$ and
$\bar\kappa C_7$ are zero.  It follows that the isomorphism $N_* \otimes
\bZ[M] \cong \pi_*(\tmf)$ also respects the action by these elements.
\end{remark}

\begin{table}
\caption{$B$-power torsion in $N_*$ at $p=2$
	\label{BtorsionN}}
\begin{tabular}{>{$}r<{$} | >{$}c<{$} | >{$}c<{$}}
n & \Gamma_B N_n & \text{generator} \\
\hline
3 & \bZ/8 & \nu \\
6 & \bZ/2 & \nu^2 \\
8 & \bZ/2 & \epsilon \\
9 & \bZ/2 & \eta \epsilon \\
14 & \bZ/2 & \kappa \\
15 & \bZ/2 & \eta \kappa \\
17 & \bZ/2 & \nu \kappa \\
20 & \bZ/8 & \bar\kappa \\
21 & \bZ/2 & \eta \bar\kappa \\
22 & \bZ/2 & \eta^2 \bar\kappa = B \cdot \kappa \\
\hline
27 & \bZ/4 & \nu_1 \\
28 & \bZ/2 & \eta \nu_1 = B \cdot \bar\kappa \\
32 & \bZ/2 & \epsilon_1 \\
33 & \bZ/2 & \eta \epsilon_1 \\
34 & \bZ/2 & \kappa \bar\kappa \\
35 & \bZ/2 & \eta \kappa \bar\kappa = B \cdot \nu_1 \\
39 & \bZ/2 & \eta_1 \kappa \\
40 & \bZ/4 & \bar\kappa^2 \\
41 & \bZ/2 & \eta \bar\kappa^2 \\
42 & \bZ/2 & \eta^2 \bar\kappa^2 = B \cdot \kappa \bar\kappa \\
45 & \bZ/2 & \eta_1 \bar\kappa \\
46 & \bZ/2 & \eta \eta_1 \bar\kappa \\
\hline
51 & \bZ/8 & \nu_2 \\
52 & \bZ/2 & \eta \nu_2 \\
53 & \bZ/2 & \eta^2 \nu_2 = B \cdot \eta_1 \bar\kappa \\
54 & \bZ/4 & \nu \nu_2 \\
57 & \bZ/2 & \nu^2 \nu_2 \\
59 & \bZ/2 & B \nu_2 \\
60 & \bZ/4 & \bar\kappa^3 \\
65 & (\bZ/2)^2 & \eta_1 \bar\kappa^2 \\
- & - & \nu_2 \kappa \\
66 & \bZ/2 & \eta \nu_2 \kappa \\
68 & \bZ/2 & \nu \nu_2 \kappa \\
70 & \bZ/2 & \eta_1^2 \bar\kappa \\
\hline
75 & \bZ/2 & \eta_1^3 \\
80 & \bZ/2 & \bar\kappa^4 \\
\end{tabular}
\qquad
\qquad
\qquad
\begin{tabular}{>{$}r<{$} | >{$}c<{$} | >{$}c<{$}}
n & \Gamma_B N_n & \text{generator} \\
\hline
85 & \bZ/2 & \eta_1 \bar\kappa^3 \\
90 & \bZ/2 & \eta_1^2 \bar\kappa^2 \\
\hline
99 & \bZ/8 & \nu_4 \\
100 & \bZ/2 & \eta \nu_4 \\
102 & \bZ/2 & \nu \nu_4 \\
104 & \bZ/2 & \epsilon_4 \\
105 & (\bZ/2)^2 & \eta \epsilon_4 \\
- & - & \eta_1 \bar\kappa^4 \\
110 & \bZ/4 & \kappa_4 \\
111 & \bZ/2 & \eta \kappa_4 \\
113 & \bZ/2 & \nu \kappa_4 \\
116 & \bZ/4 & \bar\kappa D_4 \\
117 & \bZ/2 & \eta_4 \bar\kappa \\
118 & \bZ/2 & \eta \eta_4 \bar\kappa = B \cdot \kappa_4 \\
\hline
123 & \bZ/4 & \nu_5 \\
124 & \bZ/2 & \eta \nu_5 \\
125 & \bZ/2 & \eta^2 \nu_5 = B \cdot \eta_4 \bar\kappa \\
128 & \bZ/2 & \epsilon_5 \\
129 & \bZ/2 & \eta \epsilon_5 \\
130 & \bZ/4 & \kappa_4 \bar\kappa \\
131 & \bZ/2 & \eta \kappa_4 \bar\kappa = B \cdot \nu_5 \\
135 & \bZ/2 & \eta_1 \kappa_4 \\
136 & \bZ/2 & \eta \eta_1 \kappa_4 = B \cdot \epsilon_5 \\
137 & \bZ/2 & \nu_5 \kappa \\
138 & \bZ/2 & \eta \nu_5 \kappa = B \cdot \kappa_4 \bar\kappa \\
142 & \bZ/2 & \epsilon_5 \kappa \\
\hline
147 & \bZ/8 & \nu_6 \\
148 & \bZ/2 & \eta \nu_6 \\
149 & \bZ/2 & \eta^2 \nu_6 \\
150 & \bZ/8 & \nu \nu_6 \\
153 & \bZ/2 & \nu^2 \nu_6 \\
155 & \bZ/2 & B \nu_6 \\
156 & \bZ/2 & B \eta \nu_6 \\
161 & \bZ/2 & \nu_6 \kappa \\
162 & \bZ/2 & \eta \nu_6 \kappa \\
164 & \bZ/2 & \nu \nu_6 \kappa \\
\end{tabular}
\end{table}

\begin{lemma} \label{lem:HBMtmf}
$$
H^s_{(B,M)}(\pi_*(\tmf))
	\cong H^{s-1}_{(B)}(N_*) \otimes \bZ[M]/M^\infty \,.
$$
\end{lemma}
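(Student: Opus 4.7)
The plan is to apply the composite functor spectral sequence of Subsection~\ref{subsec:composite} with $I = (B)$ and $J = (M)$ in $\pi_*(\tmf)$. Since that spectral sequence was shown to collapse at $E_2$, it immediately yields identifications of $H^0_{(B,M)}$ and $H^2_{(B,M)}$ as $\Gamma_B \Gamma_M \pi_*(\tmf)$ and $(\pi_*(\tmf)/M^\infty)/B^\infty$ respectively, together with a natural short exact sequence
$$
0 \to (\Gamma_M \pi_*(\tmf))/B^\infty \longto H^1_{(B,M)}(\pi_*(\tmf))
	\longto \Gamma_B(\pi_*(\tmf)/M^\infty) \to 0 \,.
$$

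The first step is to exploit the injectivity of multiplication by $M$ on $\pi_*(\tmf)$, which is immediate from the isomorphism $\pi_*(\tmf) \cong N_* \otimes \bZ[M]$ of Theorem~\ref{thm:pitmfextension}. This injectivity gives $\Gamma_M \pi_*(\tmf) = H^0_{(M)}(\pi_*(\tmf)) = 0$, so both $H^0_{(B,M)}(\pi_*(\tmf)) = 0$ and the left term $(\Gamma_M \pi_*(\tmf))/B^\infty$ of the short exact sequence vanish. Consequently $H^1_{(B,M)}(\pi_*(\tmf)) \cong \Gamma_B(\pi_*(\tmf)/M^\infty)$, while $H^2_{(B,M)}(\pi_*(\tmf)) = (\pi_*(\tmf)/M^\infty)/B^\infty$, and $H^s_{(B,M)} = 0$ for $s \ge 3$ since the ideal has two generators, matching the vanishing of $H^{s-1}_{(B)}(N_*)$ outside $s-1 \in \{0,1\}$.

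Next I would rewrite $\pi_*(\tmf)/M^\infty$. Because $M$ is a non-zerodivisor and $\pi_*(\tmf) \cong N_* \otimes \bZ[M]$, one has $\pi_*(\tmf)[1/M] \cong N_* \otimes \bZ[M^{\pm 1}]$ and hence
$$
\pi_*(\tmf)/M^\infty \cong N_* \otimes \bZ[M]/M^\infty
$$
as $\bZ_2[B,M]$-modules. Since the action of $B$ on this tensor product only affects the left factor, and since $\bZ[M]/M^\infty$ is a $\bZ$-flat (hence $\bZ[B]$-flat when $B$ acts trivially on it) module regarded as the filtered colimit of $\bZ\{M^{-1},\ldots,M^{-n}\}$, taking $B$-torsion and $B$-power quotients commutes with tensoring by $\bZ[M]/M^\infty$. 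This gives
$$
\Gamma_B(\pi_*(\tmf)/M^\infty) \cong \Gamma_B N_* \otimes \bZ[M]/M^\infty
	= H^0_{(B)}(N_*) \otimes \bZ[M]/M^\infty
$$
and
$$
(\pi_*(\tmf)/M^\infty)/B^\infty \cong (N_*/B^\infty) \otimes \bZ[M]/M^\infty
	= H^1_{(B)}(N_*) \otimes \bZ[M]/M^\infty \,,
$$
completing the identification for $s = 1$ and $s = 2$.

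The only real point to verify carefully will be the exactness of tensoring with $\bZ[M]/M^\infty$ in the formation of $\Gamma_B$ and $(-)/B^\infty$; this is where I expect the bookkeeping to demand the most attention, since one must check that the localisation $N_*[1/B] \otimes \bZ[M]/M^\infty$ really computes $(N_* \otimes \bZ[M]/M^\infty)[1/B]$. Because $B$ acts only on the $N_*$ factor and localisation commutes with the colimit presentation $\bZ[M]/M^\infty = \mathrm{colim}_n \Sigma^{-|M|n} \bZ\{M^{-1},\ldots,M^{-n}\}$, this is routine, but it should be spelled out to justify the displayed isomorphisms.
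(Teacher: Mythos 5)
Your proposal is correct and follows the same route as the paper: both invoke the composite functor spectral sequence of Subsection~\ref{subsec:composite} with outer ideal $I=(B)$ and inner ideal $J=(M)$, both use the isomorphism $\pi_*(\tmf)\cong N_*\otimes\bZ[M]$ from Theorem~\ref{thm:pitmfextension} to kill the $j=0$ row via $\Gamma_M\pi_*(\tmf)=0$, and both finish by identifying $H^i_{(B)}(N_*\otimes\bZ[M]/M^\infty)$ with $H^i_{(B)}(N_*)\otimes\bZ[M]/M^\infty$. The only difference is presentational: the paper compresses this to a single sentence (``collapses at the $j=1$ line, where $H^1_{(M)}(\bZ[M])=\bZ[M]/M^\infty$''), whereas you expand the four-corner $E_\infty$-identification and the flatness bookkeeping explicitly. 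One small blemish: the parenthetical ``hence $\bZ[B]$-flat when $B$ acts trivially on it'' is not quite what is used. What matters is $\bZ$-flatness of $\bZ[M]/M^\infty$ (it is a colimit of free abelian groups), together with the fact that $B$ acts through the $N_*$-tensor factor, so that localisation, kernel, and cokernel along $\gamma\colon N_*\to N_*[1/B]$ all commute with $-\otimes\bZ[M]/M^\infty$; you do state this correctly in the final paragraph.
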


\begin{proof}
The spectral sequence
$$
E_2^{i,j} = H^i_{(B)}(H^j_{(M)}(N_* \otimes \bZ[M]))
	\Longrightarrow_i H^{i+j}_{(B, M)}(N_* \otimes \bZ[M])
$$
collapses at the $j=1$ line, where $H^1_{(M)}(\bZ[M]) =
\bZ[M]/M^\infty = \bZ[M^{-1}]\{1/M\}$.
\end{proof}

\begin{lemma} \label{lem:HBN}
$$
H^0_{(B)}(N_*) = \Gamma_B N_*
$$
and
$$
H^1_{(B)}(N_*) = N_*/B^\infty = \bigoplus_{k=0}^7 ko[k]/B^\infty
$$
is the direct sum of the following eight $\bZ_2[B]$-modules,
with $ko[k]/B^\infty$ concentrated in degrees $* \le 24k+4$:
\begin{align*}
ko[0]/B^\infty &= \bZ_2[B]/B^\infty\{1, C\}
	\oplus \bZ/2[B]/B^\infty\{\eta, \eta^2\} \\
ko[1]/B^\infty &= \bZ_2[B]/B^\infty\{B_1, C_1\}/(8 B_1/B)
	\oplus \bZ/2[B]/B^\infty\{\eta_1, \eta \eta_1\} \\
ko[2]/B^\infty &= \bZ_2[B]/B^\infty\{B_2, C_2\}/(4 B_2/B)
	\oplus \bZ/2[B]/B^\infty\{\eta B_2, \eta_1^2\} \\
ko[3]/B^\infty &= \bZ_2[B]/B^\infty\{B_3, C_3\}/(8 B_3/B)
	\oplus \bZ/2[B]/B^\infty\{\eta B_3, \eta^2 B_3\} \\
ko[4]/B^\infty &= \bZ_2[B]/B^\infty\{B_4, C_4\}/(2 B_4/B)
	\oplus \bZ/2[B]/B^\infty\{\eta_4, \eta \eta_4\} \\
ko[5]/B^\infty &= \bZ_2[B]/B^\infty\{B_5, C_5\}/(8 B_5/B)
	\oplus \bZ/2[B]/B^\infty\{\eta B_5, \eta_1 \eta_4\} \\
ko[6]/B^\infty &= \bZ_2[B]/B^\infty\{B_6, C_6\}/(4 B_6/B)
	\oplus \bZ/2[B]/B^\infty\{\eta B_6, \eta^2 B_6\} \\
ko[7]/B^\infty &= \bZ_2[B]/B^\infty\{B_7, C_7\}/(8 B_7/B)
	\oplus \bZ/2[B]/B^\infty\{\eta B_7, \eta^2 B_7\} \,.
\end{align*}
Here $\bZ_2[B]/B^\infty = \bZ_2[B^{-1}]\{1/B\}$
and $\bZ/2[B]/B^\infty = \bZ/2[B^{-1}]\{1/B\}$.
\end{lemma}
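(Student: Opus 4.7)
The plan is to realise $H^*_{(B)}(N_*)$ as the cohomology of the two-term complex
$$
0 \to N_* \overset{\gamma}\longto N_*[1/B] \to 0 \,,
$$
so that $H^0_{(B)}(N_*) = \ker(\gamma)$ and $H^1_{(B)}(N_*) = \operatorname{coker}(\gamma)$ are by construction $\Gamma_B N_*$ and $N_*/B^\infty$, respectively. The first identity in the lemma is then immediate.

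For the second, I would invoke Theorem~\ref{thm:ZBmoduleN}, which splits $N_*$ as a direct sum $\Gamma_B N_* \oplus \bigoplus_{k=0}^{7} ko[k]$ of $\bZ_2[B]$-modules. Localisation at $B$ annihilates the $B$-power torsion summand, so $\gamma$ reduces to the injection $\bigoplus_k ko[k] \hookrightarrow \bigoplus_k ko[k][1/B]$, and its cokernel is the termwise sum $\bigoplus_k ko[k]/B^\infty$. This reduces the claim to identifying $ko[k]/B^\infty$ one summand at a time.

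For each $k$ I would read the cokernel off the generators-and-relations presentation of $ko[k]$ in Theorem~\ref{thm:ZBmoduleN}. A free cyclic summand $\bZ_2[B]\{x\}$ (respectively $\bZ/2[B]\{x\}$) contributes the standard piece $\bZ_2[B]/B^\infty\{x\} = \bZ_2[B^{-1}]\{x/B\}$ (respectively $\bZ/2[B]/B^\infty\{x\}$). The only subtle ingredient, which I expect to be the main obstacle to writing the formula cleanly, is the coupling $B \cdot D_k = d_k B_k$ for $1 \le k \le 7$: upon inverting~$B$ the class $D_k$ is forced to equal $d_k(B_k/B)$ in $ko[k][1/B]$, so $D_k$ contributes no new generator to the cokernel, but it does force the relation $d_k(B_k/B) = 0$ there, producing exactly the quotient $\bZ_2[B]/B^\infty\{B_k, C_k\}/(d_k B_k/B)$ recorded in the statement. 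For $k=0$ there is no $D_0$ and hence no such relation, which accounts for the difference in the first summand. The degree bound $* \le 24k+4$ for each $ko[k]/B^\infty$ then falls out of the degrees of the generators $B_k$ and $C_k$ listed in Theorem~\ref{thm:ZBmoduleN}.
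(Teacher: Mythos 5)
Your proof is correct and follows essentially the same path as the paper: realise $H^*_{(B)}(N_*)$ as the cohomology of $N_* \to N_*[1/B]$, split off the $B$-power torsion using Theorem~\ref{thm:ZBmoduleN}, compute $ko[k]/B^\infty$ termwise, and observe that the coupling $B\cdot D_k = d_k B_k$ is precisely what produces the relation $d_k \cdot B_k/B = 0$. The paper's proof differs only in a small presentational point, citing the relations $B\cdot\eta_k = \eta B_k$ to express $ko[k][1/B]$ uniformly as $\bZ_2[B^{\pm1}]\{B_k,C_k\}\oplus\bZ/2[B^{\pm1}]\{\eta B_k,\eta^2 B_k\}$ before passing to the quotient, but your direct termwise reading of the cokernel from the generators in Theorem~\ref{thm:ZBmoduleN} reaches the same conclusion.
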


\begin{proof}
The relations $B \cdot \eta_k = \eta B_k$ from~\cite{BR21}*{Def.~7.22(7)}
ensure that
$$
ko[k][1/B] = \bZ_2[B^{\pm1}] \{B_k, C_k\}
	\oplus \bZ/2[B^{\pm1}] \{\eta B_k, \eta^2 B_k\}
$$
for each $0 \le k \le 7$, from which the formulas for $ko[k]/B^\infty$
follow.  Note that $B \cdot D_k = d_k B_k$ in $ko[k]$ implies the relation
$d_k \cdot B_k/B = 0$ in~$ko[k]/B^\infty$.
\end{proof}

\begin{theorem} \label{thm:GammaBMtmfspseq}
At $p=2$, the local cohomology spectral sequence
$$
E_2^{s,t} = H^s_{(B,M)}(\pi_*(\tmf))_t
	\Longrightarrow_s \pi_{t-s}(\Gamma_{(B, M)} \tmf)
		\cong \pi_{t-s}(\Sigma^{-22} I_{\bZ_2}(\tmf))
$$
has $E_2$-term
$$
H^s_{(B,M)}(\pi_*(\tmf))_*
        \cong H^{s-1}_{(B)}(N_*) \otimes \bZ[M]/M^\infty
$$
where $H^*_{(B)}(N_*)$ is displayed in Figures~\ref{fig:GammaBN-ab}
and~\ref{fig:GammaBN-cd}.  There is no room for differentials, so $E_2
= E_\infty$.  There are hidden additive extensions
$$
d_{7-k} \cdot \nu_k \doteq C_k/B
$$
(multiplied by all negative powers of~$M$)
for $0 \le k \le 6$, indicated by vertical dashed red lines in the
figures.  Moreover, there are hidden $\eta$- and $\nu$-extensions
as shown by sloping dashed and dotted red lines in these figures.
\end{theorem}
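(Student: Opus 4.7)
The plan has three stages matching the three assertions of the theorem.

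First, I would identify the $E_2$-term. The composite functor spectral sequence (Lemma~\ref{lem:HBMtmf}) gives
$$
H^s_{(B,M)}(\pi_*(\tmf)) \cong H^{s-1}_{(B)}(N_*) \otimes \bZ[M]/M^\infty,
$$
since $\pi_*(\tmf) \cong N_* \otimes \bZ[M]$ is $M$-torsion free. Lemma~\ref{lem:HBN} computes the two nontrivial columns: $H^0_{(B)}(N_*) = \Gamma_B N_*$ at $s = 1$, and $H^1_{(B)}(N_*) = N_*/B^\infty$ at $s = 2$. Combined with Theorem~\ref{thm:ZBmoduleN} this yields the groups depicted in Figures~\ref{fig:GammaBN-ab} and~\ref{fig:GammaBN-cd}.

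Second, I would argue that $E_2 = E_\infty$ by pure bidegree considerations. The $E_2$-term is concentrated in filtration degrees $s \in \{1,2\}$, and any $d_r$ has target in filtration $s + r \ge 3$, so all differentials vanish.

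Third, for the hidden extensions I would invoke the duality equivalence $\Gamma_{(B,M)} \tmf \simeq \Sigma^{-22} I_{\bZ_2}(\tmf)$ (Theorem~\ref{thm:Gorduality}) together with the Anderson short exact sequence~\eqref{eq:ExtIZpHom}: for each topological degree $n$ one obtains
$$
0 \to \Ext_{\bZ_2}(\pi_{n+21}(\tmf), \bZ_2) \to \pi_n(\Gamma_{(B,M)} \tmf) \to \Hom_{\bZ_2}(\pi_{n+22}(\tmf), \bZ_2) \to 0.
$$
Matching Pontryagin ranks, the $s = 2$ filtration piece $N_*/B^\infty \otimes \bZ[M]/M^\infty$ corresponds to the Hom-term, while the $s = 1$ piece $\Gamma_B N_* \otimes \bZ[M]/M^\infty$ corresponds to the Ext-term. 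The relations $B \cdot D_k = d_k B_k$ in $\pi_*(\tmf)$ (Theorem~\ref{thm:ZBmoduleN}) dualize, via this Pontryagin duality, to the statement that the dual of $d_k B_k$ is divisible by~$B$: concretely $d_{7-k}\nu_k$ lifts (up to a unit) to $C_k/B$ in the higher filtration, which is precisely the asserted additive extension. The hidden $\eta$- and $\nu$-extensions are read off analogously by transporting the known $\eta$- and $\nu$-action on $\pi_*(\tmf)$ (as recorded in Figure~\ref{fig:pitmf}) through the duality isomorphism.

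The main obstacle is not the existence of the spectral sequence or the collapse — both are essentially automatic — but the explicit bookkeeping of all hidden extensions. The collapse leaves a nontrivial filtration in every degree where both $\Gamma_B N_*$ and $N_*/B^\infty$ contribute, and the two candidate generators on either side are connected by nontrivial multiplicative relations whose exact form one must extract by matching degree by degree against the self-duality displayed in Figure~\ref{fig:A-duality}. This is where the argument becomes combinatorially delicate, and where the relations $d_{7-k}\nu_k = 0$ in the associated graded but $d_{7-k}\nu_k \doteq C_k/B$ in the abutment are crucial: they are forced by the fact that $d_k B_k = B \cdot D_k$ has a visible $B$-divisor in $\pi_*(\tmf)$, which must be reflected contravariantly on the dual side.
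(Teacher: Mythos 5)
Your Stages 1 and 2 are correct and match the paper: the $E_2$-identification comes from Lemma~\ref{lem:HBMtmf} and Lemma~\ref{lem:HBN}, and the collapse is forced because the $E_2$-term is supported on $s\in\{1,2\}$ while $d_r$ for $r\ge 2$ raises $s$ by at least $2$.

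Stage 3 contains a genuine error. The claim that the local cohomology filtration ($s=1$ versus $s=2$) matches the Anderson short exact sequence filtration ($\Ext$ versus $\Hom$) is false, and it fails precisely in the degrees where the hidden extensions occur. At topological degree $24k+3$, the paper's argument uses $\Gamma_B N \simeq \Sigma^{171} I_{\bZ_2} N$ (obtained from the cofibre sequences $\Sigma^{192}\Gamma_{(B,M)}\tmf \to \Gamma_{(B,M)}\tmf \to \Gamma_B N$ and $I_{\bZ_2} N \to I_{\bZ_2}\tmf \to I_{\bZ_2}(\Sigma^{192}\tmf)$) together with \eqref{eq:ExtIZpHom} to compute that $\pi_{24k+3}(\Gamma_B N)\cong\pi_{-24(7-k)}(I_{\bZ_2}N)$ is an extension of $\Hom_{\bZ_2}(\pi_{24(7-k)}(N),\bZ_2)\cong\bZ_2^{8-k}$ by $\Ext_{\bZ_2}(\pi_{24(7-k)-1}(N),\bZ_2)=0$, hence is torsion-free of rank $8-k$. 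The $s=1$ class $\nu_k$, a finite cyclic group of order $d_{7-k}$ in $\Gamma_B N_*$, thus cannot correspond to an $\Ext$-term: it is a torsion quotient of a free $\bZ_2$-module, which forces a hidden additive extension. Since $\nu_k$ is ($B$- or) $B^2$-torsion, so is its lift, and $C_k/B$ is the only available target in $N_*/B^\infty$. This degree-by-degree argument is necessary; your blanket claim ``$s=1$ corresponds to $\Ext$, $s=2$ corresponds to $\Hom$'' would contradict the very conclusion you want (in degree $24k+3$ the $\Ext$-term vanishes and the $s=1$ piece lands entirely on the $\Hom$-side). Your reference to the relation ``$B\cdot D_k = d_k B_k$'' also has an index mismatch: the relevant relation is at index $7-k$, dual to degree $24(7-k)$, not at index $k$. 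Finally, you should also note that the uniqueness of the target ($C_k/B$ rather than some other class) requires the $B$-linearity constraint, which you omit.
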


\begin{proof}
See~\cite{BR21}*{\S9.2} for the $\eta$- and $\nu$-multiplications
in $\Gamma_B N_*$ that are not evident from the notation.  We note
in particular the relation $\nu^2 \nu_4 = \eta \epsilon_4 +
\eta_1 \bar\kappa^4$ in degree~$105$.  The dotted black lines show
$B$-multiplications.  The homotopy cofibre (and fibre) sequences
\begin{gather*}
\Sigma^{192} \Gamma_{(B, M)} \tmf
	\overset{M}\longto \Gamma_{(B, M)} \tmf
	\longto \Gamma_B N \\
I_{\bZ_2} N
	\longto I_{\bZ_2}(\tmf)
	\overset{M}\longto I_{\bZ_2}(\Sigma^{192} \tmf)
\end{gather*}
and the equivalence $\Gamma_{(B, M)} \tmf \simeq \Sigma^{-22}
I_{\bZ_2}(\tmf)$ imply an equivalence
$$
\Gamma_B N \simeq \Sigma^{171} I_{\bZ_2} N
$$
of $\tmf$-modules.
For each $0 \le k \le 6$ the group $\pi_{24k+3}(\Gamma_B N)
\cong \pi_{-24(7-k)}(I_{\bZ_2} N)$ sits in a short exact sequence
\begin{multline*}
0 \to \Ext_{\bZ_2}(\pi_{24(7-k)-1}(N), \bZ_2)
	\longto \pi_{-24(7-k)}(I_{\bZ_2} N) \\
	\longto \Hom_{\bZ_2}(\pi_{24(7-k)}(N), \bZ_2) \to 0 \,,
\end{multline*}
cf.~\eqref{eq:ExtIZpHom}.  Here $\pi_{24(7-k)-1}(N) = 0$ and
$$
\pi_{24(7-k)}(N) \cong \bZ_2\{B^{3(7-k)}, \dots, B^3 D_{6-k}, D_{7-k}\}
	\cong \bZ_2^{8-k} \,,
$$
so $\pi_{24k+3}(\Gamma_B N) \cong \bZ_2^{8-k}$ is $2$-torsion free.
In each case this implies that $\nu_k$, which generates a cyclic group
$\<\nu_k\>$ of order~$d_{7-k}$ in $\Gamma_B N_*$, lifts to a class of
infinite order in~$\pi_{24k+3}(\Gamma_B N)$.  Since $\nu_k$ is ($B$-
or) $B^2$-torsion in $\Gamma_B N_*$, its lift must also be ($B$- or)
$B^2$-torsion, and the only possibility is that $d_{7-k}$ times the lift
of $\nu_k$ is a $2$-adic unit times the image of $C_k/B \in N_*/B^\infty$.
Hence there is a hidden $2$-extension from $\frac12 d_{7-k} \nu_k$ in
Adams bidegree $(t-s,s) = (24k+3, 0)$ to $C_k/B$ in bidegree $(t-s,s) =
(24k+3,1)$, in the local cohomology spectral sequence
$$
E_2^{s,t} = H^s_{(B)}(N_*)_t
	\Longrightarrow_s \pi_{t-s}(\Gamma_B N)
	\cong \pi_{t-s}(\Sigma^{171} I_{\bZ_2} N) \,.
$$
This translates to a hidden $2$-extension from $\frac12 d_{7-k} \nu_k/M$ in
bidegree $(t-s,s) = (24k-190, 1)$ to $C_k/BM$ in bidegree $(t-s,s) =
(24k-190,2)$ in the local cohomology spectral sequence for $\Gamma_{(B,
M)} \tmf$, together with its multiples by all negative powers of~$M$.

There is no room for further hidden $2$-extensions, by elementary
$\eta$-, $\nu$- and $B$-linearity considerations.  The hidden $\eta$-
and $\nu$-extensions are present in $\pi_*(I_{\bZ_2} N)$, hence also
in $\pi_*(\Gamma_B N)$ and in $\pi_*(\Gamma_{(B, M)} \tmf)$, with the
appropriate degree shifts.
\end{proof}

\subsection{$(2, B, M)$-local cohomology of $\tmf$}
\label{subsec:2BMloccohtmf}

\begin{lemma}
$$
H^s_{(2,B,M)}(\pi_*(\tmf))
	\cong H^{s-1}_{(2,B)}(N_*) \otimes \bZ[M]/M^\infty \,.
$$
\end{lemma}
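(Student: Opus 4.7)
The plan is to reprise the composite functor spectral sequence argument used in the proof of the preceding Lemma (the $(B,M)$-version), now with the outer ideal $(B)$ replaced by $(2,B)$. This reduces everything to the flatness of $\bZ[M]/M^\infty$ over~$\bZ$ and the fact that $M$ is a nonzerodivisor on $\pi_*(\tmf)$.

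First I would invoke Theorem~\ref{thm:pitmfextension} to identify $\pi_*(\tmf) \cong N_* \otimes \bZ[M]$ as a $\bZ_2[B, M]$-module. Multiplication by~$M$ is then injective, so $H^0_{(M)}(\pi_*(\tmf)) = 0$ while
$$
H^1_{(M)}(\pi_*(\tmf)) \;=\; \pi_*(\tmf)/M^\infty \;\cong\; N_* \otimes \bZ[M]/M^\infty,
$$
where $\bZ[M]/M^\infty = \bZ[M^{-1}]\{1/M\}$. With $I=(2,B)$ and $J=(M)$, the composite functor spectral sequence of Subsection~\ref{subsec:composite},
$$
E_2^{i,j} = H^i_{(2,B)}\bigl(H^j_{(M)}(\pi_*(\tmf))\bigr) \Longrightarrow_i H^{i+j}_{(2,B,M)}(\pi_*(\tmf)),
$$
is therefore concentrated on the line $j=1$ and collapses at $E_2 = E_\infty$, giving an isomorphism
$$
H^s_{(2,B,M)}(\pi_*(\tmf)) \cong H^{s-1}_{(2,B)}\bigl(N_* \otimes \bZ[M]/M^\infty\bigr).
$$

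Second I would separate the two tensor factors on the right. Since $2$ and~$B$ act only on the $N_*$-factor, the stable Koszul (\v Cech) complex
$$
N_* \longto N_*[1/2] \oplus N_*[1/B] \longto N_*[1/2, 1/B]
$$
that computes $H^*_{(2,B)}(N_*)$ tensors over~$\bZ$ to the analogous complex for $N_* \otimes \bZ[M]/M^\infty$. Because $\bZ[M]/M^\infty$ is $\bZ$-free on the basis $\{1/M^k : k\ge 1\}$, hence flat, passage to cohomology commutes with this tensor factor, yielding
$$
H^{s-1}_{(2,B)}\bigl(N_* \otimes \bZ[M]/M^\infty\bigr) \;\cong\; H^{s-1}_{(2,B)}(N_*) \otimes \bZ[M]/M^\infty.
$$
Combining the two isomorphisms delivers the stated formula.

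There is no genuine obstacle here; the only point requiring a moment's attention is the flatness of $\bZ[M]/M^\infty$ used to commute cohomology past the second tensor factor, and the verification that the action of~$(2,B) \subset \pi_*(\tmf)$ is entirely through the $N_*$-factor. Both are immediate from the direct-sum decomposition $\pi_*(\tmf) \cong N_* \otimes \bZ[M]$, exactly as in the proof of Lemma~\ref{lem:HBMtmf}.
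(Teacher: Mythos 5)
Your proposal follows the same route as the paper: both invoke the composite functor spectral sequence from Subsection~\ref{subsec:composite} with inner ideal $(M)$ and outer ideal $(2,B)$, observe that $M$ is a nonzerodivisor so the spectral sequence is concentrated on the $j=1$ line and collapses, and then pull $\bZ[M]/M^\infty$ out of $H^{s-1}_{(2,B)}(-)$. The paper's own proof simply says ``replace $(B)$ by $(2,B)$ in the proof of Lemma~\ref{lem:HBMtmf},'' which is exactly what you have done; the only difference is that you spell out the flatness of $\bZ[M]/M^\infty$ over $\bZ$ (free on $\{1/M^k : k \ge 1\}$) that the paper leaves tacit, which is a welcome bit of explicitness but not a different argument.
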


\begin{proof}
Replace $(B)$ by $(2, B)$ in the proof of Lemma~\ref{lem:HBMtmf}.
\end{proof}

\begin{proposition}
All $B$-power torsion in $N_*$ is $2$-power torsion, so
\begin{align*}
H^0_{(2, B)}(N_*) &= \Gamma_B N_* \\
H^1_{(2, B)}(N_*) &= \Gamma_2(N_*/B^\infty) \\
H^2_{(2, B)}(N_*) &= N_*/(2^\infty, B^\infty)
\end{align*}
with a short exact sequence
$$
0 \to (\Gamma_2 N_*)/B^\infty \longto \Gamma_2(N_*/B^\infty)
	\longto \Gamma_B(N_*/2^\infty) \to 0 \,.
$$
Here
\begin{multline*}
(\Gamma_2 N_*)/B^\infty = \bZ/2[B]/B^\infty
	\{\eta, \eta^2, \eta_1, \eta \eta_1,
	\eta B_2, \eta_1^2, \eta B_3, \eta^2 B_3, \\
	\eta_4, \eta \eta_4, \eta B_5, \eta_1 \eta_4,
	\eta B_6, \eta^2 B_6, \eta B_7, \eta^2 B_7\} \,,
\end{multline*}
and
$$
\Gamma_B(N_*/2^\infty) = \bigoplus_{k=1}^7 \bZ/d_k \{B_k/B\} \,,
$$
while
$$
N_*/(2^\infty, B^\infty) = \bigoplus_{k=0}^7
	\bZ_2[B]/(2^\infty, B^\infty) \{B_k/B, C_k\} \,,
$$
where $\bZ_2[B]/(2^\infty, B^\infty) = \bQ_2/\bZ_2[B^{-1}]\{1/B\}$.
\end{proposition}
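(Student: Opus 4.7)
The first claim, that $\Gamma_B N_* \subseteq \Gamma_2 N_*$, I would verify by direct inspection of Table~\ref{BtorsionN}: every displayed summand of $\Gamma_B N_*$ is cyclic of $2$-power order, so each $B$-power torsion class is automatically $2$-power torsion. Granting this, the identifications of $H^0$ and $H^2$ and the short exact sequence for $H^1$ come from two applications of the composite functor spectral sequence of Subsection~\ref{subsec:composite}. Applied with $(x,y) = (2,B)$, the sequence gives $H^0_{(2,B)}(N_*) = \Gamma_2(\Gamma_B N_*) = \Gamma_B N_*$ and $H^2_{(2,B)}(N_*) = N_*/(2^\infty, B^\infty)$, together with a short exact sequence
$$
0 \to (\Gamma_B N_*)/2^\infty \longto H^1_{(2,B)}(N_*) \longto \Gamma_2(N_*/B^\infty) \to 0.
$$
Since $(\Gamma_B N_*)/2^\infty = 0$, the leftmost term vanishes and $H^1_{(2,B)}(N_*) \cong \Gamma_2(N_*/B^\infty)$. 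Swapping the roles of $x$ and~$y$ in the same machinery yields the second short exact sequence asserted in the proposition, relating $\Gamma_2(N_*/B^\infty)$ to $(\Gamma_2 N_*)/B^\infty$ and $\Gamma_B(N_*/2^\infty)$.

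For the explicit module descriptions I would exploit the splitting $N_* = \Gamma_B N_* \oplus \bigoplus_{k=0}^{7} ko[k]$ from Theorem~\ref{thm:ZBmoduleN}. The summand $\Gamma_B N_*$ is both $2$- and $B$-power torsion, so it contributes only to $H^0$. The sixteen $\bZ/2[B]$-summands of $\bigoplus_{k} ko[k]$ are $2$-torsion and $B$-torsion-free, so they contribute precisely the listed $\bZ/2[B]/B^\infty$-summands of $(\Gamma_2 N_*)/B^\infty$ and vanish in $N_*/2^\infty$. The $\bZ_2$-free parts $\bZ_2\{D_k\} \oplus \bZ_2[B]\{B_k,C_k\}$ (with the relation $B\cdot D_k = d_k B_k$) contribute nothing to $(\Gamma_2 N_*)/B^\infty$ and, after accounting for the relation, yield the $\bZ_2[B]/(2^\infty, B^\infty)\{B_k/B, C_k\}$-summands of $N_*/(2^\infty, B^\infty)$.

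The main obstacle is the calculation of $\Gamma_B(N_*/2^\infty)$. After inverting~$2$, the relation $B\cdot D_k = d_k B_k$ becomes $B\cdot(D_k/d_k) = B_k$ in $ko[k][1/2]$, since $d_k$ is a power of~$2$. Hence the class of $D_k/d_k$ in $ko[k]/2^\infty$ is annihilated by~$B$ (because $B_k \in ko[k]$) and has exact order~$d_k$ (because $D_k \notin d_k\cdot ko[k]$). I would then write an arbitrary element of the torsion-free layer of $ko[k]/2^\infty$ in the form $\alpha D_k + \sum_{n \ge 0}\beta_n B^n B_k + \sum_{n \ge 0}\gamma_n B^n C_k$ with coefficients in $\bQ_2/\bZ_2$ and check that requiring $B^m$-annihilation forces $\gamma_n = \beta_n = 0$ for all~$n$ and $\alpha \in (1/d_k)\bZ_2/\bZ_2 \cong \bZ/d_k$. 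Summing over $k = 1, \ldots, 7$ (with $k=0$ contributing nothing since $d_0 = 1$) produces the claimed $\bigoplus_{k=1}^{7} \bZ/d_k\{B_k/B\}$, and the short exact sequence then splits $\Gamma_2(N_*/B^\infty)$ into the $\bZ/2[B]$-summand contribution and this mixed-relation contribution.
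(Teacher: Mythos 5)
Your proposal is correct and follows essentially the same route as the paper's proof: two applications of the composite functor spectral sequence from Subsection~\ref{subsec:composite} (once with the roles of $2$ and $B$ in each order), combined with the explicit $\bZ_2[B]$-module descriptions of $N_*$ and $N_*/B^\infty$ from Theorem~\ref{thm:ZBmoduleN} and Lemma~\ref{lem:HBN}. Your computation of $\Gamma_B(N_*/2^\infty)$ via the identification $D_k/d_k = B_k/B$ using the relation $B\cdot D_k = d_k B_k$ is exactly the argument the paper gives, just spelled out in a bit more detail.
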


\begin{proof}
This follows from the composite functor spectral sequence of
Subsection~\ref{subsec:composite} with $R_* = \pi_*(\tmf)/M$, first applied
with $x = B$ and $y = 2$, and thereafter with $x = 2$ and $y = B$.
The formulas for $(\Gamma_2 N_*)/B^\infty$ and $N_*/(2^\infty,
B^\infty)$ follow from the expressions for $N_*$ and $N_*/B^\infty$
in Theorem~\ref{thm:ZBmoduleN} and Lemma~\ref{lem:HBN}.
Only the summands $\bZ_2[D_k] \oplus \bZ_2[B]\{B_k\} \subset ko[k]$
of $N_*$ contribute to $\Gamma_B(N_*/2^\infty)$, where $B \cdot
D_k = d_k B_k$.  The $B$-power torsion in $ko[k]/2^\infty$ equals
$\bZ/d_k\{D_k/d_k\} \subset \bQ_2/\bZ_2\{D_k\}$, which we can rewrite
as $\bZ/d_k\{B_k/B\}$.
\end{proof}

\begin{theorem}
The local cohomology spectral sequence
$$
E_2^{s,t} = H^s_{(2, B, M)}(\pi_*(\tmf))_t
	\Longrightarrow_s \pi_{t-s}(\Gamma_{(2, B, M)} \tmf)
	\cong \pi_{t-s}(\Sigma^{-23} I(\tmf))
$$
has $E_2$-term
$$
H^s_{(2, B, M)}(\pi_*(\tmf))_*
	\cong H^{s-1}_{(2, B)}(N_*) \otimes \bZ[M]/M^\infty
$$
where $H^*_{(2, B)}(N_*)$ is displayed in Figures~\ref{fig:Gamma2BN-ab}
through~\ref{fig:Gamma2BN-gh}.  There are $d_2$-differentials
$$
d_2(\nu_k) \doteq C_k/d_{7-k}B
$$
(multiplied by all negative powers of~$M$) for $0 \le k \le 6$, indicated
by the green zigzag arrows increasing the filtration by~$2$.
There are no hidden additive extensions, but several hidden $\eta$- and
$\nu$-extensions, as shown by sloping dashed and dotted red lines in
these figures.
\end{theorem}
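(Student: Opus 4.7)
The plan is to imitate the strategy from the proof of Theorem~\ref{thm:GammaBMtmfspseq}. First, the identification of the $E_2$-term as $H^{s-1}_{(2,B)}(N_*) \otimes \bZ[M]/M^\infty$ is immediate from the preceding lemma combined with the explicit description of $H^*_{(2,B)}(N_*)$ already in hand. The genuinely new feature, compared with the $(B,M)$-case, is that the $E_2$-term now occupies three adjacent filtrations $s = 1, 2, 3$, leaving room for $d_2$-differentials from filtration~$1$ to filtration~$3$, which the comparison below forces to be nonzero.

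To locate these differentials, I would first reduce to the basic block~$N$. Combining the cofibre sequence
\[
\Sigma^{192} \Gamma_{(2, B, M)} \tmf \overset{M}\longto \Gamma_{(2, B, M)} \tmf \longto \Gamma_{(2,B)} N
\]
with the duality equivalence $\Gamma_{(2,B,M)} \tmf \simeq \Sigma^{-23} I(\tmf)$ and the cofibre sequence $I N \longto I \tmf \longto \Sigma^{-192} I \tmf$ obtained by applying the contravariant functor $I$ to $\Sigma^{192}\tmf \overset{M}\longto \tmf \longto N$, an elementary diagram chase yields a $\tmf$-module equivalence $\Gamma_{(2,B)} N \simeq \Sigma^{170} I N$. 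Since $\pi_{-m}(IN) \cong \Hom_{\bZ_2}(\pi_m(N), \bQ_2/\bZ_2)$ is readily computable by Pontryagin duality from the explicit description of $N_*$ in Theorem~\ref{thm:ZBmoduleN}, this makes the abutment of the local cohomology spectral sequence for $\Gamma_{(2,B)} N$ fully explicit. The corresponding statement for $\Gamma_{(2,B,M)} \tmf$ then follows by tensoring with $\bZ[M]/M^\infty$.

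The precise $d_2$-differentials $d_2(\nu_k) \doteq C_k/d_{7-k} B$ are then forced by a size comparison: for each $0 \le k \le 6$, the classes $\nu_k$ in filtration~$1$ and $C_k/d_{7-k} B$ in filtration~$3$ are both redundant at $E_\infty$, and by $B$- and $\eta$-linearity of differentials, together with degree and filtration considerations, the only way to eliminate them is via this pairing. Conceptually, these $d_2$-differentials are the Bockstein avatar of the hidden $2$-extensions $d_{7-k}\nu_k \doteq C_k/B$ established in Theorem~\ref{thm:GammaBMtmfspseq}: enlarging the ideal by~$2$ converts those hidden extensions into genuine differentials on~$\nu_k$.

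Finally, the absence of further hidden additive extensions, and the precise pattern of hidden $\eta$- and $\nu$-extensions, are read off from $\pi_*(IN)$ by transporting its $\tmf$-module structure across $\Gamma_{(2,B)} N \simeq \Sigma^{170} IN$. The main obstacle is the case-by-case bookkeeping: one must verify that the surviving classes at $E_\infty$, together with the predicted hidden extensions, account for every element of $\pi_*(\Sigma^{170} IN)$ with the correct $\bZ_2[\eta, \nu]$-module structure. Although routine in principle, this verification is extensive, and is best organised through the charts in Figures~\ref{fig:Gamma2BN-ab} through~\ref{fig:Gamma2BN-gh}.
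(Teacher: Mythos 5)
Your proposal is correct and follows essentially the same approach as the paper: identify the $E_2$-term via the composite-functor lemma, reduce to the basic block by combining the two cofibre sequences with the duality equivalence to obtain $\Gamma_{(2,B)}N \simeq \Sigma^{170}IN$, and then use the explicit Pontryagin-dual description of $\pi_*(IN)$ to force the $d_2$-differentials and read off the hidden extensions. The one place where you are slightly less precise than the paper is the ``size comparison'' step: the paper pins down the differential by the sharp observation that $\pi_{24k+3}(\Gamma_{(2,B)}N) \cong \Hom_{\bZ_2}(\pi_{24(7-k)-1}(N), \bQ_2/\bZ_2) = 0$ (since $\pi_{24(7-k)-1}(N) = 0$), which forces $\langle\nu_k\rangle$ to be killed, and then identifies $\bQ_2/\bZ_2\{C_k/B\}$ as the unique possible target in the relevant bidegree because $d_2(\nu_k)$ must inherit the $B$-power torsion property of $\nu_k$; your phrasing that both $\nu_k$ and $C_k/d_{7-k}B$ are ``redundant'' and paired by linearity arrives at the same place but should be sharpened to this vanishing argument. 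Your aside that the $d_2$-differentials are the Bockstein shadow of the hidden $2$-extensions in the $(B,M)$-case is a nice conceptual gloss not made explicit in the paper.
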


\begin{proof}
The homotopy (co-)fibre sequences
\begin{gather*}
\Sigma^{192} \Gamma_{(2, B, M)} \tmf
        \overset{M}\longto \Gamma_{(2, B, M)} \tmf
        \longto \Gamma_{(2, B)} N \\
I N \longto I(\tmf)
        \overset{M}\longto I(\Sigma^{192} \tmf)
\end{gather*}
and the equivalence $\Gamma_{(2, B, M)} \tmf \simeq \Sigma^{-23}
I(\tmf)$ imply an equivalence
$$
\Gamma_{(2, B)} N \simeq \Sigma^{170} I N
$$
of $\tmf$-modules.
For each $0 \le k \le 6$ the group
$$
\pi_{24k+3}(\Gamma_{(2,B)} N)
	\cong \pi_{-24(7-k)+1}(I N)
	\cong \Hom_{\bZ_2}(\pi_{24(7-k)-1}(N), \bQ_2/\bZ_2)
$$
is trivial, since $\pi_{24(7-k)-1}(N) = 0$.  Hence the group $\<\nu_k\>
= \bZ/d_{7-k} \{\nu_k\}$ in degree~$24k+3$ of $\Gamma_{(2, B)} N_*
= \Gamma_B N_*$ cannot survive to $E_\infty$ in the local cohomology
spectral sequence
$$
E_2^{s,t} = H^s_{(2, B)}(N_*)_t
	\Longrightarrow_s \pi_{t-s}(\Gamma_{(2, B)} N)
		\cong \pi_{t-s}(\Sigma^{170} I N) \,.
$$
This means that $d_2$ must act injectively on~$\<\nu_k\>$.  Since $\nu_k$
is ($B$- or) $B^2$-torsion, the only possible target in bidegree~$(t-s,s)
= (24k+2, 2)$ is $\bQ_2/\bZ_2 \{C_k/B\}$, and therefore $d_2$ maps
$\<\nu_k\>$ isomorphically to the subgroup of this target that is
generated by $C_k/d_{7-k} B$.

This translates to a $d_2$-differential in the local cohomology spectral
sequence for $\Gamma_{(2, B, M)} \tmf$ from $\nu_k/M$ in bidegree $(t-s,s)
= (24k-190, 1)$ to $C_k/d_{7-k} BM$ in bidegree $(t-s,s) = (24k-191,3)$
together with its multiples by all negative powers of~$M$.  The $2$-,
$\eta$- and $\nu$-extensions in $\pi_*(N)$ and $\pi_*(I N)$ are also
present in $\pi_*(\Gamma_{(2, B)} N)$ and in $\pi_*(\Gamma_{(2, B,
M)} \tmf)$, with the appropriate degree shifts, and those that increase
the local cohomology filtration degree are displayed with red lines.
\end{proof}

\begin{remark}
Let $\Theta N_* \subset \Gamma_B N_*$ be the part of the $B$-power
torsion in $N_*$ that is not in degrees~$* \equiv 3 \mod 24$, omitting
the subgroups $\<\nu_k\>$ for $0 \le k \le 6$ from Table~\ref{BtorsionN}.
This equals the kernel of the $d_2$-differential in the $(2, B)$-local
cohomology spectral sequence, which is also the image of the edge
homomorphism $\pi_*(\Gamma_{(2, B)} N) \to \Gamma_B N_*$.  Furthermore,
let $\Theta \pi_*(\tmf)$ be the part of $\Gamma_B \pi_*(\tmf)$ that is
not in degrees~$* \equiv 3 \mod 24$, which equals the image of the edge
homomorphism $\pi_*(\Gamma_{(2, B)} \tmf) \to \Gamma_B \pi_*(\tmf)$.

The image of the $2$-complete $\tmf$-Hurewicz homomorphism $\pi_*(S) \to
\pi_*(\tmf)$ is the direct sum of $\bZ$ in degree~$0$, the $8$-periodic
groups $\bZ/2\{\eta B^k\}$ and $\bZ/2\{\eta^2 B^k\}$ for $k\ge0$,
the group $\bZ/8\{\nu\}$ in degree~$3$, and the $192$-periodic groups
$\Theta \pi_*(\tmf) \cong \Theta N_* \otimes \bZ[M]$.
This was conjectured by Mahowald, was proved for degrees~$n \le 101$
and $n=125$ in~\cite{BR21}*{Thm.~11.89}, and has now been proved in
all degrees by Behrens, Mahowald and Quigley~\cite{BMQ}.
The three first summands of the $\tmf$-Hurewicz image are also
detected by the Adams $d$- and $e$-invariants.  To see that the fourth
summand is contained in the image from $\pi_*(\Gamma_{(2, B)} \tmf)$,
one can use the commutative diagram
$$
\xymatrix{
C^f_1 S \ar[r] \ar[d] & S \ar[d] \\
C^f_1 \tmf \ar[r] & \tmf
}
$$
and the equivalence $C^f_1 \tmf \simeq \Gamma_{(2, B)} \tmf$ from
Lemmas~\ref{lem:CfnCell} and~\ref{lem:CellGammaJ}.
\end{remark}

\subsection{$(B, H)$-local cohomology of $\tmf$}
\label{subsec:BHloccohtmf}

Let $p=3$ in this subsection and the next.
See Figure~\ref{fig:pitmfp3} for the mod~$3$ ($\tmf$-module) Adams
$E_\infty$-term for $\tmf$ in the range $0 \le t-s \le 72$, with all
hidden $\nu$-extensions shown.  There are no hidden $B$- or $H$-extensions
in this spectral sequence.

\begin{definition} \label{def:Nstarp3}
Let $N_* \subset \pi_*(\tmf)$ be the $\bZ_3[B]$-submodule generated by
all classes in degrees $0 \le * < 72$, and let $N = \tmf/H$.
\end{definition}

\begin{theorem}[\cite{BR21}*{Lem.~13.16}]
\label{thm:pitmfextensionp3}
The composite homomorphisms
\begin{gather*}
N_* \otimes \bZ[H] \longto \pi_*(\tmf) \otimes \pi_*(\tmf)
        \overset{\cdot}\longto \pi_*(\tmf) \\
N_* \subset \pi_*(\tmf) \longto \pi_*(N)
\end{gather*}
are isomorphisms.  Hence $\pi_*(\tmf)$ is a (split) extension of $\bZ_3[B,
H]$-modules
$$
0 \to \Gamma_B N_* \otimes \bZ[H] \longto \pi_*(\tmf)
        \longto \frac{N_*}{\Gamma_B N_*} \otimes \bZ[H] \to 0 \,.
$$
\end{theorem}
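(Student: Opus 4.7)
The plan is to derive both isomorphisms from a single structural fact: multiplication by $H \in \pi_{72}(\tmf)$ is injective on $\pi_*(\tmf)$, and its cokernel is concentrated in degrees $0 \le * < 72$, where it agrees with $N_*$. Equivalently, $\pi_*(\tmf)$ is free as a $\bZ_3[H]$-module on generators lying in the ``basic block'' $N_*$. From this both asserted isomorphisms and the split extension follow formally.

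To establish this structural fact I would inspect the mod~$3$ $\tmf$-module Adams $E_\infty$-term recorded in Figure~\ref{fig:pitmfp3} (equivalently, the descent spectral sequence for $\tmf$ at $p=3$). The class $H$ is detected by $\Delta^3$, which acts as a $72$-fold periodicity on the $E_\infty$-page: $H$-multiplication is visibly injective there, and in degrees $* \ge 72$ visibly surjective. Because the paper records that there are no hidden $B$- or $H$-extensions in this spectral sequence, the same statements persist for $H$-multiplication on $\pi_*(\tmf)$ itself. The homotopy cofibre sequence $\Sigma^{72}\tmf \overset{H}\longto \tmf \longto N$ defining $N = \tmf/H$ then yields short exact sequences
$$
0 \to \pi_{n-72}(\tmf) \overset{H}\longto \pi_n(\tmf) \longto \pi_n(N) \to 0
$$
for every integer~$n$. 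Surjectivity of $H$ in degrees $\ge 72$ forces $\pi_n(N) = 0$ there, while for $0 \le n < 72$ the sequence degenerates to $\pi_n(N) \cong \pi_n(\tmf) = N_n$. This proves the second asserted isomorphism $N_* \cong \pi_*(N)$, realised by the composite $N_* \hookrightarrow \pi_*(\tmf) \twoheadrightarrow \pi_*(N)$.

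A straightforward induction on the $H$-power (or, equivalently, splicing the short exact sequences above for all~$n$) then upgrades this to the first isomorphism: the multiplication map
$$
N_* \otimes \bZ[H] \longto \pi_*(\tmf), \qquad x \otimes H^k \longmapsto x \cdot H^k,
$$
is an isomorphism of graded abelian groups, and it is $\bZ_3[B,H]$-linear since $B$ and~$H$ are central in the graded commutative ring $\pi_*(\tmf)$. The split extension at the end of the theorem is obtained by tensoring with $\bZ[H]$ the $\bZ_3[B]$-module splitting $0 \to \Gamma_B N_* \to N_* \to N_*/\Gamma_B N_* \to 0$ provided by the forthcoming Theorem~\ref{thm:ZBmoduleNp3}. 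The main obstacle is lifting $H$-injectivity and $H$-divisibility from the $E_\infty$-page to $\pi_*(\tmf)$ itself: a priori a hidden $H$-extension could raise Adams filtration and spoil freeness. The absence of such extensions rests on the complete Adams differential and multiplicative extension analysis of \cite{BR21}*{Ch.~13}; once that input is in hand, the remainder is formal bookkeeping with the cofibre sequence defining~$N$.
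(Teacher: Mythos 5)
The paper does not give its own proof of this theorem; it cites \cite{BR21}*{Lem.~13.16}, so your argument must stand on its own. The overall strategy --- use injectivity of $H$-multiplication on $\pi_*(\tmf)$ (justified by the absence of hidden $H$-extensions) together with the cofibre sequence $\Sigma^{72}\tmf \overset{H}\longto \tmf \longto N$ --- is reasonable and in the right spirit, but there is a concrete gap arising from a misreading of Definition~\ref{def:Nstarp3}.

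You assert that $H$-multiplication is ``visibly surjective'' in degrees $\ge 72$, so that $\pi_n(N) = 0$ there, and that $N_*$ is concentrated in degrees $0 \le * < 72$. Both assertions are false, and the second contradicts the very statement you are proving. By Definition~\ref{def:Nstarp3}, $N_*$ is the $\bZ_3[B]$-\emph{submodule} of $\pi_*(\tmf)$ generated by the classes in degrees $0 \le * < 72$, not the $\bZ_3$-span of those classes; since $|B|=8$, the module $N_*$ contains $B^9$, $B^{10}$, $CB^8$, etc., and is supported in all non-negative degrees (see also the formulas for $ko[k]$ in Theorem~\ref{thm:ZBmoduleNp3}). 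Concretely, $B^{10} \in \pi_{80}(\tmf)$ is detected by the modular form $c_4^{10}$, while $H\cdot\pi_8(\tmf)$ is spanned by $BH$, detected by $c_4\Delta^3$; these are linearly independent, so $H$-multiplication is not onto in degree $80$ and $\pi_{80}(N)\neq 0$. What has to be verified instead is that in every degree $n$ the composite $N_n \hookrightarrow \pi_n(\tmf) \twoheadrightarrow \pi_n(N)$ is an isomorphism, equivalently that $\pi_n(\tmf) = N_n \oplus H\cdot\pi_{n-72}(\tmf)$ as an internal direct sum. Both halves of this (that $N_n + H\cdot\pi_{n-72}(\tmf)$ exhausts $\pi_n(\tmf)$, and that $N_n \cap H\cdot\pi_{n-72}(\tmf) = 0$) must be extracted from the $E_\infty$-chart and the no-hidden-$H$-extension claim, which you correctly invoke but do not actually deploy to this end. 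Once that identification is in hand, the induction giving $N_*\otimes\bZ[H] \cong \pi_*(\tmf)$ and the tensoring of the $\bZ_3[B]$-module splitting from Theorem~\ref{thm:ZBmoduleNp3} with $\bZ[H]$ are formal, as you say.
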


\begin{theorem}[\cite{BR21}*{Thm.~13.18}] \label{thm:ZBmoduleNp3}
As a $\bZ_3[B]$-module, $N_*$ is a split extension
$$
0 \to \Gamma_B N_* \longto N_* \longto \frac{N_*}{\Gamma_B N_*} \to 0 \,.
$$
The $B$-power torsion submodule $\Gamma_B N_*$ is given in
Table~\ref{BtorsionNp3}.  It is concentrated in degrees $3 \le * \le 40$,
and is annihilated by $(3, B)$.

The $B$-torsion free quotient of $N_*$ is the direct sum
$$
\frac{N_*}{\Gamma_B N_*} = \bigoplus_{k=0}^2 ko[k]
$$
of the following three $\bZ_3[B]$-modules,
with $ko[k]$ concentrated in degrees~$* \ge 24k$:
\begin{align*}
ko[0] &= \bZ_3[B]\{1, C\} \\
ko[1] &= \bZ_3\{D_1\} \oplus \bZ_3[B]\{B_1, C_1\} \\
ko[2] &= \bZ_3\{D_2\} \oplus \bZ_3[B]\{B_2, C_2\} \,.
\end{align*}
The $\bZ_3[B]$-module structures are specified by $B \cdot D_1 = 3 B_1$
and $B \cdot  D_2 = 3 B_2$.
\end{theorem}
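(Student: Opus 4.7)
The plan is to extract the statement directly from the Hopkins--Miller calculation of $\pi_*(\tmf)$ at $p=3$, as organised in \cite{BR21}*{Ch.~13}. First, I would read off $N_* = \pi_*(N) = \pi_*(\tmf/H)$ as a graded $\bZ_3$-module together with its $B$-action from the mod~$3$ Adams $E_\infty$-term shown in Figure~\ref{fig:pitmfp3} (which covers $0 \le t-s \le 72$, precisely the range of the basic block). All additive extensions in this $E_\infty$-term are known, and together with the multiplicative action of~$B$ they determine $N_*$ as a $\bZ_3[B]$-module.

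The next step is to separate $B$-power torsion from the $B$-torsion free quotient. A class lies in $\Gamma_B N_*$ precisely when some power of $B$ annihilates it; inspection of Figure~\ref{fig:pitmfp3} together with the tabulated multiplicative structure shows that the $B$-power torsion is concentrated in degrees $3 \le * \le 40$ and is in fact killed by both $3$ and $B$, giving the list in Table~\ref{BtorsionNp3}. For the $B$-torsion free quotient, the classes in degrees~$0$ and~$12$ and their $B$-multiples form $ko[0]$, and for $k = 1, 2$ the generators $D_k, B_k, C_k$ in degrees $24k, 24k+8, 24k+12$ together with their $B$-multiples form $ko[k]$. That $ko[k]$ is in fact $B$-torsion free is a consequence of the $B$-periodicity visible in the chart, where the towers through $B_k$ and $C_k$ extend without truncation up to the degree-$72$ boundary of~$N_*$.

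The main obstacle is to establish the two hidden multiplicative relations $B \cdot D_1 = 3 B_1$ and $B \cdot D_2 = 3 B_2$. Additively, $D_k$ generates a $\bZ_3$ summand at the bottom of Adams filtration in degree~$24k$, while $B_k$ generates a $\bZ_3[B]$-tower starting one $B$-step higher; both $B \cdot D_k$ and $3 B_k$ detect the same $E_\infty$-class in degree $24k+8$, but determining whether the lift is $\pm 3$ or $0$ requires information beyond the associated graded. This is resolved in \cite{BR21}*{Ch.~13} via the $\tmf$-module map $\tmf \to \tmf_0(2)$ coming from the equivalence $\tmf \wedge \Psi \simeq \tmf_0(2)$ of Proposition~\ref{prop:Htmfp3}: one pulls back the evident polynomial relations among $a_2, a_4$ in $\pi_*(\tmf_0(2)) = \bZ_3[a_2, a_4]$, combined with the formulas $\phi(B) = 16(a_2^2 - 3 a_4)$ and the corresponding formula for $H$, to pin down the coefficient as~$3$.

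Finally, the splitting of $0 \to \Gamma_B N_* \to N_* \to N_*/\Gamma_B N_* \to 0$ as $\bZ_3[B]$-modules amounts to choosing preferred lifts of the named generators $D_k, B_k, C_k$ into $N_*$; since each $ko[k]$ is presented by these generators and the relation $B \cdot D_k = 3 B_k$, and since these relations already hold in $N_*$ modulo the finite $B$-torsion subgroup, one checks directly that the resulting subgroup intersects $\Gamma_B N_*$ trivially and maps isomorphically to the quotient. This last step is essentially bookkeeping once the hidden extensions have been identified.
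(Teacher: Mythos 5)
The paper states this theorem as an import from \cite{BR21}*{Thm.~13.18} and gives no argument of its own, so there is no in-paper proof to compare against; what you have written is a reconstruction of the cited source. As such your outline is plausible and consistent with the framework of the Hopkins--Miller calculation as organised in \cite{BR21}*{Ch.~13}: read the basic block off the $\tmf$-module Adams $E_\infty$-term (Figure~\ref{fig:pitmfp3}), separate $B$-power torsion from the $B$-torsion free quotient, resolve the hidden relations $B \cdot D_k = 3 B_k$ by mapping to $\pi_*(\tmf_0(2)) = \bZ_3[a_2, a_4]$ using $\phi(B) = 16(a_2^2 - 3a_4)$ and the formula for~$\Delta$, and then produce the splitting.

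One step is left genuinely vague where it could have been made precise. To get a splitting of $\bZ_3[B]$-modules, it is not enough that $B \cdot \tilde D_k - 3 \tilde B_k$ vanish modulo $\Gamma_B N_*$; you need the relation to hold exactly in~$N_*$ for chosen lifts. This is automatic here because $\Gamma_B N_*$ is concentrated in the degrees $\{3,10,13,20,27,30,37,40\}$ listed in Table~\ref{BtorsionNp3}, and therefore vanishes in degrees $32 = 24 + 8$ and $56 = 48 + 8$ where the putative defect $B \tilde D_k - 3 \tilde B_k$ would live. Stating that observation is what turns ``holds modulo the finite $B$-torsion'' into an actual $\bZ_3[B]$-module section. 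The remaining assertions (the precise images $\phi(D_k)$, $\phi(B_k)$, and the injectivity of $\phi$ on the torsion-free part in the relevant degrees) are defensible but stay at the level of a pointer to \cite{BR21}, which is acceptable given that the paper under review does the same.
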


\begin{table}
\caption{$B$-power torsion in $N_*$ at $p=3$
	\label{BtorsionNp3}}
\begin{tabular}{>{$}r<{$} | >{$}c<{$} | >{$}c<{$}}
n & \Gamma_B N_n & \text{generator} \\
\hline
3 & \bZ/3 & \nu \\
10 & \bZ/3 & \beta \\
13 & \bZ/3 & \nu \beta \\
20 & \bZ/3 & \beta^2 \\
27 & \bZ/3 & \nu_1 \\
30 & \bZ/3 & \beta^3 \\
37 & \bZ/3 & \nu_1 \beta \\
40 & \bZ/3 & \beta^4 \\
\end{tabular}
\end{table}

\begin{lemma} \label{lem:HBHtmf}
$$
H^s_{(B,H)}(\pi_*(\tmf))
	\cong H^{s-1}_{(B)}(N_*) \otimes \bZ[H]/H^\infty \,.
$$
\end{lemma}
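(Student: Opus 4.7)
The plan is to proceed in exact parallel with the proof of Lemma~\ref{lem:HBMtmf}, with the Mahowald element~$M$ replaced by the Hopkins--Miller element~$H$. By Theorem~\ref{thm:pitmfextensionp3} there is a $\bZ_3[B, H]$-module isomorphism $\pi_*(\tmf) \cong N_* \otimes \bZ[H]$, so it suffices to compute $H^s_{(B, H)}(N_* \otimes \bZ[H])$.

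I would then invoke the composite functor spectral sequence from Subsection~\ref{subsec:composite} with $I = (B)$ and $J = (H)$, which takes the form
$$
E_2^{i,j} = H^i_{(B)}\bigl(H^j_{(H)}(N_* \otimes \bZ[H])\bigr)
	\Longrightarrow_i H^{i+j}_{(B, H)}(N_* \otimes \bZ[H]).
$$
Because $N_*$ is $H$-torsion free and $\bZ[H]$ is a polynomial ring on the single variable~$H$, the local cohomology of $\bZ[H]$ at~$(H)$ is concentrated in degree~$1$, where $H^1_{(H)}(\bZ[H]) = \bZ[H]/H^\infty = \bZ[H^{-1}]\{1/H\}$, while $H^0_{(H)}(\bZ[H]) = 0$. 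Tensoring with the flat $\bZ$-module~$N_*$ gives $H^0_{(H)}(N_* \otimes \bZ[H]) = 0$ and $H^1_{(H)}(N_* \otimes \bZ[H]) \cong N_* \otimes \bZ[H]/H^\infty$.

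Hence the spectral sequence is concentrated on the line $j = 1$, where it collapses trivially, yielding
$$
H^s_{(B, H)}(N_* \otimes \bZ[H]) \cong H^{s-1}_{(B)}(N_*) \otimes \bZ[H]/H^\infty,
$$
which (after identifying $\pi_*(\tmf)$ with $N_* \otimes \bZ[H]$) gives the claimed formula. There is no real obstacle here; the only thing to verify is the action of local cohomology on the tensor product, which is standard given that $H$ acts freely on $N_* \otimes \bZ[H]$ through its action on the polynomial factor. The whole argument is two lines once one points to Lemma~\ref{lem:HBMtmf} as a template.
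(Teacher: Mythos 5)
Your proposal is correct and is essentially the paper's own proof, which is given by reference to Lemma~\ref{lem:HBMtmf} with $M$ replaced by~$H$: one uses the composite functor spectral sequence with $I = (B)$, $J = (H)$, observes that $H$ acts injectively on $N_* \otimes \bZ[H]$ so the $E_2$-term is concentrated on the $j=1$ line where $H^1_{(H)}(\bZ[H]) = \bZ[H]/H^\infty$, and the spectral sequence collapses. The only small caveat is that "$N_*$ is $H$-torsion free" is loose phrasing (the relevant fact, which you later state correctly, is that $H$ acts injectively on $N_* \otimes \bZ[H]$ via the polynomial factor), but the argument is sound.
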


\begin{proof}
Replace $M$ by $H$ in the proof of Lemma~\ref{lem:HBMtmf}.
\end{proof}

\begin{lemma} \label{lem:HBNp3}
$$
H^0_{(B)}(N_*) = \Gamma_B N_*
$$
and
$$
H^1_{(B)}(N_*) = N_*/B^\infty = \bigoplus_{k=0}^2 ko[k]/B^\infty
$$
is the direct sum of the following three $\bZ_3[B]$-modules,
with $ko[k]/B^\infty$ concentrated in degrees $* \le 24k+4$:
\begin{align*}
ko[0]/B^\infty &= \bZ_3[B]/B^\infty\{1, C\} \\
ko[1]/B^\infty &= \bZ_3[B]/B^\infty\{B_1, C_1\}/(3 B_1/B) \\
ko[2]/B^\infty &= \bZ_3[B]/B^\infty\{B_2, C_2\}/(3 B_2/B) \,.
\end{align*}
\end{lemma}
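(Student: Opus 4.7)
The plan is to follow the same strategy as in the proof of Lemma \ref{lem:HBN}, but adapted to the much simpler $p=3$ situation where we have only three blocks $ko[0]$, $ko[1]$, $ko[2]$ and no $\eta$- or $\eta_1$-summands to worry about. By definition $H^*_{(B)}(N_*)$ is the cohomology of the two-term complex
$$
0 \longto N_* \overset{\gamma}\longto N_*[1/B] \longto 0 \,,
$$
so $H^0_{(B)}(N_*) = \ker(\gamma) = \Gamma_B N_*$ is immediate, and $H^1_{(B)}(N_*) = \operatorname{coker}(\gamma) = N_*/B^\infty$ by definition.

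Next I would use Theorem~\ref{thm:ZBmoduleNp3}, which presents $N_*$ as a split extension of $\Gamma_B N_*$ by $\bigoplus_{k=0}^{2} ko[k]$. Because $\Gamma_B N_*[1/B] = 0$, inverting $B$ kills the torsion summand, and the formation of $N_*/B^\infty$ commutes with finite direct sums. This yields
$$
N_*/B^\infty \;\cong\; \bigoplus_{k=0}^{2} ko[k]/B^\infty \,,
$$
reducing the lemma to computing each $ko[k]/B^\infty$ individually.

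For $ko[0] = \bZ_3[B]\{1, C\}$ the action of $B$ is already free, so $ko[0][1/B] = \bZ_3[B^{\pm 1}]\{1, C\}$ and hence $ko[0]/B^\infty = \bZ_3[B]/B^\infty\{1, C\}$. For $ko[k]$ with $k=1,2$, the only nontrivial relation in Theorem~\ref{thm:ZBmoduleNp3} is $B \cdot D_k = 3 B_k$. Inverting $B$ gives $D_k = 3 B_k/B$ in $ko[k][1/B] = \bZ_3[B^{\pm 1}]\{B_k, C_k\}$, so $D_k$ becomes redundant; passing to the cokernel of $\gamma$, the element $3 B_k/B$ (which is the image of $D_k \in ko[k]$) becomes zero. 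This gives the presentation
$$
ko[k]/B^\infty \;=\; \bZ_3[B]/B^\infty\{B_k, C_k\}/(3 B_k/B)
$$
for $k = 1, 2$, which is exactly the stated formula. The degree bound $* \le 24k + 4$ follows because $|B_k| = 24k + 8$ and $|C_k| = 24k + 12$, so the highest-degree classes in $ko[k]/B^\infty$ are $B_k/B$ in degree $24k$ and $C_k/B$ in degree $24k+4$.

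There is no serious obstacle here: the argument is entirely a straightforward bookkeeping exercise, essentially a simplified version of the proof of Lemma~\ref{lem:HBN} with $p=2$ replaced by $p=3$, with the eight blocks replaced by three, with no $\eta$- or $\eta_1$-summands (since $\pi_*(\tmf)$ at $p=3$ contains no such classes, cf.\ Table~\ref{tab:alggenpitmfp3}), and with all $d_k$ equal to~$3$. The only point that requires a moment's care is verifying that $D_k$ contributes exactly the single relation $3 B_k/B = 0$ in the cokernel and nothing else, which is clear from the explicit module structure $B \cdot D_k = 3 B_k$.
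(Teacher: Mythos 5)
Your proof is correct and follows the same route as the paper: identify $H^0$ and $H^1$ as kernel and cokernel of $\gamma\colon N_*\to N_*[1/B]$, split off the $B$-power torsion using Theorem~\ref{thm:ZBmoduleNp3}, compute $ko[k][1/B]$, and observe that $B\cdot D_k = 3B_k$ yields the single relation $3\cdot B_k/B = 0$ in the cokernel. The paper's own proof is just a one-line remark recording this last relation (leaning on the parallel Lemma~\ref{lem:HBN} at $p=2$ for the surrounding bookkeeping), so you have simply spelled out what the authors left implicit.
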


\begin{proof}
For $k \in \{1,2\}$, the relation $B \cdot D_k = 3 B_k$ in $ko[k]$
implies the relation $3 \cdot B_k/B = 0$ in~$ko[k]/B^\infty$.
\end{proof}

\begin{theorem}
At $p=3$, the local cohomology spectral sequence
$$
E_2^{s,t} = H^s_{(B,H)}(\pi_*(\tmf))_t
        \Longrightarrow_s \pi_{t-s}(\Gamma_{(B, H)} \tmf)
                \cong \pi_{t-s}(\Sigma^{-22} I_{\bZ_3}(\tmf))
$$
has $E_2$-term
$$
H^s_{(B,H)}(\pi_*(\tmf))_*
        \cong H^{s-1}_{(B)}(N_*) \otimes \bZ[H]/H^\infty
$$
where $H^*_{(B)}(N_*)$ is displayed in Figure~\ref{fig:GammaBNp3}.
There is no room for differentials, so $E_2 = E_\infty$.  There are
hidden additive extensions
$$
3 \cdot \nu \doteq C/B
\quad\text{and}\quad
3 \cdot \nu_1 \doteq C_1/B
$$
(multiplied by all negative powers of~$H$), indicated by vertical dashed
red lines in the figure.  Moreover, there is a hidden $\nu$-extension
from $\beta^2$ to $B_1/B$, shown by a sloping dotted red line.
\end{theorem}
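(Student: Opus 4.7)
The plan is to mirror the proof of Theorem~\ref{thm:GammaBMtmfspseq}, simplified by the fact that at $p=3$ the basic block has length three rather than eight and that $\Gamma_B N_*$ is annihilated by $(3,B)$ (Theorem~\ref{thm:ZBmoduleNp3}). First, the $E_2$-term identification is immediate from Lemma~\ref{lem:HBHtmf} combined with $H^0_{(B)}(N_*) = \Gamma_B N_*$ (Table~\ref{BtorsionNp3}) and $H^1_{(B)}(N_*) = N_*/B^\infty$ (Lemma~\ref{lem:HBNp3}). Because $H^s_{(B)}(N_*)$ vanishes for $s \notin \{0,1\}$, the $(B,H)$-spectral sequence lives on the two lines $s=1$ and $s=2$, and any $d_r$ with $r \geq 2$ originating on these lines lands in filtration $\geq 3$, where $E_r$ vanishes; hence $E_2 = E_\infty$.

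To extract the hidden extensions I would first reduce to the basic block. Applying $I_{\bZ_3}$ to the cofibre sequence $\Sigma^{72}\tmf \overset{H}\longto \tmf \longto N$ and comparing with $\Sigma^{72}\Gamma_{(B,H)}\tmf \overset{H}\longto \Gamma_{(B,H)}\tmf \longto \Gamma_B N$, after substituting $\Gamma_{(B,H)}\tmf \simeq \Sigma^{-22}I_{\bZ_3}(\tmf)$, should produce a $\tmf$-module equivalence
\[ \Gamma_B N \simeq \Sigma^{51} I_{\bZ_3} N, \]
with shift $51 = 72 - 22 + 1$, the $p=3$ analogue of $\Gamma_B N \simeq \Sigma^{171} I_{\bZ_2} N$ from the proof of Theorem~\ref{thm:GammaBMtmfspseq}. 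I would then work in the $(B)$-local cohomology spectral sequence for $N$, and translate the resulting extensions into the $(B,H)$-spectral sequence for $\tmf$ by tensoring with $\bZ[H]/H^\infty$.

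For $\nu \in \Gamma_B N_3 = E_2^{0,3}$, the short exact sequence~\eqref{eq:ExtIZpHom} together with the block duality gives $\pi_3(\Gamma_B N) \cong \pi_{-48}(I_{\bZ_3}N)$. From Theorem~\ref{thm:ZBmoduleNp3} one reads that $\pi_{47}(N) = 0$ and $\pi_{48}(N) \cong \bZ_3\{B^6, B^2 B_1, D_2\}$, so $\pi_3(\Gamma_B N) \cong \bZ_3^3$ is $\bZ_3$-free. Hence the order-$3$ class $\nu$ lifts to an element of infinite order in $\pi_3(\Gamma_B N)$, whose $3$-multiple must lie in filtration $s=1$, namely in $(N_*/B^\infty)_4 = \bZ_3\{C/B,\, C_1/B^4,\, C_2/B^5\}$. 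Because $\nu$ is annihilated by $B$, so is the target, and the unique $B$-torsion generator is $C/B$; this forces $3\nu \doteq C/B$ up to a $3$-adic unit. The argument for $\nu_1$ is parallel: $\pi_{23}(N) = 0$ and $\pi_{24}(N) \cong \bZ_3\{B^3, D_1\}$ make $\pi_{27}(\Gamma_B N) \cong \bZ_3^2$ torsion-free, and among the generators $C_1/B,\, C_2/B^2$ of $(N_*/B^\infty)_{28}$ only $C_1/B$ is $B$-torsion, producing $3\nu_1 \doteq C_1/B$. Tensoring with $\bZ[H]/H^\infty$ spreads both extensions across the $(B,H)$-spectral sequence.

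Finally, the hidden $\nu$-extension from $\beta^2$ to $B_1/B$ transports from a corresponding $\nu$-multiplicative relation already present in $\pi_*(I_{\bZ_3}N)$, dual under~\eqref{eq:ExtIZpHom} to multiplicative structure on $\pi_*(N)$ visible in Figure~\ref{fig:pitmfp3}, along the equivalence $\Gamma_B N \simeq \Sigma^{51} I_{\bZ_3}N$ and with the appropriate shift by a negative power of $H$. The main obstacle I anticipate is distinguishing among the several $\bZ_3$-generators of $(N_*/B^\infty)_4$ and $(N_*/B^\infty)_{28}$ when pinning down the targets of the forced $3$-extensions; this is handled above by the $B$-torsion criterion, while fixing the exact $3$-adic units and verifying the hidden $\nu$-extension reduce to tracking unit scalars through the explicit $\bZ_3[B]$-presentation of $N_*$ recorded in Theorem~\ref{thm:ZBmoduleNp3}.
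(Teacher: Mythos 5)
Your argument follows the paper's proof essentially verbatim: the same reduction to the basic block via $\Gamma_B N \simeq \Sigma^{51} I_{\bZ_3} N$, the same use of~\eqref{eq:ExtIZpHom} and the vanishing of $\pi_{24(2-k)-1}(N)$ to get $\pi_{24k+3}(\Gamma_B N) \cong \bZ_3^{3-k}$ torsion-free, and the same $B$-torsion criterion to single out $C_k/B$ as the target of the hidden $3$-extension. Two small arithmetic slips do not affect the argument but should be corrected: $(N_*/B^\infty)_4 = \bZ_3\{C/B,\,C_1/B^4,\,C_2/B^7\}$ (not $C_2/B^5$) and $(N_*/B^\infty)_{28} = \bZ_3\{C_1/B,\,C_2/B^4\}$ (not $C_2/B^2$). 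For the hidden $\nu$-extension you gesture at a multiplicative relation in $\pi_*(N)$ without naming it; the paper identifies it explicitly as $\nu\nu_1 \doteq \beta^3$ (from \cite{BR21}*{Prop.~13.14}), whose dualisation under $\Gamma_B N \simeq \Sigma^{51} I_{\bZ_3} N$ is exactly what produces the extension from $\beta^2$ to $B_1/B$.
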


\begin{proof}
We refer to~\cite{BR21}*{Prop.~13.14} for the relation $\nu
\nu_1 \doteq \beta^3$.  The equivalence $\Gamma_{(B, H)} \tmf \simeq
\Sigma^{-22} I_{\bZ_3}(\tmf)$ implies an equivalence $\Gamma_B N \simeq
\Sigma^{51} I_{\bZ_3} N$ of $\tmf$-modules.  For $k \in \{0,1\}$ the
group $\pi_{24k+3}(\Gamma_B N) \cong \pi_{-24(2-k)}(I_{\bZ_3} N)$ sits
in an extension
$$
0 \to \Ext_{\bZ_3}(\pi_{24(2-k)-1}(N), \bZ_3)
        \to \pi_{-24(2-k)}(I_{\bZ_3} N)
	\to \Hom_{\bZ_3}(\pi_{24(2-k)}(N), \bZ_3) \to 0 \,.
$$
Here $\pi_{24(2-k)-1}(N) = 0$ and
$$
\pi_{24(2-k)}(N) \cong \bZ_3\{B^{3(2-k)}, \dots, D_{2-k}\}
	\cong \bZ_3^{3-k} \,,
$$
so $\pi_{24k+3}(\Gamma_B N) \cong \bZ_3^{3-k}$.  Since $\nu_k$ is
$B$-torsion, there must be a $3$-extension in $\pi_{24k+3}(\Gamma_B N)$
from $\nu_k$ to a $3$-adic unit times $C_k/B$.  The $\nu$-extension
from $\nu_1$ to $\beta^3$ in $\pi_*(N)$ appears in dual form in
$\pi_*(I_{\bZ_3} N)$, $\pi_*(\Gamma_B N)$ and~$\pi_*(\Gamma_{(B, H)}
\tmf)$, and appears as a hidden $\nu$-extension from $\beta^2$ to $B_1/B$
in the second of these.
\end{proof}

\subsection{$(3, B, H)$-local cohomology of $\tmf$}
\label{subsec:3BHloccohtmf}

\begin{lemma}
$$
H^s_{(3,B,H)}(\pi_*(\tmf))
	\cong H^{s-1}_{(3,B)}(N_*) \otimes \bZ[H]/H^\infty \,.
$$
\end{lemma}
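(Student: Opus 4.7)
The plan is to mirror the proof of Lemma~\ref{lem:HBMtmf} essentially verbatim, but with $(B)$ replaced by $(3, B)$ and with $M$ replaced by $H$. By Theorem~\ref{thm:pitmfextensionp3}, the multiplication map identifies $\pi_*(\tmf) \cong N_* \otimes \bZ[H]$ as $\bZ_3[B, H]$-modules, where $H$ acts only through the right-hand polynomial factor while $3$ and $B$ act only through the factor $N_*$.

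The next step is to invoke the composite functor spectral sequence of Subsection~\ref{subsec:composite} associated to the decomposition $(3, B, H) = (3, B) + (H)$, namely
$$
E_2^{i,j} = H^i_{(3, B)}\bigl(H^j_{(H)}(N_* \otimes \bZ[H])\bigr)
        \Longrightarrow_i H^{i+j}_{(3, B, H)}(N_* \otimes \bZ[H]) \,.
$$
Since $N_* \otimes \bZ[H]$ is $H$-torsion free, we have $H^0_{(H)}(N_* \otimes \bZ[H]) = 0$, while $H^1_{(H)}(\bZ[H]) = \bZ[H]/H^\infty = \bZ[H^{-1}]\{1/H\}$ gives $H^1_{(H)}(N_* \otimes \bZ[H]) = N_* \otimes \bZ[H]/H^\infty$. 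The spectral sequence is thus concentrated on the line $j = 1$ and collapses there.

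Because $3$ and $B$ act only through $N_*$, the $E_2$-term at $j = 1$ simplifies to
$$
E_2^{i, 1} = H^i_{(3, B)}\bigl(N_* \otimes \bZ[H]/H^\infty\bigr)
        \cong H^i_{(3, B)}(N_*) \otimes \bZ[H]/H^\infty \,.
$$
Reindexing by $s = i + 1$ yields the claimed isomorphism. There is no real obstacle: the argument is formally identical to the analogous statements at $p = 2$, and all that is being used is the tensor product splitting of $\pi_*(\tmf)$ together with the flatness of $\bZ[H]$ over $\bZ$.
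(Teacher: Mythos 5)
Your proof is correct and matches the paper's own approach: the paper simply refers to the proof of Lemma~\ref{lem:HBMtmf}, replacing $(B)$ by $(3,B)$ and $M$ by $H$, which is exactly the composite functor spectral sequence argument you have spelled out. The observation that the spectral sequence collapses on the $j=1$ line because $N_*\otimes\bZ[H]$ is $H$-torsion free, and that $3$ and $B$ act only on the $N_*$ factor, is precisely the intended reasoning.
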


\begin{proof}
Replace $(B)$ by $(3, B)$ in the proof of Lemma~\ref{lem:HBHtmf}.
\end{proof}

\begin{proposition}
\begin{align*}
H^0_{(3, B)}(N_*) &= \Gamma_B N_* = \bZ/3\{\nu, \beta, \nu\beta,
	\beta^2, \nu_1, \beta^3, \nu_1\beta, \beta^4\} \\
H^1_{(3, B)}(N_*) &= \Gamma_3(N_*/B^\infty) = \Gamma_B(N_*/3^\infty)
	= \bZ/3 \{B_1/B, B_2/B\} \\
H^2_{(3, B)}(N_*) &= N_*/(3^\infty, B^\infty) = \bigoplus_{k=0}^2
	\bZ_3[B]/(3^\infty, B^\infty) \{B_k/B, C_k\} \,.
\end{align*}
\end{proposition}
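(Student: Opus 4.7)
The plan is to apply the composite functor spectral sequence of Subsection~\ref{subsec:composite} to $M_* = N_*$, with $(x, y) = (3, B)$ and, symmetrically, $(x, y) = (B, 3)$, in direct parallel to the proof of the $p = 2$ analogue in Subsection~\ref{subsec:2BMloccohtmf}. Since the spectral sequence collapses at $E_2$, it will immediately furnish
$$H^0_{(3,B)}(N_*) = \Gamma_3(\Gamma_B N_*), \qquad H^2_{(3,B)}(N_*) = N_*/(3^\infty, B^\infty),$$
together with the short exact sequence
$$0 \to (\Gamma_B N_*)/3^\infty \longto H^1_{(3,B)}(N_*) \longto \Gamma_3(N_*/B^\infty) \to 0$$
and its $(3, B)$-swapped variant.

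The decisive observation special to $p = 3$ is that $\Gamma_B N_* = \Gamma_3 N_*$. Indeed, Theorem~\ref{thm:ZBmoduleNp3} together with Table~\ref{BtorsionNp3} present $\Gamma_B N_*$ as a direct sum of copies of $\bZ/3$, so $\Gamma_B N_* \subset \Gamma_3 N_*$; conversely, the $B$-torsion-free quotient $\bigoplus_{k=0}^2 ko[k]$ is $\bZ_3$-torsion-free by inspection, so $\Gamma_3 N_* \subset \Gamma_B N_*$. This identity does two things at once: it collapses $\Gamma_3(\Gamma_B N_*)$ to $\Gamma_B N_*$ (yielding the $H^0$ formula), and it forces both edge terms $(\Gamma_B N_*)/3^\infty$ and $(\Gamma_3 N_*)/B^\infty$ to vanish, producing the unified identification $H^1_{(3,B)}(N_*) \cong \Gamma_3(N_*/B^\infty) \cong \Gamma_B(N_*/3^\infty)$.

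It will then remain to compute these concrete invariants. For $H^1$ I would read off $\Gamma_3(N_*/B^\infty)$ from the splitting $N_*/B^\infty = \bigoplus_{k=0}^2 ko[k]/B^\infty$ of Lemma~\ref{lem:HBNp3}: the summand $ko[0]/B^\infty$ is $\bZ_3$-torsion-free as an abelian group, while for $k \in \{1, 2\}$ the imposed relation $3 \cdot B_k/B = 0$ contributes exactly the $\bZ/3\{B_k/B\}$ claimed. For $H^2$, the same splitting combined with inversion of $3$ produces one copy of $\bZ_3[B]/(3^\infty, B^\infty)$ for each of the generators $B_k/B$ and $C_k$; the cross-relation $B D_k = 3 B_k$ for $k \in \{1, 2\}$ merely identifies $D_k/3$ with $B_k/B$ in the doubly localised module and introduces no further generators. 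This last $H^2$ verification is the only step requiring a genuine, if routine, calculation, and it proceeds in direct analogy with the $p = 2$ argument, where a cross-relation of the same shape (with variable exponents $d_k$) was handled.
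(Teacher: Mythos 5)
Your argument is correct and follows the paper's own route: both invoke the composite functor spectral sequence of Subsection~2.E twice, once with $(x,y)=(3,B)$ and once with $(x,y)=(B,3)$, and then read off the concrete groups from Theorem~\ref{thm:ZBmoduleNp3} and Lemma~\ref{lem:HBNp3}. The one thing you add is to make explicit the identity $\Gamma_B N_* = \Gamma_3 N_*$ (special to $p=3$, where the $B$-power torsion of Table~\ref{BtorsionNp3} is all killed by $3$ and the $B$-torsion-free part is $3$-torsion-free), which neatly packages the paper's terse assertion that $(\Gamma_B N_*)/3^\infty$ and $(\Gamma_3 N_*)/B^\infty$ both vanish and explains why $H^1$ admits the two symmetric descriptions simultaneously.
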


\begin{proof}
This follows from the composite functor spectral sequence of
Subsection~\ref{subsec:composite} with $R_* = \pi_*(\tmf)/H$, first
applied with $x = 3$ and $y = B$, and thereafter with $x = B$ and $y = 3$.
The groups $(\Gamma_B N_*)/3^\infty$ and $(\Gamma_3 N_*)/B^\infty$ vanish.
The $3$-power torsion in $ko[k]/B^\infty$ is trivial for $k=0$, and
equals $\bZ/3\{B_k/B\}$ for $k \in \{1,2\}$.
\end{proof}

\begin{theorem}
The local cohomology spectral sequence
$$
E_2^{s,t} = H^s_{(3, B, H)}(\pi_*(\tmf))_t
	\Longrightarrow_s \pi_{t-s}(\Gamma_{(3, B, H)} \tmf)
	\cong \pi_{t-s}(\Sigma^{-23} I(\tmf))
$$
has $E_2$-term
$$
H^s_{(3, B, H)}(\pi_*(\tmf))_*
	\cong H^{s-1}_{(3, B)}(N_*) \otimes \bZ[H]/H^\infty
$$
where $H^*_{(3, B)}(N_*)$ is displayed in Figure~\ref{fig:Gamma3BN}.
There are $d_2$-differentials
$$
d_2(\nu_k) \doteq C_k/3 B
$$
(multiplied by all negative powers of~$M$) for $k \in \{0,1\}$, indicated
by the green zigzag arrows increasing the filtration by~$2$.
There are no hidden additive extensions, but hidden $\nu$-extensions
from $\beta^2$ to $B_1/B$ and from $B_2/B$ to $C_2/3B$, as shown by
sloping dashed red lines in this figure.
\end{theorem}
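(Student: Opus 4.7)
The plan parallels the $(2,B,M)$ argument but is simpler because $\Gamma_B N_*$ is annihilated by $(3,B)$ at $p=3$. First, the identification of the $E_2$-term is given by the preceding lemma together with the preceding proposition, which computes $H^{s-1}_{(3,B)}(N_*)$ explicitly for $s \in \{1,2,3\}$. To organise the rest of the computation, I would use the equivalence $\Gamma_{(3,B,H)}\tmf \simeq \Sigma^{-23} I(\tmf)$ and the cofibre sequence $\Sigma^{72}\tmf \overset{H}\longto \tmf \longto N$ to produce an equivalence $\Gamma_{(3,B)} N \simeq \Sigma^{50} IN$ of $\tmf$-modules, giving
$$
\pi_n(\Gamma_{(3,B)} N) \cong \Hom_{\bZ_3}(\pi_{50-n}(N), \bQ_3/\bZ_3).
$$

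Next, I would force the $d_2$-differential on each $\nu_k$ by showing that the abutment vanishes. For $k \in \{0,1\}$, Theorem~\ref{thm:ZBmoduleNp3} and Table~\ref{BtorsionNp3} imply $\pi_{47-24k}(N) = 0$, hence $\pi_{24k+3}(\Gamma_{(3,B)} N) = 0$. The order-$3$ subgroup $\<\nu_k\>$ at bidegree $(t-s,s) = (24k+3, 0)$ therefore cannot survive. Among the order-$3$ classes in the target bidegree $E_2^{2,\,24k+4}$, only $C_k/3B$ is annihilated by~$B$, so $\tmf$-linearity of $d_2$ combined with $B\nu_k = 0$ forces $d_2(\nu_k) \doteq C_k/3B$. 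Multiplying by all negative powers of~$H$ propagates these differentials to the full spectral sequence, and $E_3 = E_\infty$ for degree reasons.

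For the extensions, the formula above determines each abutment group exactly, and a degree-by-degree comparison with the surviving $E_\infty$-columns in Figure~\ref{fig:Gamma3BN} rules out hidden additive extensions. The multiplicative relation $\nu\nu_1 \doteq \beta^3$ in $\pi_*(N)$, recalled from~\cite{BR21}*{Prop.~13.14}, dualises to $\nu \phi_{\beta^3} = \phi_{\nu_1}$ in $\pi_*(IN)$, which under the shift by~$50$ yields the hidden $\nu$-extension from $\beta^2$ (detecting $\phi_{\beta^3}$ at $(t-s,s) = (20, 0)$) up to $B_1/B$ (detecting $\phi_{\nu_1}$ at $(t-s,s) = (23, 1)$). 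Dualising the action of $\nu$ on the generator of $\pi_0(N) = \bZ_3$ produces, in the same way, the hidden $\nu$-extension from $B_2/B$ to $C_2/3B$.

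The main obstacle is the bookkeeping for the hidden extensions: in each affected topological degree one must compare $\pi_n(\Gamma_{(3,B)}N)$ computed from the duality with the $E_\infty$-column, and eliminate alternative $3$-, $\nu$- or $B$-linear extensions by degree counting. The $d_2$-differentials and the $E_2$ identification are essentially forced once the equivalence $\Gamma_{(3,B)} N \simeq \Sigma^{50} IN$ is established.
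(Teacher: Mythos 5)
Your proof is correct and follows essentially the same route as the paper's: identifying the $E_2$-term from the preceding lemma and proposition, using the equivalence $\Gamma_{(3,B)} N \simeq \Sigma^{50} IN$ to deduce vanishing of $\pi_{24k+3}(\Gamma_{(3,B)} N)$ from $\pi_{47-24k}(N) = 0$, forcing the $d_2$-differential by $\tmf$-linearity (using $B\nu_k = 0$ to pin down the target), and reading the hidden $\nu$-extensions off from the duals of the $\nu$-multiplications $\nu\cdot\nu_1 \doteq \beta^3$ and $\nu\cdot 1 = \nu$ in $\pi_*(N)$. Your detour through the uniqueness of the $B$-annihilated order-$3$ class in the target bidegree is a slightly more explicit version of the paper's argument, but the underlying logic is the same.
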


\begin{proof}
The equivalence $\Gamma_{(3, B, M)} \tmf \simeq \Sigma^{-23}
I(\tmf)$ implies an equivalence
$$
\Gamma_{(3, B)} N \simeq \Sigma^{50} I N
$$
of $\tmf$-modules.
For each $k \in \{0,1\}$ the group
$$
\pi_{24k+3}(\Gamma_{(3,B)} N)
	\cong \pi_{-24(2-k)+1}(I N)
	\cong \Hom_{\bZ_3}(\pi_{24(2-k)-1}(N), \bQ_3/\bZ_3)
$$
is trivial, since $\pi_{24(2-k)-1}(N) = 0$.  Hence the group $\<\nu_k\>
\cong \bZ/3$ in degree~$24k+3$ of $\Gamma_{(3, B)} N_* = \Gamma_B N_*$
cannot survive to $E_\infty$ in the local cohomology spectral sequence
$$
E_2^{s,t} = H^s_{(3, B)}(N_*)_t
	\Longrightarrow_s \pi_{t-s}(\Gamma_{(3, B)} N)
		\cong \pi_{t-s}(\Sigma^{50} I N) \,.
$$
Since $\nu_k$ is $B$-torsion, it follows that $d_2$ maps $\<\nu_k\>$
isomorphically to the subgroup of $\bQ_3/\bZ_3\{C_k/B\}$ that is generated
by $C_k/3 B$.  This translates to a $d_2$-differential in the local
cohomology spectral sequence for $\Gamma_{(3, B, H)} \tmf$ from $\nu_k/H$
in bidegree $(t-s,s) = (24k-70, 1)$ to $C_k/3 B H$ in bidegree $(t-s,s) =
(24k-71,3)$ together with its multiples by all negative powers of~$H$.
The $\nu$-extensions in $\pi_*(N)$ and $\pi_*(I N)$ are also present
in $\pi_*(\Gamma_{(3, B)} N)$ and in $\pi_*(\Gamma_{(3, B, H)} \tmf)$,
with the appropriate degree shifts, and those that increase the local
cohomology filtration degree are displayed in red.
\end{proof}

\begin{remark}
Let $\Theta N_* \subset \Gamma_B N_*$ be the part of the $B$-power
torsion in $N_*$ that is not in degrees~$* \equiv 3 \mod 24$, omitting
$\bZ/3\{\nu\}$ and $\bZ/3\{\nu_1\}$ from Table~\ref{BtorsionNp3}.
Likewise, let $\Theta \pi_*(\tmf)$ be the part of $\Gamma_B \pi_*(\tmf)$
that is not in degrees~$* \equiv 3 \mod 24$, which equals the image
of the $(3, B)$-local cohomology spectral sequence edge homomorphism
$\pi_*(\Gamma_{(3, B)} \tmf) \to \Gamma_B \pi_*(\tmf)$.

Mahowald conjectured that the image of the $3$-complete $\tmf$-Hurewicz
homomorphism $\pi_*(S) \to \pi_*(\tmf)$ is the direct sum of $\bZ$ in
degree~$0$, the group $\bZ/3\{\nu\}$ in degree~$3$, and the $72$-periodic
groups $\Theta \pi_*(\tmf) \cong \Theta N_* \otimes \bZ[H]$.  This was
proved for degrees~$n < 154$ in~\cite{BR21}*{Prop.~13.29}.
\end{remark}

\subsection{Charts}
\label{subsec:charts}

Figure~\ref{fig:A-duality} shows $N_* \cong \pi_*(N)$, $\pi_*(\Gamma_B
N)$ and $\pi_*(\Sigma^{171} I_{\bZ_2} N)$ in the range $-9 \le * \le
72$, visible as three horizontal wedges.  The vertical direction has no
intrinsic meaning.  Circled numbers represent finite cyclic groups of
that order, squares represent infinite cyclic groups, and each ellipse
containing `$22$' represents a Klein Vierergruppe.  Horizontal dashed lines
show multiplication by~$B$, which extends indefinitely to the right in the
upper wedge, and indefinitely to the left in the middle and lower wedges.
Thick vertical lines indicate additive extensions, by which a square and a
circle combine to an infinite cyclic group.  The passage from the upper to
the middle wedge is given by taking the homotopy fibre of the localisation
map $\gamma \: N \to N[1/B]$, leaving the $B$-power torsion (shown in
red) in place and replacing copies of $\bZ[B]$ or $\bZ/2[B]$ (shown in
blue) by desuspended copies of $\bZ[B]/B^\infty$ or $\bZ/2[B]/B^\infty$
(shown in green), respectively.  The passage from the upper to the lower
wedge takes the torsion-free part of $\pi_*(N)$ to its linear dual in
degree~$171-*$, and takes the torsion in $\pi_*(N)$ to its Pontryagin
dual in degree~$170-*$.  The local cohomology theorem asserts that the
middle and lower wedges are isomorphic.  Note in particular how this is
achieved in degrees $* \equiv -1, 3 \mod 24$.

Figure~\ref{fig:pitmf} shows the $E_\infty$-term of the mod~$2$
Adams spectral sequence
$$
E_2^{s,t} = \Ext_{A(2)}^{s,t}(\bF_2, \bF_2)
	\Longrightarrow_s \pi_{t-s}(\tmf)
$$
for $0 \le t-s \le 192$, together with all hidden $2$-, $\eta$-
and $\nu$-extensions in this range.  There is also a more
subtle multiplicative relation in degree~$105$, cf.~the proof of
Theorem~\ref{thm:GammaBMtmfspseq}.  The vertical coordinate gives the
Adams filtration~$s$.  The $B$-power torsion classes are shown in red, and
selected product factorisations in terms of the algebra indecomposables
in $\pi_*(\tmf)$ are shown.  The $B$-periodic classes are shown in
black, and usually only the $\bZ[B]$-module generators are labelled.
The $\bZ[B]$-submodule generated by the classes in degrees $0 \le * < 192$
defines the basic block $N_*$, which repeats $M$-periodically, so that
$\pi_*(\tmf) \cong N_* \otimes \bZ[M]$.  Note how the additive structure
of $N_*$ also appears in the upper wedge of Figure~\ref{fig:A-duality}.

Figures~\ref{fig:GammaBN-ab} and~\ref{fig:GammaBN-cd} show the collapsing
local cohomology spectral sequence for $\Gamma_B N$, in the range
$-20 \le t-s \le 172$, broken into four sections.  In each section the
lower row shows $H^0_{(B)}(N_*) = \Gamma_B N_*$, while the upper row
shows $H^1_{(B)}(N_*) = N_*/B^\infty$ shifted one unit to the left.
Multiplication by~$2$, $\eta$, $\nu$ and~$B$ is shown by lines increasing
the topological degree by $0$, $1$, $3$ and~$8$, respectively.  The dotted
arrows coming from the left indicate classes that are infinitely divisible
by~$B$.  Multiplications that connect the lower and upper rows increase
the local cohomology filtration, hence are hidden, and are shown in red.
The additive extensions in degrees~$* \equiv 3 \mod 24$ are also carried
over to the central wedge of Figure~\ref{fig:A-duality}.

It may be easiest to study these charts by starting in high degrees
and descending from there.  The top terms in $N_*$ that are not
$B$-divisible are $\bZ_2\{C_7\}$, $\bZ/2\{\eta^2 B_7\}$, $\bZ/2\{\eta
B_7\}$ and~$\bZ/8\{B_7\}$ in degrees~$180$ and $178$ to~$176$,
while the topmost $B$-power torsion in $N_*$ is $\bZ/2\{\nu
\nu_6 \kappa\}$ in degree~$164$.  These contribute $\bZ_2\{C_7/B\}$,
$\bZ/2\{\eta^2 B_7/B\}$, $\bZ/2\{\eta B_7/B\}$ and~$\bZ/8\{B_7/B\}$
to $N_*/B^\infty$ in internal degrees~$172$ and $170$ to~$168$, shifted
to topological degrees~$171$ and $169$ to~$167$ in $\pi_*(\Gamma_B N)$,
together with $\bZ/2\{\nu \nu_6 \kappa\}$ in degree~$164$ of $\Gamma_B
N_*$ and $\pi_*(\Gamma_B N)$.  In the Anderson dual, the bottom term
$\bZ_2\{1\}$ of $\pi_*(N)$ contributes a copy of $\bZ_2$ in degree~$171$
of $\pi_*(\Sigma^{171} I_{\bZ_2} N)$, while the terms $\bZ/2\{\eta\}$,
$\bZ/2\{\eta^2\}$, $\bZ/8\{\nu\}$ and~$\bZ/2\{\nu^2\}$ contribute the
groups $\bZ/2$, $\bZ/2$, $\bZ/8$ and~$\bZ/2$ in degrees~$169$ to~$167$
and~$164$.  The duality theorem matches these groups isomorphically.

Figures~\ref{fig:Gamma2BN-ab} through~\ref{fig:Gamma2BN-gh} show the
local cohomology spectral sequence for $\Gamma_{(2, B)} N$, in the range
$-20 \le t-s \le 172$.  In each figure the lower row shows $H^0_{(2,
B)}(N_*) = \Gamma_B N_*$, the middle row shows $H^1_{(2, B)}(N_*) =
\Gamma_2(N_*/B^\infty)$ shifted one unit to the left, and the upper row
shows $H^2_{(2, B)}(N_*) = N_*/(2^\infty, B^\infty)$ shifted two units to
the left.  There are nonzero $d_2$-differentials from topological degrees
$* \equiv 3 \mod 24$, leaving $E_3 = E_\infty$.  Multiplications by~$2$,
$\eta$, $\nu$ and~$B$, infinitely $B$-divisible towers, and hidden
extensions, are shown as in the previous figures.  Note how the abutment
$\pi_*(\Gamma_{(2, B)} N)$ is Pontryagin dual to $\pi_{170-*}(N)$.

The charts for $p=3$ follow the same conventions as for $p=2$.

\begin{landscape}

\begin{figure}
\vskip 0.60cm
\includegraphics[scale=0.305]{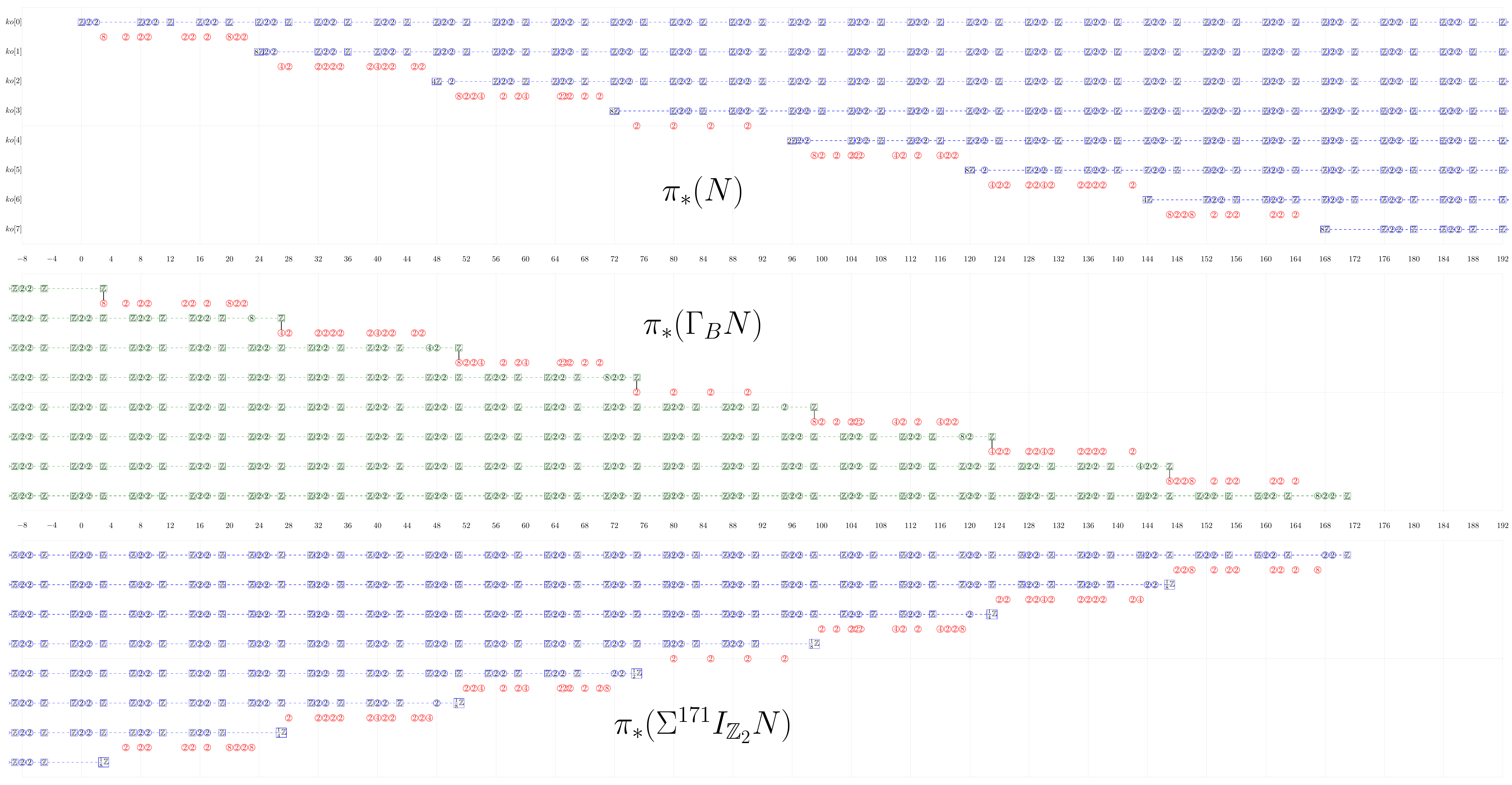}
\caption{Homotopy of the basic block $N$ of $\tmf$ at $p=2$, of its $B$-local
	cohomology, and of its shifted Anderson dual
	\label{fig:A-duality}}
\end{figure}

\begin{figure}
\includegraphics[scale=0.50]{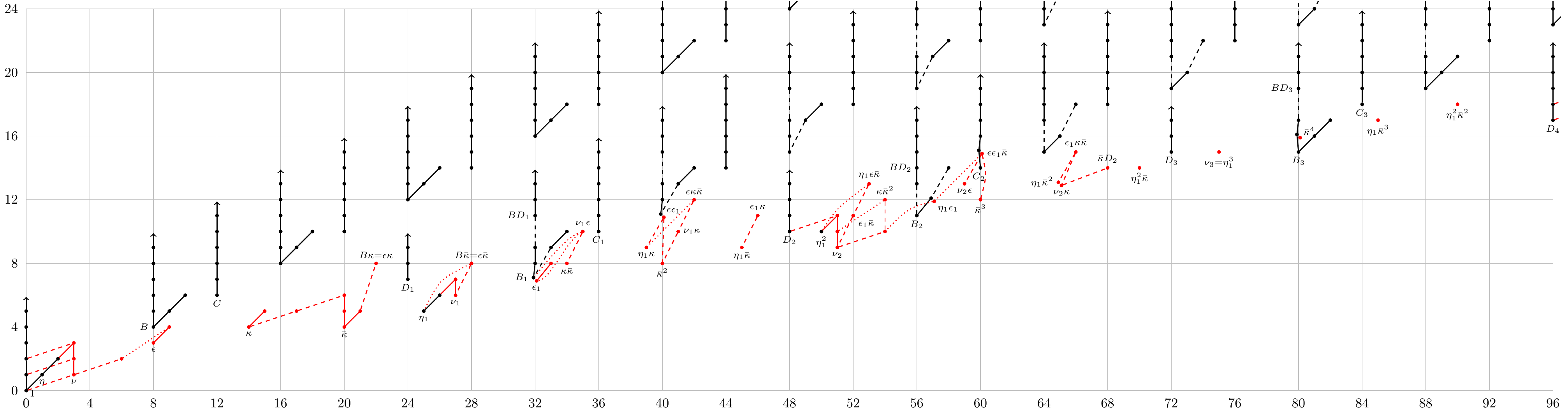}
\vskip 0.87cm
\includegraphics[scale=0.50]{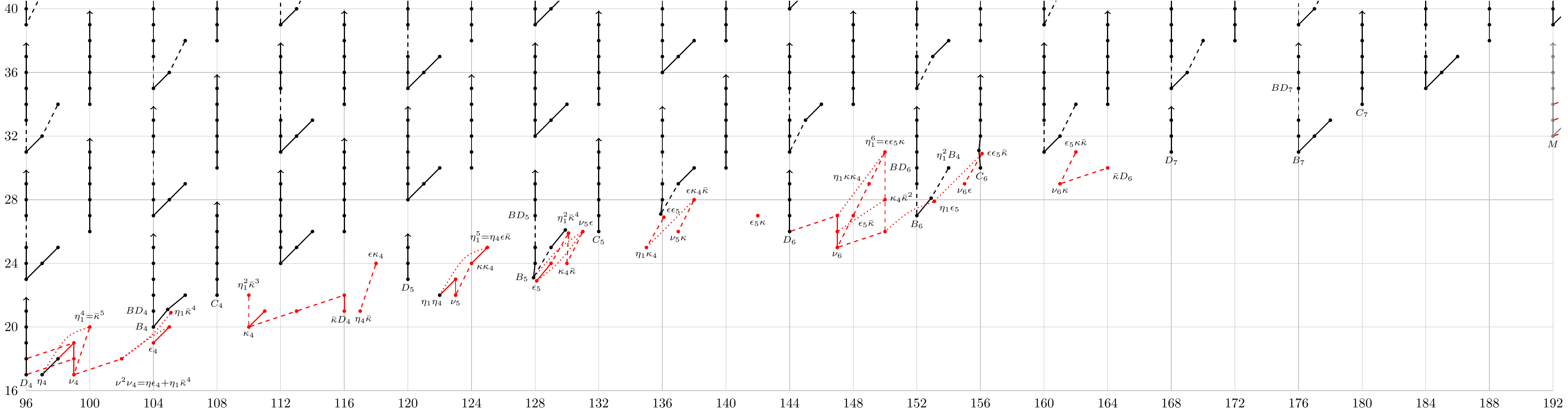}
\caption{$\pi_*(\tmf)$ at $p=2$ for $0 \le * \le 192$
	\label{fig:pitmf}}
\end{figure}

\begin{figure}
\includegraphics[scale=0.38]{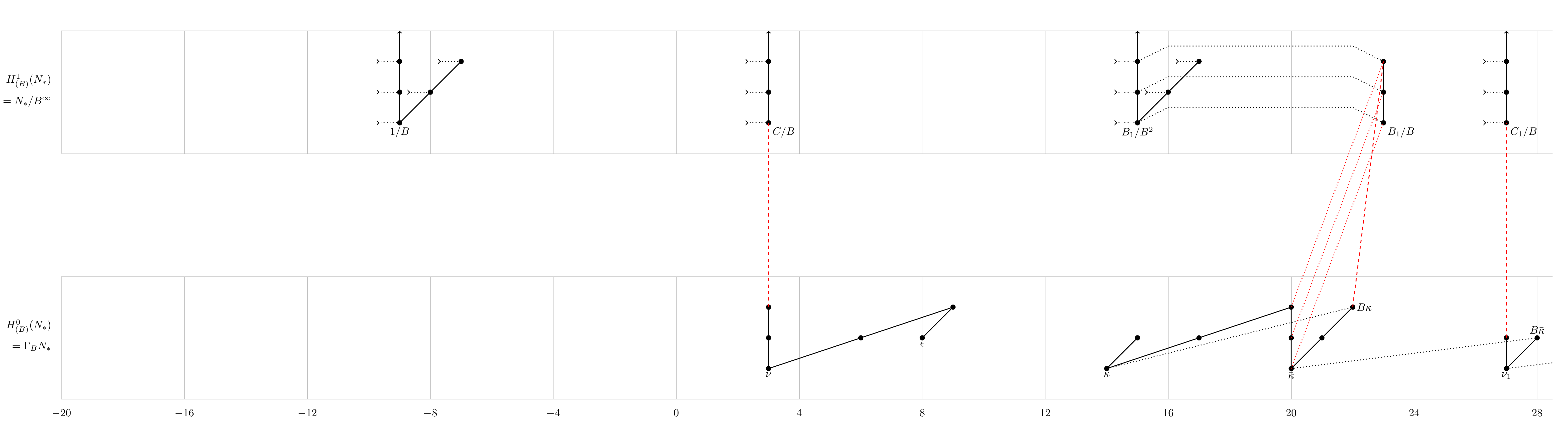}
\vskip 0.9cm
\includegraphics[scale=0.38]{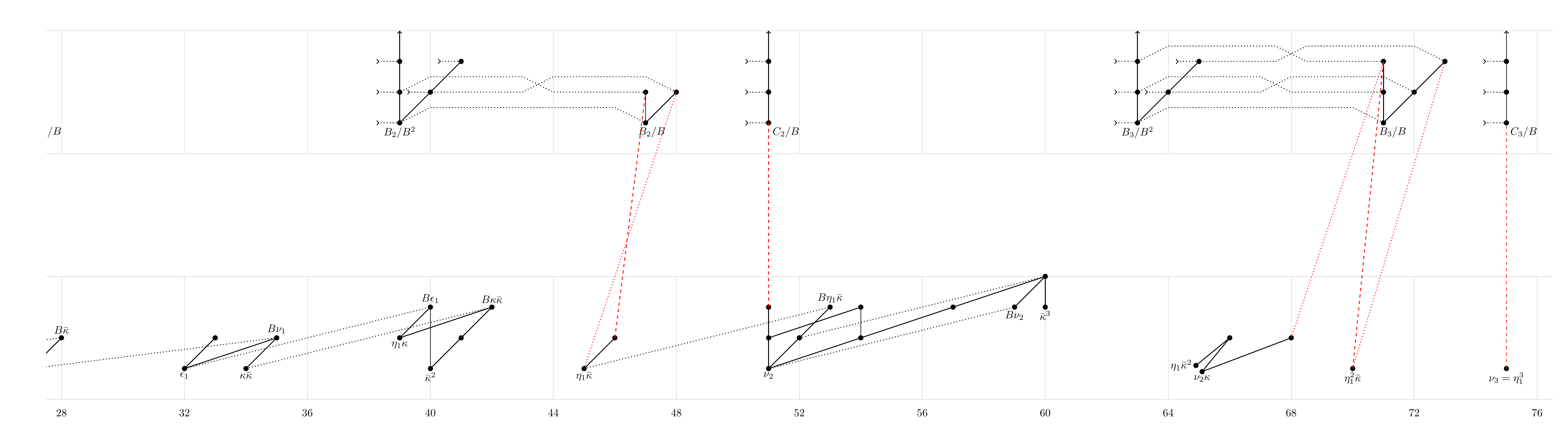}
\caption{$E_2^{s,t} = H^s_{(B)}(N_*)_t
	\Longrightarrow_s \pi_{t-s}(\Gamma_B N)$ at $p=2$
	\label{fig:GammaBN-ab}}
\end{figure}

\begin{figure}
\includegraphics[scale=0.38]{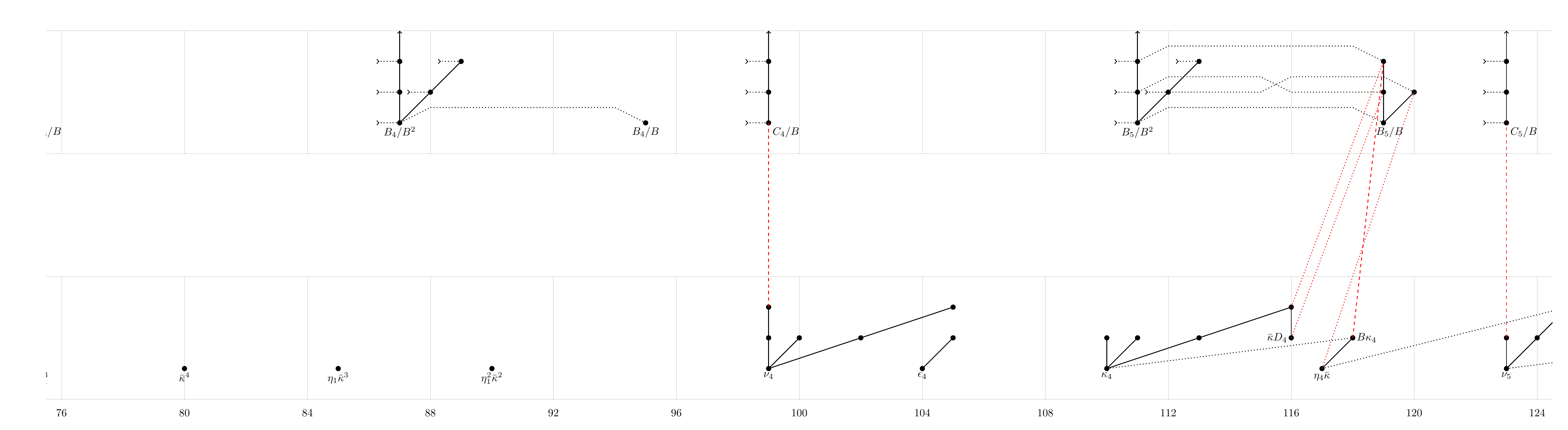}
\vskip 0.9cm
\includegraphics[scale=0.38]{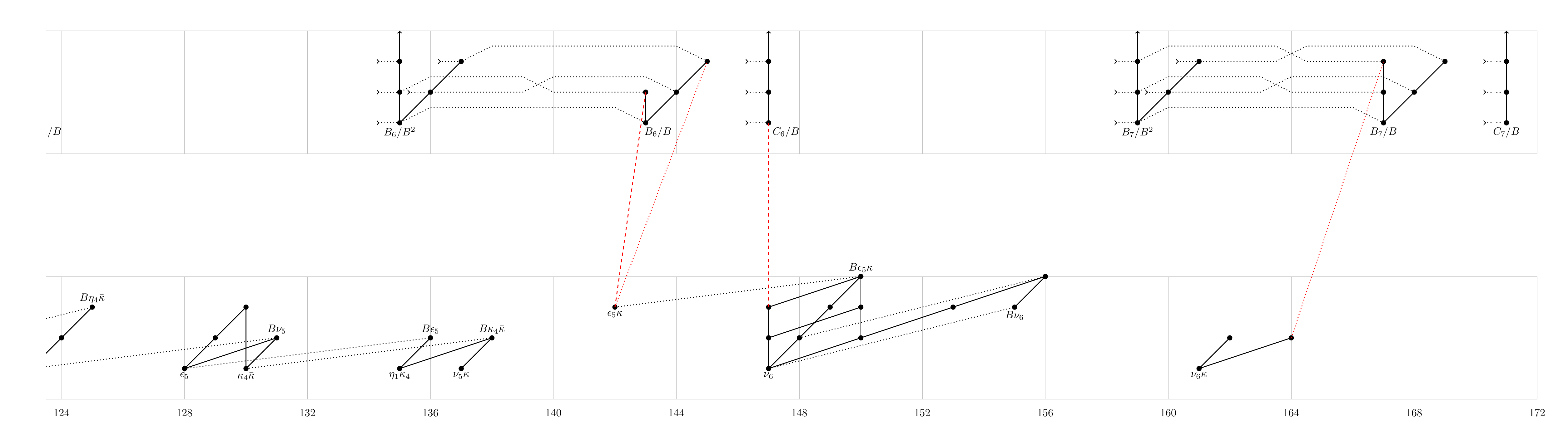}
\caption{$E_2^{s,t} = H^s_{(B)}(N_*)_t
	\Longrightarrow_s \pi_{t-s}(\Gamma_B N)$ at $p=2$
	\label{fig:GammaBN-cd}}
\end{figure}

\begin{figure}
\vskip 1.7cm
\includegraphics[scale=0.40]{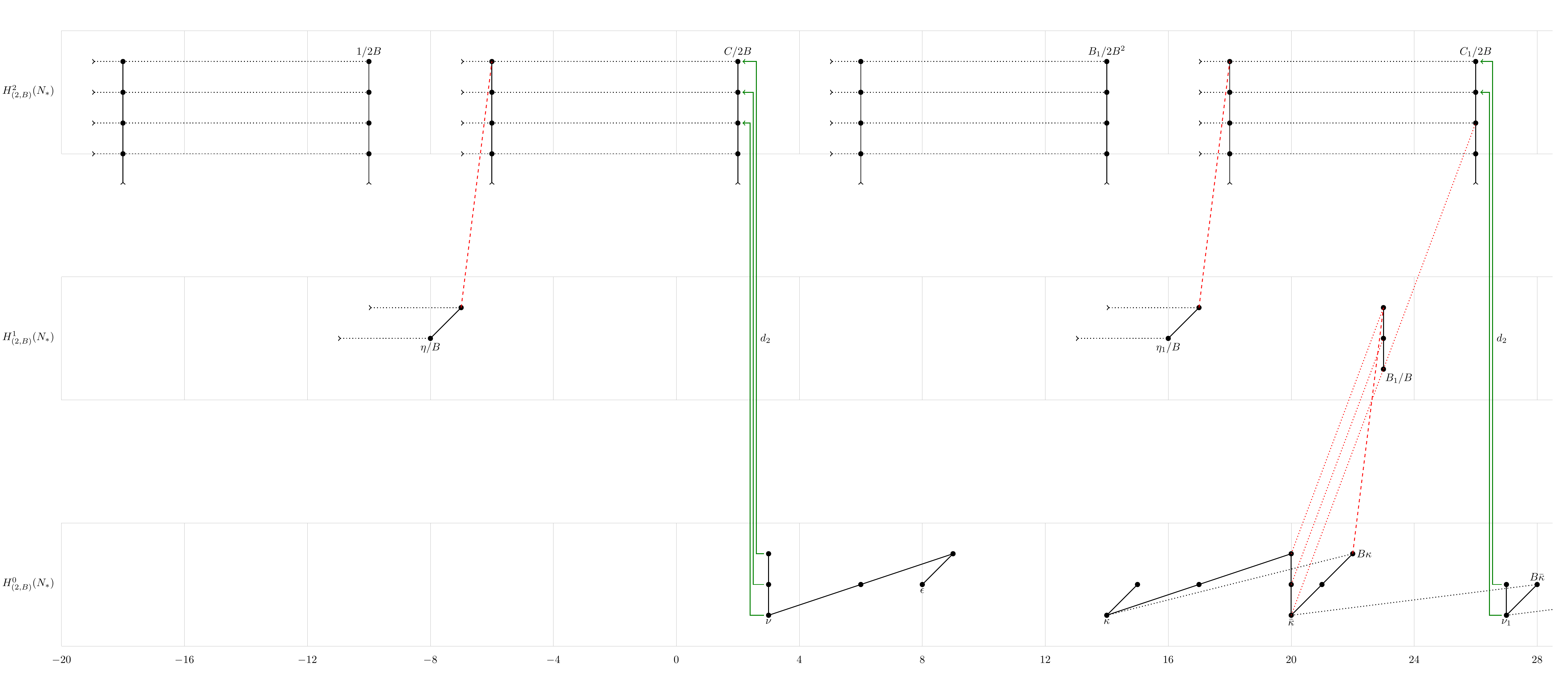}
\caption{$E_2^{s,t} = H^s_{(2, B)}(N_*)_t
	\Longrightarrow_s \pi_{t-s}(\Gamma_{(2, B)} N)$
	\label{fig:Gamma2BN-ab}}
\end{figure}

\begin{figure}
\vskip 1.7cm
\includegraphics[scale=0.40]{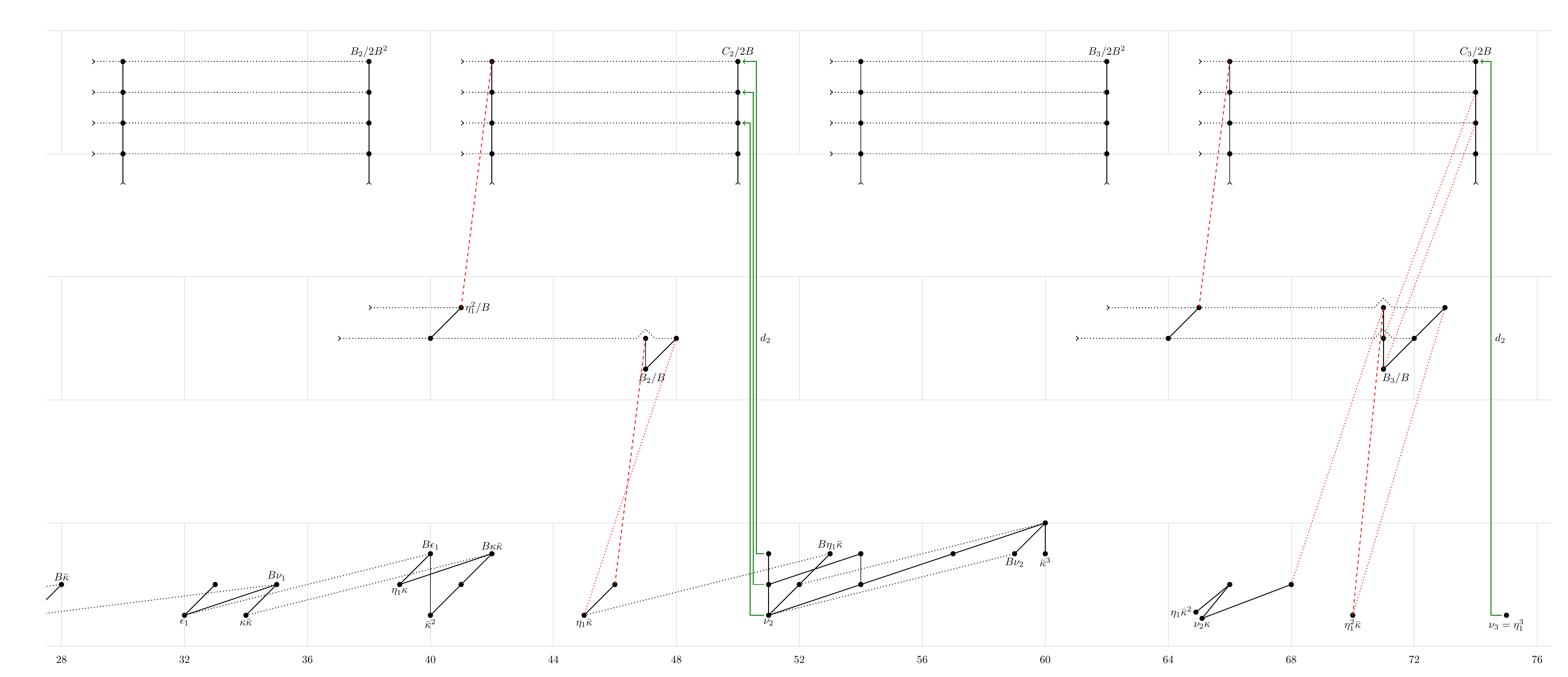}
\caption{$E_2^{s,t} = H^s_{(2, B)}(N_*)_t
	\Longrightarrow_s \pi_{t-s}(\Gamma_{(2, B)} N)$
	\label{fig:Gamma2BN-cd}}
\end{figure}

\begin{figure}
\vskip 1.7cm
\includegraphics[scale=0.40]{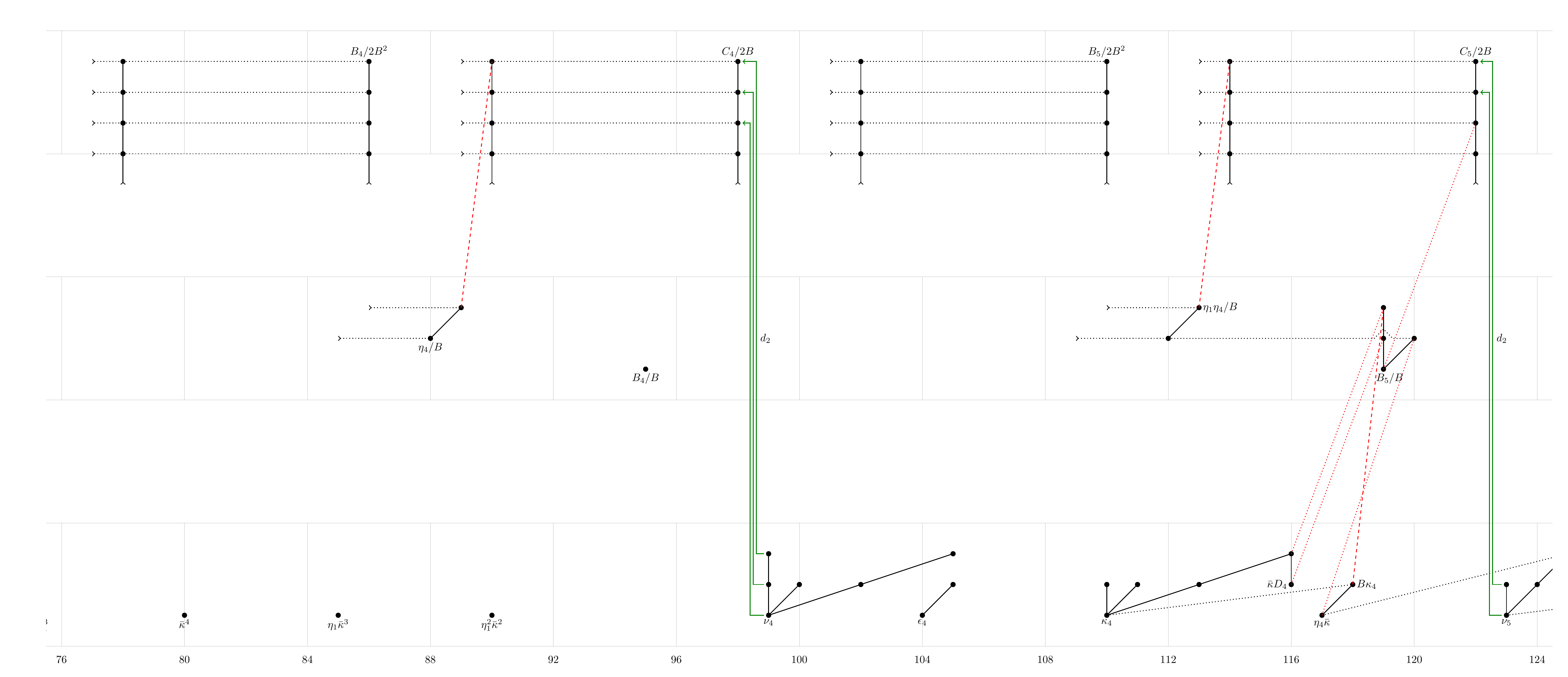}
\caption{$E_2^{s,t} = H^s_{(2, B)}(N_*)_t
	\Longrightarrow_s \pi_{t-s}(\Gamma_{(2, B)} N)$
	\label{fig:Gamma2BN-ef}}
\end{figure}

\begin{figure}
\vskip 1.7cm
\includegraphics[scale=0.40]{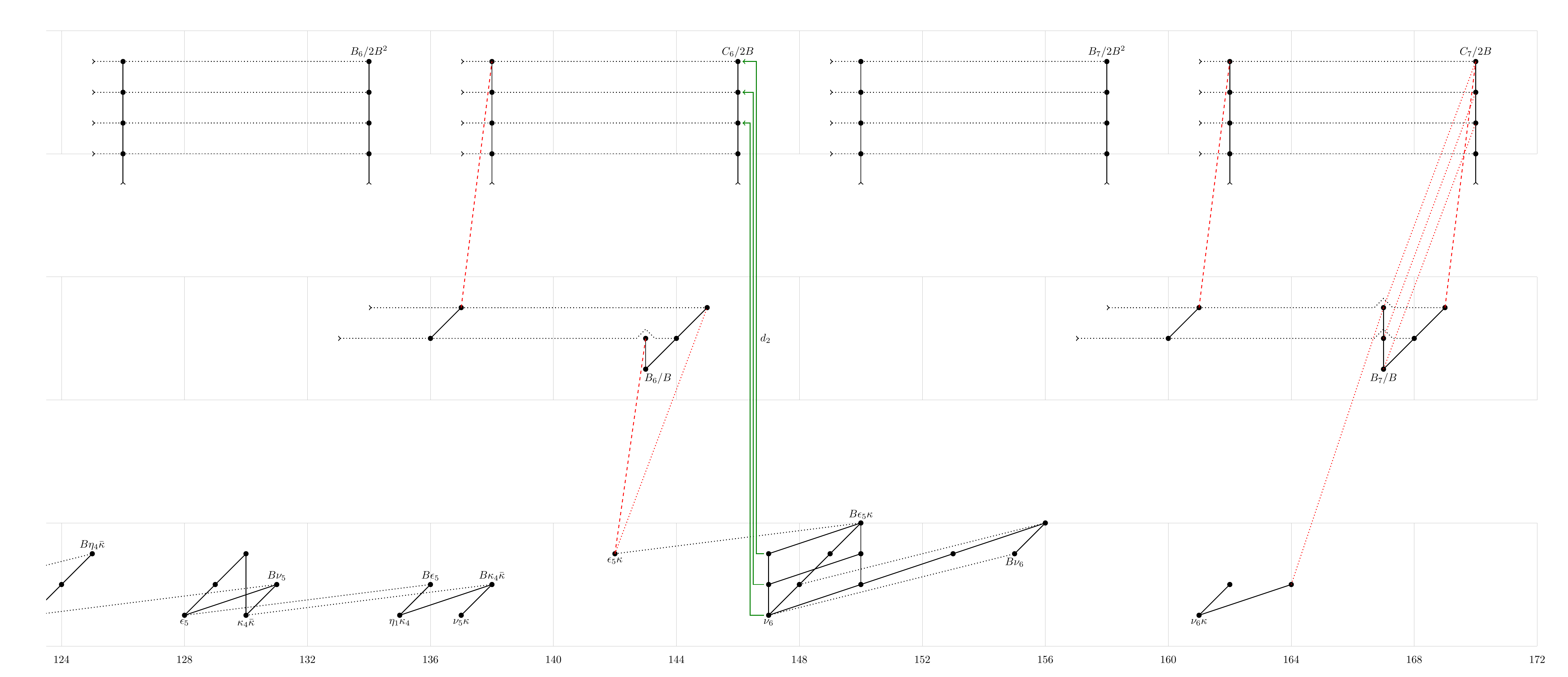}
\caption{$E_2^{s,t} = H^s_{(2, B)}(N_*)_t
	\Longrightarrow_s \pi_{t-s}(\Gamma_{(2, B)} N)$
	\label{fig:Gamma2BN-gh}}
\end{figure}

\begin{figure}
\includegraphics[scale=0.655]{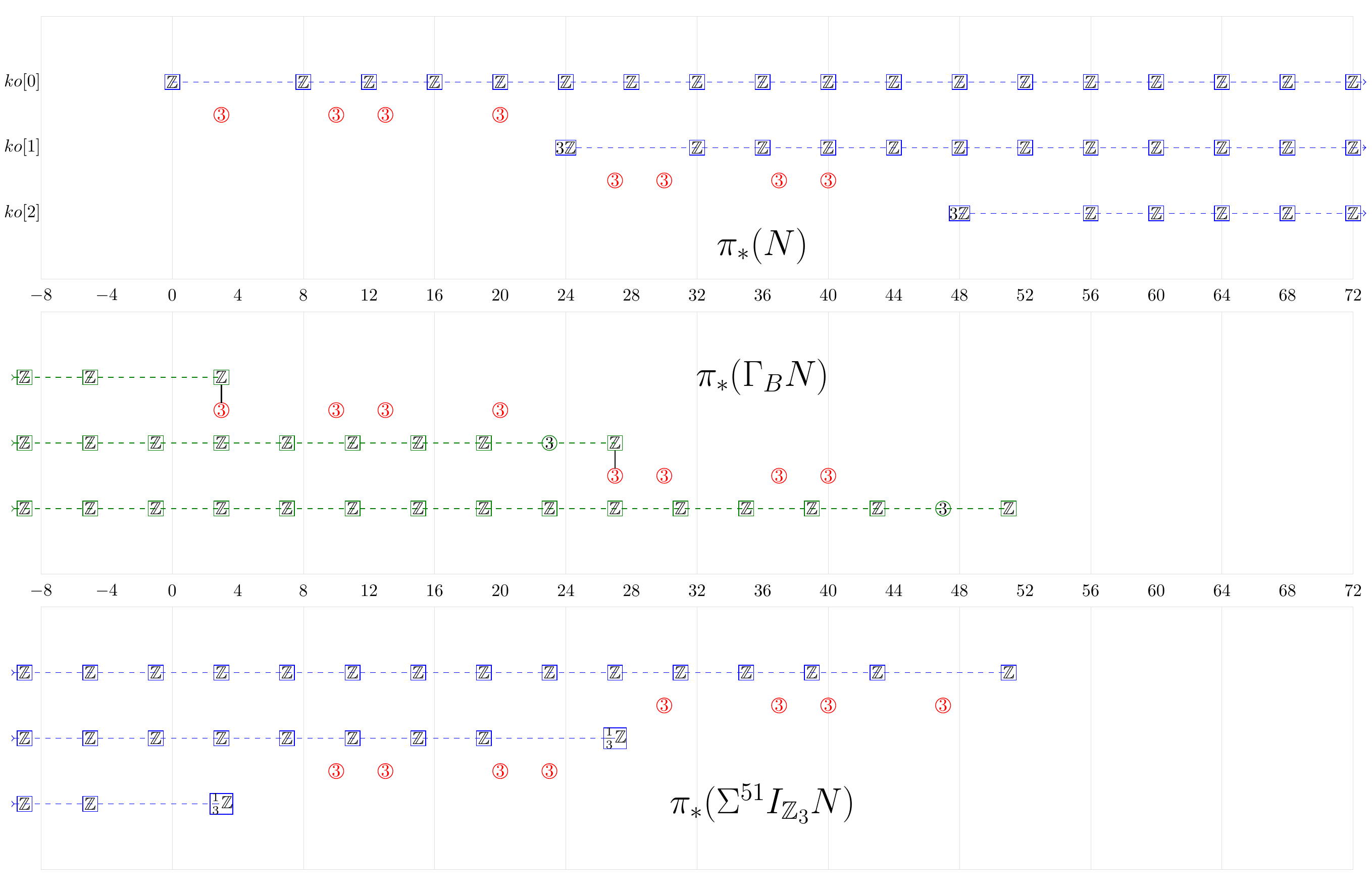}
\caption{Homotopy of the basic block $N$ of $\tmf$ at $p=3$, of its $B$-local
	cohomology, and of its shifted Anderson dual
	\label{fig:Ap3-duality}}
\end{figure}

\begin{figure}
\includegraphics[scale=0.28]{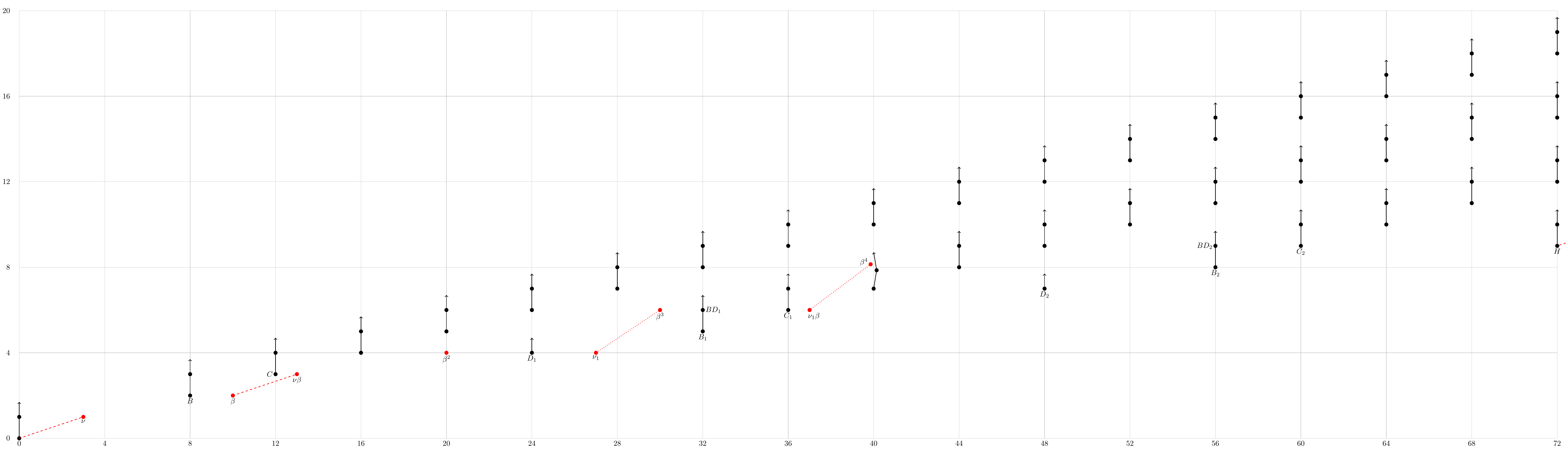}
\caption{$\pi_*(\tmf)$ at $p=3$ for $0 \le * \le 72$
	\label{fig:pitmfp3}}
\end{figure}
\begin{figure}
\vskip 0.3cm
\includegraphics[scale=0.30]{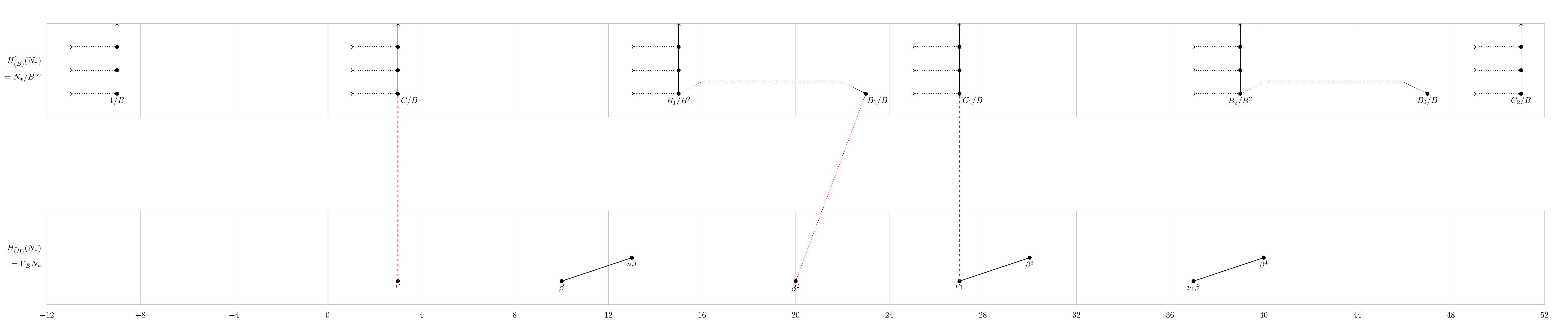}
\caption{$E_2^{s,t} = H^s_{(B)}(N_*)_t
	\Longrightarrow_s \pi_{t-s}(\Gamma_B N)$ at $p=3$
	\label{fig:GammaBNp3}}
\end{figure}

\begin{figure}
\vskip 3.0cm
\includegraphics[scale=0.27]{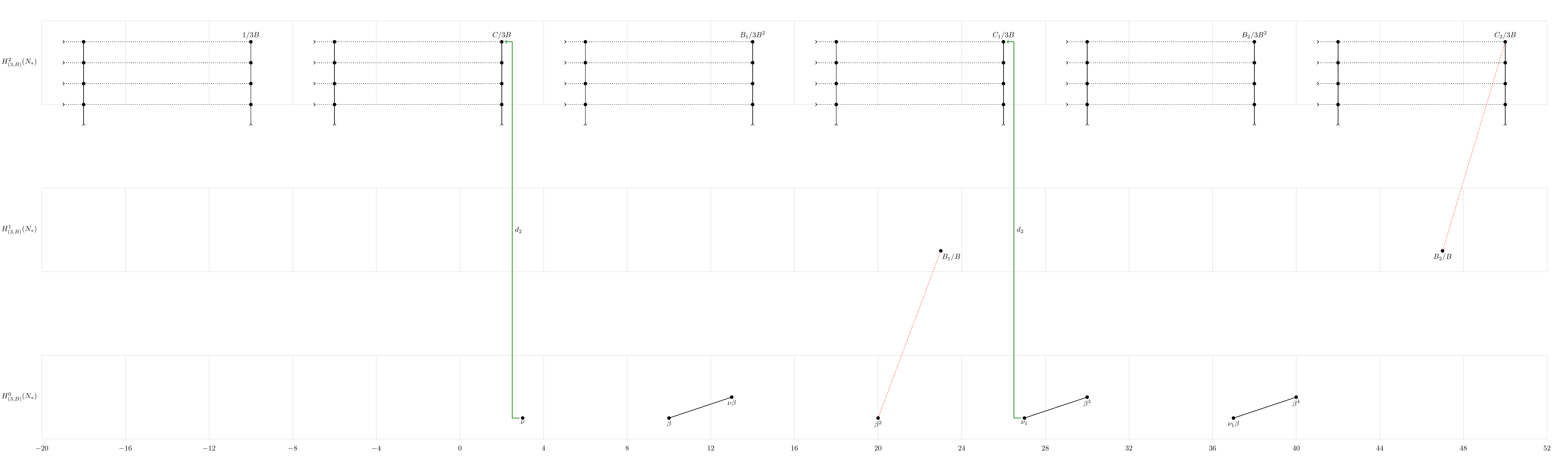}
\caption{$E_2^{s,t} = H^s_{(3, B)}(N_*)_t
	\Longrightarrow_s \pi_{t-s}(\Gamma_{(3, B)} N)$
	\label{fig:Gamma3BN}}
\end{figure}

\end{landscape}

\begin{bibdiv}
\begin{biblist}

\bib{Ada66}{article}{
   author={Adams, J. F.},
   title={On the groups $J(X)$. IV},
   journal={Topology},
   volume={5},
   date={1966},
   pages={21--71},
   issn={0040-9383},
   review={\MR{198470}},
   doi={10.1016/0040-9383(66)90004-8},
}

\bib{And69}{article}{
   author={Anderson, D. W.},
   title={Universal coefficient theorems for K-theory},
   note={Mimeographed notes},
   date={1969},
}

\bib{Beh06}{article}{
   author={Behrens, Mark},
   title={A modular description of the $K(2)$-local sphere at the prime 3},
   journal={Topology},
   volume={45},
   date={2006},
   number={2},
   pages={343--402},
   issn={0040-9383},
   review={\MR{2193339}},
   doi={10.1016/j.top.2005.08.005},
}

\bib{BHHM08}{article}{
   author={Behrens, M.},
   author={Hill, M.},
   author={Hopkins, M. J.},
   author={Mahowald, M.},
   title={On the existence of a $v^{32}_2$-self map on $M(1,4)$ at the prime
   2},
   journal={Homology Homotopy Appl.},
   volume={10},
   date={2008},
   number={3},
   pages={45--84},
   issn={1532-0073},
   review={\MR{2475617}},
}

\bib{BMQ}{article}{
   author={Behrens, Mark},
   author={Mahowald, Mark},
   author={Quigley, J. D.},
   title={The $2$-primary Hurewicz image of $\tmf$},
   note={arXiv:2011.08956}
}

\bib{BP04}{article}{
   author={Behrens, Mark},
   author={Pemmaraju, Satya},
   title={On the existence of the self map $v^9_2$ on the Smith-Toda complex
   $V(1)$ at the prime 3},
   conference={
      title={Homotopy theory: relations with algebraic geometry, group
      cohomology, and algebraic $K$-theory},
   },
   book={
      series={Contemp. Math.},
      volume={346},
      publisher={Amer. Math. Soc., Providence, RI},
   },
   date={2004},
   pages={9--49},
   review={\MR{2066495}},
   doi={10.1090/conm/346/06284},
}

\bib{BG97a}{article}{
   author={Benson, D. J.},
   author={Greenlees, J. P. C.},
   title={Commutative algebra for cohomology rings of virtual duality
   groups},
   journal={J. Algebra},
   volume={192},
   date={1997},
   number={2},
   pages={678--700},
   issn={0021-8693},
   review={\MR{1452682}},
   doi={10.1006/jabr.1996.6944},
}

\bib{BG97b}{article}{
   author={Benson, D. J.},
   author={Greenlees, J. P. C.},
   title={Commutative algebra for cohomology rings of classifying spaces of
   compact Lie groups},
   journal={J. Pure Appl. Algebra},
   volume={122},
   date={1997},
   number={1-2},
   pages={41--53},
   issn={0022-4049},
   review={\MR{1479347}},
   doi={10.1016/S0022-4049(96)00078-3},
}

\bib{BG08}{article}{
   author={Benson, David J.},
   author={Greenlees, J. P. C.},
   title={Localization and duality in topology and modular representation
   theory},
   journal={J. Pure Appl. Algebra},
   volume={212},
   date={2008},
   number={7},
   pages={1716--1743},
   issn={0022-4049},
   review={\MR{2400738}},
   doi={10.1016/j.jpaa.2007.12.001},
}

\bib{Bot59}{article}{
   author={Bott, Raoul},
   title={The stable homotopy of the classical groups},
   journal={Ann. of Math. (2)},
   volume={70},
   date={1959},
   pages={313--337},
   issn={0003-486X},
   review={\MR{110104}},
   doi={10.2307/1970106},
}

\bib{BC76}{article}{
   author={Brown, Edgar H., Jr.},
   author={Comenetz, Michael},
   title={Pontrjagin duality for generalized homology and cohomology
   theories},
   journal={Amer. J. Math.},
   volume={98},
   date={1976},
   number={1},
   pages={1--27},
   issn={0002-9327},
   review={\MR{405403}},
   doi={10.2307/2373610},
}

\bib{BG03}{article}{
   author={Bruner, R. R.},
   author={Greenlees, J. P. C.},
   title={The connective $K$-theory of finite groups},
   journal={Mem. Amer. Math. Soc.},
   volume={165},
   date={2003},
   number={785},
   pages={viii+127},
   issn={0065-9266},
   review={\MR{1997161}},
   doi={10.1090/memo/0785},
}

\bib{BG10}{book}{
   author={Bruner, Robert R.},
   author={Greenlees, J. P. C.},
   title={Connective real $K$-theory of finite groups},
   series={Mathematical Surveys and Monographs},
   volume={169},
   publisher={American Mathematical Society, Providence, RI},
   date={2010},
   pages={vi+318},
   isbn={978-0-8218-5189-0},
   review={\MR{2723113}},
   doi={10.1090/surv/169},
}

\bib{BR21}{book}{
   author={Bruner, Robert R.},
   author={Rognes, John},
   title={The Adams spectral sequence for topological modular forms},
   series={Mathematical Surveys and Monographs},
   volume={253},
   publisher={American Mathematical Society, Providence, RI},
   date={2021},
   pages={xviii+690},
   isbn={978-1-4704-5674-0},
}

\bib{BR:imj}{article}{
   author={Bruner, Robert R.},
   author={Rognes, John},
   title={The Adams spectral sequence for the image-of-$J$ spectrum},
   note={arXiv:2105.02601},
}

\bib{CE56}{book}{
   author={Cartan, Henri},
   author={Eilenberg, Samuel},
   title={Homological algebra},
   publisher={Princeton University Press, Princeton, N. J.},
   date={1956},
   pages={xv+390},
   review={\MR{0077480}},
}

\bib{Cul21}{article}{
   author={Culver, D.},
   title={The Adams spectral sequence for 3-local $\tmf$},
   journal={J. Homotopy Relat. Struct.},
   volume={16},
   date={2021},
   number={1},
   pages={1--40},
   issn={2193-8407},
   review={\MR{4225505}},
   doi={10.1007/s40062-020-00271-3},
}

\bib{Del75}{article}{
   author={Deligne, P.},
   title={Courbes elliptiques: formulaire d'apr\`es J. Tate},
   language={French},
   conference={
      title={Modular functions of one variable, IV},
      address={Proc. Internat. Summer School, Univ. Antwerp, Antwerp},
      date={1972},
   },
   book={
      publisher={Springer, Berlin},
   },
   date={1975},
   pages={53--73. Lecture Notes in Math., Vol. 476},
   review={\MR{0387292}},
}

\bib{DFHH14}{collection}{
   title={Topological modular forms},
   series={Mathematical Surveys and Monographs},
   volume={201},
   editor={Douglas, Christopher L.},
   editor={Francis, John},
   editor={Henriques, Andr\'e G.},
   editor={Hill, Michael A.},
   publisher={American Mathematical Society, Providence, RI},
   date={2014},
   pages={xxxii+318},
   isbn={978-1-4704-1884-7},
   review={\MR{3223024}},
}


\bib{DGI06}{article}{
   author={Dwyer, W. G.},
   author={Greenlees, J. P. C.},
   author={Iyengar, S.},
   title={Duality in algebra and topology},
   journal={Adv. Math.},
   volume={200},
   date={2006},
   number={2},
   pages={357--402},
   issn={0001-8708},
   review={\MR{2200850}},
   doi={10.1016/j.aim.2005.11.004},
}

\bib{EKMM97}{book}{
   author={Elmendorf, A. D.},
   author={Kriz, I.},
   author={Mandell, M. A.},
   author={May, J. P.},
   title={Rings, modules, and algebras in stable homotopy theory},
   series={Mathematical Surveys and Monographs},
   volume={47},
   note={With an appendix by M. Cole},
   publisher={American Mathematical Society, Providence, RI},
   date={1997},
   pages={xii+249},
   isbn={0-8218-0638-6},
   review={\MR{1417719}},
   doi={10.1090/surv/047},
}

\bib{Gre93}{article}{
   author={Greenlees, J. P. C.},
   title={$K$-homology of universal spaces and local cohomology of the
   representation ring},
   journal={Topology},
   volume={32},
   date={1993},
   number={2},
   pages={295--308},
   issn={0040-9383},
   review={\MR{1217070}},
   doi={10.1016/0040-9383(93)90021-M},
}

\bib{Gre95}{article}{
   author={Greenlees, J. P. C.},
   title={Commutative algebra in group cohomology},
   journal={J. Pure Appl. Algebra},
   volume={98},
   date={1995},
   number={2},
   pages={151--162},
   issn={0022-4049},
   review={\MR{1319967}},
   doi={10.1016/0022-4049(94)00040-P},
}

\bib{Gre16}{article}{
   author={Greenlees, J. P. C.},
   title={Ausoni-B\"{o}kstedt duality for topological Hochschild homology},
   journal={J. Pure Appl. Algebra},
   volume={220},
   date={2016},
   number={4},
   pages={1382--1402},
   issn={0022-4049},
   review={\MR{3423453}},
   doi={10.1016/j.jpaa.2015.09.007},
}

\bib{GM95}{article}{
   author={Greenlees, J. P. C.},
   author={May, J. P.},
   title={Completions in algebra and topology},
   conference={
      title={Handbook of algebraic topology},
   },
   book={
      publisher={North-Holland, Amsterdam},
   },
   date={1995},
   pages={255--276},
   review={\MR{1361892}},
   doi={10.1016/B978-044481779-2/50008-0},
}

\bib{GM17}{article}{
   author={Greenlees, J. P. C.},
   author={Meier, Lennart},
   title={Gorenstein duality for real spectra},
   journal={Algebr. Geom. Topol.},
   volume={17},
   date={2017},
   number={6},
   pages={3547--3619},
   issn={1472-2747},
   review={\MR{3709655}},
   doi={10.2140/agt.2017.17.3547},
}

\bib{GS18}{article}{
   author={Greenlees, J. P. C.},
   author={Stojanoska, V.},
   title={Anderson and Gorenstein duality},
   conference={
      title={Geometric and topological aspects of the representation theory
      of finite groups},
   },
   book={
      series={Springer Proc. Math. Stat.},
      volume={242},
      publisher={Springer, Cham},
   },
   date={2018},
   pages={105--130},
   review={\MR{3901158}},
}

\bib{Har67}{book}{
   author={Hartshorne, Robin},
   title={Local cohomology},
   series={Lecture Notes in Mathematics, No. 41},
   note={A seminar given by A. Grothendieck, Harvard University, Fall,
   1961},
   publisher={Springer-Verlag, Berlin-New York},
   date={1967},
   pages={vi+106},
   review={\MR{0224620}},
}

\bib{Hil07}{article}{
   author={Hill, Michael A.},
   title={The 3-local ${\rm tmf}$-homology of $B\Sigma_3$},
   journal={Proc. Amer. Math. Soc.},
   volume={135},
   date={2007},
   number={12},
   pages={4075--4086},
   issn={0002-9939},
   review={\MR{2341960}},
   doi={10.1090/S0002-9939-07-08937-X},
}

\bib{Hir03}{book}{
   author={Hirschhorn, Philip S.},
   title={Model categories and their localizations},
   series={Mathematical Surveys and Monographs},
   volume={99},
   publisher={American Mathematical Society, Providence, RI},
   date={2003},
   pages={xvi+457},
   isbn={0-8218-3279-4},
   review={\MR{1944041}},
   doi={10.1090/surv/099},
}

\bib{HS98}{article}{
   author={Hopkins, Michael J.},
   author={Smith, Jeffrey H.},
   title={Nilpotence and stable homotopy theory. II},
   journal={Ann. of Math. (2)},
   volume={148},
   date={1998},
   number={1},
   pages={1--49},
   issn={0003-486X},
   review={\MR{1652975}},
   doi={10.2307/120991},
}

\bib{HPS97}{article}{
   author={Hovey, Mark},
   author={Palmieri, John H.},
   author={Strickland, Neil P.},
   title={Axiomatic stable homotopy theory},
   journal={Mem. Amer. Math. Soc.},
   volume={128},
   date={1997},
   number={610},
   pages={x+114},
   issn={0065-9266},
   review={\MR{1388895}},
   doi={10.1090/memo/0610},
}

\bib{HS99}{article}{
   author={Hovey, Mark},
   author={Strickland, Neil P.},
   title={Morava $K$-theories and localisation},
   journal={Mem. Amer. Math. Soc.},
   volume={139},
   date={1999},
   number={666},
   pages={viii+100},
   issn={0065-9266},
   review={\MR{1601906}},
   doi={10.1090/memo/0666},
}

\bib{Kai71}{article}{
   author={Kainen, Paul C.},
   title={Universal coefficient theorems for generalized homology and stable
   cohomotopy},
   journal={Pacific J. Math.},
   volume={37},
   date={1971},
   pages={397--407},
   issn={0030-8730},
   review={\MR{319198}},
}

\bib{Kna99}{article}{
   author={Knapp, Karlheinz},
   title={Anderson duality in $K$-theory and ${\rm Im}(J)$-theory},
   journal={$K$-Theory},
   volume={18},
   date={1999},
   number={2},
   pages={137--159},
   issn={0920-3036},
   review={\MR{1711716}},
   doi={10.1023/A:1007763715735},
}

\bib{MR99}{article}{
   author={Mahowald, Mark},
   author={Rezk, Charles},
   title={Brown-Comenetz duality and the Adams spectral sequence},
   journal={Amer. J. Math.},
   volume={121},
   date={1999},
   number={6},
   pages={1153--1177},
   issn={0002-9327},
   review={\MR{1719751}},
}

\bib{MR09}{article}{
   author={Mahowald, Mark},
   author={Rezk, Charles},
   title={Topological modular forms of level 3},
   journal={Pure Appl. Math. Q.},
   volume={5},
   date={2009},
   number={2, Special Issue: In honor of Friedrich Hirzebruch.},
   pages={853--872},
   issn={1558-8599},
   review={\MR{2508904}},
   doi={10.4310/PAMQ.2009.v5.n2.a9},
}

\bib{Mat16}{article}{
   author={Mathew, Akhil},
   title={The homology of tmf},
   journal={Homology Homotopy Appl.},
   volume={18},
   date={2016},
   number={2},
   pages={1--29},
   issn={1532-0073},
   review={\MR{3515195}},
   doi={10.4310/HHA.2016.v18.n2.a1},
}

\bib{Mil92}{article}{
   author={Miller, Haynes},
   title={Finite localizations},
   note={Papers in honor of Jos\'{e} Adem (Spanish)},
   journal={Bol. Soc. Mat. Mexicana (2)},
   volume={37},
   date={1992},
   number={1-2},
   pages={383--389},
   review={\MR{1317588}},
}


\bib{Sin68}{article}{
   author={Singer, William M.},
   title={Connective fiberings over ${\rm BU}$ and ${\rm U}$},
   journal={Topology},
   volume={7},
   date={1968},
   pages={271--303},
   issn={0040-9383},
   review={\MR{232392}},
   doi={10.1016/0040-9383(68)90006-2},
}

\bib{Sto12}{article}{
   author={Stojanoska, Vesna},
   title={Duality for topological modular forms},
   journal={Doc. Math.},
   volume={17},
   date={2012},
   pages={271--311},
   issn={1431-0635},
   review={\MR{2946825}},
}

\bib{Sto14}{article}{
   author={Stojanoska, Vesna},
   title={Calculating descent for 2-primary topological modular forms},
   conference={
      title={An alpine expedition through algebraic topology},
   },
   book={
      series={Contemp. Math.},
      volume={617},
      publisher={Amer. Math. Soc., Providence, RI},
   },
   date={2014},
   pages={241--258},
   review={\MR{3243402}},
   doi={10.1090/conm/617/12286},
}

\bib{Sto63}{article}{
   author={Stong, Robert E.},
   title={Determination of $H^{\ast} ({\rm BO}(k,\cdots,\infty ),Z_{2})$
   and $H^{\ast} ({\rm BU}(k,\cdots,\infty ),Z_{2})$},
   journal={Trans. Amer. Math. Soc.},
   volume={107},
   date={1963},
   pages={526--544},
   issn={0002-9947},
   review={\MR{151963}},
   doi={10.2307/1993817},
}

\bib{Yos75}{article}{
   author={Yosimura, Zen-ichi},
   title={Universal coefficient sequences for cohomology theories of ${\rm
   CW}$-spectra},
   journal={Osaka Math. J.},
   volume={12},
   date={1975},
   number={2},
   pages={305--323},
   issn={0388-0699},
   review={\MR{388375}},
}

\end{biblist}
\end{bibdiv}

\end{document}